\newcommand{\effinf}{\tilde{\psi}_{f_0}}
\newcommand{\ap}{$\an-$posterior }
\newcommand{\an}{{\alpha_n}}
\newtheorem{assumption}[theorem]{Assumption}
\newenvironment{custommodel}[1]
  {\innercustomthm}
  {\endinnercustomthm}
\newcommand{\ninf}[1]{\| #1 \|_\infty}
\newcommand{\e}{\mathbb{E}}
\newcommand{\N}{\mathbb{N}}
\newcommand{\R}{\mathbb{R}}
\newcommand{\cL}{\mathcal{L}}
\newcommand{\eps}{\varepsilon}
\newcommand{\al}{\alpha}
\newcommand{\cN}{\mathcal{N}}
\newcommand{\given}{\,|\,}
\definecolor{blendedblue}{rgb}{0.2,0.2,0.7}
\renewenvironment{proof}[1]{\par\noindent{\bf #1 \ }}{\hfill\BlackBox\\[2mm]}
\numberwithin{theorem}{section}
\begin{document}
\title{Semiparametric Inference Using Fractional Posteriors}
\author{$^*$, Luke Travis$^\dagger$, Isma\"el Castillo$^*$ and Kolyan Ray$^\dagger$\\
\\
LPSM, Sorbonne Universit\'e$^*$ and Imperial College London$^\dagger$}
\date{}

\author{\name Alice L'Huillier$^*$ \email alice.lhuillier@sorbonne-universite.fr \\
       \addr LPSM, Sorbonne Universit\'e\\
       4, place Jussieu \\
75005, Paris, France
       \AND
       \name Luke Travis$^*$ \email luke.travis15@imperial.ac.uk \\
       \addr Department of Mathematics\\
       Imperial College London\\
       London SW7 2AZ, United Kingdom 
       \AND
       \name Isma\"el Castillo \email ismael.castillo@upmc.fr \\
       \addr LPSM, Sorbonne Universit\'e\\
       4, place Jussieu\\
75005, Paris, France
\AND
       \name Kolyan Ray \email kolyan.ray@imperial.ac.uk \\
       \addr Department of Mathematics\\
       Imperial College London\\
       London SW7 2AZ, United Kingdom 
       }

\editor{}

\maketitle

\begin{abstract}%
We establish a general Bernstein--von Mises theorem for approximately linear semiparametric functionals of fractional posterior distributions based on nonparametric priors. This is illustrated in a number of nonparametric settings and for different classes of prior distributions, including Gaussian process priors. We show that fractional posterior credible sets can provide reliable semiparametric uncertainty quantification, but have inflated size. To remedy this, we further propose a \textit{shifted-and-rescaled} fractional posterior set that is an efficient confidence set having optimal size under regularity conditions. As part of our proofs, we also refine existing contraction rate results for fractional posteriors by sharpening the dependence of the rate on the fractional exponent.
\end{abstract}

\begin{keywords}
  fractional posteriors, Bernstein--von Mises theorem, uncertainty quantification,  Gaussian processes, histograms.
\end{keywords}


\section{Introduction}
\footnotetext[1]{Equal contribution.}
In this work, we establish theoretical guarantees for the {\it fractional} or {\it tempered} or {\it $\alpha_n$-posterior}, which is obtained in a similar way to the usual Bayesian posterior distribution, but with the likelihood raised to a power $\an\in(0,1]$. Suppose that we model data $Y=Y^n$ with a log-likelihood $\ell_n(\eta; Y^n) = \ell_n(\eta)$, and that we assign a prior distribution $\Pi= \Pi_n$ to the parameter $\eta\in S$. 
The fractional posterior is then defined as
\begin{equation}
\Pi_\an(B | Y^n) = \frac{\int_B e^{\an\ell_n(\eta)} d\Pi(\eta)}{\int_S e^{\an\ell_n(\eta)} d\Pi(\eta)}, \qquad \qquad B \text{ measurable}.   \label{eq:alpha_posterior_definition}
\end{equation}
One interpretation is that $\al_n$ induces a tempering effect: for $\al_n<1$ the contribution of the data in Bayes' formula is downweighted, thus lowering the importance of the data relative to the prior. When $\al_n=1$, this reduces to the usual posterior distribution. We study here the frequentist behaviour of the fractional posterior for \textit{semiparametric inference}, that is when estimating a low-dimensional functional $\psi(\eta)$ of the parameter $\eta$ when the latter is assigned a high- or infinite-dimensional prior. As reflected in our notation, we will allow the power $\al_n$ to possibly depend on $n$.

Fractional posteriors have been used in a wide variety of settings, including Bayesian model selection \citep{O95}, marginal likelihood approximation \citep{FP08}, empirical Bayes methods \citep{MT20} and more recently variational inference \citep{ARC16,HTLC18,BHP18,AR20,MORV22}. One motivation for their use in statistical inference is their greater robustness to possible model misspecification compared to the usual Bayesian posterior. \citet{GvO17} empirically demonstrate that in a misspecified linear regression setting, fractional posteriors can outperform traditional posteriors, motivating their \textit{safe Bayesian} approach \citep{Grunwald_2012,SafeProbability}, which consists of a data-driven choice of $\alpha_n$. The C-posterior \citep{Cposterior} is another special case of the fractional posterior, which has empirically been shown to be more robust to model misspecification than the full posterior in specific examples. \citet{BHW16} argue that within a decision-theoretic framework, fractional posteriors can be viewed as principled ways to update prior beliefs. In particular, under model misspecification, they show that a choice $\alpha_n\neq 1$ may be necessary for good performance. Computationally, fractionally downweighting parallel distributions can also improve sampling convergence and yield faster mixing times \citep{GT95}.

In all cases, the choice of the fractional power $\alpha_n$, often termed the \textit{learning rate}, plays a key role. There are many proposals for picking this (see, e.g., \citealp{Grunwald_2012,GvO17,HW17,LHW19,SM19}), each aiming to achieve a different target. However, one common and major motivation for using generalized Bayesian methods is to provide uncertainty quantification via the use of generalized posterior credible sets, whose performance are sensitive to the choice of $\alpha_n$ in practice \citep{WM20}. This motivates our work, whose main contribution is to obtain a precise theoretical characterization of the role of $\alpha_n$ for some widely-used Bayesian nonparametric priors, in particular Gaussian processes and histograms. More precisely, for these common high- and infinite-dimensional priors, we obtain nonparametric convergence rates and semiparametric Bernstein-von Mises theorems having the correct dependence on both $n$ and $\alpha_n$. We further use these insights to construct rescaled credible sets from the $\alpha_n$-posterior that are optimal from an information-theoretic perspective for uncertainty quantification.

To both gain some intuition for the results ahead and relate these to the existing literature, consider the simple parametric example where we observe $Y_1,\dots,Y_n \sim^{iid} \cN(\theta,1)$ with a conjugate prior $\Pi=\cN(\mu,\sigma^2)$ for $\theta$. A direct calculation yields the fractional posterior
\begin{equation}\label{eq:conjugate}
\Pi_{\al_n}[\cdot\given Y_1,\dots,Y_n]=\cN \left( \frac{n\alpha_n \bar{Y}_n + \mu \sigma^{-2}}{n\al_n + \sigma^{-2}} ,\frac{1}{n\al_n + \sigma^{-2}} \right) \approx \cN \left( \hat{\theta}_{\text{MLE}} , \frac{1}{n\alpha_n}I_0^{-1} \right),
\end{equation} 
where in this model the MLE equals the sample mean $\hat{\theta}_{\text{MLE}} = \bar{Y}_n$, the Fisher information $I_0 = 1$ and the last (Bernstein--von Mises) approximation holds as $n\alpha_n \to \infty$. Observe that (i) the $\alpha_n$-posterior above can be obtained from the original posterior by replacing $n$ by the \textit{effective sample size} $n\alpha_n$, so that the tempering effect means one effectively only uses $n'=n\al_n$ of the data -- with the exception that $\bar{Y}_n$ in the centering remains identical. Second, (ii) the posterior variance scales as $(n\al_n)^{-1}$ for large $n$ and hence the diameter of a credible set constructed from two-sided $\al_n$--posterior quantiles is enlarged by a multiplicative factor of order $1/\alpha_n$ compared to the traditional posterior. Third, (iii) the choice of $\alpha_n$ does not asymptotically affect the location of the $\alpha_n$-posterior mean. Combined with (ii), this implies that credible sets from the $\alpha_n$-posterior do not have the correct frequentist coverage asymptotically, being conservative (too large). In view of these observations, our main results can be heuristically summarized as implying that for $\alpha_n$-posteriors based on common Bayesian \textit{nonparametric} priors in the well-specified setting:
\begin{enumerate}
\item The $\alpha_n$-posterior contraction rate is of the same form as the full posterior contraction rate, but with the sample size $n$ replaced by the effective sample size $n' = n\alpha_n$.
\item For semiparametric Bayesian inference involving sufficiently regular low-dimensional functionals, a Bernstein--von Mises distributional approximation holds as in \eqref{eq:conjugate}.
\item Under regularity conditions, suitably \textit{rescaled} credible sets from the $\alpha_n$-posterior have asymptotically correct frequentist coverage and information-theoretic optimal diameter, and are thus efficient confidence sets (unlike the standard credible sets).
\end{enumerate}

The Bernstein--von Mises (BvM) distributional approximation in \eqref{eq:conjugate} has been extended for the $\alpha_n$-posterior to general regular low-dimensional parametric models \citep{miller21,MORV22}. However, such proof techniques do not extend to the present semiparametric setting, where one wishes to estimate a finite-dimensional functional in the presence of a high- or infinite-dimensional prior, such as a Gaussian process. In Section \ref{sect:sp-bvm}, we derive analogous semiparametric BvM results to \eqref{eq:conjugate} for the $\alpha_n$-posterior by building on the ideas of \citet{CR2015}. We apply these results to the concrete examples of density estimation and the nonparametric Gaussian white noise model, illustrating our results using histogram and Gaussian process priors, including for the standard Mat\'ern and squared exponential covariance kernels.
Since the $\alpha_n$-posterior variance inflates the usual posterior variance, the resulting credible sets can be much larger than needed leading to conservative uncertainty quantification in the well-specified setting. In Section \ref{sect:corrected_credible_sets}, we further show that suitably rescaled credible sets can correct for this, yielding optimal (efficient) uncertainty quantification and potentially mitigating one of the downsides of fractional posterior inference.

Unlike semiparametric BvM results, nonparametric contraction rates for $\alpha_n$-posteriors have previously been studied in the literature. When the model is well-specified, these often require weaker conditions for convergence and sometimes lead to simpler proofs compared to the usual posterior. A remarkable result is that when $\al_n<1$, testing or metric entropy conditions which are typically needed for deriving posterior convergence rates as in \citet{GGV,vdv_FNBI} are not needed for the fractional posterior, at least when convergence is expressed in terms of certain information-theoretic distances such as R\'eyni--divergences. This was established  by \citet{tongz06} (and was earlier obtained for consistency by \citealp{walkerhjort01}), see also \citet{kruijervdv13,BPY,GM20} for related results and examples (we refer to \citealp{vdv_FNBI}, Chapters 6 and 8, for further results and historical notes). This means that using fractional posteriors often allows one to broaden the set of priors or models for which desirable properties are obtained compared to usual posteriors, as only a prior mass condition is needed, avoiding sometimes delicate constructions with sieve sets in order to keep entropies under control.
However, the works \citet{tongz06, kruijervdv13, BPY,GM20} do not seek to obtain a sharp dependence of the rate on $\al_n$, and  do not yield sharp results in the norms we are interested in, see Section \ref{sec:contraction} for more discussion. In particular, we show that 
one can recover the heuristic idea that the fractional posterior uses $n' = n\alpha_n$ fraction of the data. Such sharp nonparametric contraction rates for the $\alpha_n$-posterior in terms of both $n$ and $\alpha_n$ are needed to obtain precise semiparametric BvM results.

In this paper, we restrict to well-specified nonparametric models. Compared to parametric models, nonparametric models attempt to be sufficiently broad that model misspecification is unlikely, so that the well-specified case covers a far larger set of situations. There are nonetheless important notions of nonparametric model misspecification (e.g. \citealp{vdv_FNBI}, Chapter 8.5) that will be dealt with in future work. Note that the choice $\alpha_n >1$, which is not covered by our results, is also used in the literature, for instance in variational inference \citep{ARC16,BHP18} and distributed Bayesian computation \citep{SzvZ19}.
Finally, the fractional posterior is a special case of a Gibbs posterior \citep{GibbsPosterior}, where one replaces the log-likelihood with (the negative of) a risk function, and with a multiplicative constant $\lambda$, also called inverse-temperature parameter, playing the role of $\al_n$. Gibbs posteriors appear naturally in the study of PAC-Bayesian bounds, see \citet{Catoni_2004,Catoni_2007} and the recent overview  by \citet{alquier_2021}. Although we focus here on the special case of the log-likelihood, it would be interesting to also investigate similar questions as in the present paper for $\lambda$.

\textbf{Outline.}
In this paper, we investigate the behaviour of fractional posteriors both for functionals of infinite-dimensional models (the semiparametric problem, see Sections \ref{sect:sp-bvm} and \ref{sect:corrected_credible_sets}) and for contraction rates of the overall unknown parameter (the nonparametric problem, see Section \ref{sec:contraction}). We start each main section by a general result valid under fairly generic conditions, which we then apply to specific models and priors. In particular, we will consider three main example--cases: the nonparametric Gaussian white noise model, density estimation with random histogram priors, and density estimation with exponentiated Gaussian process priors.

 In Section \ref{sect:sp-bvm}, we study the semiparametric problem and investigate the distribution induced from the  fractional posterior on a functional $\psi(\eta)$, where $\eta$ is an infinite-dimensional parameter. We show that under certain conditions, the fractional posterior distribution of $\sqrt{n\an}(\psi(\eta) - \hat {\psi})$, with $\hat{\psi}$ an efficient estimator of $\psi$, converges to a normal distribution with variance equal to the efficient information bound for estimating the functional. In some cases, the conditions for this to hold differ slightly from those needed for the classical posterior with $\al_n=1$ studied in \citet{CR2015}. Although this posterior asymptotic normality (which we shall call the $\al_n$--BvM result) is of interest in itself, it also implies that credible sets from the $\al_n$--posterior are length--inflated by a factor $1/\sqrt{\an}$ compared to the case $\al_n=1$, giving them large (conservative) coverage but making them inefficient. In Section \ref{sect:corrected_credible_sets}, we study the frequentist coverage properties of a shifted--and--dilated version of the $\al_n$--credible sets. Under an appropriate condition on the centering of the $\al_n$--BvM result, which can always be verified if $\al_n$ is bounded from below, the transformed credible set is shown to be an asymptotically optimal credible set, thereby remedying this issue. We show that when $\al_n$ may go to zero, this is no longer necessarily the case, and assessing coverage becomes more delicate.

Nonparametric contraction rates are studied in Section \ref{sec:contraction}. We first obtain a generic result for the contraction rate of the $\alpha_n$-posterior in terms of a R\'enyi divergence and under a prior mass condition only, slightly sharpening the recent result by \citet{BPY}. We then show that under further entropy conditions \citep{vdv_FNBI}, one can improve this rate in certain regimes of $\alpha_n$, in particular deriving the expected nonparametric rate with $n$ replaced by the effective sample size $n'=n\alpha_n$, thereby generalising the very specific one--dimensional Gaussian example above to the infinite--dimensional setting. We also briefly discuss supremum--norm contraction rates, and show that the above message still holds.
 
Our results are investigated in the three concrete example settings mentioned above. Note that we restrict to these settings for simplicity of exposition, but that our results can be applied much more broadly to settings where the semiparametric BvM tools discussed in the next sections can be deployed, which includes  contexts as different as inverse problems \citep{rn22}, survival analysis  \citep{cv21}, inference for diffusions \citep{nr20}, causal inference \citep{RVV20}, etc.  We also perform simulations which confirm that the derived asymptotic theoretical properties are empirically relevant and observable at reasonable finite sample sizes: in particular, we illustrate that the modified credible sets have close to optimal coverage already at moderate sample size.

\textbf{Framework and notation.} Throughout the paper, we consider the following general setting. Let $(\mathcal{Y}^n, \mathcal{A}^n, P^n_{\eta} : \eta \in S)$ be a sequence of statistical experiments indexed by a parameter $\eta$, where $Y=Y^n$ are the observations, $S$ is a metric measure space, and $n$ is an indexing parameter quantifying the available amount of information. For each $n \in \N$ and $\eta \in S$, we assume that $P^n_{\eta}$ admits a density $p^n_{\eta}$ relative to a $\sigma$-finite measure $\mu^n$ defined on the measurable space $(\mathcal{Y}^n, \mathcal{A}^n)$.

Throughout the following, we make a number of notational simplifications, enumerated here. We write
$P^n_{\eta_0}=:P_0$ for the probability under the true parameter $\eta_0$,
 $E^n_{\eta_0}=:E_0$ for the corresponding expectation under $P_0$,
 $o_{P^n_{\eta_0}}(1) =: o_P(1)$ for a term which is $o(1)$ in $P_0-$probability,  
 $\Pi_n =: \Pi$ for a prior which may depend on $n$,
 $\Pi_{\alpha_n}(\cdot |Y^n)$ for the \ap distribution,
 and $E_{\alpha_n}(\cdot |Y^n)$ for the expectation with respect to the \ap.

We study frequentist properties of the $\alpha_n$--posterior distribution as $n\rightarrow \infty$, that is assuming the observation $Y$ is distributed according to $P^n_{\eta_0}$ for some true value of the parameter $\eta_0$. We consider the regime $n\to\infty$ with $\alpha_n \in(0,1]$ such that $n'=n\alpha_n \to\infty$, with further conditions on $\alpha_n$ required for some results. The condition $n' \to\infty$ is minimal for asymptotic results given the interpretation of $n'$ as the effective sample size used by the fractional posterior, see \eqref{eq:conjugate}. Of particular interest is the regime $\alpha_n \to 0$, since several existing results in the literature hold for ``$\alpha$ small enough'', for instance robustness to misspecification of both fractional posteriors \citep{GvO17} and their variational approximations \citep{MORV22}.

\section{Semiparametric Bernstein-von Mises Theorems}\label{sect:sp-bvm}

Using a nonparametric statistical model provides generality and flexibility, and global nonparametric rates for fractional posteriors will be discussed in Section \ref{sec:contraction}. Even in this general setting, it is often the case statisticians are interested in estimating a finite-dimensional parameter or aspect of the model, the so-called {\em semiparametric problem}. Perhaps the simplest example is, say in density estimation to fix ideas, the problem of estimating a linear functional $\int_0^1 a f$ of the unknown density $f$, where $a$ is a given square-integrable function (e.g. the indicator of an interval). We have seen that in the simple one-dimensional example in the introduction, the $\al_n$--posterior gives a distribution that is inflated by a factor of size roughly $1/\sqrt{\al_n}$ compared to the classical posterior. In this section, we will show that this in fact corresponds to a general phenomenon which carries over to estimation of many semiparametric functionals. As mentioned earlier, we allow $\al_n\in(0,1]$ to depend on $n$ and assume $n\al_n\to\infty$ as $n\to\infty$. 

More precisely, given a functional $\psi : S \rightarrow \R$ of interest,  we wish to study the properties of the marginal $\alpha_n$--posterior distribution of $\psi(\eta)$, i.e the push-forward measure of the $\alpha_n$--posterior defined by \eqref{eq:alpha_posterior_definition} through the map $\psi$. We first consider a fairly general setting and introduce sufficient conditions for the posterior distribution to be asymptotically Gaussian (in a sense given in the next paragraph) with an optimal (efficient) variance. Afterwards, we apply this general result to the Gaussian white noise model and density estimation.

We say that a distribution $Q_Y$ on $\mathbb{R}$, depending on the data $Y$, \textit{converges weakly in $P_0$-probability to a Gaussian distribution $\cN(0,V)$}, {\color{black} denoted $Q_Y  \leadsto \cN(0,V)$} if, as $n\to\infty$,
\begin{equation}\label{cvl}
 d_{BL}\left(Q_Y, \cN(0,V) \right) \to^{P_0} 0, 
\end{equation} 
where $d_{BL}$ is the bounded Lipschitz distance between probability distributions on $\mathbb{R}$ (the latter distance metrises weak convergence, see Chapter 11 of \citealp{D02}). In the sequel, we take $Q_Y$ to be a re-centered and re-scaled version of the $\al_n$--posterior distribution induced on the functional $\psi(\eta)$.   More precisely, given a {\em rate}  $v_n$ and a {\em centering} $\mu=\mu(Y)$, consider the map $\tau_\psi:\eta \to v_n(\psi(\eta)-\mu)$. Below we will say that the $\al_n$--posterior distribution of $v_n(\psi(\eta)-\mu)$ converges weakly in $P_0$--probability to a $\cN(0,V)$ distribution if \eqref{cvl} holds for 
\[ Q_Y = \Pi_{\al_n}[\cdot\given Y] \circ \tau_\psi^{-1},\]
that is, for the push-forward measure of the $\al_n$--posterior through $\tau_\psi$. To establish \eqref{cvl}, one can, for instance, verify that Laplace transforms converge in $P_0$--probability, see \citet{CR2015} for details. 

When $v_n = \sqrt{n\alpha_n}$ and $\mu = \hat\psi$ is an efficient estimator of $\psi(\eta)$, writing $\mathcal{L}_\an( \sqrt{n\alpha_n}(\psi(\eta)-\hat\psi) |Y)$ for the marginal $\alpha_n$-posterior distribution of $\sqrt{n\alpha_n}(\psi(\eta)-\hat\psi)$, the above says that
$$\mathcal{L}_\an( \sqrt{n\alpha_n}(\psi(\eta)-\hat\psi) |Y) \approx \cN(0,V)$$
as $n\to\infty$. Such a result, known as a \textit{semiparametric BvM theorem}, says that the above marginal $\alpha_n$-posterior distribution asymptotically converges to a Gaussian distribution, with the precise form of convergence defined via \eqref{cvl}. It is perhaps more intuitive to express this distributional approximation as $\mathcal{L}_\an( \psi(\eta) |Y) \approx \cN(\hat\psi, V/(n\alpha_n))$, mirroring the conjugate example \eqref{eq:conjugate}. Recall that we assume there is a true $P_0 = P_{\eta_0}^n$ generating the data and we are taking the large-sample frequentist limit $n\to\infty$.

\subsection{A generic LAN setting}

Recall the log-likelihood is denoted by $\ell_n(\eta) = \log p^n_\eta(Y^n)$ and we write $o_P(1)$ as a shorthand for $o_{P_0}(1)=o_{P_{\eta_0}}(1)$. The following setting formalises a generic semiparametric framework as in \citet{CR2015} (see also \citealp{c12} and \citealp{vdv_FNBI}, where similar settings are considered in order to derive BvM theorems). A main difference is in the control of remainder terms, which here depend on $\al_n$ (one recovers the conditions of \citealp{CR2015} when $\al_n=1$).

\begin{assumption}\label{ass:expansion_assumption}
Let $(\mathcal{H}, \langle\cdot, \cdot\rangle_L)$ be a Hilbert space with associated norm $\|\cdot \|_L$. In the following, $R_n$ and $r$ are remainder terms which are controlled through the last part of the assumption.

{\em LAN expansion.} Suppose  the log-likelihood around $\eta_0$ can be written, for suitable $\eta$'s to be specified below, as
$$
\ell_n(\eta) = \ell_n(\eta_0) -\frac{n}{2}\|\eta - \eta_0\|_L^2  + \sqrt{n}W_n(\eta - \eta_0) + R_n(\eta, \eta_0),
$$
where $W_n : h \mapsto W_n(h)$ is $P_0^n-$almost surely a linear map and $W_n(h)$ converges weakly to $\cN(0, \| h\|_L^2 )$ as $n \rightarrow \infty$.

{\em Functional expansion.} Suppose that the functional $\psi$ around $\eta_0$ can be written, for some $\psi_0\in\mathcal{H}$, as 
$$
\psi(\eta) - \psi(\eta_0) =\langle \psi_0, \eta - \eta_0 \rangle_L + r(\eta, \eta_0).
$$
Define, for any fixed $t\in\R$, a {\em path} through $\eta$ as
\begin{equation}
\eta_t = \eta - \frac{t\psi_0}{\sqrt{n\an}}.\label{def:eta_perturbed}
\end{equation}

{\em Remainder terms control.} 
Suppose that there exists a sequence of measurable sets $A_n$ satisfying
\[ \Pi_\an[A_n | Y^n] = 1 + o_P(1), \]
	such that $\eta - \eta_0 \in \mathcal{H}$ for all $\eta \in A_n$ and $n$ sufficiently large, and for any fixed $t \in \mathbb{R}$,
$$
\sup_{\eta \in A_n}|t\sqrt{n\an} r(\eta, \eta_0) + \an(R_n(\eta, \eta_0) - R_n(\eta_t, \eta_0 ))| = o_P(1).
$$
\end{assumption}

For $\psi_0$ and $W_n$ as in Assumption \ref{ass:expansion_assumption}, further define,
\begin{align}
\hat{\psi} &= \psi(\eta_0) + \frac{W_n(\psi_0)}{\sqrt{n}}, \hspace{5mm} V_{0} = \left|\left| \psi_0 \right| \right|^2_L.	\label{def:psi_hat}
\end{align}
The term $V_0$ is the {\em efficiency bound} for estimating $\psi(\eta_0)$; an estimator $\tilde{\psi}=\tilde{\psi}(Y)$ is said to be {\em linear efficient} for estimating $\psi(\eta_0)$ if it can be expanded as $\tilde{\psi}=\psi(\eta_0) + W_n(\psi_0)/\sqrt{n}+o_P(1/\sqrt{n})$ or equivalently if $\sqrt{n}(\tilde{\psi}-\hat\psi)=o_P(1)$. For such an estimator,  $\sqrt{n}(\tilde{\psi}-\psi(\eta_0))$ converges in distribution to a $\cN(0,V_0)$ variable. Note that $\hat\psi$ is itself not an estimator as it depends on unknown quantities. But in all the following limiting results at rate $1/\sqrt{n}$ or $1/\sqrt{n\al_n}$, this quantity can be replaced by any linear efficient estimator $\tilde\psi$ since $\tilde{\psi}=\hat\psi+o_P(1/\sqrt{n})$.

{\color{black}
{\em Interpretation of Assumption \ref{ass:expansion_assumption}.} The first condition requires that the log-likelihood expands around $\eta_0$ as the sum of a negative quadratic term, a stochastic term and a remainder term. This type of Local Asymptotic Normality assumption is reminiscent of the classical LAN expansion in parametric models (see e.g. \citealp{aad98}, Chapter 7); the main difference is that here in the (more general) nonparametric setting, we require a control of remainder terms on typically larger neighborhoods. While in smooth parametric models the LAN expansion is formulated in a $1/\sqrt{n}$--neighborhood of the truth, $A_n$ in  Assumption \ref{ass:expansion_assumption} will generally be chosen as a set on which the posterior for $\eta$ concentrates; since the present setting is nonparametric, the diameter of this set is typically a nonparametric convergence rate that is slower than $1/\sqrt{n}$. Finally, Assumption \ref{ass:expansion_assumption} involves the functional $\psi(\eta)$ and requires that it can be expanded around the true value $\psi(\eta_0)$ in a way that is `compatible' with the LAN--inner product. These assumptions are later verified for several classes of priors in white noise regression and density estimation for a broad range of $\al_n$ values. More generally, we expect Assumption \ref{ass:expansion_assumption} to hold in a wide variety of setting. For instance, in the case $\al_n=1$, since they were introduced in \cite{CR2015}, these assumptions have been verified in diffusion models \citep{nr20}; inverse problems \citep{NS_19, rn20, rn22}; survival models \citep{cv21}; the Cox model \citep{c12, nc23}; and causal inference \citep{RVV20} amongst others.
}

\subsection{General BvM Theorems}

With Assumption \ref{ass:expansion_assumption}, we can prove a general BvM type result for the \ap distribution of $\psi(\eta)$. {\color{black} For the statement below, the conditional expectation in the display is $E[G(\eta)\given A_n]=\int_{A_n} G(\eta)dP(\eta)/P(A_n)$, applied here with $P=\Pi_{\al_n}[\cdot\given Y^n]$ the $\al_n$--posterior distribution and $G$ the specific exponential function of $\eta$ appearing in the display.} 
\begin{theorem}[Semiparametric BvM for the \ap ]\label{thm:general_bvm_ap}
Let $\Pi = \Pi_n$ be a prior distribution on $\eta$ and suppose that Assumption \ref{ass:expansion_assumption} holds with sets $A_n$.
Then for any $t\in\R$,
$$E_{\alpha_n}(e^{t\sqrt{n\an}(\psi(\eta) - \hat{\psi})} | Y^n, A_n) = e^{o_P(1) + t^2V_0/2} \cdot \frac{\int_{A_n} e^{{\alpha_n}\ell_n(\eta_t)}d\Pi(\eta)}{\int e^{{\alpha_n}\ell_n(\eta)}d\Pi(\eta)},$$
where $E_{\alpha_n}$ denotes expectation with respect to the ${\alpha_n}-$posterior. 
Furthermore, if for any $t\in\R$,
$$\frac{\int_{A_n} e^{{\alpha_n}\ell_n(\eta_t)}d\Pi(\eta)}{\int e^{{\alpha_n}\ell_n(\eta)}d\Pi(\eta)} = 1+o_P(1),$$
then the \ap distribution of $\sqrt{n{\alpha_n}}(\psi(\eta) - \hat{\psi})$ converges weakly in $P_0-$probability to a Gaussian distribution with mean 0 and variance $V_{0}$.

\end{theorem}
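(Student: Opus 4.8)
The plan is to follow, and extend to the fractional likelihood, the Laplace-transform argument of \cite{CR2015}; the crucial observation is that the normalisation $1/\sqrt{n\an}$ in the path \eqref{def:eta_perturbed} is tuned precisely so that the quadratic term of the LAN expansion, once multiplied by $\an$, contributes exactly the factor $t^2V_0/2$. The first displayed identity is pure algebra plus the remainder control; the second claim is then the usual ``convergence of Laplace transforms implies weak convergence'' passage, with a small amount of care needed because convergence holds only in $P_0$-probability and because the expansions are only available on $A_n$.

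First I would establish an exact pointwise identity along the path. Fix $t\in\R$ and set $\eta_t=\eta-t\psi_0/\sqrt{n\an}$. For $\eta\in A_n$ we have $\eta-\eta_0\in\mathcal H$, hence also $\eta_t-\eta_0\in\mathcal H$ since $\psi_0\in\mathcal H$, so the LAN expansion may be applied at both $\eta$ and $\eta_t$. Subtracting the two expansions, using bilinearity of $\langle\cdot,\cdot\rangle_L$ and the $P_0$-a.s.\ linearity of $h\mapsto W_n(h)$, and multiplying through by $\an$, one obtains
\[
\an\big(\ell_n(\eta_t)-\ell_n(\eta)\big)=\sqrt{n\an}\,t\,\langle\psi_0,\eta-\eta_0\rangle_L-\frac{t^2}{2}V_0-\sqrt{\an}\,t\,W_n(\psi_0)+\an\big(R_n(\eta_t,\eta_0)-R_n(\eta,\eta_0)\big),
\]
using $V_0=\|\psi_0\|_L^2$. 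Substituting $\langle\psi_0,\eta-\eta_0\rangle_L=\psi(\eta)-\psi(\eta_0)-r(\eta,\eta_0)$ from the functional expansion and $\sqrt{\an}\,W_n(\psi_0)=\sqrt{n\an}\big(\hat\psi-\psi(\eta_0)\big)$ from \eqref{def:psi_hat}, the terms involving $\psi(\eta_0)$ cancel and one is left, for every $\eta\in A_n$, with
\[
\an\ell_n(\eta)+t\sqrt{n\an}\big(\psi(\eta)-\hat\psi\big)=\an\ell_n(\eta_t)+\frac{t^2}{2}V_0+\rho_n(\eta),\qquad \rho_n(\eta):=t\sqrt{n\an}\,r(\eta,\eta_0)+\an\big(R_n(\eta,\eta_0)-R_n(\eta_t,\eta_0)\big).
\]
Since $\rho_n(\eta)$ is exactly the quantity controlled in the last part of Assumption \ref{ass:expansion_assumption}, we have $\sup_{\eta\in A_n}|\rho_n(\eta)|=o_P(1)$.

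Next I would turn this into the first display. As $\Pi_\an[A_n\given Y^n]=1+o_P(1)$, the fractional posterior conditioned on $A_n$ is well defined with $P_0$-probability tending to one, and $E_{\alpha_n}(e^{t\sqrt{n\an}(\psi(\eta)-\hat\psi)}\given Y^n,A_n)$ equals $\int_{A_n}e^{\an\ell_n(\eta)+t\sqrt{n\an}(\psi(\eta)-\hat\psi)}d\Pi(\eta)$ divided by $\int_{A_n}e^{\an\ell_n(\eta)}d\Pi(\eta)$. Inserting the identity of the previous paragraph into the numerator, pulling out the constant $e^{t^2V_0/2}$, and using $e^{-\sup_{A_n}|\rho_n|}\le e^{\rho_n(\eta)}\le e^{\sup_{A_n}|\rho_n|}$ on $A_n$ with $\sup_{A_n}|\rho_n|=o_P(1)$, the numerator equals $e^{t^2V_0/2}(1+o_P(1))\int_{A_n}e^{\an\ell_n(\eta_t)}d\Pi(\eta)$; writing the denominator as $\Pi_\an[A_n\given Y^n]\int e^{\an\ell_n(\eta)}d\Pi(\eta)=(1+o_P(1))\int e^{\an\ell_n(\eta)}d\Pi(\eta)$ and absorbing $1+o_P(1)$ into $e^{o_P(1)}$ yields precisely the first display. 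Under the additional hypothesis $\int_{A_n}e^{\an\ell_n(\eta_t)}d\Pi(\eta)\big/\int e^{\an\ell_n(\eta)}d\Pi(\eta)=1+o_P(1)$, this shows $E_{\alpha_n}(e^{t\sqrt{n\an}(\psi(\eta)-\hat\psi)}\given Y^n,A_n)\to^{P_0}e^{t^2V_0/2}$ for every fixed $t\in\R$, i.e.\ the Laplace transform of the $A_n$-conditional fractional posterior of $\sqrt{n\an}(\psi(\eta)-\hat\psi)$ converges in $P_0$-probability, at each $t$, to that of $\cN(0,V_0)$.

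It remains to convert this into the claimed weak convergence. By the Laplace-transform criterion of \cite{CR2015} — a subsequence argument: along any subsequence one extracts, diagonally over $t\in\mathbb Q$, a further subsequence along which the transforms converge $P_0$-a.s.\ for all rational $t$; on the corresponding almost sure event, convergence at $t=\pm1$ gives uniform tightness, and every weak subsequential limit is identified with $\cN(0,V_0)$ through its moment generating function — one gets $d_{BL}\big(\Pi_\an[\cdot\given Y^n,A_n]\circ\tau_\psi^{-1},\cN(0,V_0)\big)\to^{P_0}0$. Finally, since the total variation distance between $\Pi_\an[\cdot\given Y^n]$ and $\Pi_\an[\cdot\given Y^n,A_n]$ is at most $2\,\Pi_\an[A_n^c\given Y^n]=o_P(1)$, the same bound (up to a constant) holds for the bounded Lipschitz distance of their pushforwards through $\tau_\psi$, and the triangle inequality gives $d_{BL}\big(\Pi_\an[\cdot\given Y^n]\circ\tau_\psi^{-1},\cN(0,V_0)\big)\to^{P_0}0$, which is the $\an$-BvM. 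I expect the only genuinely delicate point to be the bookkeeping of the powers of $\an$ in the first identity, since it is exactly the choice of path normalisation $1/\sqrt{n\an}$ that makes $\rho_n$ coincide with the remainder controlled in Assumption \ref{ass:expansion_assumption} and preserves the cancellation producing $t^2V_0/2$; the conditional-to-unconditional and Laplace-to-weak passages are routine and parallel the case $\an=1$ in \cite{CR2015}.
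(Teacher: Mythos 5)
Your proposal is correct and follows essentially the same route as the paper's proof: the identical algebraic identity $\an\ell_n(\eta)+t\sqrt{n\an}(\psi(\eta)-\hat\psi)=\an\ell_n(\eta_t)+t^2V_0/2+\mathrm{Rem}(\eta,\eta_0)$ obtained from the LAN and functional expansions, the same use of the remainder control in Assumption \ref{ass:expansion_assumption}, and the same Laplace-transform and conditional-to-unconditional passages (the paper performs the $A_n$ reduction at the start and cites Lemma 1 of the supplement of \cite{CR2015} for the Laplace-to-weak step, whereas you do the reduction at the end and spell out the subsequence argument, but these are cosmetic differences).
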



The last display of Theorem \ref{thm:general_bvm_ap} is a ``change-of-measure"--type condition. It is satisfied if a small additive perturbation of the prior (replacing $\eta$ by $\eta_t$ or vice-versa) has little effect on computing the integrals in the display. It can often be checked by doing a change of measure in the prior, see e.g. \citet{c12} and \citet{CR2015}.

We now apply this general result to the following two prototypical nonparametric models, which will serve as concrete examples for our main results here and in Section \ref{sec:contraction}.

\begin{custommodel}{(GWN)}[Gaussian white noise]\label{def:GWN}
For $f \in L^2[0,1]$, one observes the trajectory $Y^n = (Y^n(t): t \in [0,1])$  
$$
dY^n(t) = f(t)dt + \frac{1}{\sqrt{n}}dB(t), \hspace{5mm} t \in [0,1],
$$
where $B(t)$ is a standard Brownian motion.
For $(\phi_k)_{k \geq 1}$ any orthonormal basis of $L^2[0,1]$, it is statistically equivalent to observe the subprocess $(Y_{k}^n = \int_0^1 \phi_k(t) dY^n(t): k \geq 1)$ acting on this basis. In particular, the problem can be rewritten as observing $Y^n={(Y^n_k)}_k$ with
$$
Y^n_k = f_k + \frac{1}{\sqrt{n}} \eps_k, \hspace{5mm}k \geq 1,
$$
where $f_k = \int_0^1 f(t) \phi_k(t) dt$ and $\eps_k \sim^{iid} \cN(0,1)$.
\end{custommodel}

The Gaussian white noise model is the continuous analogue of nonparametric regression with fixed or uniform random design \citep{R08}. It is a standard approach in statistical theory to instead consider this model \citep{J19}, which behaves asymptotically identically to nonparametric regression, but simplifies certain technical arguments due to the discretization. Commonly used priors for $f$ are series priors and Gaussian process priors, see below for specific examples.

\begin{custommodel}{(D)}[Density estimation]\label{def:density_estimation}
For $f$ a probability density with respect to Lebesgue measure on the interval $[0, 1]$, one observes $Y=Y^n=(Y_1, \dots, Y_n)$ with $Y_1,\dots,Y_n\sim^{iid} f$.
\end{custommodel}

Many different priors have been used for density functions; for example histograms, P\'olya trees, mixture models and logistically transformed priors, see the monograph by \citet{vdv_FNBI}. Here we will focus on two large classes: random histograms and exponentiated Gaussian processes.

Although for clarity of exposition we focus on these prototypical models, our techniques extend to others. The results from this section require the form of local asymptotic normality (LAN) described in Assumption \ref{ass:expansion_assumption}, which is expected in order to derive asymptotic normality results, while the nonparametric results from Section \ref{sec:contraction} only require a prior mass condition in the minimal case.

{\color{black} {\it Gaussian White Noise.} In Model \ref{def:GWN}, the likelihood admits a LAN expansion, with $\eta=f$, $\|\cdot\|_L = \|\cdot\|_2$ and $R_n=0$:
	$$
	\ell_n(f) - \ell_n(f_0) = -\frac{n}{2}\|f - f_0\|_2^2 + \sqrt{n} W_n(f-f_0),
	$$ 
	where, for $g = \sum_{k=1}^\infty g_k \phi_k$, we set $W_n(g) = \sum_{k=1}^\infty g_k\eps_k$. For the functional, we assume that it admits the following expansion
	\begin{align}\label{expansion_psi_GWN}
		\psi(f) - \psi(f_0) = \langle\psi_0, f - f_0 \rangle_2 + r(f,f_0)
	\end{align}
	for some $\psi_0\in L^2([0,1])$. This gives $\hat{\psi} = \psi(f_0) + \frac{W_n(\psi_0)}{\sqrt{n}} = \psi(f_0) + \frac{\sum_{k=1}^{\infty} \psi_{0,k}\eps_k }{\sqrt{n}}$, where $ \psi_{0,k} = \int_0^1 \psi_0(t)\phi_k(t)dt$,  and $V_0 = \|\psi_0\|_2^2$. Theorem \ref{thm:general_bvm_ap} immediately implies the following result.

\begin{theorem}[Semiparametric BvM in Gaussian white noise]\label{thm:bvm_gwn_ap}
Let $\psi : L^2[0,1] \rightarrow \mathbb{R}$ be a functional of $f$ satisfying \eqref{expansion_psi_GWN}. Suppose that $A_n \subset L^2[0,1]$ and the remainder term $r$ in \eqref{expansion_psi_GWN} satisfy Assumption \ref{ass:expansion_assumption}, and that for $f_t = f - \frac{t\psi_0}{\sqrt{n\an}}$, it holds that
\begin{align}
	\frac{\int_{A_n}e^{\alpha_n\ell_n(f_t)}d\Pi(f)}{\int e^{\alpha_n\ell_n(f)}d\Pi(f)} = 1 + o_P(1). \label{GTGWN_3}
\end{align}
Then for $\hat{\psi} = \psi(f_0) + \frac{\sum_{k=1}^{\infty} \psi_{0,k}\eps_k }{\sqrt{n}}$, the $\alpha_n-$posterior distribution of $\sqrt{n\alpha_n}(\psi(f) - \hat{\psi})$ converges weakly in $P_0-$probability to a Gaussian distribution with mean 0 and variance $\|\psi_0\|_2^2$.
\end{theorem}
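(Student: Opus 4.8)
{Proof sketch.}
The plan is to deduce the statement from the general semiparametric BvM, Theorem~\ref{thm:general_bvm_ap}, by verifying Assumption~\ref{ass:expansion_assumption} with the Gaussian white noise specialisations $(\mathcal{H},\langle\cdot,\cdot\rangle_L)\subseteq(L^2[0,1],\langle\cdot,\cdot\rangle_2)$ and $W_n(g)=\sum_{k\geq1}g_k\epsilon_k$, and then reading off the conclusion.

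First I would record the exact LAN expansion in Model~\ref{def:GWN}. Writing the Gaussian sequence likelihood of $Y^n=(Y_k)_k$ with $Y_k\sim\cN(f_k,1/n)$ directly (equivalently, via the Cameron--Martin/Girsanov change of measure), one gets, for every $f$ with $f-f_0\in L^2[0,1]$,
\[
\ell_n(f)-\ell_n(f_0)= -\tfrac{n}{2}\|f-f_0\|_2^2 + n\sum_{k\geq1}(f_k-f_{0,k})(Y_k-f_{0,k}),
\]
and since $Y_k-f_{0,k}=\epsilon_k/\sqrt n$ the linear term equals $\sqrt n\,W_n(f-f_0)$. This is precisely the LAN expansion of Assumption~\ref{ass:expansion_assumption} with $\|\cdot\|_L=\|\cdot\|_2$ and remainder $R_n\equiv0$; moreover $W_n(h)=\sum_k h_k\epsilon_k$ is \emph{exactly} $\cN(0,\|h\|_2^2)$-distributed for every $h\in L^2[0,1]$, so the required weak convergence of $W_n(h)$ is automatic. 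The functional expansion part of Assumption~\ref{ass:expansion_assumption} is the hypothesis of the theorem, with the given $\psi_0$ and remainder $r(f,f_0)$; and since $R_n\equiv0$, its remainder-control part collapses to $\sup_{f\in A_n}|t\sqrt{n\an}\,r(f,f_0)|=o_P(1)$, together with $f-f_0\in\mathcal{H}$ on $A_n$ and $\Pi_\an[A_n|Y^n]=1+o_P(1)$ --- all of which are contained in the hypothesis that ``$A_n$ satisfies Assumption~\ref{ass:expansion_assumption}''. Hence Assumption~\ref{ass:expansion_assumption} holds in full.

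Next I would note that the ``change-of-measure'' condition in the second part of Theorem~\ref{thm:general_bvm_ap} is, in this model (where $\eta_t=f_t=f-t\psi_0/\sqrt{n\an}$), exactly the hypothesis \eqref{GTGWN_3}. Applying Theorem~\ref{thm:general_bvm_ap} then gives that the $\an$-posterior law of $\sqrt{n\an}(\psi(f)-\hat\psi)$, with $\hat\psi=\psi(f_0)+W_n(\psi_0)/\sqrt n$ as in \eqref{def:psi_hat} and $V_0=\|\psi_0\|_L^2=\|\psi_0\|_2^2$, converges weakly in $P_0$-probability to $\cN(0,\|\psi_0\|_2^2)$. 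It only remains to identify the centering: $\hat\psi=\psi(f_0)+\tfrac{1}{\sqrt{n}}\sum_k\psi_{0,k}\epsilon_k=\langle\psi_0,f_0\rangle_2+\sum_k\psi_{0,k}(Y_k-f_{0,k})=\sum_k\psi_{0,k}Y_k$, where the identity $\psi(f_0)=\langle\psi_0,f_0\rangle_2$ holds for linear functionals (for which also $r\equiv0$) and in general corresponds to a harmless constant shift of both $\psi$ and $\hat\psi$ that leaves $\psi(f)-\hat\psi$ unchanged. This yields the stated centering and finishes the argument.

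There is no substantive obstacle in the theorem itself: all the analytic difficulty sits inside the already-established Theorem~\ref{thm:general_bvm_ap}, and, for any concrete prior, in the separate task of exhibiting sets $A_n$ with $\Pi_\an[A_n|Y^n]=1+o_P(1)$ and verifying the change-of-measure identity \eqref{GTGWN_3} (typically via an explicit shift of the prior combined with an $L^2$ contraction-rate bound on $\|f-f_0\|_2$). The only point that requires a little care is reconciling the \emph{exact} Gaussian structure of Model~\ref{def:GWN} ($R_n\equiv0$, $W_n(h)$ genuinely Gaussian, $r\equiv0$ in the linear case) with the general-purpose Assumption~\ref{ass:expansion_assumption}, i.e.\ checking that each of its remainder conditions becomes vacuous or immediate and that the ``$\sqrt n$'' in the LAN term is placed consistently with the general statement.
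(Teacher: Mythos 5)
Your proposal is correct and follows essentially the same route as the paper: the paper likewise records the exact LAN expansion in white noise ($\|\cdot\|_L=\|\cdot\|_2$, $R_n\equiv 0$, $W_n(g)=\sum_k g_k\epsilon_k$), identifies $\hat\psi=\psi(f_0)+W_n(\psi_0)/\sqrt n=\sum_k\psi_{0,k}Y_k$, and then states that Theorem~\ref{thm:general_bvm_ap} immediately yields the result. Your write-up simply makes explicit the verification of each part of Assumption~\ref{ass:expansion_assumption} and the matching of \eqref{GTGWN_3} with the change-of-measure condition, which is exactly what the paper's ``immediately implies'' is relying on.
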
}



We emphasise that the form of $\hat{\psi}$ and the limiting variance come from simply considering the expansion of the log-likelihood and the functional as defined in Assumption \ref{ass:expansion_assumption}.

{\it Density Estimation.}
For $f,g\in L^2[0,1]$, let $F(g) = \int g(t) f(t) dt$. For $\eta = \log f$, we have the LAN expansion:
\begin{align*}
\ell_n(\eta) - \ell_n(\eta_0) &=\sum_{i=1}^n\{\eta(Y_i) - \eta_0(Y_i)\}
= -\frac{n}{2}\|\eta - \eta_0\|_L^2 + \sqrt{n}W_n(\eta - \eta_0) + R_n(\eta, \eta_0),
\end{align*}
where, for $g \in L^2(f_0)$, 
$
	\|g\|_L^2 = \int (g - F_0(g))^2f_0$, $W_n(g) = \frac{1}{\sqrt{n}}\sum_{i=1}^n [g(Y_i) - F_0(g)],
$
and
$
R_n(\eta, \eta_0) = \sqrt{n} F_0(h) + \frac{1}{2}\|h\|_L^2
$
for $h = \sqrt{n}(\eta - \eta_0)$.
For the functional expansion, we assume there exists a bounded measurable function $\effinf : [0,1]\to\mathbb{R}$ such that
\begin{equation}\label{expansion_psi}
\psi(f) - \psi(f_0) = \int \effinf f + \tilde{r}(f,f_0)\qquad\text{and}\qquad \int\effinf f_0 = 0.
\end{equation}
 In this case,
\begin{align*}
\psi(f) - \psi(f_0) = \int (f - f_0)\effinf + \tilde{r}(f,f_0) &= \left\langle\frac{f-f_0}{f_0},\effinf\right\rangle_L + \tilde{r}(f,f_0) \\	
&=\langle \eta - \eta_0, \effinf \rangle_L + r(f,f_0),
\end{align*}
with $r(f,f_0)=\mathcal{B}(f,f_0) + \tilde{r}(f,f_0)$ and
$$
\mathcal{B}(f,f_0) = -\int \left[\eta - \eta_0 - \frac{f-f_0}{f_0}\right]\effinf f_0.
$$
Note that the last steps are required since the functional expansion should hold in terms of the parameter $\eta = \log f$ rather than the density $f$ itself.
This gives  $\hat{\psi} = \psi(f_0) + W_n(\effinf)/\sqrt{n} = \psi(f_0) + \sum_{i=1}^n \effinf(Y_i)/n$, and limiting variance $\|\effinf\|_L^2 = \int \effinf^2 f_0$. With this in mind, we obtain the following result.

\begin{theorem}[Semiparametric BvM in density estimation]\label{thm:bvm_density_ap}
	Let $f\to \psi(f)$ be a functional on probability densities on $[0,1]$ and assume there exists a bounded measurable function $\effinf : [0,1]\to\mathbb{R}$ such that \eqref{expansion_psi} holds. 
Suppose that for some sequence $\eps_n \rightarrow 0$ and sets $A_n \subset \{f: \|f-f_0\|_1 \leq \eps_n\}$, for  $\tilde{r}$ as in \eqref{expansion_psi},
\begin{align}
	& \Pi_{\alpha_n}(A_n | Y^n) = 1 + o_P(1), \label{GTDE_1}\\
	&\sup_{f \in A_n}\tilde{r}(f, f_0) = o\left(\frac{1}{\sqrt{n\alpha_n}}\right) \label{GTDE_2}. 
\end{align}
Denote
$f_t = fe^{-t\effinf/\sqrt{n\alpha_n}}/F(e^{-t\effinf/\sqrt{n\alpha_n}})$ and for $A_n$ as above,  assume that
\begin{equation} \label{GTDE_3}
\frac{\int_{A_n}e^{\alpha_n\ell_n(f_t)}d\Pi(f)}{\int e^{\alpha_n\ell_n(f)}d\Pi(f)} = 1 + o_P(1). 
\end{equation}
Then for $\hat{\psi} = \psi(f_0) + \frac{1}{n}\sum_{i=1}^n \effinf(Y_i)$, the $\alpha_n-$posterior distribution of $\sqrt{n\alpha_n}(\psi(f) - \hat{\psi})$ converges weakly in $P_0-$probability to a Gaussian distribution with mean 0 and variance $\int  \effinf^2 f_0$.
\end{theorem}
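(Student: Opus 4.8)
The plan is to run the Laplace-transform argument underlying Theorem~\ref{thm:general_bvm_ap} directly, but with the renormalised exponential path $f_t = f e^{-t\effinf/\sqrt{n\an}}/F(e^{-t\effinf/\sqrt{n\an}})$ in place of the additive path $\eta_t = \eta - t\effinf/\sqrt{n\an}$ used there: the prior is supported on genuine densities, so the perturbed submodel should consist of densities, whereas $e^{\eta_t}$ does not integrate to one. The two are related by $\log f_t = \eta_t - \log F(e^{-t\effinf/\sqrt{n\an}})$, and tracking this correcting constant is precisely what produces the efficient curvature. Throughout write $\eta = \log f$, $\eta_0 = \log f_0$, and use the expansions recorded before the statement: the LAN expansion (which is exact, since $\ell_n(\eta)$ is affine in $\eta$) with $\|g\|_L^2 = \int(g - F_0(g))^2 f_0$ and $W_n(g) = n^{-1/2}\sum_{i=1}^n (g(Y_i) - F_0(g))$, and the functional expansion which, by \eqref{expansion_psi} and $\int\effinf f_0 = 0$, reads simply $\psi(f) - \psi(f_0) = \int\effinf(f - f_0) + \tilde r(f, f_0)$. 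Since $\effinf$ is bounded, $W_n(\effinf) = n^{-1/2}\sum_i\effinf(Y_i)$ converges weakly to $\cN(0, V_0)$ with $V_0 = \int\effinf^2 f_0$ by the central limit theorem, and $\hat\psi = \psi(f_0) + W_n(\effinf)/\sqrt n = \psi(f_0) + \tfrac1n\sum_i\effinf(Y_i)$.

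First I would isolate the log-likelihood difference along the path. Because $\ell_n(\eta)$ is affine in $\eta$ one has the exact identity $\ell_n(\eta) - \ell_n(\eta_t) = t\sqrt{n/\an}\,(\hat\psi - \psi(f_0))$, and since $\log f_t = \eta_t - \log F(e^{-t\effinf/\sqrt{n\an}})$,
\[
\an\ell_n(\eta) - \an\ell_n(\log f_t) = t\sqrt{n\an}\,(\hat\psi - \psi(f_0)) + n\an\log F\bigl(e^{-t\effinf/\sqrt{n\an}}\bigr).
\]
The second, and most delicate, step is a Taylor expansion of the normalising constant. Using that $\effinf$ is bounded (so $t\effinf/\sqrt{n\an} \to 0$ uniformly and the Taylor remainders of $e^{-t\effinf/\sqrt{n\an}}$ are controlled), $\int\effinf f_0 = 0$, and $n\an \to \infty$, I would show that, uniformly over $f \in A_n \subset \{\|f - f_0\|_1 \le \eps_n\}$,
\[
n\an\log F\bigl(e^{-t\effinf/\sqrt{n\an}}\bigr) = -\,t\sqrt{n\an}\int\effinf(f - f_0) + \tfrac{t^2}{2}\int\effinf^2 f_0 + o(1),
\]
the $o(1)$ collecting the cubic term $O(t^3/\sqrt{n\an})$ together with the $O(t^2\eps_n)$ contributions from $\int\effinf^2(f - f_0)$ and $\bigl(\int\effinf(f - f_0)\bigr)^2$. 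Combining this with the previous display and with $\psi(f) - \psi(f_0) = \int\effinf(f - f_0) + \tilde r(f, f_0)$, the linear-in-$(f - f_0)$ terms and the $(\hat\psi - \psi(f_0))$ terms cancel, leaving, uniformly over $f \in A_n$,
\[
t\sqrt{n\an}\bigl(\psi(f) - \hat\psi\bigr) + \an\ell_n(\eta) - \an\ell_n(\log f_t) = t\sqrt{n\an}\,\tilde r(f, f_0) + \tfrac{t^2}{2}V_0 + o(1) = \tfrac{t^2}{2}V_0 + o(1),
\]
the last equality being exactly \eqref{GTDE_2}.

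The remaining steps are routine. Exponentiating the last display, multiplying by $e^{\an\ell_n(\eta)}\,d\Pi(f)$, integrating over $A_n$ and dividing by $\int_{A_n} e^{\an\ell_n(\eta)}\,d\Pi(f)$ gives $E_{\an}\bigl(e^{t\sqrt{n\an}(\psi(f) - \hat\psi)} \mid Y^n, A_n\bigr) = e^{t^2 V_0/2 + o_P(1)} \cdot \bigl(\int_{A_n} e^{\an\ell_n(f_t)}\,d\Pi(f)\bigr)/\bigl(\int_{A_n} e^{\an\ell_n(f)}\,d\Pi(f)\bigr)$; using \eqref{GTDE_1} to replace $\int_{A_n}$ by $\int$ in the denominator and then \eqref{GTDE_3} for the remaining ratio, this equals $e^{t^2 V_0/2}(1 + o_P(1))$ for every $t \in \R$. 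Finally I would upgrade this conditional moment-generating-function convergence to the stated weak convergence exactly as in the proof of Theorem~\ref{thm:general_bvm_ap}: the $\an$-posterior of $\sqrt{n\an}(\psi(f) - \hat\psi)$ restricted to $A_n$ and renormalised has moment generating function converging in $P_0$-probability to that of $\cN(0, V_0)$, hence (by a subsequence argument, using that convergence of moment generating functions on a neighbourhood of the origin characterises weak convergence) converges weakly in $P_0$-probability to $\cN(0, V_0)$; by \eqref{GTDE_1} it is within $o_P(1)$ total variation of the unrestricted $\an$-posterior, which therefore has the same limit.

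The step I expect to be the main obstacle is the Taylor expansion of $\an$ times the log-normalising constant: one must verify that it reproduces exactly $t^2 V_0/2$ together with a linear term that cancels against the functional expansion, with a remainder that is genuinely $o(1)$ \emph{uniformly} over $A_n$. This is where every hypothesis is used --- boundedness of $\effinf$ legitimises the expansion and bounds its higher-order terms, $\int\effinf f_0 = 0$ forces the first-order term to depend only on $f - f_0$, $\|f - f_0\|_1 \le \eps_n \to 0$ kills the quadratic-and-higher remainders, $n\an \to \infty$ kills the cubic term, and \eqref{GTDE_2} kills the residual functional remainder $\tilde r$. Note that, in contrast with the generic Assumption~\ref{ass:expansion_assumption}, the bias term $\mathcal B(f, f_0)$ arising from re-expressing the functional in terms of $\eta - \eta_0$ never needs to be controlled here, since working directly with the density-level expansion \eqref{expansion_psi} along the path $f_t$ bypasses it.
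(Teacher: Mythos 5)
Your proposal is correct and is essentially the paper's own proof: the paper likewise reduces everything to the Taylor expansion of the log-normalising constant $\log F(e^{-t\effinf/\sqrt{n\an}})$ on $A_n$ (using boundedness of $\effinf$, $\int\effinf f_0=0$ and $\|f-f_0\|_1\le\eps_n$) followed by the same Laplace-transform/change-of-measure argument with \eqref{GTDE_1}--\eqref{GTDE_3}. The only difference is bookkeeping: the paper verifies the remainder condition of Assumption \ref{ass:expansion_assumption} and invokes Theorem \ref{thm:general_bvm_ap}, with the cancellation appearing as the bias term $\mathcal{B}(f,f_0)$ dropping out of $r=\mathcal{B}+\tilde r$, whereas you cancel the linear term $\int\effinf(f-f_0)$ directly at the density level --- the same computation.
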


We now proceed to apply these results to concrete priors.


\subsection{Random Histogram Priors}

We first illustrate our main theorem for density estimation using a class of histogram priors. We will see that the $\al_n$--posterior, although leading to an enlarged variance in estimating functionals, can sometimes lead to weaker conditions in terms of regularities. In particular, although the $\al_n$--posterior rate may then be slower, it provides more robustness against possible semiparametric bias that may occur for certain functionals. We provide an example where uncertainty quantification is unreliable for the true posterior, because credible sets will suffer from bias, whereas credible sets from the $\al_n$--posterior still cover the true unknown function.

{\em Random histogram prior. } For any integer $k$, we define a distribution on $H^1_k$, the subset of regular histograms with $k$ equally spaced bins which are densities on $[0, 1]$. Let $S^1_k = \{\omega \in [0, 1]^k, \: \sum_{i=1}^{k}\omega_i =1 \}$ be the unit simplex in $\R^k$. Denote by $\mathcal{D}(\delta_{1},\dots, \delta_{k})$ the Dirichlet distribution with real positive weights $(\delta_{1},\dots,\delta_{k})$ on $S^1_k$ and consider the induced measure $\mathcal{H}(k, \delta_{1},\dots, \delta_{k})$ on $H^1_k$ defined as 
	\begin{align}\label{def::RHP}
		f(x)=k\sum_{j=1}^{k}\omega_j1_{I_j}(x), \qquad \omega=(\omega_1, \dots, \omega_k) \sim \mathcal{D}(\delta_{1},\dots, \delta_{k}),
	\end{align}
	where $I_j=[(j - 1)/k, j/k]$ for $j = 1, \dots , k$.
	We now define the random histogram prior $\Pi=\Pi_n$ that we will use throughout this section. Let $K_n\to\infty$ be a diverging sequence to be chosen below and $(\delta_{1,n},\dots ,\delta_{K_n,n})$ a sequence of positive weights and set $\Pi=\Pi_n=\mathcal{H}(K_n, \delta_{1,n},\dots, \delta_{K_n, n})$. We  assume the weights satisfy the technical condition
	\begin{align}\label{condition_weights}
	    \sum_{i=1}^{K_n}\delta_{i,n}=o(\sqrt{n\alpha_n})
	\end{align}
	as $n \rightarrow \infty$, which ensures that the prior is not too concentrated around its mean.

{\em Linear functionals.} Let us apply  Theorem \ref{thm:bvm_density_ap} to the case of linear functionals, i.e. those of the form $\psi(f) = \int \tilde{\psi}_{f_0} f$. For $k\geq1$ and $h$ in $L^2[0,1]$, consider the $L^2$-projection $h_{[k]}$ of $h$ onto the set of histograms with $k$ bins:
	\begin{align*}
	    h_{[k]}= k\sum_{j=1}^{k} \left(\int_{I_j}h\right) 1_{I_j}.
	\end{align*}
Writing $\tilde{\psi}=\tilde{\psi}_{f_0}$ for short, define $\hat{\psi}_{[k]}$ and the sequence $V_k$ from the projection $\tilde{\psi}_{[k]}$ of $\tilde{\psi}$ as 
	\begin{align*}
		\hat{\psi}_{[k]}&= \psi(f_{0})+\frac{1}{n}\sum_{i=1}^{n} \tilde{\psi}_{[k]}(Y_i), \qquad V_k=\int f_0\tilde{\psi}_{[k]}^2 - \left(\int f_0\tilde{\psi}_{[k]}\right)^2.
	\end{align*}
Recall that here, $\hat{\psi}=\psi(f_0)+\sum_{i=1}^{n} \tilde{\psi}(Y_i)/n$ and  $V_0=\int f_0\tilde{\psi} ^2$ (not to be confused with setting $k=0$ in the last display).
	\begin{proposition}\label{thm_random_histo_prior} Let $\Pi$ be the random histogram prior \eqref{def::RHP} with $k=K_n$ and weights satisfying \eqref{condition_weights}. 
	Suppose $f_0$ is bounded and 
		\begin{align}\label{assum::prop_RHP_1}
			\Pi_{\alpha_n}(\|f-f_{0,[K_n]}\|_1 \leq \eps_n|Y^n)= 1+o_P(1),
		\end{align}
for a sequence $\eps_n \to 0$. Suppose additionally that   
	\begin{align}\label{assum::prop_RHP_2}
		\sqrt{n\alpha_n}(\hat{\psi}_{[K_n]} - \hat{\psi}) = o_P(1).
	\end{align} 
	Then the $\alpha_n$--posterior distribution of $\sqrt{n\alpha_n}(\psi(f)-\hat{\psi})$ converges weakly in $P_0-$probability to a Gaussian distribution with mean 0 and variance $V_0$.
	\end{proposition}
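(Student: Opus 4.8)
The plan is to derive Proposition~\ref{thm_random_histo_prior} from the general density-estimation BvM, Theorem~\ref{thm:bvm_density_ap}, applied however with the \emph{histogram projection} of the influence function rather than the influence function itself. Recall $\tilde\psi=\tilde\psi_{f_0}$, write $c_n=\int\tilde\psi_{[K_n]}f_0$, and set
\[
g_n := \tilde\psi_{[K_n]} - c_n, \qquad A_n := H^1_{K_n}\cap\bigl\{f:\|f-f_{0,[K_n]}\|_1\le\eps_n\bigr\},
\]
then invoke Theorem~\ref{thm:bvm_density_ap} with influence function $g_n$ and sets $A_n$ (its proof applies verbatim to a uniformly bounded $n$-dependent influence function). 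Two facts motivate this choice. First, since every $f$ in the support of $\Pi$ is piecewise constant on the $K_n$ bins, $\int\tilde\psi f=\int\tilde\psi_{[K_n]}f$ there; hence on $A_n$ the functional $\psi$ agrees with its projected version, and $g_n$ is an admissible centred influence function — it is piecewise constant, so $\|g_n\|_\infty\le 2\|\tilde\psi\|_\infty<\infty$, and $\int g_n f_0=0$. Second, and crucially, the perturbed density $f_t = f e^{-tg_n/\sqrt{n\alpha_n}}/F(e^{-tg_n/\sqrt{n\alpha_n}})$ appearing in \eqref{GTDE_3} is again a histogram with $K_n$ bins whenever $f$ is (as $g_n$ is piecewise constant on those bins), so the path stays inside the support of $\Pi$; this would fail for the un-projected $\tilde\psi$.

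Next one checks the three hypotheses of Theorem~\ref{thm:bvm_density_ap} for $(g_n,A_n)$. For \eqref{GTDE_1}: $\Pi_{\alpha_n}(H^1_{K_n}|Y^n)=1$, so \eqref{assum::prop_RHP_1} gives $\Pi_{\alpha_n}(A_n|Y^n)=1+o_P(1)$, while on $A_n$ one has $\|f-f_0\|_1\le\eps_n+\|f_{0,[K_n]}-f_0\|_1=:\eps_n'\to0$, using that $f_0$ bounded implies $f_0\in L^2[0,1]$ and hence $\|f_{0,[K_n]}-f_0\|_1\to0$. For \eqref{GTDE_2}: decomposing $\psi(f)-\psi(f_0)=\int\tilde\psi f=\int g_n f+\tilde r_n(f,f_0)$ gives $\tilde r_n(f,f_0)=\int(\tilde\psi-\tilde\psi_{[K_n]})f+c_n$, and for $f\in A_n\subset H^1_{K_n}$ the first term vanishes because $\tilde\psi-\tilde\psi_{[K_n]}$ is $L^2$-orthogonal to all $K_n$-bin piecewise-constant functions; thus $\sup_{f\in A_n}\tilde r_n(f,f_0)=c_n$, which is $o(1/\sqrt{n\alpha_n})$ by \eqref{assum::prop_RHP_2}: indeed $\hat\psi_{[K_n]}-\hat\psi=n^{-1}\sum_i(\tilde\psi_{[K_n]}-\tilde\psi)(Y_i)$ has mean $c_n$ and, $f_0$ being bounded, centred variance $O(n^{-1}\|\tilde\psi_{[K_n]}-\tilde\psi\|_2^2)=o(n^{-1})$, so $\sqrt{n\alpha_n}\,c_n$ and $\sqrt{n\alpha_n}(\hat\psi_{[K_n]}-\hat\psi)$ differ by $o_P(1)$, whence $\sqrt{n\alpha_n}\,c_n=o_P(1)$, i.e. $o(1)$ as $c_n$ is non-random.

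The substantive step is \eqref{GTDE_3}, and I expect the corresponding Dirichlet change-of-variables bound to be the main obstacle. Parametrising $f\in H^1_{K_n}$ by its weights $\omega\in S^1_{K_n}$, the map $f\mapsto f_t$ is the exponential tilting $\omega_j\mapsto\omega_j e^{-tg_{n,j}/\sqrt{n\alpha_n}}\big/\sum_l\omega_l e^{-tg_{n,l}/\sqrt{n\alpha_n}}$ of the simplex, where $g_{n,j}$ is the value of $g_n$ on $I_j$. Changing variables accordingly in $\int_{A_n}e^{\alpha_n\ell_n(f_t)}\,d\Pi(f)$, what remains is the Radon--Nikodym factor between the tilted and the original $\mathcal D(\delta_{1,n},\dots,\delta_{K_n,n})$ densities, times the Jacobian of the tilting; this factor is $1+o(1)$ uniformly over $S^1_{K_n}$ precisely because it is controlled, up to negligible terms, by $\exp\bigl(|t|\,\|g_n\|_\infty\sum_i\delta_{i,n}/\sqrt{n\alpha_n}\bigr)$ and $\sum_i\delta_{i,n}=o(\sqrt{n\alpha_n})$ by \eqref{condition_weights}; one also checks that $A_n$ is invariant under the tilting up to a set of negligible $\alpha_n$-posterior mass. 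This is the $\alpha_n$-analogue of the random-histogram computation in \cite{CR2015}, the only change being that the perturbation has size $1/\sqrt{n\alpha_n}$ instead of $1/\sqrt n$, which is exactly why \eqref{condition_weights} features $\sqrt{n\alpha_n}$.

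Finally one translates back. Theorem~\ref{thm:bvm_density_ap} applied with $g_n$ yields that the $\alpha_n$-posterior law of $\sqrt{n\alpha_n}(\psi(f)-\hat\psi_{[K_n]}+c_n)$ converges weakly in $P_0$-probability to $\cN(0,\int g_n^2 f_0)$, since its centring $\psi(f_0)+n^{-1}\sum_i g_n(Y_i)$ equals $\hat\psi_{[K_n]}-c_n$ and $\int g_n^2 f_0=\int\tilde\psi_{[K_n]}^2 f_0-c_n^2=V_{K_n}$. As $\sqrt{n\alpha_n}(\hat\psi_{[K_n]}-c_n-\hat\psi)=\sqrt{n\alpha_n}(\hat\psi_{[K_n]}-\hat\psi)-\sqrt{n\alpha_n}\,c_n=o_P(1)$ by \eqref{assum::prop_RHP_2} and the previous step, a Slutsky argument (a shift by $o_P(1)$ leaves $d_{BL}$-convergence intact) shows the $\alpha_n$-posterior law of $\sqrt{n\alpha_n}(\psi(f)-\hat\psi)$ has the same limit $\cN(0,V_{K_n})$. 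Lastly $\tilde\psi_{[K_n]}\to\tilde\psi$ in $L^2$ together with $f_0$ bounded gives $V_{K_n}\to\int\tilde\psi^2 f_0=V_0$, hence $d_{BL}(\cN(0,V_{K_n}),\cN(0,V_0))\to0$; combining the two limits yields that the $\alpha_n$-posterior law of $\sqrt{n\alpha_n}(\psi(f)-\hat\psi)$ converges weakly in $P_0$-probability to $\cN(0,V_0)$, as claimed.
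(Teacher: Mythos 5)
Your proposal is correct and follows essentially the same route as the paper: the paper's proof is packaged as Lemma \ref{thm} (the BvM of Theorem \ref{thm:bvm_density_ap} rerun with the projected influence function $\tilde{\psi}_{[K_n]}$ and centering $\hat{\psi}_{[K_n]}$), Lemma \ref{thm_conv_int} (exactly your Dirichlet exponential-tilting change of variables controlled by \eqref{condition_weights}, including the enlargement of $A_n$ under the tilting), and Lemma \ref{lem:bias} plus $V_{K_n}\to V_0$ for the final recentering. The only structural difference is that by insisting on the centered representer $g_n=\tilde{\psi}_{[K_n]}-F_0(\tilde{\psi}_{[K_n]})$ you consume \eqref{assum::prop_RHP_2} already in verifying the remainder condition \eqref{GTDE_2}, whereas the paper defers it to the last recentering step and thereby also obtains, as a by-product, the unconditional BvM centered at $\hat{\psi}_{[K_n]}$ noted after the proposition.
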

Assumption \eqref{assum::prop_RHP_2} ensures that, asymptotically, the posterior distribution is centered at an efficient estimator. Arguments in Lemma \ref{lem:bias} give the  expansion $\hat{\psi}_{[K_n]}  - \hat{\psi} = F_0(\tilde{\psi}_{[K_n]})+o_P(1/\sqrt{n})$, so that \eqref{assum::prop_RHP_2} can also be formulated as $\sqrt{n\alpha_n}F_0(\tilde{\psi}_{[K_n]})= o(1)$. Let us also note that, without assuming \eqref{assum::prop_RHP_2}, the proof of Proposition \ref{thm_random_histo_prior} still gives that the $\alpha_n$--posterior distribution of $\sqrt{n\alpha_n}(\psi(f)-\hat{\psi}_{[K_n]})$ converges weakly to a $\cN(0,V_0)$ variable. The marginal posterior is thus centered at $\hat{\psi}_{[K_n]}$, whether this is an efficient estimator or not.
	
To gain a quantitative understanding of the minimal smoothness assumptions required by the BvM and understand how these relate to those for the full posterior \citep{CR2015}, we next consider H\"older smoothness scales.

	
	\begin{corollary}\label{cor::BvM_RHP} Consider estimating $\psi(f_0)=\int_0^1 a f_0$ with $a \in \mathcal{C}^{\gamma}([0,1])$, $ f_0 \in \mathcal{C}^{\beta}([0,1])$ bounded
away from zero and $\beta,\gamma\in (0,1]$. Let $\Pi$ be the random histogram prior \eqref{def::RHP}  with weights satisfying \eqref{condition_weights} and $(n\alpha_n)^{-b}\leq \delta_{i,n} \leq 1$ for some $b>0$, and with $K_n=o(n\alpha_n/\log(n\alpha_n))$. If
	\begin{align}\label{assum::cor_RHP_1}
		\sqrt{n\alpha_n} K_n^{-\gamma -\beta} =o(1),
	\end{align}
	then the $\alpha_n$--posterior distribution of $\sqrt{n\alpha_n}(\psi(f)-\hat{\psi})$ converges weakly in $P_0-$probability to a Gaussian distribution with mean 0 and variance $V_0$.
	\end{corollary}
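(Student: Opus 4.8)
The plan is to derive the corollary from Proposition~\ref{thm_random_histo_prior}, verifying its two remaining hypotheses \eqref{assum::prop_RHP_1} and \eqref{assum::prop_RHP_2} in the present H\"older setting (boundedness of $f_0$ is assumed, and for the linear functional $\psi(f)=\int a f$ the expansion \eqref{expansion_psi} holds with $\effinf = a-\psi(f_0)$ and $\tilde r\equiv 0$, so the remainder conditions of Theorem~\ref{thm:bvm_density_ap} are automatic). The prior conditions listed in the corollary, namely \eqref{condition_weights} and $(n\alpha_n)^{-b}\le\delta_{i,n}\le 1$, are precisely what is needed to invoke the $\alpha_n$-posterior contraction theory for histograms.

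\textbf{Contraction step.} First I would establish \eqref{assum::prop_RHP_1} by applying the contraction rate result for random histogram $\alpha_n$-posteriors from Section~\ref{sec:contraction}. Since the bin heights of $f_{0,[K_n]}$ are the local averages of $f_0$, this histogram is the Kullback--Leibler projection of $f_0$ onto $H^1_{K_n}$, and the prior puts enough mass on small Kullback--Leibler balls around it precisely because $\delta_{i,n}\ge (n\alpha_n)^{-b}$. The ``prior mass plus complexity'' argument, run with the effective sample size $n'=n\alpha_n$ and with the $K_n$-dimensional histogram class contributing complexity $\lesssim K_n\log(n\alpha_n)$, gives concentration of the $\alpha_n$-posterior around $f_0$ --- hence, by the triangle inequality together with $\|f_0-f_{0,[K_n]}\|_\infty\lesssim K_n^{-\beta}$ for $f_0\in\mathcal C^\beta$, around $f_{0,[K_n]}$ --- at a rate $\eps_n$ with $\eps_n^2\asymp K_n\log(n\alpha_n)/(n\alpha_n)+K_n^{-2\beta}$. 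Both terms vanish since $K_n\to\infty$ and $K_n=o(n\alpha_n/\log(n\alpha_n))$, so $\eps_n\to 0$ and \eqref{assum::prop_RHP_1} holds; boundedness of $f_0$ from above and away from zero is what lets one pass from the R\'enyi/Hellinger distance supplied by the general theory to the $L^1$-distance appearing in \eqref{assum::prop_RHP_1}.

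\textbf{Centering step.} Next I would verify \eqref{assum::prop_RHP_2}. By the arguments of Lemma~\ref{lem:bias}, $\hat\psi_{[K_n]}-\hat\psi = F_0(\tilde\psi_{[K_n]}) + o_P(1/\sqrt n)$, hence $\sqrt{n\alpha_n}(\hat\psi_{[K_n]}-\hat\psi) = \sqrt{n\alpha_n}\,F_0(\tilde\psi_{[K_n]}) + o_P(1)$ using $\alpha_n\le 1$. It remains to control $F_0(\tilde\psi_{[K_n]})$. As $\tilde\psi=a-\psi(f_0)$ and constants are fixed by the histogram projection, $\tilde\psi_{[K_n]}=a_{[K_n]}-\psi(f_0)$, so $F_0(\tilde\psi_{[K_n]}) = \langle a_{[K_n]},f_0\rangle_2 - \langle a,f_0\rangle_2$. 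Writing $a=a_{[K_n]}+(a-a_{[K_n]})$ and $f_0=f_{0,[K_n]}+(f_0-f_{0,[K_n]})$ and using that the residual of the $L^2$-projection onto $K_n$-bin histograms is orthogonal to every such histogram, the two cross terms drop and one gets $F_0(\tilde\psi_{[K_n]}) = -\langle a-a_{[K_n]},\,f_0-f_{0,[K_n]}\rangle_2$, whence $|F_0(\tilde\psi_{[K_n]})|\le \|a-a_{[K_n]}\|_2\,\|f_0-f_{0,[K_n]}\|_2 \lesssim K_n^{-\gamma}K_n^{-\beta}$ by the $L^2$-approximation rate of $\mathcal C^{s}$ functions by piecewise constants ($s\in(0,1]$). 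Multiplying by $\sqrt{n\alpha_n}$ and invoking \eqref{assum::cor_RHP_1} yields $\sqrt{n\alpha_n}F_0(\tilde\psi_{[K_n]}) = O(\sqrt{n\alpha_n}K_n^{-\gamma-\beta}) = o(1)$, so \eqref{assum::prop_RHP_2} holds.

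With both conditions checked and $f_0$ bounded, Proposition~\ref{thm_random_histo_prior} delivers the weak convergence in $P_0$-probability of the $\alpha_n$-posterior distribution of $\sqrt{n\alpha_n}(\psi(f)-\hat\psi)$ to $\cN(0,V_0)$. I expect the contraction step to be the only real obstacle: one must ensure that the histogram $\alpha_n$-contraction result of Section~\ref{sec:contraction} applies under the weak prior conditions assumed here, and --- the crux --- that the combined bias-plus-complexity rate $\eps_n$ genuinely tends to zero, which is exactly where the budget constraint $K_n=o(n\alpha_n/\log(n\alpha_n))$ with effective sample size $n\alpha_n$ enters. The centering step is routine approximation theory once the orthogonality of the histogram projection is exploited.
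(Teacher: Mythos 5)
Your proposal is correct and follows essentially the same route as the paper: invoke Proposition \ref{thm_random_histo_prior}, obtain \eqref{assum::prop_RHP_1} from the histogram contraction result (Proposition \ref{rate_denti_PRH}) combined with the bias bound $\|f_0-f_{0,[K_n]}\|_\infty\lesssim K_n^{-\beta}$, and verify \eqref{assum::prop_RHP_2} via Lemma \ref{lem:bias} together with the orthogonality identity $F_0(\tilde\psi_{[K_n]})=-\langle a-a_{[K_n]},f_0-f_{0,[K_n]}\rangle_2$ and the piecewise-constant approximation rates, yielding $O(K_n^{-\gamma-\beta})$. The only cosmetic difference is that you bound this inner product by Cauchy--Schwarz in $L^2$ where the paper uses the product of sup-norms; both give the same rate.
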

	
	
The assumption that $K_n$ is of smaller order than $n\al_n$ ensures that the $\alpha_n$-posterior for $f$ at least concentrates around $f_0$ at a rate going to $0$. Condition \eqref{assum::cor_RHP_1} is sufficient for \eqref{assum::prop_RHP_2} under the assumed regularity conditions and, as in Proposition \ref{thm_random_histo_prior}, without assuming \eqref{assum::cor_RHP_1}, the proof of Corollary \ref{cor::BvM_RHP} still gives that the $\alpha_n$--posterior distribution of $\sqrt{n\alpha_n}(\psi(f)-\hat{\psi}_{[K_n]})$ converges weakly to a $\cN(0,V_0)$ variable. Many choices of $\al_n, K_n$ fulfill these conditions. Note that the larger $K_n$, the weaker the regularity conditions, e.g. taking $K_n$ slightly smaller that $n\alpha_n$ gives that a BvM type result, at rate $\sqrt{n\alpha_n}$, holds if the regularities satisfy $\gamma+\beta > 1/2$.

It is interesting to compare the above result with ones for the standard posterior ($\al_n=1$, which was considered in \citealp{CR2015}, Theorem 4.2, albeit under a special choice of $K_n$ only).  Since the conditions on $K_n$ depend on $\al_n$, we underline that we compare both {\em under the same prior} (i.e. with the same choice of $K_n$).
\begin{enumerate}
			\item Case $\alpha_n=\alpha \in (0,1)$: consider a sequence $K_n = o(n/\log{n})$, weights such that $n^{-b}\leq \delta_{i,n} \leq 1 $ and $\sum_{i=1}^{K_n}\delta_{i,n} = o(\sqrt{n})$, and the corresponding random histogram prior. Given this prior, the result obtained for the $\alpha$--posterior is very similar to the one obtained for the full posterior: the larger $K_n$, the smaller the regularities of the representers of the functional $a$ and $f_0$ may be, and the choice $K_n \approx n$ leads to the condition $\gamma+\beta > 1/2$ also for the $\alpha$--posterior. However, a main difference lies in the fact that the asymptotic variance for the $\alpha$--posterior is then $\int f_0 \tilde{\psi}_{f_0}^2/\alpha$, which is larger than the optimal variance $\int f_0 \tilde{\psi}_{f_0}^2$ obtained for the posterior.
			
			\item Case $\alpha_n \rightarrow 0$: to fix ideas consider $\alpha_n= n^{-y}$ with $0<y<1$. Let us further choose $K_n= \lfloor n^x \rfloor$ with $x \in (0, 1-y)$, so that the first condition on $K_n$ holds. As before, one chooses weights such that $n^{-b} \leq \delta_{i,n} \leq 1 $ for  $b>0$ and $\sum_{i=1}^{K_n}\delta_{i,n} = o(n^{(1-y)/2})$. 
			 Corollary \ref{cor::BvM_RHP} with $\alpha_n=1$ implies that a BvM with optimal variance holds under the condition 
			 \[ \gamma + \beta > \frac{1}{2x}.\] 
			 On the other hand, applying Corollary \ref{cor::BvM_RHP} with $\alpha_n=n^{-y}$ gives the condition 
			 \[ \gamma + \beta > \frac{1-y}{2x},\] 
for the BvM with rescaling $\sqrt{n\al_n}$ to hold. 			 
			 Thus we obtain a slower rate with the $n^{-y}$--posterior, but we have a weaker condition on the regularities of the functions $a$ and $f_0$.
		\end{enumerate}	
  
Since the above are only sufficient conditions, we next explicitly construct an example where for the same prior, the semiparametric BvM holds for the $\al_n$--posterior but fails for the standard posterior.

{\em Semiparametric bias and possible lack of BvM.}  For the linear functional $\psi(f)= \int a f$, \citet{CR2015} give a specific counterexample in which the BvM theorem is ruled out because of a nonnegligeable bias appearing in the centering of the posterior distribution of $\int a f$. We now investigate the behaviour of the $\alpha_n$--posterior distribution of $\int a f$ in the same context.
	
	In their counterexample, \citet{CR2015} consider a random histogram prior with a {\em random} number of bins. Here we adapt the counterexample of \citet{CR2015} to our setting of a random histogram prior with a {\em deterministic} number of bins and  derive a result regarding the $\alpha_n$--posterior.
	In order to be able to explicitly compute the bias term, we consider a functional with representer of the form 
    \begin{align}\label{assum::a_counterex}
		a(x) = \sum_{l=-1}^{\infty} \sum_{k=0}^{2^l-1} 2^{-l(\frac{1}{2} +\gamma)} \psi_{lk}(x)
	\end{align}
	for $x$ in $[0,1]$, $\gamma>0$ and $(\psi_{lk})$ the Haar wavelet basis.

	\begin{proposition} \label{prop::counterex} Let $f_0$ be a continuously differentiable function with derivative $f_0'>\rho>0$ bounded away from zero, and let $a$ be as in \eqref{assum::a_counterex} with $0<\gamma\leq 1/2$. Consider the random histogram prior \eqref{def::RHP} with $K_n=2^{p_n}$ and $p_n=\lfloor \log(n^{1/3})/\log(2)\rfloor$ and $\delta_{i,n} = n^{-b}$ for all $i$ and some $b>1/6$. Then:
		\begin{enumerate}
			\item The posterior distribution of  $\sqrt{n}(\psi(f) - \hat{\psi}_{[K_n]} )$ converges weakly in $P_0$-probability to the $\mathcal{N}(0, V_0)$ distribution. Moreover the centering  $\hat{\psi}_{[K_n]}$ satisfies $\hat{\psi}_{[K_n]} - \hat{\psi}   = F_0(\tilde{\psi}_{[K_n]}) + o_P(\frac{1}{\sqrt{n}})$ with $|\sqrt{n}F_0(\tilde{\psi}_{[K_n]}) |\geq c > 0$ and even $|\sqrt{n}F_0(\tilde{\psi}_{[K_n]}) | \rightarrow \infty$ if $\gamma < 1/2$ . In particular, the posterior distribution is biased and the BvM theorem does not hold.
			\item Consider a sequence $\alpha_n=n^{-x}$ with $(1-2\gamma)/3<x<2/3$. Then the $\alpha_n$--posterior distribution of $\sqrt{n\alpha_n}(\psi(f) - \hat{\psi} ) $ converges weakly in $P_0$-probability to the $\mathcal{N}(0, V_0)$ distribution.
		\end{enumerate}
	\end{proposition}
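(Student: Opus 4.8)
The plan is to obtain both statements from Proposition \ref{thm_random_histo_prior}: part (1) from the ``centred-at-$\hat\psi_{[K_n]}$'' variant described in the remark following that proposition, and part (2) from its original form (equivalently, part (2) is a direct application of Corollary \ref{cor::BvM_RHP} with $\beta=1$), once we have (i) checked the prior-side hypotheses and the concentration \eqref{assum::prop_RHP_1}, and (ii) computed the bias $F_0(\tilde\psi_{[K_n]})$ up to constants. Here the functional is linear, $\psi(f)=\int af$, so \eqref{expansion_psi} holds with $\effinf=a-\int af_0$ --- bounded since $a\in\mathcal C^\gamma([0,1])\subset L^\infty$ --- and $\tilde r\equiv 0$; by linearity of the histogram projection, $\tilde\psi_{[K_n]}=a_{[K_n]}-\int af_0$, so that $F_0(\tilde\psi_{[K_n]})=\int f_0(a_{[K_n]}-a)$ and $V_0=\int f_0\effinf^2$.

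The crux is the exact order of this bias. Since $K_n=2^{p_n}$, the $L^2$-projection $a_{[K_n]}$ is the partial sum of the Haar expansion \eqref{assum::a_counterex} up to resolution level $p_n-1$, so that $a-a_{[K_n]}=\sum_{l\ge p_n}\sum_{k=0}^{2^l-1}2^{-l(1/2+\gamma)}\psi_{lk}$ and hence $F_0(\tilde\psi_{[K_n]})=-\sum_{l\ge p_n}2^{-l(1/2+\gamma)}\sum_{k=0}^{2^l-1}\langle f_0,\psi_{lk}\rangle$. For a dyadic interval of length $2^{-l}$, a Taylor expansion of the $\mathcal C^1$ function $f_0$ gives $\langle f_0,\psi_{lk}\rangle=-\tfrac14\,2^{-3l/2}f_0'(\xi_{lk})(1+o(1))$ for some $\xi_{lk}$ in that interval, with $o(1)$ uniform in $k$ by uniform continuity of $f_0'$; the essential point is that $\rho\le f_0'\le\|f_0'\|_\infty<\infty$, so these coefficients all have the same sign and do not cancel when summed over $k$. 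Recognising $2^{-l}\sum_k f_0'(\xi_{lk})$ as a Riemann sum with limit $\int_0^1 f_0'=f_0(1)-f_0(0)\in[\rho,\|f_0'\|_\infty]$ yields $\sum_k\langle f_0,\psi_{lk}\rangle\asymp -2^{-l/2}$ uniformly in $l$, and summing the geometric series over $l\ge p_n$ gives $F_0(\tilde\psi_{[K_n]})\asymp 2^{-p_n(1+\gamma)}=K_n^{-(1+\gamma)}\asymp n^{-(1+\gamma)/3}$, with constants depending only on $\rho,\|f_0'\|_\infty,\gamma$. Combined with the expansion $\hat\psi_{[K_n]}-\hat\psi=F_0(\tilde\psi_{[K_n]})+o_P(1/\sqrt n)$ of Lemma \ref{lem:bias} (which follows since $\mathrm{Var}_0(\tilde\psi_{[K_n]}(Y)-\effinf(Y))\le\|f_0\|_\infty\|a-a_{[K_n]}\|_2^2\lesssim K_n^{-2\gamma}=o(1)$), this determines the centring.

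For part (1) take $\an=1$. The prior-side conditions of Proposition \ref{thm_random_histo_prior} hold ($f_0$ bounded; $\sum_i\delta_{i,n}=n^{1/3-b}=o(\sqrt n)$; $\delta_{i,n}=n^{-b}\ge(n\an)^{-b}$), and \eqref{assum::prop_RHP_1} --- concentration around $f_{0,[K_n]}$ at some $\eps_n\to 0$ --- follows from the histogram contraction results of Section \ref{sec:contraction} since $K_n=n^{1/3}=o(n)$. The quoted remark then gives that the posterior of $\sqrt n(\psi(f)-\hat\psi_{[K_n]})$ converges weakly in $P_0$-probability to $\cN(0,V_0)$; since, given the data, $\hat\psi_{[K_n]}-\hat\psi$ is a deterministic shift, the posterior of $\sqrt n(\psi(f)-\hat\psi)$ is this Gaussian translated by $\sqrt n\,F_0(\tilde\psi_{[K_n]})+o_P(1)\asymp n^{(1-2\gamma)/6}$, which is bounded below by a positive constant when $\gamma\le 1/2$ and diverges when $\gamma<1/2$; as $d_{BL}(\cN(\mu,V),\cN(0,V))$ does not tend to $0$ unless $\mu\to 0$, and every linear efficient estimator $\tilde\psi$ satisfies $\sqrt n(\tilde\psi-\hat\psi)=o_P(1)$, the semiparametric BvM fails. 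For part (2), with $\an=n^{-x}$ and $(1-2\gamma)/3<x<2/3$, one has $n\an=n^{1-x}\to\infty$ and $K_n=o(n\an/\log(n\an))$; the weights satisfy \eqref{condition_weights} because $\sum_i\delta_{i,n}=n^{1/3-b}=o(n^{(1-x)/2})$ (using $b>1/6$ and $x<2/3$, so $1/3-b<1/6<(1-x)/2$) and $\delta_{i,n}\ge(n\an)^{-3b}$; \eqref{assum::prop_RHP_1} holds as above since $K_n/(n\an)=n^{x-2/3}\to 0$; and \eqref{assum::prop_RHP_2} becomes $\sqrt{n\an}\,F_0(\tilde\psi_{[K_n]})\asymp n^{(1-3x-2\gamma)/6}\to 0$, precisely because $x>(1-2\gamma)/3$. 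Proposition \ref{thm_random_histo_prior} then delivers the $\cN(0,V_0)$ limit for the $\an$-posterior of $\sqrt{n\an}(\psi(f)-\hat\psi)$.

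The main obstacle I anticipate is exactly this exact order of the bias: the upper bound $|F_0(\tilde\psi_{[K_n]})|\lesssim K_n^{-(1+\gamma)}$ is routine, but the matching lower bound --- needed both to rule out BvM in part (1) and to locate the sharp threshold $x>(1-2\gamma)/3$ in part (2) --- rests on showing that the resolution-$l$ Haar coefficients of $f_0$ add up rather than cancel across bins, which is precisely where the strict monotonicity $f_0'>\rho>0$ enters (the constancy in $k$ of the coefficients of $a$ in \eqref{assum::a_counterex} is the matching feature on the functional side). A secondary, more technical point is to phrase the appeal to Section \ref{sec:contraction} cleanly, i.e.\ that the $\an$-posterior of the $K_n$-bin Dirichlet--histogram model concentrates in $L^1$ around $f_{0,[K_n]}$ at a rate $o(1)$ when $K_n=o(n\an)$.
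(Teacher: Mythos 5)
Your proposal is correct and follows essentially the same route as the paper: both parts are reduced to Proposition \ref{thm_random_histo_prior} (equivalently Corollary \ref{cor::BvM_RHP} without condition \eqref{assum::cor_RHP_1}, invoking the remark that the limit then holds centered at $\hat{\psi}_{[K_n]}$), the prior-side conditions are checked in the identical way, and everything hinges on the two-sided estimate $|F_0(\tilde{\psi}_{[K_n]})| \asymp K_n^{-(1+\gamma)}$. The only substantive difference is that the paper imports this bias estimate \eqref{calc::bias} from \cite{CR2015} (p.~2371), whereas you derive it from scratch via the Taylor expansion $\langle f_0,\psi_{lk}\rangle \approx -\tfrac14 2^{-3l/2}f_0'(\xi_{lk})$, the Riemann-sum identification $2^{-l}\sum_k f_0'(\xi_{lk})\to f_0(1)-f_0(0)\geq\rho$, and the geometric series over $l\geq p_n$; this computation is correct and makes explicit the non-cancellation mechanism (monotonicity of $f_0$ matched with level-constant Haar coefficients of $a$) that the citation hides, at the cost of no new generality. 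One cosmetic caveat: your resulting sign for $F_0(\tilde{\psi}_{[K_n]})$ is opposite to that in \eqref{calc::bias} (a Haar sign convention), and the claim $a\in\mathcal C^\gamma$ is not needed and not quite right ($a$ has dyadic jumps); only $a\in L^\infty$ is used, which holds directly from the absolutely convergent series.
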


This provides an example in which a non--negligible bias appears in the centering of the posterior distribution of $\int af$ (rescaled by $\sqrt{n}$), whereas the $\alpha_n$--posterior distribution of $\int af$ (rescaled by $\sqrt{n\alpha_n}$) is not biased. This has consequences for uncertainty quantification:  $1-\delta$--quantile credible sets from the posterior have less than $1-\delta$ coverage asymptotically, or even $0$ coverage, whereas those for the $\al_n$-posterior have coverage greater than $1-\delta$ asymptotically. Uncertainty quantification using the $\alpha_n$-posterior is thus reliable, if conservative, whereas that using the standard posterior is not. Of course, note that the $\al_n$--posterior has a spread of order $1/\sqrt{n\al_n}$ (instead of the smaller $1/\sqrt{n}$ for the posterior), which makes it easier to verify confidence statements. We refer to Section \ref{sect:corrected_credible_sets} for more details on the coverage and size of credible sets for the $\al_n$--posterior.

\begin{remark}[Approximately linear functionals]\label{rem:functionals}
The above results extend to certain well-behaved non-linear functionals, such as the square-root $\int \sqrt{f}$, power $\int f^q$, $q\geq 2$, and entropy $\int  f\log f$ functionals.
This is proved in Examples 4.2-4.4 of \citet{CR2015} by controlling the remainder of the functional expansion in Assumption \ref{ass:expansion_assumption}, and the extension to the \ap is similar.	
\end{remark}


\subsection{Gaussian Process Priors}\label{sec:gp_bvm}

In this section, we apply our general semiparametric BvM results to the widely used class of Gaussian process priors. For general definitions and background material on Gaussian processes and their associated reproducing kernel Hilbert spaces (RKHS), the reader is referred to Chapter 11 of \citet{vdv_FNBI} or the monograph by \citet{RW06}. We will establish general results for Gaussian priors in density estimation and Gaussian white noise, and then apply these to specific examples commonly used in practice, such as the Mat\'ern and squared exponential covariance kernels.

Let $W = (W(x):x\in[0,1])$ be a mean-zero Gaussian process with covariance function $K(x,y) = \e [W(x)W(y)]$. One can view $W$ as a Borel-measurable map in some Banach space $(\mathbb{B}, \|\cdot\|)$ (e.g. $(C[0,1],\|\cdot\|_\infty)$) with associated RKHS $(\mathbb{H}, \|\cdot\|_\mathbb{H})$. It is known that nonparametric estimation properties of Gaussian process priors depend on their sample smoothness, as measured through their small-ball probability \citep{vdv,vdv_2007,vdVvZ11}. This can be quantified via the \textit{concentration function} $\varphi_{\eta_0}$ at a point $\eta_0\in\mathbb{B}$, defined as
\begin{equation}
   \varphi_{\eta_0}(\eps) = -\log\Pi(\|W\| \leq \eps) + \frac{1}{2}\inf_{h\in\mathbb{H}: \| h - \eta_0\| < \eps}\|h\|_\mathbb{H}^2, \label{def:concentration_function}
\end{equation}
where $\|\cdot\|$ refers to the norm on $\mathbb{B}$. For the full posterior and standard statistical models, the contraction rate for Gaussian processes is then connected to the solution to the equation $\varphi_{\eta _0}(\eps_n) \sim n\eps_n^2$, see \citet{vdv}. As the next theorem shows, a similar result holds for the fractional posterior by instead considering the inequality
\begin{equation}\label{eq:conc_eqn}
\varphi_{\eta_0}(\eps_n) \leq n\alpha_n \eps_n^2,
\end{equation}
i.e. using the effective sample size $n' = n\alpha_n$ on the right-hand side, see Section \ref{sec:contraction} below for details.

\begin{theorem}[Gaussian white noise]\label{thm:general_theorem_gp_gwn}
Consider the Gaussian white noise model and assign to $f$ a mean-zero Gaussian prior $\Pi$ in $L^2[0,1]$ with associated RKHS $\mathbb{H}$. Suppose that $\eps_n \to 0$ satisfies \eqref{eq:conc_eqn} with $\eta_0 = f_0 \in L^2[0,1]$, and that 
Assumption \ref{ass:expansion_assumption} holds for $\psi(f) = \psi(f_0) + \langle \psi_0, f - f_0 \rangle_2 + r(f,f_0)$ and $A_n \subset \{f: \|f-f_0\|_2 \leq \eps_n\}$.
Further assume that there exist sequences $\psi_n \in \mathbb{H}$ and $\zeta_n\rightarrow0$ such that
\begin{equation}\label{eq:RKHS_conditions}
\| \psi_n - \psi_0 \|_2 \leq \zeta_n, \hspace{5mm}
\|\psi_n\|_\mathbb{H} \leq \sqrt{n\alpha_n}\zeta_n, \hspace{5mm}
\sqrt{n\an}\eps_n \zeta_n \rightarrow 0.
\end{equation}
Then the $\alpha_n-$posterior distribution of $\sqrt{n\alpha_n}(\psi(f) - \hat{\psi})$ converges weakly in $P_0-$probability to a Gaussian distribution with mean 0 and variance $\|\psi_0\|^2_2.$
\end{theorem}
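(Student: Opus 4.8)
The plan is to deduce this from Theorem~\ref{thm:bvm_gwn_ap}. All of its hypotheses other than the change--of--measure condition \eqref{GTGWN_3} are assumed here — namely Assumption~\ref{ass:expansion_assumption} with the given sets $A_n$, and the stated functional expansion — so it suffices to verify that, for each fixed $t\in\R$,
\[
\frac{\int_{A_n}e^{\an\ell_n(f_t)}\,d\Pi(f)}{\int e^{\an\ell_n(f)}\,d\Pi(f)}=1+o_P(1),\qquad f_t=f-\frac{t\psi_0}{\sqrt{n\an}} .
\]
Because $\psi_0$ need not lie in the RKHS $\mathbb{H}$, I would first replace it by the approximant $\psi_n\in\mathbb{H}$ from \eqref{eq:RKHS_conditions} and set $g_n:=t\psi_n/\sqrt{n\an}\in\mathbb{H}$. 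Using the exact quadratic LAN expansion of the white noise log--likelihood, $\an(\ell_n(f_t)-\ell_n(f-g_n))$ splits into a drift term $t\sqrt{n\an}\langle f-f_0-g_n,\psi_0-\psi_n\rangle_2$, a deterministic term $O(\|\psi_0-\psi_n\|_2^2)$, and a centred Gaussian term of variance $O(\an\|\psi_0-\psi_n\|_2^2/n)$. On $A_n\subset\{\|f-f_0\|_2\le\eps_n\}$, and using $\|g_n\|_2=O((n\an)^{-1/2})=o(\eps_n)$, the drift term is $O(\sqrt{n\an}\,\eps_n\zeta_n)=o(1)$ by \eqref{eq:RKHS_conditions}, while the other two are $O(\zeta_n^2)=o(1)$ and $o_P(1)$; hence $\sup_{f\in A_n}|\an\ell_n(f_t)-\an\ell_n(f-g_n)|=o_P(1)$, so $\int_{A_n}e^{\an\ell_n(f_t)}d\Pi=(1+o_P(1))\int_{A_n}e^{\an\ell_n(f-g_n)}d\Pi$.

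Next, since $\|g_n\|_\mathbb{H}\le|t|\zeta_n\to0$ (second condition in \eqref{eq:RKHS_conditions}) and hence $\|g_n\|_2\to0$, I would reduce — exactly as for the classical posterior in \cite{CR2015}, using $\Pi_{\an}(A_n\mid Y)=1+o_P(1)$ together with the $\an$--posterior contraction at rate $\eps_n$ implied by \eqref{eq:conc_eqn} (Section~\ref{sec:contraction}) — to showing that the \emph{full} ratio $\int e^{\an\ell_n(f-g_n)}d\Pi(f)/\int e^{\an\ell_n(f)}d\Pi(f)$ is $1+o_P(1)$. Here I would use conjugacy. Diagonalising the (trace--class) prior covariance operator and working in the corresponding orthonormal basis $(\phi_k)$ of $L^2[0,1]$ (legitimate since Model~\ref{def:GWN} allows any such basis), one has $f_k\sim\cN(0,\lambda_k)$ independently, $\|h\|_\mathbb{H}^2=\sum_k h_k^2/\lambda_k$, $Y_k=f_{0,k}+\eps_k/\sqrt n$, and the $\an$--posterior is the product of the $\cN(\hat f_{n,k},\lambda_k/(1+n\an\lambda_k))$ with $\hat f_{n,k}=n\an\lambda_k Y_k/(1+n\an\lambda_k)$. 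A Gaussian integral then evaluates the full ratio exactly as
\[
\exp\Bigl(-t\sqrt{n\an}\sum_k\frac{\psi_{n,k}Y_k}{1+n\an\lambda_k}-\frac{t^2}{2}\sum_k\frac{\psi_{n,k}^2}{1+n\an\lambda_k}\Bigr),
\]
and, since $Y_k=\hat f_{n,k}+Y_k/(1+n\an\lambda_k)$, the first exponent is $-t\sqrt{n\an}\langle\psi_n,Y-\hat f_n\rangle_2$.

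Finally I would check that this exponent is $o_P(1)$. The quadratic term satisfies $\sum_k\psi_{n,k}^2/(1+n\an\lambda_k)\le\|\psi_n\|_\mathbb{H}^2/(n\an)\le\zeta_n^2\to0$. Writing $\langle\psi_n,Y-\hat f_n\rangle_2=\sum_k\psi_{n,k}f_{0,k}/(1+n\an\lambda_k)+n^{-1/2}\sum_k\psi_{n,k}\eps_k/(1+n\an\lambda_k)$, the noise part multiplied by $\sqrt{n\an}$ is a centred Gaussian of variance $\an\sum_k\psi_{n,k}^2/(1+n\an\lambda_k)^2\le\an\zeta_n^2=o(1)$, hence $o_P(1)$; and by Cauchy--Schwarz and $\sum_k\psi_{n,k}^2/(1+n\an\lambda_k)\le\zeta_n^2$, the deterministic ``bias'' part is at most $|t|\sqrt{n\an}\,\zeta_n\,(\sum_k f_{0,k}^2/(1+n\an\lambda_k))^{1/2}$. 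Now \eqref{eq:conc_eqn} supplies $h^\ast\in\mathbb{H}$ with $\|h^\ast-f_0\|_2<\eps_n$ and $\|h^\ast\|_\mathbb{H}^2\le2n\an\eps_n^2$, whence $\sum_k f_{0,k}^2/(1+n\an\lambda_k)\le2\|f_0-h^\ast\|_2^2+2\|h^\ast\|_\mathbb{H}^2/(n\an)\le6\eps_n^2$, so the bias part is $O(\sqrt{n\an}\,\eps_n\zeta_n)=o(1)$ by \eqref{eq:RKHS_conditions}. Thus the exponent, and so the ratio, is $1+o_P(1)$, establishing \eqref{GTGWN_3} and, via Theorem~\ref{thm:bvm_gwn_ap}, the claimed BvM. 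I expect the main obstacle to be precisely the control of the bias term $\sqrt{n\an}\langle\psi_n,Y-\hat f_n\rangle_2$: isolating it requires the explicit conjugate form of the $\an$--posterior, and bounding it forces one to relate $\sum_k f_{0,k}^2/(1+n\an\lambda_k)$ to the contraction rate $\eps_n$ (the one place where \eqref{eq:conc_eqn} genuinely enters) and to interlock all three conditions in \eqref{eq:RKHS_conditions} through the Cauchy--Schwarz split above; the reduction from $\psi_0$ to $\psi_n$ and the removal of the restriction to $A_n$ are comparatively routine.
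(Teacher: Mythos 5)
Your argument is correct in its core computation but takes a genuinely different route from the paper's for the change-of-measure condition \eqref{GTGWN_3}. The first stage -- replacing $\psi_0$ by the RKHS approximant $\psi_n$ via the exact LAN expansion and the three bounds from \eqref{eq:RKHS_conditions} -- coincides with the paper's step \eqref{eq:likelihood_change}. After that the paper invokes the abstract Cameron--Martin theorem on the Banach space, which forces it to construct specific sets $A_n=\{w:|U_w(\psi_n)|\le M_0\sqrt{n\an}\eps_n\|\psi_n\|_{\mathbb{H}}\}\cap B_n$ on which the Cameron--Martin density is $e^{o_P(1)}$, and then to re-verify that the shifted set $A_{n,t}$ keeps posterior mass $1+o_P(1)$. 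You instead diagonalise the prior covariance and evaluate the full ratio in closed form by conjugacy, reducing everything to the scalar $t\sqrt{n\an}\langle\psi_n,Y-\hat f_n\rangle_2+\tfrac{t^2}{2}\sum_k\psi_{n,k}^2/(1+n\an\lambda_k)$; your bounds on the three pieces are all correct, and the treatment of the bias term -- Cauchy--Schwarz against $\sum_k f_{0,k}^2/(1+n\an\lambda_k)$, itself bounded by $6\eps_n^2$ via the decomposition $f_0=h^*+(f_0-h^*)$ supplied by \eqref{eq:conc_eqn} -- is a clean way of exposing exactly where the concentration-function inequality enters, which the paper's proof hides inside the contraction result. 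The trade-off is that your route is tied to conjugacy and does not port to the density-estimation analogue (Theorem \ref{thm:general_theorem_gp_density_estimation}), whereas the Cameron--Martin argument does.

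The one step you should not cite away as ``exactly as in \cite{CR2015}'' is the passage from the restricted numerator $\int_{A_n}e^{\an\ell_n(f-g_n)}d\Pi$ to the full integral. Your exact formula evaluates the unrestricted ratio, so you still owe the claim that the probability measure proportional to $e^{\an\ell_n(f-g_n)}d\Pi(f)$ puts mass $1+o_P(1)$ on $A_n$; this does not follow from $\Pi_{\an}(A_n|Y^n)=1+o_P(1)$ alone, because $A_n$ is an abstract set and the integrand has been shifted. In your framework the repair is short but must be stated: that measure is the Gaussian $N(m_P+n\an\lambda g_n/(1+n\an\lambda),\Sigma)$ with the same covariance $\Sigma$ as the $\an$-posterior $P_n=N(m_P,\Sigma)$ and squared Mahalanobis mean-shift $\sum_k (n\an)^2\lambda_k g_{n,k}^2/(1+n\an\lambda_k)\le n\an\|g_n\|_2^2=t^2\|\psi_n\|_2^2$, which is bounded but not vanishing; a Cauchy--Schwarz bound on the likelihood ratio, $\tilde\Pi_n(A_n^c)\le e^{\sigma^2/2}P_n(A_n^c)^{1/2}$ with $\sigma^2=t^2\|\psi_n\|_2^2$, then transfers $P_n(A_n^c)=o_P(1)$ across the shift. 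This contiguity step is precisely the work done by the paper's $U_W$-tail bound and its explicit set $A_{n,t}$, so it needs to appear in your write-up; with it included, your proof is complete.
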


The sequence $\psi_n$ allows one to approximate the Riesz representer, $\psi_0$, of the functional by elements of the RKHS $\mathbb{H}$. This is helpful since for elements of the RKHS, one can directly deal with the change of measure condition \eqref{GTGWN_3} using the Cameron-Martin Theorem. Note that if $\psi_0 \in \mathbb{H}$, one may immediately take $\psi_n = \psi_0$ and $\zeta_n = 0$. The main message from Theorem \ref{thm:general_theorem_gp_gwn} is that for semiparametric inference, the fractional posterior mirrors the main heuristic properties of parametric models (e.g. Equation \ref{eq:conjugate}). In particular, all conditions are driven by the usual conditions for semiparametric BvMs for Gaussian priors but with effective sample size $n' = n\alpha_n$ reflecting the downweighting of the data (see Section \ref{sec:contraction} for specific discussion on this regarding contraction rates). The resulting marginal posterior for the functional $\psi(f)$ is again centered at an efficient estimator $\hat{\psi}$, but has variance inflated by a $1/\alpha_n$-factor.

Turning now to density estimation, we use the standard approach of using the exponential link function (\citealp{vdv_FNBI}, Section 2.3.1) to ensure the Gaussian process induces a prior on the set of probability densities:
\begin{equation}
f(x) = f_W(x) = \frac{e^{W(x)}}{\int_0^1 e^{W(y)}dy}. \label{def:gp_inducement}
\end{equation}

\begin{theorem}[Density estimation]\label{thm:general_theorem_gp_density_estimation}
Consider density estimation on $[0,1]$ and suppose $f_0\in C[0,1]$ is bounded away from zero. Let $W$ be a mean-zero Gaussian process in $(C[0,1],\|\cdot\|_\infty)$ with RKHS $\mathbb{H}$, and consider the induced prior on densities $f$ via \eqref{def:gp_inducement}. Suppose that $\eps_n \to 0$ satisfies \eqref{eq:conc_eqn} with $\eta_0 = \log f_0$.
Let $\psi(f)$ be a functional with expansion \eqref{expansion_psi} having continuous representer $\effinf$ satisfying
$$
\sup_{f \in A_n}\tilde{r}(f,f_0) = o_P\left(\frac{1}{\sqrt{n\an}}\right)
$$
for some $A_n \subset \{f: \|f-f_0\|_1 \leq \epsilon_n\}$ with $\Pi_\an(A_n |Y^n) = 1+o_P(1)$. Further assume that there exist sequences $\psi_n \in \mathbb{H}$ and $\zeta_n\rightarrow0$ such that
$$
\| \psi_n - \effinf \|_\infty \leq \zeta_n, \hspace{5mm}
\|\psi_n\|_\mathbb{H} \leq \sqrt{n\alpha_n}\zeta_n, \hspace{5mm}
\sqrt{n\an}\epsilon_n \zeta_n \rightarrow 0.	
$$
Then the $\alpha_n-$posterior distribution of $\sqrt{n\alpha_n}(\psi(\eta) - \hat{\psi})$ converges weakly in $P_0-$probability to a Gaussian distribution with mean 0 and variance $\|\effinf\|_L^2 = \int_0^1 \effinf^2 f_0.$
\end{theorem}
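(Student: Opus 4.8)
\noindent\emph{Proof proposal.}\ The plan is to deduce the result from the general density-estimation BvM, Theorem~\ref{thm:bvm_density_ap}. The functional expansion \eqref{expansion_psi}, the mass condition \eqref{GTDE_1} and the remainder bound \eqref{GTDE_2} are already among the present hypotheses (with $\eps_n$ the $\an$-posterior contraction rate furnished by \eqref{eq:conc_eqn} and the results of Section~\ref{sec:contraction}), while both the centering $\hat\psi$ and the limiting variance $\int\effinf^2 f_0$ are dictated by the density-estimation LAN expansion recorded above \eqref{expansion_psi}. So the only thing left to check is the change-of-measure condition \eqref{GTDE_3},
\[
\frac{\int_{A_n}e^{\an\ell_n(f_t)}\,d\Pi(f)}{\int e^{\an\ell_n(f)}\,d\Pi(f)} = 1 + o_P(1),\qquad f_t = \frac{f\,e^{-t\effinf/\sqrt{n\an}}}{F(e^{-t\effinf/\sqrt{n\an}})},
\]
for each fixed $t\in\R$, and it is here that the exponentiated-Gaussian structure must be exploited.

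First I would transfer the perturbation to the Gaussian process. Writing $\mathcal{W}$ for the law of $W$, so that $\Pi$ is the image of $\mathcal{W}$ under $W\mapsto f_W$ as in \eqref{def:gp_inducement}, a short computation gives $(f_W)_t = f_{W - t\effinf/\sqrt{n\an}}$, the additive normalising constant in $\log f_t$ being absorbed by the density normalisation. Since $\effinf$ generally lies outside the RKHS $\mathbb{H}$, the next step is to replace it by its RKHS approximant $\psi_n$: set $h_n := t\psi_n/\sqrt{n\an}\in\mathbb{H}$, so that $\|h_n\|_{\mathbb{H}}\le|t|\zeta_n\to0$, and $f_t^{(n)} := f_{W-h_n}$. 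Expanding $\ell_n(f_t)-\ell_n(f_t^{(n)})$ to second order (using $\log(f_t/f_t^{(n)}) = -t(\effinf-\psi_n)/\sqrt{n\an}+\mathrm{const}$ and $\log F_f(e^g)=F_f(g)+\tfrac12\mathrm{Var}_f(g)+O(\ninf{g}^3)$) should yield
\[
\an\big(\ell_n(f_t)-\ell_n(f_t^{(n)})\big) = -t\sqrt{n\an}\,(\p_n-F_f)(\effinf-\psi_n) - \tfrac{t^2}{2}(\mathrm{Var}_f\effinf-\mathrm{Var}_f\psi_n) + O\big((n\an)^{-1/2}\big),
\]
with $\p_n$ the empirical measure; the crucial point is that the exponent $\an$ converts the dangerous prefactor $n/\sqrt{n\an}$ into the benign $\sqrt{n\an}$. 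Splitting $\p_n-F_f = (\p_n-F_0)+(F_0-F_f)$, the first term is $O_P(\sqrt{n\an}\cdot\zeta_n/\sqrt n)+O(\sqrt{n\an}\cdot\eps_n\zeta_n)=O_P(\sqrt{\an}\zeta_n)+O_P(\sqrt{n\an}\eps_n\zeta_n)=o_P(1)$, using $\ninf{\effinf-\psi_n}\le\zeta_n$ for the variance of the empirical part, $\|f-f_0\|_1\le\eps_n$ on $A_n$ for the bias part, and the third RKHS condition; the variance-difference term is $O(\zeta_n)$ uniformly in $f$. Hence $\sup_{f\in A_n}\an|\ell_n(f_t)-\ell_n(f_t^{(n)})|=o_P(1)$, so $\int_{A_n}e^{\an\ell_n(f_t)}d\Pi = e^{o_P(1)}\int_{A_n}e^{\an\ell_n(f_t^{(n)})}d\Pi$, and it remains to treat the $\psi_n$-path.

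For that I would invoke the Cameron--Martin theorem. Since $h_n\in\mathbb{H}$, the translate $\mathcal{W}(\cdot+h_n)$ is equivalent to $\mathcal{W}$ with density $\exp(-L_{h_n}-\tfrac12\|h_n\|_{\mathbb{H}}^2)$, where $L_{h_n}$ is the measurable-linear (Paley--Wiener) functional, centred Gaussian with variance $\|h_n\|_{\mathbb{H}}^2$ under $\mathcal{W}$. Changing variables $W'=W-h_n$ in the numerator yields
\[
\frac{\int_{A_n}e^{\an\ell_n(f_t^{(n)})}d\Pi(f)}{\int e^{\an\ell_n(f)}d\Pi(f)} = e^{-\frac12\|h_n\|_{\mathbb{H}}^2}\,E_{\an}\big[\,1\{f_{W+h_n}\in A_n\}\,e^{-L_{h_n}(W)}\,\big|\,Y^n\,\big].
\]
Here $e^{-\frac12\|h_n\|_{\mathbb{H}}^2}=1+o(1)$; the indicator may be dropped at a cost $o_P(1)$, since $\ninf{h_n}=O((n\an)^{-1/2})=o(\eps_n)$ forces $\{f_{W+h_n}\notin A_n\}$ to have $\an$-posterior mass $o_P(1)$ (by the contraction estimates of Section~\ref{sec:contraction}, run at effective sample size $n\an$), together with a Cauchy--Schwarz step to absorb the factor $e^{-L_{h_n}}$; and $E_{\an}[e^{-L_{h_n}(W)}\,|\,Y^n]=1+o_P(1)$ is expected because, on the set carrying the $\an$-posterior mass — roughly $\{W=g+v:\ \|g\|_{\mathbb{H}}\lesssim\sqrt{n\an}\,\eps_n,\ \|v\|_{\mathbb{B}}\lesssim\eps_n\}$ — one has $|L_{h_n}(g)|=|\langle h_n,g\rangle_{\mathbb{H}}|\le\|h_n\|_{\mathbb{H}}\,O(\sqrt{n\an}\eps_n)\lesssim\sqrt{n\an}\eps_n\zeta_n\to0$, the remaining fluctuation being controlled via the Gaussian integrability of $L_{h_n}$ and the vanishing of $\|h_n\|_{\mathbb{H}}$.

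The hard part will be exactly this last estimate, $E_{\an}[e^{-L_{h_n}(W)}|Y^n]=1+o_P(1)$: the functional $L_{h_n}$ is only $\mathcal{W}$-almost surely defined, and the $\an$-posterior is a non-Gaussian tilt of $\mathcal{W}$, so Gaussian tail bounds are not available directly under the posterior. The argument will have to carry the $\an$-posterior concentration through the Cameron--Martin change of measure, in the spirit of the $\an=1$ analysis in \cite{CR2015}, handling the complement of the concentration set by Cauchy--Schwarz while keeping the auxiliary moment $E_{\an}[e^{-2L_{h_n}(W)}|Y^n]$ under control, and verifying at each step that the bounds deteriorate in $n$ only through the effective sample size $n\an$. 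Relative to \cite{CR2015}, the genuinely new element is this uniform-in-$\an$ bookkeeping — in particular that the quantities $\sqrt{\an}\zeta_n$, $\sqrt{n\an}\eps_n\zeta_n$, $\|h_n\|_{\mathbb{H}}\le|t|\zeta_n$ and $(n\an)^{-1/2}$ appearing above all vanish, which the three RKHS conditions together with \eqref{eq:conc_eqn} are designed to ensure.
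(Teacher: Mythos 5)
Your overall route is the paper's: reduce to Theorem \ref{thm:bvm_density_ap}, transfer the perturbation to the Gaussian process, approximate $\effinf$ by $\psi_n\in\mathbb{H}$, compare the two likelihood paths, and finish with Cameron--Martin. Your likelihood-comparison step is correct and matches the paper exactly, including the key use of H\"older's inequality $\sqrt{n\an}\,\|f-f_0\|_1\,\|\effinf-\psi_n\|_\infty\le \sqrt{n\an}\,\eps_n\zeta_n\to0$ in place of Cauchy--Schwarz (this is precisely why the theorem assumes the $\|\cdot\|_\infty$-approximation rather than the $\|\cdot\|_2$ one used in Gaussian white noise).

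The genuine gap is the step you yourself flag as ``the hard part'': you leave $E_{\an}[e^{-L_{h_n}(W)}\mid Y^n]=1+o_P(1)$ unproven, and the route you sketch for it --- Cauchy--Schwarz against the auxiliary moment $E_{\an}[e^{-2L_{h_n}(W)}\mid Y^n]$ --- does not close, because that auxiliary moment is a posterior expectation of an exponential of the very same Paley--Wiener functional under the same non-Gaussian tilt of $\mathcal{W}$; it is exactly as hard as the quantity you started with, so the argument is circular. The paper avoids ever computing a posterior expectation of $e^{-L_{h_n}}$ by building the control of the Paley--Wiener functional into the localization set \emph{before} applying Cameron--Martin: it takes
$A_n=\{w:\ |U_w(\psi_n)|\le M_0\sqrt{n\an}\,\eps_n\|\psi_n\|_{\mathbb{H}}\}\cap B_n$,
where the added event has prior probability at least $1-2e^{-M_0^2 n\an\eps_n^2/2}$ by the \emph{prior} Gaussian tail bound, hence posterior probability $1+o_P(1)$ by Lemma \ref{lemma_sieve} (small prior mass relative to $\Pi(B_n(\eta_0,\eps_n))e^{-2n\an\eps_n^2}$ transfers to small posterior mass). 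After the Cameron--Martin change of variables, the exponent satisfies, deterministically on the shifted set,
\begin{equation*}
\bigl|U_w(-h_t)-\tfrac12\|h_t\|_{\mathbb{H}}^2\bigr|\le \tfrac{t}{\sqrt{n\an}}\rho_t+\tfrac{t^2}{2n\an}\|\psi_n\|_{\mathbb{H}}^2\le tM_0\sqrt{n\an}\,\eps_n\zeta_n+\tfrac{t^2}{2}\zeta_n^2\to0,
\end{equation*}
so the Radon--Nikodym factor is $e^{o(1)}$ uniformly on the domain of integration and the ratio reduces to $\Pi_{\an}(A_{n,t}\mid Y^n)e^{o_P(1)}$; the only remaining work is to check that the shifted set $A_{n,t}$ still has posterior mass $1+o_P(1)$, using $\|\psi_n\|_{\mathbb{H}}=o(n\an\eps_n)$ and the $L^1$-contraction. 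Your heuristic ``on the set carrying the posterior mass, $|\langle h_n,g\rangle_{\mathbb{H}}|\lesssim\sqrt{n\an}\,\eps_n\zeta_n\to0$'' is the right idea, but the device that makes it rigorous is this truncation-before-change-of-measure, which your write-up does not supply.
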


The implications of Theorem \ref{thm:general_theorem_gp_density_estimation} are similar to those of Theorem \ref{thm:general_theorem_gp_gwn}. The use of the slightly stronger $\|\cdot\|_\infty$-norm compared to the $\|\cdot\|_2$-norm in Theorem \ref{thm:general_theorem_gp_gwn} is required to deal with the nonlinear link function \eqref{def:gp_inducement} and has little effect on our main results.

We consider the following specific examples of Gaussian priors.

\begin{example}[Infinite series]\label{ex:series}
Let $(\phi_k)_{k \geq 1}$ be an orthonormal basis of $L^2[0,1]$. For $\gamma>0$, consider the random function
\begin{equation}
    W(x) = \sum_{k=1}^\infty k^{-\gamma-1/2} Z_k \phi_k(x) , \qquad \qquad Z_k \sim^{iid} \mathcal{N}(0,1).\label{def:infinite_series_prior}
\end{equation}
\end{example}

Define the Sobolev scales in terms of the $(\phi_k)$ basis:
\begin{equation}\label{eq:sobolev}
\mathcal{H}^\beta(R) :=  \left\{ f \in L^2[0,1]: \sum_{k= 1}^\infty k^{2\beta} |\langle f,\phi_k\rangle_2|^2 \leq R^2 \right\}.
\end{equation}
If $(\phi_k)$ is the Fourier basis, then $\mathcal{H}^\beta$ coincides with the usual notion of Sobolev smoothness of periodic functions on $(0,1]$. The infinite series prior \eqref{def:infinite_series_prior} models an almost $\gamma$-smooth function in the sense that it assigns probability one to $\mathcal{H}^s$ for any $s<\gamma$.

\begin{example}[Mat\'ern]\label{ex:Matern}
The Mat\'ern process on $\R$ with parameter $\gamma>0$ is the mean-zero stationary Gaussian process with covariance kernel (Example 11.8 in \citealp{vdv_FNBI})
$$K(s,t) = K(s-t) = \int_\R e^{-i(s - t)\lambda} (1+|\lambda|^2)^{-\gamma -1/2} d\lambda.$$
The covariance function can alternatively be represented in terms of special functions, see e.g. p.84 of \citet{RW06}.
\end{example}

\begin{example}[Squared exponential]\label{ex:SE}
The rescaled squared exponential process on $\R$ with parameter $\gamma>0$ is the mean-zero stationary Gaussian process with covariance kernel
$$K(s,t) = K(s-t) = \exp\left(-\frac{1}{k_n^2}(s-t)^2\right),$$
where $k_n = \left(\frac{n\an}{\log^2(n\an)}\right)^{-\frac{1}{1+2\gamma}}$ is the length scale.
\end{example}

The Mat\'ern and squared exponential are two of the most widely used covariance kernels in statistics and machine learning \citep{RW06}. The sample paths of the squared exponential process are analytic, and so are typically too smooth to effectively model a function of finite smoothness in the sense that they yield suboptimal contraction rates. Rescaling the covariance kernel using the decaying lengthscale $k_n$ as in Example \ref{ex:SE} allows one to overcome this and model a $\gamma$-smooth function \citep{vdv_2007}.

\begin{example}[Riemann-Liouville]\label{ex:RL}
The Riemann-Liouville process released at zero of regularity $\gamma>0$ is defined as
\begin{equation}
    W^\gamma(x) = \sum_{k=0}^{\lfloor \gamma \rfloor + 1} Z_kx^k + \int_0^x (x-s)^{\gamma - 1/2} dB_s, \label{def:RL_process}
\end{equation}
where $Z_k \sim^{iid} \mathcal{N}(0,1)$ and $B$ is an independent Brownian motion. 
\end{example}

For $\gamma = 1/2$, the Riemann-Liouville process reduces to Brownian motion released at zero. Each of the above Gaussian processes is suitable for modelling a $\gamma$-smooth function in a suitable sense, which can differ between the processes. For simplicity, we state the following result for linear functionals, but it can be extended to certain non-linear functionals following Remark \ref{rem:functionals}.

\begin{corollary}\label{cor:gp_examples}
Let $W$ be a mean-zero Gaussian process. In Gaussian white noise, take as prior $f = W$ and set $\eta_0 = f_0$, while in density estimation take the prior on densities $f$ induced by \eqref{def:gp_inducement} and set $\eta_0 = \log f_0$.
Let $\psi(f) = \int_0^1 f a$ be a linear functional and consider the two cases:
\begin{itemize}
\item[(i)] $W$ is an infinite Gaussian series (Example \ref{ex:series}) with parameter $\gamma$, $\eta_0 \in \mathcal{H}^\beta$ and $a \in \mathcal{H}^\mu$, where $\mathcal{H}^s$ is defined in \eqref{eq:sobolev};
\item[(ii)] $W$ is a Mat\'ern, rescaled squared exponential or Riemann-Liouville process (Examples \ref{ex:Matern}-\ref{ex:RL}) with parameter $\gamma$, $\eta_0 \in C^\beta$ and $a \in C^\mu$.
\end{itemize}
If
$$ \gamma \wedge \beta > \frac{1}{2} +(\gamma - \mu) \vee 0,$$
then the $\alpha_n-$posterior distribution of $\sqrt{n\alpha_n}(\psi(\eta) - \hat{\psi})$ converges weakly in $P_0-$probability to a Gaussian distribution with
\begin{itemize}
\item[(a)] Gaussian white noise: mean 0 and variance  $\|a\|_2^2$ in both cases (i) and (ii); 
\item[(b)] density estimation: mean 0 and variance $\|\effinf\|_L^2 = \int \effinf^2f_0$, with $\effinf = a - \int a f_0$ in case (ii).
\end{itemize}
\end{corollary}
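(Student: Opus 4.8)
The plan is to obtain Corollary~\ref{cor:gp_examples} by verifying, for each of the four Gaussian priors, the hypotheses of Theorem~\ref{thm:general_theorem_gp_gwn} in Gaussian white noise and of Theorem~\ref{thm:general_theorem_gp_density_estimation} in density estimation, with representer $\psi_0 = a$ in the first case and $\effinf = a - \int a f_0$ in the second, after which the claimed Gaussian limits and variances are read off directly from those theorems. Three ingredients are needed: a rate $\eps_n \to 0$ solving the concentration inequality \eqref{eq:conc_eqn} at $\eta_0$, together with sets $A_n$ inside the corresponding $\eps_n$-ball carrying $\an$-posterior mass $1 + o_P(1)$; control of the remainder in the functional expansion; and an RKHS approximant $\psi_n \in \mathbb{H}$ of the representer satisfying the three bounds of \eqref{eq:RKHS_conditions} (and their density analogue in Theorem~\ref{thm:general_theorem_gp_density_estimation}). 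For a linear functional the remainder part is essentially free: in Gaussian white noise $r \equiv 0$ and $R_n \equiv 0$, so the remainder clause of Assumption~\ref{ass:expansion_assumption} is vacuous, while in density estimation $\tilde r \equiv 0$ in \eqref{expansion_psi} (because $\int(f - f_0) = 0$) and the quadratic bias $\mathcal{B}(f, f_0)$ is absorbed by the exponential-tilting path $f_t$ exactly as in the proof of Theorem~\ref{thm:general_theorem_gp_density_estimation}. So the substance lies in the rate and the RKHS approximant.

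For the rate, I would invoke the classical concentration-function estimates for these processes (\cite{vdv_FNBI}, Chapter~11, and \cite{vdv_2007} for the rescaled squared exponential, whose lengthscale $k_n$ is already tuned to the effective sample size $n\an$ rather than to $n$). For a $\gamma$-regular process and a truth of regularity $\beta$ --- measured in the Sobolev scale $\mathcal{H}^\beta$ of \eqref{eq:sobolev} in case~(i) and in $C^\beta$ in case~(ii) --- the inequality $\varphi_{\eta_0}(\eps_n) \le n\an\eps_n^2$ is solved, up to logarithmic factors (absent for the infinite series prior), by $\eps_n \asymp (n\an)^{-(\gamma\wedge\beta)/(2\gamma+1)}$. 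The sets $A_n \subset \{\|f - f_0\| \le M\eps_n\}$ with $\Pi_\an(A_n \mid Y^n) = 1 + o_P(1)$ are then provided by the contraction results of Section~\ref{sec:contraction}, which establish precisely the principle that the $\an$-posterior for these priors behaves as the ordinary posterior with $n$ replaced by $n\an$.

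For the RKHS approximant, for the infinite series, Mat\'ern and Riemann--Liouville processes $\mathbb{H}$ is norm-equivalent to a Sobolev-type space of order $\gamma + \tfrac12$, so a truncation of the basis expansion (series prior) or a mollification at scale $\lambda^{-1}$ (Mat\'ern, Riemann--Liouville) produces $\psi_n \in \mathbb{H}$ with $\|\psi_n - \psi_0\| \lesssim \lambda^{-\mu}$ and $\|\psi_n\|_{\mathbb{H}} \lesssim \lambda^{(\gamma + 1/2 - \mu)_+}$; taking $\lambda \asymp (n\an)^{1/(2\gamma+1)}$ yields $\zeta_n \asymp (n\an)^{-(\mu\wedge(\gamma+1/2))/(2\gamma+1)}$ together with $\|\psi_n\|_{\mathbb{H}} \le \sqrt{n\an}\,\zeta_n$. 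For the rescaled squared exponential process $\mathbb{H}$ is instead a space of analytic functions and truncation is unavailable; there $\psi_n$ is built by convolving $\psi_0$ with the Gaussian kernel at the lengthscale $k_n$, the same device used for the contraction rate in \cite{vdv_2007}, which again delivers both bounds with $\zeta_n$ a power of $n\an$. Combining, $\sqrt{n\an}\,\eps_n\,\zeta_n$ is of order $(n\an)^{1/2 - [(\gamma\wedge\beta) + (\mu\wedge(\gamma+1/2))]/(2\gamma+1)}$ up to logarithms, which tends to $0$ exactly when $(\gamma\wedge\beta) + (\mu\wedge(\gamma + \tfrac12)) > \gamma + \tfrac12$; when $\mu \le \gamma$ this rearranges to $\gamma\wedge\beta > \tfrac12 + (\gamma - \mu)$ --- the stated condition --- and when $\mu > \gamma$ the stated condition $\gamma\wedge\beta > \tfrac12$ is more than sufficient, the strict inequality also neutralising the logarithmic factors. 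Feeding $\eps_n$, $\psi_n$, $\zeta_n$ into Theorems~\ref{thm:general_theorem_gp_gwn} and~\ref{thm:general_theorem_gp_density_estimation} then yields the Gaussian limits with the variances listed in (a) and (b), and the passage to the non-linear functionals alluded to in Remark~\ref{rem:functionals} is routine.

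The main obstacle I anticipate is the rescaled squared exponential case: because its RKHS consists of analytic functions, the clean Sobolev truncation is lost and one must carry through the Gaussian-convolution approximation of $\psi_0$ at the data-dependent lengthscale $k_n$, simultaneously controlling $\|\psi_n - \psi_0\|_\infty$, $\|\psi_n\|_{\mathbb{H}}$ and their interaction with the concentration-function bound; tracking the unavoidable logarithmic factors so that $\sqrt{n\an}\,\eps_n\,\zeta_n \to 0$ survives also needs a little bookkeeping, though it is guaranteed by the strictness of the hypothesis. A lesser point, specific to density estimation, is checking that the exponential-tilting path genuinely cancels $\mathcal{B}(f, f_0)$ uniformly over $A_n$ under only an $L^1$-type localisation, but this is already contained in the proof of Theorem~\ref{thm:general_theorem_gp_density_estimation}.
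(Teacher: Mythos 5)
Your proposal is correct and follows essentially the same route as the paper's proof: both reduce the corollary to Theorems \ref{thm:general_theorem_gp_gwn} and \ref{thm:general_theorem_gp_density_estimation}, take the contraction rate $\eps_n \asymp (n\an)^{-\frac{\gamma\wedge\beta}{2\gamma+1}}$ from Corollary \ref{corrollary:gp_contraction_rate_examples}, and build the RKHS approximant $\psi_n$ by basis truncation for the series prior and by kernel smoothing (Gaussian convolution at the lengthscale for the squared exponential) for the other processes, with $\zeta_n$ a power of $n\an$ so that $\sqrt{n\an}\,\eps_n\zeta_n \to 0$ reduces to the stated regularity condition. Your slightly more careful handling of the regime $\mu > \gamma + \tfrac12$ via the cap $\mu\wedge(\gamma+\tfrac12)$ is a minor bookkeeping refinement over the paper's choice $\zeta_n = (n\an)^{-\mu/(2\gamma+1)}$, but the substance is identical.
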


  
  
  

Corollary \ref{cor:gp_examples} shows that for widely used Gaussian priors, parametric BvM results and conclusions (e.g. \citealp{miller21,MORV22}) extend to semiparametric problems. In particular, for regular enough functionals, the heuristic ideas and intuition extend from low-dimensional frameworks to our more complex setting involving an infinite-dimensional nuisance parameter. 

For the infinite series prior (Example \ref{ex:series}) in Gaussian white noise, one can also directly derive the last conclusion using the explicit form of the posterior coming from conjugacy. In particular, this allows one to consider low regularity functionals where $\sqrt{n}$-estimation is not possible, which falls outside the usual BvM setting. The following extends the computations of Theorem 5.1 of \citet{BIP2011} to the $\alpha_n$-posterior. 
\begin{lemma}\label{prop:contraction_rate_an_posterior_Lmu}
Consider Gaussian white noise, let $f$ have the infinite series prior (Example \ref{ex:series}) of regularity $\gamma>0$ and consider the linear function $\psi(f) = \int_0^1 af$. If $f_0 \in \mathcal{H}^\beta$, $a \in \mathcal{H}^\mu$, $0< \alpha_n\leq 1$ and $\mu \geq -\beta$, then
$$
E_{f_0} \Pi_{\alpha_n}(f : |\psi(f) - \psi(f_0)| \geq M_n \max \{ (n\an)^{-\frac{\beta\wedge(\frac{1}{2}+\gamma) + \mu}{1 + 2\gamma}} , (n\an)^{-1/2} \} | Y) \rightarrow 0,
$$
for every sequence $M_n \rightarrow \infty$ as $n\to\infty$.
\end{lemma}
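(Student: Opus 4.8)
The plan is to exploit the conjugacy between the Gaussian white noise likelihood and the Gaussian series prior, which makes the fractional posterior fully explicit, and then to carry out the bias--variance analysis of \cite{BIP2011} with the sample size $n$ replaced throughout by the effective sample size $n'=n\alpha_n$. In the basis $(\phi_k)$ the log-likelihood is exactly quadratic, $\ell_n(f)=-\tfrac n2\|f\|_2^2+n\sum_k Y_k f_k+c(Y)$ with $c(Y)$ not depending on $f$, and the series prior makes the coordinates $f_k$ independent $\cN(0,k^{-1-2\gamma})$; hence $e^{\alpha_n\ell_n(f)}\,d\Pi(f)$ factorises over $k$, and completing the square the $\alpha_n$-posterior is the product measure under which, independently over $k$,
$$ f_k\mid Y\sim\cN\!\left(\frac{n\alpha_n Y_k}{k^{1+2\gamma}+n\alpha_n},\ \frac{1}{k^{1+2\gamma}+n\alpha_n}\right),$$
i.e. exactly the $(\alpha_n=1)$ posterior of \cite{BIP2011} with $n$ replaced by $n\alpha_n$. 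Writing $a_k=\langle a,\phi_k\rangle_2$, the marginal $\alpha_n$-posterior law of $\psi(f)=\sum_k a_k f_k$ is then Gaussian with the \emph{data-independent} variance $\sigma_n^2=\sum_k a_k^2/(k^{1+2\gamma}+n\alpha_n)$ and mean $\hat\psi_n=\sum_k n\alpha_n a_k Y_k/(k^{1+2\gamma}+n\alpha_n)$, the hypotheses $a\in\mathcal{H}^\mu$, $f_0\in\mathcal{H}^\beta$, $\mu\ge-\beta$ ensuring convergence of the relevant series.

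Denote $r_n=\max\{(n\alpha_n)^{-(\beta\wedge(1/2+\gamma)+\mu)/(1+2\gamma)},(n\alpha_n)^{-1/2}\}$, so that the rate in the statement is $M_n r_n$. Splitting the series defining $\sigma_n^2$ at the index $k^\star\asymp(n\alpha_n)^{1/(1+2\gamma)}$ and using $\sum_k k^{2\mu}a_k^2<\infty$ gives $\sigma_n^2\lesssim\max\{(n\alpha_n)^{-1},(n\alpha_n)^{-(1+2\gamma+2\mu)/(1+2\gamma)}\}$; since $1+2\gamma\ge 2\bigl(\beta\wedge(\tfrac12+\gamma)\bigr)$, the second exponent is at least $2\tfrac{\beta\wedge(1/2+\gamma)+\mu}{1+2\gamma}$, so $\sigma_n\lesssim r_n$. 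For the centering, $Y_k=f_{0,k}+\eps_k/\sqrt n$ gives $\hat\psi_n-\psi(f_0)=B_n+L_n$, where $B_n=-\sum_k k^{1+2\gamma}a_k f_{0,k}/(k^{1+2\gamma}+n\alpha_n)$ is a deterministic bias and $L_n=\tfrac1{\sqrt n}\sum_k n\alpha_n a_k\eps_k/(k^{1+2\gamma}+n\alpha_n)$ is centred Gaussian under $P_0$. Using $k^{1+2\gamma}/(k^{1+2\gamma}+n\alpha_n)\le(k^{1+2\gamma}/(n\alpha_n))^{\theta}$ for any $\theta\in[0,1]$, Cauchy--Schwarz, and the choice $\theta=\min\{1,\tfrac{\beta+\mu}{1+2\gamma}\}$ yields $|B_n|\lesssim(n\alpha_n)^{-\theta}\lesssim r_n$, the parametric floor $(n\alpha_n)^{-1/2}$ in $r_n$ accounting for the very smooth regimes in which the nonparametric exponent would exceed $1$; moreover $(n\alpha_n)^2\le n\alpha_n(k^{1+2\gamma}+n\alpha_n)$ gives $E_0[L_n^2]\le\alpha_n\sigma_n^2\le\sigma_n^2\lesssim r_n^2$. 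Hence $E_0[(\hat\psi_n-\psi(f_0))^2]=B_n^2+E_0[L_n^2]\lesssim r_n^2$.

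To conclude, for any $M_n\to\infty$,
$$ \Pi_{\alpha_n}\bigl(|\psi(f)-\psi(f_0)|\ge M_n r_n\mid Y\bigr)\le\Pi_{\alpha_n}\bigl(|\psi(f)-\hat\psi_n|\ge\tfrac{M_n r_n}{2}\mid Y\bigr)+\mathbf{1}\bigl\{|\hat\psi_n-\psi(f_0)|\ge\tfrac{M_n r_n}{2}\bigr\}.$$
By the explicit $\cN(0,\sigma_n^2)$ law of $\psi(f)-\hat\psi_n$ under the posterior and $\sigma_n\le Cr_n$, the first term is at most the deterministic quantity $2e^{-M_n^2/(8C^2)}\to0$; taking $E_0$ of the second term and applying Markov's inequality with the moment bound above yields a quantity of order $M_n^{-2}\to0$. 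Summing these two bounds gives $E_0\Pi_{\alpha_n}(|\psi(f)-\psi(f_0)|\ge M_n r_n\mid Y)\to0$, which is the assertion.

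I do not expect a genuine analytic obstacle here: conjugacy trivialises the posterior and reduces everything to the estimates in \cite{BIP2011}. The only point needing care is the case analysis in the second paragraph --- the sign of $\mu$, whether $\beta$ lies below or above $\tfrac12+\gamma$, and the interaction with the parametric floor $(n\alpha_n)^{-1/2}$ --- which one must run to verify that the single exponent $\tfrac{\beta\wedge(1/2+\gamma)+\mu}{1+2\gamma}$ controls both the posterior spread $\sigma_n$ and the bias $B_n$ simultaneously, and uniformly over $\alpha_n\in(0,1]$; the extra factor $\alpha_n\le1$ in $E_0[L_n^2]$ only makes the noise contribution smaller than in the $\alpha_n=1$ case.
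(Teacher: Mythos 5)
Your proposal is correct and follows essentially the same route as the paper: conjugacy gives the explicit Gaussian marginal $\alpha_n$-posterior for $\psi(f)$, and the problem reduces to bounding the squared bias of the centering, the variance of its stochastic part, and the posterior spread by a multiple of the rate, exactly the three terms the paper controls via Lemmas 8.1--8.2 of \cite{BIP2011} with $n$ replaced by $n\alpha_n$. The only cosmetic difference is that you derive the series bounds by hand (interpolation plus Cauchy--Schwarz) and spell out the final triangle-inequality/Gaussian-tail/Markov step that the paper leaves implicit.
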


Thus in the low regularity regime, the $\alpha_n$-posterior may inflate the posterior variance of $\psi(f)$ by a factor slower than $1/\alpha_n$. This corresponds to a more `nonparametric' regime and the conclusions here are similar to those obtained for contraction rates for the full parameter, see Section \ref{sec:contraction} for more discussion.

\textit{Empirical verification of the BvM for the rescaled squared exponential process.} Consider density estimation with $n=10,000$ observations drawn from the density on $[0,1]$ given by $f \propto e^g$, with $g$ having coefficients $g_k = k^{-\frac{1}{2} - \beta}$ in the Fourier basis of $[0,1]$. Consider estimating the linear functional given by $\psi(f) = \int_0^1 a(t) f(t) dt $ with $a$ defined by coefficients $a_k = k^{-\frac{1}{2} - \mu}$ in the same basis. The estimator is $\hat{\psi}=\frac{1}{n}\sum_{i=1}^na(Y_i)$, the efficient influence function is $\effinf(t) = a(t) - \psi(f_0)$, and the information bound is $\|\effinf\|_L^2 = \int_0^1 a(t)^2 f_0(t) dt- \psi(f_0)^2$. We take as prior the exponentiated Gaussian process prior \eqref{def:gp_inducement} with $W$ a rescaled squared exponential process (Example \ref{ex:SE}) with length scale $k_n = n^{-\frac{1}{1+2\gamma}}$. Figure \ref{fig:gp_bvm_condition_illustration} displays histograms of \ap draws of $\sqrt{n\an}(\psi(f) - \hat{\psi})/\|\effinf\|_L$ with $\alpha_n = 1/4$ for combinations of $\beta,\gamma$; the blue distributions represent cases for which the condition $\gamma \wedge \beta > \frac{1}{2} + (\gamma - \mu)$ in Corollary \ref{cor:gp_examples} is satisfied, while the red distributions represent cases when the condition is violated. Posterior draws were generated by MCMC using the \texttt{sbde} package \citep{Tokdar_sbde}.

One can see that when the condition is verified, the marginal posterior appears to be Gaussian with the correct variance,  but when the condition is violated this does not seem to be the case. This illustrates that the asymptotic results and conditions are applicable in finite sample sizes. 

\begin{figure}[h]
  \centering
  \includegraphics[scale=0.45]{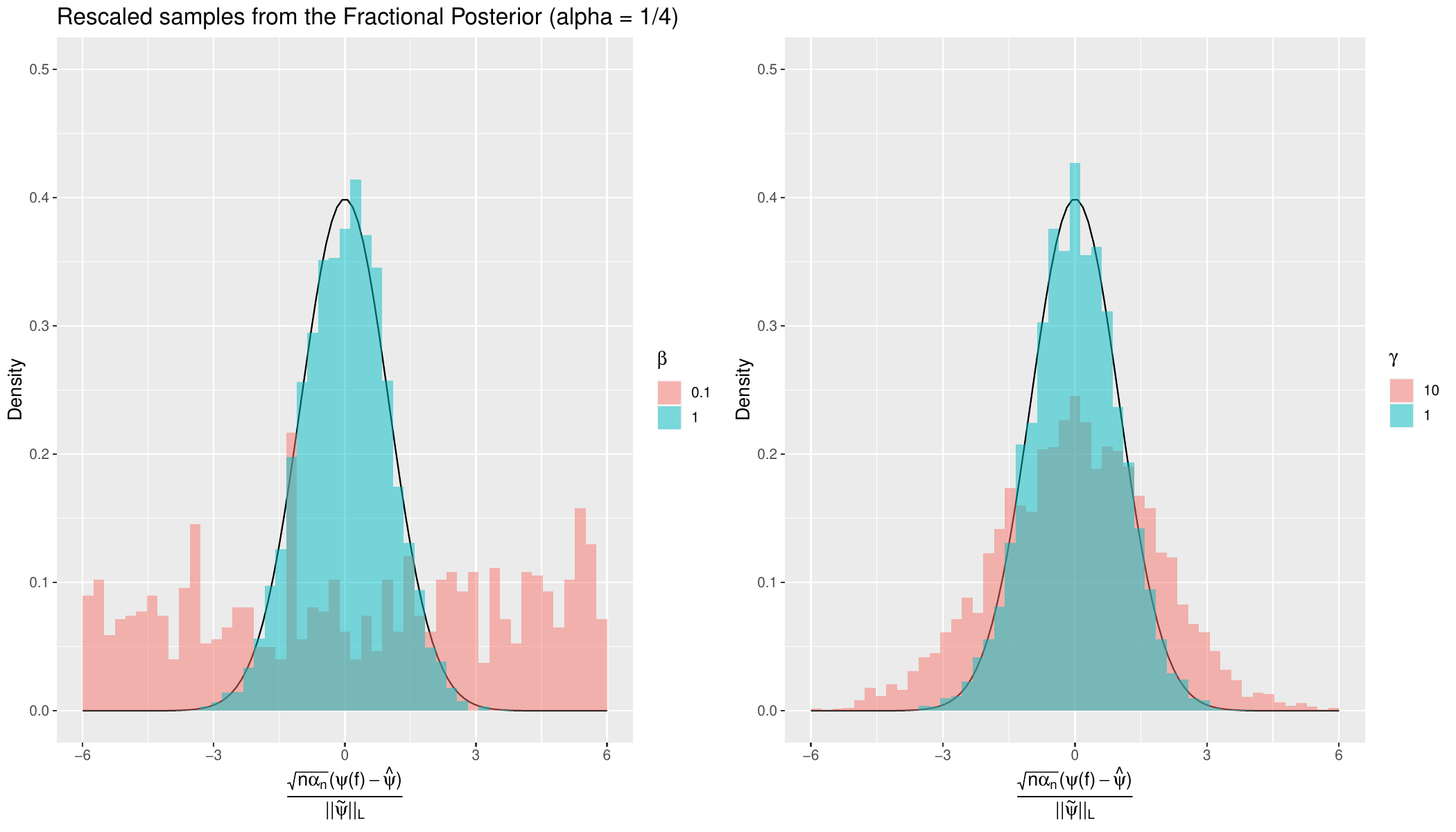}
  \caption{Draws from the fractional posterior distribution of $\sqrt{n\an}(\psi(\eta) - \psi(f_0))/\|\effinf\|_L$ with $\alpha_n =1/4$ for different combinations of $\beta$ and $\gamma$. In all cases $\mu = 1$, and on the left $\gamma = 1$ for different values of $\beta$, while on the right $\beta = 1$ for different values of $\gamma$. The red distributions correspond to cases where $\gamma \wedge \beta < \frac{1}{2} + (\gamma - \mu)$ (the condition in Corollary \ref{cor:gp_examples} is violated), while the blue distributions correspond to cases where $\gamma \wedge \beta > \frac{1}{2} + (\gamma - \mu)$ (the condition is verified). The black line is the density of a  $\cN(0,1)$ random variable.}
  \label{fig:gp_bvm_condition_illustration}
\end{figure}

\section{Construction of Efficient Confidence Intervals from $\alpha_n$--Posteriors}\label{sect:corrected_credible_sets}
{\color{black} In Section \ref{sect:sp-bvm}, we derived semiparametric BvM theorems for fractional posteriors. When $\al=1$, it is well--known that the BvM theorem implies that certain credible sets (typically built from posterior quantiles) are optimal--sized confidence sets. For $0<\alpha_n<1$, this is no longer true for $\al_n$--posteriors in that the length of the resulting credible sets will overshoot the optimal length given by the semiparametric efficiency bound. We now investigate how this can be remedied.

For simplicity, we focus on the case where $\psi(\eta)$ is one dimensional.  
Suppose one has obtained a BvM theorem for $\psi(\eta)$, for instance using the results from  Section \ref{sect:sp-bvm}, that is,
\begin{align}\label{conv_BvM_alpha}
		\Pi_{\alpha_n}[\cdot | Y^n] \circ \tau_n^{-1} \leadsto \mathcal{N}(0,V),
	\end{align}
	where  $\tau_n : \eta \rightarrow \sqrt{n\alpha_n}(\psi(\eta) - \hat{\psi})$, the centering $\hat{\psi}$ is linear efficient 
	and  $V$ is the efficiency bound for estimating $\psi(\eta)$. In particular,
	\begin{align}\label{conv_hat_psi}
		\sqrt{n}(\hat{\psi} - \psi(\eta_0)) \xrightarrow{\mathcal{L}} \mathcal{N}(0, V).
	\end{align}  
For  $0<\delta <1$, let $a_{n, \delta}^Y$ denote the $\delta$--quantile of the $\alpha_n$--posterior distribution of $\psi(\eta)$ and consider the quantile region
\[  \mathcal{I}_{\al_n}= \mathcal{I}(\delta,\al_n,Y):=(a_{n, \frac{\delta}{2}}^Y\, ,\, a_{n, 1-\frac{\delta}{2}}^Y].\]
 By definition, $\Pi_{\al_n}\left[ \psi(\eta)\in  \mathcal{I}_{\al_n} \given Y\right] =1-\delta$, 
that is,  $\mathcal{I}_{\al_n}$ is a $(1-\delta)$--credible set (assuming the $\al_n$--posterior CDF is continuous, otherwise one takes generalised quantiles).  
For $\delta \in (0,1)$, denote by $q_\delta$ the quantile of the $\mathcal{N}(0,1)$ distribution. From \eqref{conv_BvM_alpha} and standard results recalled in Lemma \ref{lemma_conv_quantiles}, one deduces that $\mathcal{I}_{\al_n}$ admits the following expansion:
\begin{align}\label{expansion_credible_set} 
	\mathcal{I}_{\al_n} = \Big( \hat{\psi} + \frac{\sqrt{V} q_{\frac{\delta}{2}}}{\sqrt{n\alpha_n}} +o_P\left(\frac{1}{\sqrt{n\alpha_n}} \right) ,  \hat{\psi} + \frac{\sqrt{V} q_{1-\frac{\delta}{2}}}{\sqrt{n\alpha_n}}, +o_P\left(\frac{1}{\sqrt{n\alpha_n}} \right)\Big].
\end{align}
When $\alpha_n=1$ or $\alpha_n \rightarrow 1$, it follows from \eqref{expansion_credible_set} and the fact that $\hat{\psi}$ is linear efficient that $\mathcal{I}_{\al_n}$  is asymptotically an efficient confidence interval of level $1-\delta$ for the parameter $\psi(\eta_0)$.

When $\alpha_n\rightarrow \alpha \in [0,1)$, $\mathcal{I}_{\al_n}$ has a diameter blown-up by a factor $1/\sqrt{\al_n}$ compared to $\mathcal{I}_{1}$ for $\al_n=1$, and its  confidence level thus exceeds $1-\delta$. Denoting by $\Phi$ the cumulative distribution function of the $\mathcal{N}(0,1)$ distribution, it follows from \eqref{expansion_credible_set} that, 
\begin{enumerate}
	\item if $\alpha_n \rightarrow \alpha \in (0,1)$, then $P_0\left[ \psi(\eta_0) \in \mathcal{I}_{\al_n} \right] \rightarrow  2\Phi(q_{1-\delta/2} / \sqrt\alpha) - 1 > 1-\delta$;
	\item if $\alpha_n \rightarrow 0$, then $P_0\left[ \psi(\eta_0) \in \mathcal{I}_{\al_n} \right] \rightarrow1$.
\end{enumerate}
An implication is that while $\mathcal{I}_{\al_n}$ is a valid confidence set, it is {\em conservative}, in that its coverage is larger than the target $1-\delta$.

In order to construct an efficient confidence interval from the $\alpha_n$--posterior of $\psi(\eta)$ when $\alpha_n\rightarrow \alpha \in [0,1)$, we consider a modified quantile region.} Let $\bar{\psi}$ be an estimator of $\psi(\eta_0)$ built from the $\alpha_n$--posterior distribution of $\psi(\eta)$ (e.g. posterior median or mean) and set 
	\begin{align}\label{def::eff_region}
		\mathcal{J}_{\al_n}:=\left(\sqrt\alpha_n(a_{n, \frac{\delta}{2}}^Y - \bar{\psi}) +\bar{\psi} \,,\, \sqrt\alpha_n(a_{n, 1-\frac{\delta}{2}}^Y - \bar{\psi}) +\bar{\psi}\right].
	\end{align}
We call this a {\em shift--and--rescale} version of the quantile set (or sometimes corrected set): this new interval is obtained by recentering $\mathcal{I}_{\al_n}$ at $\bar\psi$ and applying a {\em shrinking} factor $\sqrt{\al_n}$. 
We now provide a condition under which the shift-and-rescale set presented in \eqref{def::eff_region} has the correct coverage.
	
	\begin{theorem}\label{prop_corrected_region}
		Suppose \eqref{conv_BvM_alpha}--\eqref{conv_hat_psi} hold for some $0<\al_n<1$, and suppose the estimator $\bar{\psi}$ satisfies
		\begin{align}\label{assum::estimator_bar_psi}
		\bar{\psi} = \hat{\psi} +o_P(1/\sqrt{n}).
		\end{align}
	Then $\mathcal{J}_{\al_n}$ in \eqref{def::eff_region}  is an asymptotically efficient confidence interval of level $1-\delta$ for the parameter $\psi(\eta_0)$, i.e.
	\[ P_0\left[\psi(\eta_0) \in \mathcal{J}_{\al_n}\right] \to 1-\delta \]
as $n\to\infty$.	
%
If $\alpha_n = \alpha \in (0,1]$ is {\em fixed} and $\bar{\psi}$ is the $\alpha$--posterior median, then \eqref{assum::estimator_bar_psi} holds. In particular,  the region \eqref{def::eff_region} is an asymptotically efficient confidence interval of level $1-\delta$ for $\psi(\eta_0)$.     
\end{theorem}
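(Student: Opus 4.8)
The plan is to derive the coverage statement for $\mathcal{J}_{\al_n}$ by tracking how the shift-and-rescale map transforms the $\al_n$-posterior quantiles, and then using the BvM \eqref{conv_BvM_alpha} together with the efficient asymptotic normality of the centering $\hat\psi$ in \eqref{conv_hat_psi}. First I would introduce $z_\delta$ as the $\delta$-quantile of the $\mathcal{N}(0,V)$ distribution and recall that \eqref{conv_BvM_alpha}, which states $\Pi_{\al_n}[\cdot\given Y^n]\circ\tau_n^{-1}\leadsto\mathcal{N}(0,V)$ with $\tau_n:\eta\mapsto\sqrt{n\al_n}(\psi(\eta)-\hat\psi)$, gives convergence of the associated quantiles: since the limit CDF is continuous and strictly increasing, the $\delta$-quantile of the pushforward measure converges in $P_0$-probability to $z_\delta$, i.e.
\[
\sqrt{n\al_n}\,(a_{n,\delta}^Y-\hat\psi)=z_\delta+o_P(1).
\]
This is the standard quantile-convergence consequence of weak convergence of distributions (with the $d_{BL}$-formulation of \eqref{cvl} it follows from a Polya-type argument applied in probability). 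Applying this with $\delta/2$ and $1-\delta/2$ gives $a_{n,\delta/2}^Y=\hat\psi+(z_{\delta/2}+o_P(1))/\sqrt{n\al_n}$ and similarly for the upper quantile.

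Next I would substitute these expansions into the definition \eqref{def::eff_region} of $\mathcal{J}_{\al_n}$. The lower endpoint of $\mathcal{J}_{\al_n}$ is $\sqrt{\al_n}(a_{n,\delta/2}^Y-\bar\psi)+\bar\psi$; plugging in the quantile expansion and using the hypothesis \eqref{assum::estimator_bar_psi} that $\bar\psi=\hat\psi+o_P(1/\sqrt{n})$ (so that $\sqrt{\al_n}(\hat\psi-\bar\psi)=o_P(\sqrt{\al_n/n})=o_P(1/\sqrt{n\al_n})\cdot\al_n$, in any case negligible at the $1/\sqrt n$ scale) one finds
\[
\sqrt{\al_n}(a_{n,\delta/2}^Y-\bar\psi)+\bar\psi
=\bar\psi+\sqrt{\al_n}\Big(\frac{z_{\delta/2}+o_P(1)}{\sqrt{n\al_n}}+\hat\psi-\bar\psi\Big)
=\hat\psi+\frac{z_{\delta/2}}{\sqrt n}+o_P(1/\sqrt n).
\]
Thus the rescaling by $\sqrt{\al_n}$ exactly cancels the $1/\sqrt{\al_n}$ inflation, and $\mathcal{J}_{\al_n}=\big(\hat\psi+z_{\delta/2}/\sqrt n+o_P(1/\sqrt n),\ \hat\psi+z_{1-\delta/2}/\sqrt n+o_P(1/\sqrt n)\big]$. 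Consequently $P_0[\psi(\eta_0)\in\mathcal{J}_{\al_n}]=P_0\big(z_{\delta/2}+o_P(1)\le\sqrt n(\hat\psi-\psi(\eta_0))+o_P(1)\le z_{1-\delta/2}+o_P(1)\big)$, and by \eqref{conv_hat_psi} the quantity $\sqrt n(\hat\psi-\psi(\eta_0))$ converges in law to $\mathcal{N}(0,V)$; since $z_{\delta/2},z_{1-\delta/2}$ are precisely its quantiles and the limit is continuous, this probability converges to $(1-\delta/2)-\delta/2=1-\delta$. The interval is efficient because its half-length is $(z_{1-\delta/2}-z_{\delta/2})/(2\sqrt n)$, matching the optimal length dictated by the efficiency bound $V$.

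For the final sentence, with $\al_n=\al\in(0,1]$ fixed, it remains to verify \eqref{assum::estimator_bar_psi} for the choice $\bar\psi=$ the $\al$-posterior median. This is the point requiring a small extra argument, and I expect it to be the only genuine obstacle: when $\al$ is fixed, $a_{n,1/2}^Y$ is the $1/2$-quantile, so the quantile-convergence statement above with $\delta=1/2$ gives $\sqrt{n\al}(a_{n,1/2}^Y-\hat\psi)=z_{1/2}+o_P(1)$; but the median of $\mathcal{N}(0,V)$ is $z_{1/2}=0$, hence $\sqrt{n\al}(\bar\psi-\hat\psi)=o_P(1)$, i.e. $\bar\psi=\hat\psi+o_P(1/\sqrt{n\al})=\hat\psi+o_P(1/\sqrt n)$ since $\al$ is a fixed constant. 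That is exactly \eqref{assum::estimator_bar_psi}, so the first part applies and $\mathcal{J}_\al$ is an asymptotically efficient level $1-\delta$ confidence interval. (The same reasoning would also work for the $\al$-posterior mean provided one additionally argues uniform integrability to pass from the weak BvM to convergence of means; with the median no such step is needed, which is presumably why the statement singles it out. When $\al_n\to 0$ this last step may fail because $o_P(1/\sqrt{n\al_n})$ need no longer be $o_P(1/\sqrt n)$, which is consistent with the paper's remark that coverage becomes more delicate in that regime.)
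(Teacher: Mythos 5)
Your proposal is correct and follows essentially the same route as the paper: quantile convergence from the BvM (the paper formalises your ``Polya-type argument in probability'' via Lemmas \ref{lemma_conv_quantiles} and \ref{conv_alea_1}), substitution of the quantile expansion $a_{n,\delta}^Y = \hat{\psi} + \sqrt{V}q_\delta/\sqrt{n\alpha_n} + o_P(1/\sqrt{n\alpha_n})$ into \eqref{def::eff_region} together with \eqref{assum::estimator_bar_psi}, and then \eqref{conv_hat_psi} for the coverage limit; the fixed-$\alpha$ median argument via $q_{1/2}=0$ is also identical. No gaps.
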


Theorem \ref{prop_corrected_region} states that if the re--centering is close enough to the efficient estimator $\hat\psi$, then the shift--and--rescale modification leads to a confidence set of optimal size (in terms of efficiency) from an information-theoretic perspective, and this is always possible for fixed $\al$ if one centers at the posterior median. When $\al_n$ can possibly go to zero, the situation is more delicate. Indeed, although by definition \eqref{conv_BvM_alpha} is centered around an efficient estimator at the scale $1/\sqrt{n\al_n}$,  it is not clear in general how to deduce from this a similar result at the smaller scale $1/\sqrt{n}$. We do not provide a general answer here, but to gain some insight we consider two specific examples: the conjugate parametric setting \eqref{eq:conjugate}, and  the nonparametric Gaussian white noise model with a conjugate prior, and investigate whether the $\al_n$--posterior median $a_{n, \frac{1}{2}}^Y$ satisfies \eqref{assum::estimator_bar_psi} when $\alpha_n \rightarrow 0$. 

Theorem \ref{prop_corrected_region} applies to semiparametric models, but also to parametric models as a special case. In particular, in the conjugate example \eqref{eq:conjugate}, it is easy to check that \eqref{assum::estimator_bar_psi} holds if and only if $\sqrt{n}\al_n\to\infty$, which is a fairly mild condition. We now turn to a more complex setting.

\textit{Modified credible sets in Gaussian white noise.}
Consider Model \ref{def:GWN} and write $f_0(t) = \sum_{k=1}^\infty f_{0,k} \phi_k(t)$ for $(\phi_k)_{k > 0}$ an orthonormal basis of $L^2[0,1]$. We assign a prior to $f$ by placing independent priors on the basis coefficients $f_k=\langle f,\phi_k\rangle \sim \cN(0,\lambda_k)$, and consider the problem of estimating the linear functional $\psi(f) = \int_0^1 a(t) f(t) dt = \sum_{k=1}^\infty a_k f_{k}$.
By conjugacy arguments, the \ap distribution of $\psi(f) | Y^{(n)}$ is Gaussian (so its median and mean coincide) $\cN(a_{n,1/2}^Y,\bar\sigma^2)$, with 
\[ a_{n,1/2}^Y=\sum_{k=1}^\infty \frac{n\an \lambda_k}{1+n\an\lambda_k}a_k Y_k,\qquad \bar\sigma^2=\sum_{k=1}^\infty \frac{\lambda_k}{1+n\an\lambda_k}a_k^2.\] Suppose the smoothness of the true function $f_0$, the representer $a$ and the prior are specified through the magnitude of their basis coefficients as follows, for $\beta, \mu, \gamma > 0$,
\begin{align} \label{params}
	f_{0,k} = k^{-\frac{1}{2} - \beta}, \hspace{5mm} a_k = k^{-\frac{1}{2} - \mu}, \hspace{5mm}
	\lambda_k = k^{-1-2\gamma}.
\end{align}
Setting $\bar{\psi}=a_{n,1/2}^Y$ the posterior mean/median,  the shift--and--rescale set is, with $z_\delta$ the standard Gaussian quantiles,
$$
\mathcal{J}_{\al_n}=\left(\bar{\psi} +\sqrt{\an} z_{\delta/2}\bar{\sigma} \,,\, \bar{\psi} +\sqrt{\an} z_{1-\delta/2}\bar{\sigma}\right].
$$
By Theorem \ref{prop_corrected_region}, for the set $\mathcal{J}_{\al_n}$ to have asymptotic coverage $1- \delta$ it suffices that 
$
\bar{\psi} - \hat{\psi} = o_P(1/\sqrt{n}).
$
The following result describes the behaviour of the shift--and--rescale sets.

\begin{proposition}\label{prop:corrected_credible_sets_behaviour}
Consider the Gaussian white noise model with Gaussian prior $f = \sum_{k=1}^\infty f_k \phi_k$, where $f_k \sim^{ind} N(0,\lambda_k)$, and suppose that \eqref{params} holds. {\color{black}Let $\mathcal{J}_{\alpha_n}$ denote the set \eqref{def::eff_region} with $\bar{\psi}$ equal to the posterior mean/median.} Then
	\begin{enumerate}
		\item If $\beta + \mu > 1+ 2\gamma$, the sets $\mathcal{J}_{\al_n}$ are efficient confidence intervals of level $1-\delta$ if and only if $\sqrt{n} \an \rightarrow \infty$.
		\item If $\beta + \mu = 1+ 2\gamma$, then $\mathcal{J}_{\al_n}$  are efficient confidence intervals of level $1-\delta$  if and only if  $\frac{\sqrt{n} }{\log(n)} \an \rightarrow \infty$.
		\item If $\frac{1}{2} + \gamma < \beta + \mu < 1+ 2\gamma$, then the sets $\mathcal{J}_{\al_n}$ are efficient confidence intervals of level $1-\delta$ if and only if  $n^{1-\frac{1+2\gamma}{2(\beta + \mu)}}\an \rightarrow \infty.$
	\end{enumerate}
\end{proposition}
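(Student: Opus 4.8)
The plan is to reduce the whole statement — all three regimes, both directions — to estimating one deterministic sequence. Write $\hat\psi=\sum_k a_kY_k=\psi(f_0)+n^{-1/2}\sum_k a_k\eps_k$ for the efficient estimator and $\bar\psi=a_{n,1/2}^Y=\sum_k\frac{n\an\lambda_k}{1+n\an\lambda_k}a_kY_k$ for the \ap mean/median. Observe first that $\bar\sigma^2=\sum_k\frac{\lambda_k}{1+n\an\lambda_k}a_k^2$ is deterministic and, since $\sum_k a_k^2=\sum_k k^{-1-2\mu}<\infty$, dominated convergence gives $n\an\bar\sigma^2\to\sum_k a_k^2=V$; hence the length $\sqrt\an(z_{1-\delta/2}-z_{\delta/2})\bar\sigma$ of $\mathcal{J}_{\an}$ is always $\sim 2z_{1-\delta/2}\sqrt{V/n}$, i.e. of efficient size, so the only thing that can fail is the coverage. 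Substituting $Y_k=f_{0,k}+\eps_k/\sqrt n$ gives the decomposition $\bar\psi-\hat\psi=B_n+S_n$ with
\[
B_n=-\sum_k\frac{a_kf_{0,k}}{1+n\an\lambda_k},\qquad S_n=-\frac1{\sqrt n}\sum_k\frac{a_k\eps_k}{1+n\an\lambda_k},
\]
where $B_n$ is deterministic.

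Next I would translate coverage into a statement about $\sqrt n(\bar\psi-\hat\psi)$. From $\psi(f_0)-\bar\psi=-n^{-1/2}\sum_k a_k\eps_k-(\bar\psi-\hat\psi)$ one gets $P_0[\psi(f_0)\in\mathcal{J}_{\an}]=P_0\big[z_{\delta/2}<(\psi(f_0)-\bar\psi)/(\sqrt\an\bar\sigma)\le z_{1-\delta/2}\big]$, and $(\psi(f_0)-\bar\psi)/(\sqrt\an\bar\sigma)$ splits as a centred Gaussian with variance $V/(n\an\bar\sigma^2)\to1$ plus the term $-(\bar\psi-\hat\psi)/(\sqrt\an\bar\sigma)$, which using $\sqrt\an\bar\sigma\sim\sqrt{V/n}$ equals $-V^{-1/2}\sqrt n(\bar\psi-\hat\psi)(1+o(1))$. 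Continuity of the standard normal CDF then shows that the coverage tends to $1-\delta$ if and only if $\sqrt n(\bar\psi-\hat\psi)\to^{P_0}0$, and that when this fails the limiting coverage (along subsequences) is a constant strictly below $1-\delta$ — this is the "if" part of Theorem \ref{prop_corrected_region} together with its converse in this conjugate model. Running the same computation with scaling $\sqrt{n\an}$ in place of $\sqrt n$ shows the BvM \eqref{conv_BvM_alpha} itself holds in all three regimes (in case 3 precisely because $\beta+\mu>\tfrac12+\gamma$), so Theorem \ref{prop_corrected_region} is genuinely applicable.

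It then remains to evaluate the two pieces. For the stochastic one, $\sqrt n S_n=-\sum_k a_k\eps_k(1+n\an\lambda_k)^{-1}$ is centred Gaussian with variance $\sum_k k^{-1-2\mu}(1+n\an k^{-1-2\gamma})^{-2}$; comparing with an integral and splitting at the effective resolution level $N:=(n\an)^{1/(1+2\gamma)}$, where $n\an\lambda_k\asymp1$, this variance is of order $(n\an)^{-2\mu/(1+2\gamma)}\vee(n\an)^{-2}\to0$ since $\mu>0$. Hence $\sqrt nS_n=o_P(1)$ always, so $\sqrt n(\bar\psi-\hat\psi)\to^{P_0}0$ is equivalent to $\sqrt nB_n\to0$. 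For the bias, with $\rho:=\beta+\mu$ the summand of $|B_n|=\sum_k k^{-1-\rho}(1+n\an k^{-1-2\gamma})^{-1}$ is $\asymp k^{2\gamma-\rho}/(n\an)$ for $k\le N$ and $\asymp k^{-1-\rho}$ for $k>N$; summing gives $|B_n|\asymp(n\an)^{-1}$ if $\rho>1+2\gamma$, $|B_n|\asymp(n\an)^{-1}\log(n\an)$ if $\rho=1+2\gamma$, and $|B_n|\asymp(n\an)^{-\rho/(1+2\gamma)}$ if $\tfrac12+\gamma<\rho<1+2\gamma$. Inserting these orders into $\sqrt n|B_n|\to0$ yields, respectively, $\sqrt n\an\to\infty$; $\sqrt n\an/\log(n\an)\to\infty$, i.e. $\sqrt n\an/\log n\to\infty$ in the relevant regimes of $\an$; and $(n\an)^{\rho/(1+2\gamma)}n^{-1/2}\to\infty$, which after raising to the power $(1+2\gamma)/\rho$ reads $n^{1-(1+2\gamma)/(2\rho)}\an\to\infty$. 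These are the three stated conditions.

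The main obstacle is the last step: one needs two-sided (not merely upper) bounds on the bias sum $B_n$, with constants bounded away from $0$, so that the characterization is a genuine equivalence. This is comfortable here because $a_k,f_{0,k},\lambda_k$ are all positive, so $B_n$ has no sign cancellations and integral comparison gives matching bounds; the only point requiring a little care is the critical case $\rho=1+2\gamma$, where $\sum_{k\le N}k^{-1}$ produces the logarithmic factor. Everything else is routine bookkeeping with power sums.
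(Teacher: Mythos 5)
Your proposal is correct and follows essentially the same route as the paper's proof: decompose $\bar\psi-\hat\psi$ into a deterministic bias $t_{n,1}=\sqrt{n}\sum_k a_kf_{0,k}/(1+n\an\lambda_k)$ and a Gaussian term whose variance is always $o(1)$ at scale $\sqrt{n}$, then characterize $\sqrt{n}\,|B_n|\to0$ by splitting the power sum at $k^*=(n\an)^{1/(1+2\gamma)}$, yielding exactly the three regimes. Your explicit treatment of the converse (coverage strictly below $1-\delta$ when the normalized bias does not vanish, using positivity of all summands) and of the length $n\an\bar\sigma^2\to V$ is slightly more careful than the paper, which leaves these points implicit, but the substance is identical.
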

This result assumes $\gamma + 1/2 < \beta + \mu$, which corresponds to the case where 
 a Bernstein-von Mises result for the standard posterior ($\an \equiv 1$) holds, see  Theorem 5.4 in \citet{BIP2011}, cases (ii) and (iii).  In agreement with these results, we see by setting $\al_n=1$ in Proposition \ref{prop:corrected_credible_sets_behaviour} that in all three cases standard credible sets $\mathcal{J}_1$ are efficient confidence sets. 
 The point of Proposition \ref{prop:corrected_credible_sets_behaviour} is to investigate to what extent shift--and--rescale sets $\mathcal{J}_{\al_n}$ centered at the posterior median remain efficient confidence sets when $\al_n$ goes to $0$.  In Cases 1 and 2, the condition is very mild and any sequence $(\al_n)$ essentially slower than $1/\sqrt{n}$ works (recall as noted above that in the basic parametric example \eqref{eq:conjugate}, the shift--and--rescale sets are efficient under the same condition $\sqrt{n}\alpha_n\to\infty$). When $\beta+\mu$ approaches $1/2+\gamma$ (Case 3),  $\alpha_n$ is only allowed to decrease quite slowly to $0$ to preserve efficiency. An interpretation is that the problem becomes more `nonparametric' and the $\al_n$--posterior median does not necessarily concentrate fast enough in order for  \eqref{assum::estimator_bar_psi} to be satisfied.
 
{\em Simulation study.} We now illustrate the applicability of the asymptotic result presented in Proposition \ref{prop:corrected_credible_sets_behaviour} to the finite sample setting. We simulated 10,000 observations  of $Y^n$ from the Gaussian white noise model ($n=10,000$) with 3 different parameter combinations of $(\beta, \mu, \gamma)$ corresponding to the three different cases presented in Proposition \ref{prop:corrected_credible_sets_behaviour}. With each of these observations, we produced credible sets from the full posterior, the $\an-$posterior, and the shift--and--rescale sets from the $\an-$posterior, and computed their empirical coverage (the proportion of the sets which contained the true parameter $\psi(f_0)$), their length, and the mean bias of their centering. This data is presented in Table \ref{Tab:credible_sets_coverage}.
For the $\an-$posterior and the corrected credible sets, we study two regimes in each case: one where $\an$ breaches the condition described in Proposition \ref{prop:corrected_credible_sets_behaviour} by a $\sqrt{\log{n}}$ factor, and one where $\an$ verifies the condition by a $\sqrt{\log{n}}$ factor. This results in a large difference in the empirical coverage of the shift--and--rescale sets; when the lower bound is breached, the corrected sets have little or no coverage, but when the lower bound is respected they have approximately the target coverage. 
{\color{black}   
In this example, the conditions provided by Proposition \ref{prop:corrected_credible_sets_behaviour} seem to be accurate (note that due here to the moderate sample size of $n=10,000$, the $\sqrt{\log(n)}$ factor is still not completely negligible in comparison to the polynomial factor specified by Proposition \ref{prop:corrected_credible_sets_behaviour}, which explains why the empirical behaviours clearly feature either coverage or non-coverage). }

\begin{table}[h]
\centering
\begin{tabular}{l|ccc}
  {\bf Gaussian White Noise} \\ \hline
 {\bf Case 1}\hspace{1mm} $\beta + \mu > 1+ 2\gamma$ \\ \hspace{14mm}$\beta = 2, \mu =2, \gamma = 0.5$ & Cov. & Len. & Bias (SD)\\
  \hline
Full Posterior & 0.95 & 0.02 & -0.00008 (0.005)  \\ 
  $\an-$Posterior ($\sqrt{n} \an = 1/\sqrt{\log(n)} \rightarrow 0$) & 1.00 & 0.86 & -0.05331 (0.004)  \\ 
$\an-$Posterior ($\sqrt{n} \an = \sqrt{\log(n)} \rightarrow \infty$) & 1.00 & 0.08 & -0.00059 (0.005)  \\ 
 Shift--and--rescale Sets ($\sqrt{n} \an = 1/\sqrt{\log(n)} \rightarrow 0$) & 0.00 & 0.02 & -0.05331 (0.004) \\ 
  Shift--and--rescale  Sets ($\sqrt{n} \an = \sqrt{\log(n)} \rightarrow \infty$) & 0.95 & 0.02 & -0.00059 (0.005)  \\\hline
  {\bf Case 2} \hspace{1mm} $\beta + \mu = 1+ 2\gamma$ \\ \hspace{14mm} $\beta = 1, \mu =1, \gamma = 0.5$\\\hline 
  Full Posterior & 0.95 & 0.02 & -0.00006 (0.005)  \\ 
  $\an-$Posterior ($\frac{\sqrt{n} }{\log(n)} \an = 1/\sqrt{\log(n)} \rightarrow 0$) & 1.00 & 0.29 & -0.01462 (0.005)  \\ 
$\an-$Posterior ($\frac{\sqrt{n} }{\log(n)} \an= \sqrt{\log(n)} \rightarrow \infty$) & 0.99 & 0.03 & -0.00021 (0.005) \\ 
Shift--and--rescale  Sets ($\frac{\sqrt{n} }{\log(n)} \an= 1/\sqrt{\log(n)} \rightarrow 0$) & 0.15 & 0.02 & -0.01462 (0.005) \\ 
Shift--and--rescale  Sets ($\frac{\sqrt{n} }{\log(n)} \an = \sqrt{\log(n)}\rightarrow \infty$) & 0.95 & 0.02 & -0.00021 (0.005) \\ \hline
  {\bf Case 3}\hspace{2mm} $\frac{1}{2} + \gamma < \beta + \mu < 1+ 2\gamma$ \\ \hspace{14mm} $\beta = 0.75, \mu = 0.75, \gamma = 0.5$ \\\hline 
  Full Posterior & 0.95 & 0.02 & -0.00061 (0.005) \\ 
  $\an-$Posterior ( $n^{1-\frac{1+2\gamma}{2(\beta + \mu)}}\an= 1/\sqrt{\log(n)} \rightarrow 0$) & 1.00 & 0.40 & -0.04759 (0.005) \\ 
$\an-$Posterior ( $n^{1-\frac{1+2\gamma}{2(\beta + \mu)}}\an= \sqrt{\log(n)} \rightarrow \infty$) & 1.00 & 0.04 & -0.00145 (0.005) \\ 
Shift--and--rescale  Sets ( $n^{1-\frac{1+2\gamma}{2(\beta + \mu)}}\an= 1/\sqrt{\log(n)} \rightarrow 0$) & 0.00 & 0.02 & -0.04759 (0.005) \\ 
Shift--and--rescale  Sets ( $n^{1-\frac{1+2\gamma}{2(\beta + \mu)}}\an = \sqrt{\log(n)} \rightarrow \infty$) & 0.94 & 0.02 & -0.00145 (0.005) \\ 
   \hline & \\ 
   {\bf \color{black} Density Estimation} \\ \hline
    {\color{black} {\bf Case 4}\hspace{2mm} $\frac{1}{2} + \gamma < \beta + \mu < 1+ 2\gamma$ }\\ \hspace{14mm} $\beta = 1, \mu = 1, \gamma = 1$ \\\hline 
  Full Posterior                                                                                        & 0.95 & 0.01 & -0.00089 (0.004)  \\ 
  $\an-$Posterior ( $n^{1-\frac{1+2\gamma}{2(\beta + \mu)}}\an= 1/\sqrt{\log(n)} \rightarrow 0$)               & 0.93 & 0.06 & -0.01267 (0.005)   \\ 
$\an-$Posterior ( $n^{1-\frac{1+2\gamma}{2(\beta + \mu)}}\an= \sqrt{\log(n)} \rightarrow \infty$)              & 0.94 & 0.02 & -0.00233 (0.005)   \\ 
Shift--and--rescale  Sets ( $n^{1-\frac{1+2\gamma}{2(\beta + \mu)}}\an= 1/\sqrt{\log(n)} \rightarrow 0$)       & 0.31 & 0.01 &  -0.01267 (0.005) \\ 
Shift--and--rescale  Sets ( $n^{1-\frac{1+2\gamma}{2(\beta + \mu)}}\an = \sqrt{\log(n)} \rightarrow \infty$)   & 0.92 & 0.01 &  -0.00233 (0.005) \\ 
   \hline
\end{tabular}
\caption{Data pertaining to the credible sets obtained in the three different cases presented in Proposition \ref{prop:corrected_credible_sets_behaviour}. {\color{black}Case 4 represents a similar experiment in density estimation.}}
\label{Tab:credible_sets_coverage}
\end{table}

We first comment on how the lengths of the corrected  sets in each of the cases roughly match the lengths of the credible sets from the full posterior, but that the bias of the centering of the corrected  sets is always larger than the bias of the centering of the full posterior (even in the regimes where $\an$ does not breach the lower bound). It is easy to see why this is the case in this particular model; the bias is $-\sum_{k=1}^\infty\frac{1}{1+n\an \lambda_k}$, which is obviously larger in magnitude for smaller $\an$. The fact that the corrected sets have a larger bias but the same length as those from the full posterior results in a strictly lower coverage, which can be seen in the empirical results. 

Secondly, we observe that the lengths of the shift--and--rescale sets are roughly the same for different choices of $\alpha_n$, so it is purely the bias of the centering which affects the coverage for $\alpha_n$ breaching the lower bound versus $\alpha_n$ respecting the lower bound. This makes sense on inspection of the assumptions of Proposition \ref{prop_corrected_region}, which relies on the posterior mean being within a factor $o_P(1/\sqrt{n})$ of the efficient centering; when $\an$ breaches the lower bound implied by Proposition \ref{prop:corrected_credible_sets_behaviour}, the bias is orders of magnitude larger than when $\an$ respects the lower bound.

Finally, note that the credible sets from the $\an-$posterior always have coverage close to 1, but at the price of being considerably larger than those from the full posterior or the corrected credible sets.

{\it Density estimation.} We empirically illustrate the behaviour of shift--and--rescale sets in density estimation, where exact computations are not possible. We use the same prior, true density and linear functional as the empirical study in Section \ref{sec:gp_bvm}, with $\beta = \gamma = \mu = 1$. We take $n=10,000$ observations and again generate posterior samples by MCMC using the \texttt{sbde} R-package \citep{Tokdar_sbde} with $\al_n=n^{-1/4}/\sqrt{\log(n)}$, $n^{-1/4} \sqrt{\log(n)}$ and $1$.  
{\color{black} We consider the (empirical) 95\% credible intervals and the corresponding shift-and-rescale credible intervals. Figure \ref{fig:corrected_credible_sets_rescaled_se} shows the roughly Gaussian shape of each of the posterior distributions; the comparatively large credible intervals from the \ap (dashed vertical lines); and the fact that the shift--and--rescale intervals (solid vertical lines) and credible interval from the full posterior have approximately the same length, which shows the correction also appears to work well in this more complex setting. For estimates of the coverage of these shift-and-rescale credible sets, see Case 4 in Table \ref{Tab:credible_sets_coverage}. The condition $n^{1-\frac{1+2\gamma}{2(\beta + \mu)}}\alpha_n \rightarrow \infty$ derived for Gaussian white noise in Proposition \ref{prop:corrected_credible_sets_behaviour} seems to be a good guide in this setting as well, with the shift-and-rescale credible sets achieving very small coverage when this condition is breached, but approximately the right coverage when the condition is verified.}


\begin{figure}[h]
  \centering
  \includegraphics[scale=0.55]{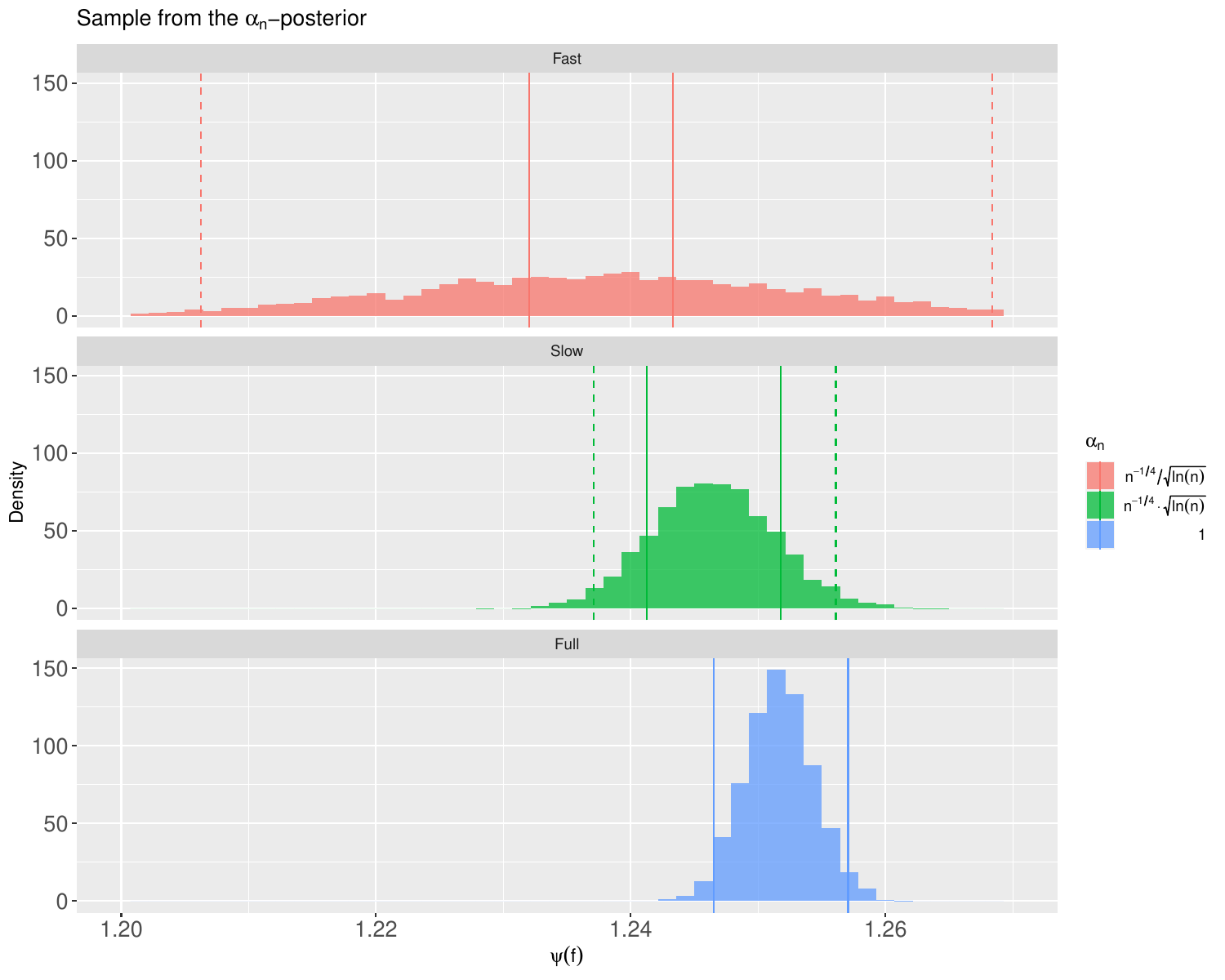}
  \caption{Histograms of \ap distribution samples of $\psi(f)$ in density estimation using a rescaled squared exponential prior, with empirical credible intervals (dashed lines) and shift-and-rescale credible intervals (solid lines). }
  \label{fig:corrected_credible_sets_rescaled_se}
\end{figure}

{\color{black}
\begin{remark}[Multi-dimensional functionals]
Though we do not formally present any multi-dimensional semiparametric BvM results in this paper, we briefly sketch the analogous construction of a multidimensional shift--and--rescale set given a BvM theorem. 
Recall that for a one-dimensional functional, one uses the $\al_n$--posterior quantiles to define the boundary of the credible interval.
In higher-dimensions, a simple possibility is to use a sample from the $\al_n$--posterior to compute its empirical covariance $V_Y$, and use this as a `shape' for the boundary of the credible set. More precisely, for a $d-$dimensional functional, a $(1-\delta)-$credible set from an approximately Gaussian $\mathcal{N}_d(\bar{\psi}, V)$ random variable is approximately
$$
\{\psi : (\psi - \bar{\psi})^T V^{-1}(\psi - \bar{\psi}) \leq \chi_d^2(1-\delta)\},
$$
where $\chi_d^2(1-\delta)$ is the $(1-\delta)-$quantile of the $\chi_d^2$ distribution. The corresponding empirical shift-and-rescale set from the fractional posterior would then  be
$$
\{\psi : (\psi - \bar{\psi})^T V_Y^{-1}(\psi - \bar{\psi}) \leq \alpha_n\chi_d^2(1-\delta)\},
$$
where $V_Y$ is the empirical $\al_n$--posterior covariance. This provides an analogue in dimension $d\ge 1$ of the shift-and-rescale set presented in (26) when $d=1$.
\end{remark}
}


\section{Contraction Rates for the Fractional Posterior}\label{sec:contraction}

A first step in proving semiparametric BvM results in Section \ref{sect:sp-bvm} is to localize the posterior near the true parameter by establishing a contraction rate. We therefore study nonparametric contraction rates for the $\alpha_n$-posterior distribution with a focus on obtaining the precise dependence on both $n$ and $\alpha_n$, results which are also of independent interest for full nonparametric Bayesian estimation. Given our primary focus is semiparametrics, we will consider common statistical norms which are relevant to this topic, such as $L^p$-distances.

Recall that unlike for the full Bayesian posterior, testing or metric entropy conditions are not needed to obtain contraction rates in the R\'enyi-divergence for the fractional posterior when $\alpha_n<1$ {\color{black} (as derived by \citealp{walkerhjort01} for consistency and \citealp{tongz06} for rates)}, see also \citet{kruijervdv13,BPY,GM20}. Given this result is more flexible than the classic test-based approach for full posteriors, we first examine its implications for some common statistical norms. For $0<\alpha<1$,  the Rényi divergence of order $\alpha$ between two densities $f$ and $g$ on a measurable space $(E, \mathcal{A}, \mu)$ is given by
	\begin{align*}
		D_\alpha(f, g) = -\frac{1}{1-\alpha}\log \left( \int_{E} f^\alpha g^{1-\alpha}d\mu \right).
	\end{align*}
Further define the usual Kullback-Leibler divergence $K(f, g)=\int f \log(f/g) d\mu$ and its $2^{nd}$-variation $V(f,g)= \int f \left(\log(f/g) - K(f,g)\right)^2 d\mu$. It is well-known that posterior contraction rates are related to the prior mass assigned to a Kullback-Leibler type neighbourhood about the true density $p_0^n = p_{\eta_0}^n$:
		\begin{align*}
		B_n(p_{\eta_0}^n, \eps) = B_n(\eta_0,\eps) &=\{\eta \in S: \: K(p_{\eta_0}^n, p_{\eta}^n) \leq n \eps^2,\: V(p_{\eta_0}^n,  p_{\eta}^n) \leq n \eps^2  \},
	\end{align*}	 
see Chapter 8 of \citet{vdv_FNBI}. We first modify Theorem 3.1 of  \citet{BPY} by introducing an explicit dependence on $\alpha_n$ in the `small-ball' probability.
\begin{theorem}\label{bat_result} For any nonnegative sequence $\eps_n$ and $0<\alpha_n<1$ such that $n\alpha_n\eps_n^2 \rightarrow \infty$ and 
		\begin{align}\label{equation_thm_1}
			\Pi(B_n(\eta_0, \eps_n)) \geq e^{-n\alpha_n\eps_n^2},
		\end{align}
		there exists $C>0$ such that as $n\to\infty$,
		\begin{align*}
			\Pi_{\alpha_n}\left( \eta: \: \frac{1}{n}D_{\alpha_n}(p_{\eta}^n, p_{\eta_0}^n) \geq C \frac{\alpha_n\eps_n^2}{1-\alpha_n}   | Y^n\right) = o_P(1).
		\end{align*}	
	\end{theorem}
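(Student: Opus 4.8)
The plan is to run the variational/Chernoff argument for fractional posteriors (cf. \cite{tongz06,BPY}), being careful to thread $\alpha_n$ through every exponent so that the rate $C\alpha_n\eps_n^2/(1-\alpha_n)$ — rather than a cruder dependence — emerges. Write $\Lambda_n(\eta) := \ell_n(\eta) - \ell_n(\eta_0)$ and let $A_n := \{\eta : \tfrac1n D_{\alpha_n}(p_\eta^n, p_{\eta_0}^n) \geq C\alpha_n\eps_n^2/(1-\alpha_n)\}$ be the event to be controlled. By \eqref{eq:alpha_posterior_definition}, $\Pi_{\alpha_n}(A_n\mid Y^n) = N_n / D_n$ with $N_n := \int_{A_n} e^{\alpha_n\Lambda_n(\eta)}\,d\Pi(\eta)$ and $D_n := \int e^{\alpha_n\Lambda_n(\eta)}\,d\Pi(\eta)$. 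For the numerator I would use the elementary identity $E_0\, e^{\alpha_n\Lambda_n(\eta)} = \int (p_\eta^n)^{\alpha_n}(p_{\eta_0}^n)^{1-\alpha_n}\,d\mu^n = e^{-(1-\alpha_n)D_{\alpha_n}(p_\eta^n, p_{\eta_0}^n)}$, valid for arbitrary $n$-sample densities (no product structure needed). Fubini then gives $E_0 N_n = \int_{A_n} e^{-(1-\alpha_n)D_{\alpha_n}(p_\eta^n,p_{\eta_0}^n)}\,d\Pi(\eta) \leq e^{-(1-\alpha_n)\, nC\alpha_n\eps_n^2/(1-\alpha_n)} = e^{-Cn\alpha_n\eps_n^2}$, using the definition of $A_n$ and $\Pi(A_n)\le 1$; this is the step that produces the $\alpha_n/(1-\alpha_n)$ factor in the statement.

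For the denominator I would restrict the integral to the Kullback--Leibler ball $B_n := B_n(\eta_0,\eps_n)$ and apply Jensen's inequality to the logarithm, obtaining $D_n \geq \Pi(B_n)\exp(\alpha_n Z_n)$ where $Z_n := \Pi(B_n)^{-1}\int_{B_n}\Lambda_n(\eta)\,d\Pi(\eta)$. Since on $B_n$ one has $K(p_{\eta_0}^n,p_\eta^n)\le n\eps_n^2$ and $V(p_{\eta_0}^n,p_\eta^n)\le n\eps_n^2$, it follows that $E_0 Z_n \geq -n\eps_n^2$ and (variance of an average being at most the average of variances, together with $\mathrm{Var}_0\Lambda_n(\eta) = V(p_{\eta_0}^n,p_\eta^n)$) that $\mathrm{Var}_0 Z_n \leq n\eps_n^2$. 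Chebyshev's inequality then gives $P_0(Z_n < -2n\eps_n^2) \leq (n\eps_n^2)^{-1} \to 0$, using $n\eps_n^2 \geq n\alpha_n\eps_n^2 \to\infty$. On the complementary event, the prior mass hypothesis \eqref{equation_thm_1} yields $D_n \geq e^{-n\alpha_n\eps_n^2}e^{-2n\alpha_n\eps_n^2} = e^{-3n\alpha_n\eps_n^2}$.

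Writing $\mathcal{E}_n := \{D_n \geq e^{-3n\alpha_n\eps_n^2}\}$, I would then bound $E_0\,\Pi_{\alpha_n}(A_n\mid Y^n) \leq P_0(\mathcal{E}_n^c) + E_0[\mathbf{1}_{\mathcal{E}_n}N_n/D_n] \leq P_0(\mathcal{E}_n^c) + e^{3n\alpha_n\eps_n^2}E_0 N_n \leq P_0(\mathcal{E}_n^c) + e^{-(C-3)n\alpha_n\eps_n^2}$. Taking any $C > 3$ and using $n\alpha_n\eps_n^2\to\infty$ forces the right-hand side to $0$; Markov's inequality then upgrades $E_0\,\Pi_{\alpha_n}(A_n\mid Y^n)\to 0$ to $\Pi_{\alpha_n}(A_n\mid Y^n) = o_P(1)$, which is the assertion. (The crude constant $C>3$ can be pushed to any $C>2$ by replacing the $2n\eps_n^2$ slack with $(1+c)n\eps_n^2$ and sending $c\downarrow 0$, but any fixed $C$ suffices for the stated result.)

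The argument is short, and I expect the only genuinely delicate points to be: (a) keeping the constants and, above all, the powers of $\alpha_n$ aligned throughout, so that the conclusion features $C\alpha_n\eps_n^2/(1-\alpha_n)$ rather than a rate with a worse $\alpha_n$-dependence — this tracking is precisely what sharpens \cite{BPY}; and (b) the measure-theoretic housekeeping, namely measurability of $\eta\mapsto D_{\alpha_n}(p_\eta^n,p_{\eta_0}^n)$, the $P_0$-a.s.\ finiteness and integrability of $\Lambda_n$ on $B_n$ that legitimise the Jensen and Fubini steps, and the absolute-continuity conventions under which $E_0\, e^{\alpha_n\Lambda_n(\eta)} = e^{-(1-\alpha_n)D_{\alpha_n}}$ holds. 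These are all routine but should be recorded.
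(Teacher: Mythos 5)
Your proposal is correct and follows essentially the same route as the paper: the denominator lower bound via Jensen's inequality and Chebyshev on the Kullback--Leibler ball (the paper isolates this as Lemma \ref{lem:mino_den_posterior}), the numerator bound via Fubini and the identity $E_0 e^{\alpha_n\Lambda_n(\eta)}=e^{-(1-\alpha_n)D_{\alpha_n}(p^n_\eta,p^n_{\eta_0})}$, and the same threading of $\alpha_n$ through both exponents. The only differences are cosmetic (you take any $C>3$ where the paper fixes $C=4$, and you bound $E_0 N_n$ by $e^{-Cn\alpha_n\eps_n^2}\Pi(A_n)$ rather than substituting the bound inside the integral), so nothing further is needed.
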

The last result differs from Theorem 3.1 in \citet{BPY} on two points: first, the required lower bound for the small-ball probability in \eqref{equation_thm_1} takes the form 	$e^{-n\alpha_n\eps_n^2}$ rather than $e^{-n\eps_n^2}$, which is a natural modification in view of the interpretation that the $\alpha_n$-posterior uses effective sample size $n' = n\alpha_n$; second, the obtained rate in terms of $D_{\al_n}(p_{\eta}^n, p_{\eta_0}^n)/n$ is $C\al_n\eps_n^2/(1-\al_n)$ instead of $C\eps_n^2/(1-\al_n)$ (importantly, note that the sequences $\eps_n$ in both rates may be different since the small-ball probability condition is different, see below for more details). We illustrate the difference between these approaches in the next examples.
%
%
Note that in interpreting the rate in Theorem \ref{bat_result},   one needs to take care of the dependence of $D_{\alpha_n}$ on the exponent $\alpha_n$. In typical examples for iid models, this scales as $n\al_n$ times squared individual distances between densities. In the Gaussian white noise model for instance, one can directly compute $D_{\alpha_n}(f,f_0) = \frac{n\alpha_n}{2}\|f-f_0\|_2^2$, so that the conclusion of the last statement becomes
		\begin{align*}
			\Pi_{\alpha_n}\left( f: \: \|f-f_0\|_2 \geq C \frac{\eps_n}{\sqrt{1-\alpha_n}}   | Y^n\right) = o_P(1).
		\end{align*}	
Consider for simplicity the case of a $\beta$-smooth Gaussian process with $\beta$-smooth truth $f_0$, in which case condition \eqref{equation_thm_1} above yields the choice $\eps_n = \eps_{n,\alpha_n} = (n\alpha_n)^{-\frac{\beta}{2\beta+1}}$ (see Section \ref{sec:contract_GP} below for precise statements). In this case, Theorem \ref{bat_result} gives $L^2$-rate $\eps_{n,\alpha_n} (1-\alpha_n)^{-1/2} = (n\alpha_n)^{-\frac{\beta}{2\beta+1}}(1-\alpha_n)^{-1/2}$, while Theorem 3.1 of \citet{BPY} implies rate $\eps_{n,1} \alpha_n^{-1/2} (1-\alpha_n)^{-1/2} = n^{-\frac{\beta}{2\beta+1}} \alpha_n^{-1/2} (1-\alpha_n)^{-1/2}$. In particular, for all $\beta>0$ and $0<\alpha_n<1$, the former gives a better dependence on $\alpha_n$, particularly in the small $\alpha_n$ regime. A similar conclusion holds in density estimation with $L^1$-loss, where one has $D_{\alpha_n}(f^n,f_0^n) \geq n\alpha_n \|f-f_0\|_1^2/2$ (\citealp{van_Erven_2014}, Theorem 31) for $f^n(x) = \prod_{i=1}^n f(x_i)$ the $n$-fold product density of $f$, thereby giving the same rates as for $L^2$-loss in Gaussian white noise as just above. Thinking of $\al_n$'s that go to zero polynomially in $n$ (e.g. $\al_n=n^{-1/4}$), one sees that the improvement is polynomial in $n$ in these examples.

\begin{remark}
	One can also more generally compare the rates obtained by the two approaches. Denote $f(\eps):=f_n(\eps)=\Pi(B_n(\eta_0, \eps))$ and $g(\eps)=e^{-n\eps^2}$ and suppose to fix ideas that the equations $f(\eps_n) = e^{-n\alpha_n\eps_n^2}$ and $f(\bar{\eps}_n) = e^{-n\bar{\eps}_n^2}$ have unique solutions $\eps_n, \bar{\eps}_n$. By definition $(f-g)(\bar{\eps}_n)=0$ while $f(\eps_n)-g(\eps_n)=e^{-n\alpha_n\eps_n^2}-e^{-n\eps_n^2}>0$, so that $\eps_n\ge \bar\eps_n$ using that $f-g$ is non-decreasing. In particular, $f(\eps_n)\ge f(\bar\eps_n)$ which leads to $\an\eps_n^2 \leq \bar{\eps}_n^2$, implying that the rate provided by Theorem \ref{bat_result} is in that case, up to constants, at least as fast as that of Theorem 3.1 of \citet{BPY} (and, as the examples above show, sometimes the improvement is polynomial).
\end{remark}
Note that the above rates deteriorate as $\alpha_n \to 1$, i.e. convergence to the full posterior. This is not surprising since contraction rates for the full posterior typically require additional conditions, such as testing or bounded entropy conditions. Indeed, \citet{BSW99} provide a counterexample of a prior which satisfies the small ball condition \eqref{equation_thm_1} with $\alpha_n=1$ but not a related entropy condition. They show the full posterior is inconsistent (\citealp{BSW99}, Section 3.5), whereas the fractional posterior converges to the truth at rate at least $(1-\alpha)^{-1} n^{-1/3}$ when $\alpha \in (0,1)$ is fixed \citep{BPY}. This counterexample shows that one must exploit additional regularity properties of a prior beyond the prior mass condition \eqref{equation_thm_1} to ensure good behaviour as $\alpha_n \to 1$. Note that taking a sequence $\alpha_n \to 1$ is also relevant to certain practical Bayesian computational algorithms, for instance fractionally weighting (tempering) parallel distributions can improve sampling convergence and yield faster mixing times \citep{GT95} or in some empirical Bayes methods \citep{MT20}.

We therefore present a second $\alpha_n$-posterior convergence result  following the testing approach of \citet{GV_2007}, which removes the necessity that $\alpha_n < 1$ at the expense of an extra testing condition needed to control the complexity of the prior support.
Theorem 1 of \citet{GV_2007} extends to the $\alpha_n$-posterior using the same proof technique as for the full posterior.


	\begin{theorem}\label{thm_GGV} 
Let $d$ be a metric on the parameter space $S$ and $\eta_0 \in S$. Suppose that there exist universal constants $K,a > 0$ such that for all $\eps > 0$ and all $\eta_1 \in S$ satisfying $d(\eta_0, \eta_1) > \eps$, there exist tests $\varphi_n$ satisfying
	\begin{align}\label{eq:tests}
		E_{\eta_0} \varphi_n \leq e^{-Kn\eps^2}, \hspace{2cm}
		\sup_{\eta\in S: d(\eta, \eta_1) < a\eps} E_\eta(1-\varphi_n) \leq e^{-Kn\eps^2}.
	\end{align}	
	Let $\Pi = \Pi_n$ be a prior on $S$, and $\eps_n,\tilde{\eps}_n$ and $0<\alpha_n \leq 1$ be nonnegative sequences such that $n\alpha_n\tilde{\eps}_n^2 \rightarrow \infty$. Suppose further that there exist constants $C,D>0$ and subsets $S_n\subset S$ satisfying
		\begin{enumerate}
			\item $N(\eps_n, S_n, d) \leq e^{Dn\eps_n^2}$, \label{assum_1_GGV}
			\item $\Pi(S_n^c) \leq e^{-(C+3)n\alpha_n\tilde{\eps}_n^2}$, \label{assum_2_GGV}
			\item $\Pi(B_n(\eta_0, \tilde{\eps}_n)) \geq e^{-Cn\alpha_n\tilde{\eps}_n^2}$. \label{assum_3_GGV}
		\end{enumerate}
		Then there exists $M> 0$ such that as $n\to\infty$,
		\begin{align*}
		    \Pi_{\alpha_n}(\eta: d(\eta,\eta_0)\geq M(\eps_n \vee \tilde{\eps}_n)|Y^n) \rightarrow^{P_0} 0 .
		\end{align*}
	\end{theorem}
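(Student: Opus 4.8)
The plan is to run the standard testing-and-entropy scheme for posterior contraction rates (as in \cite{GGV,GV_2007}) with the fractional likelihood $e^{\an\ell_n}$ in place of $e^{\ell_n}$; the only structural change is that the identity $E_0[(p^n_\eta/p^n_{\eta_0})(1-\varphi_n)]=E_\eta(1-\varphi_n)$ underpinning the $\an=1$ proof must be replaced by Hölder's inequality. Write $\Lambda_n(\eta)=\ell_n(\eta)-\ell_n(\eta_0)$ and $\eps_\star=\eps_n\vee\tilde\eps_n$, and decompose the posterior mass of $\{d(\eta,\eta_0)\ge M\eps_\star\}$ as a ratio $N_n/D_n$ with $D_n=\int e^{\an\Lambda_n}d\Pi$ and $N_n=\int_{d(\eta,\eta_0)\ge M\eps_\star}e^{\an\Lambda_n}d\Pi$. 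First I would bound $D_n$ from below: restricting the integral to $B_n:=B_n(\eta_0,\tilde\eps_n)$ and applying Jensen's inequality to the renormalised restriction of $\Pi$ gives $D_n\ge \Pi(B_n)\exp(\an\,\Pi(B_n)^{-1}\int_{B_n}\Lambda_n\,d\Pi)$, where the random exponent has $P_0$-mean $\ge -n\tilde\eps_n^2$ and (Fubini, plus Jensen for the variance of an average) variance $\le n\tilde\eps_n^2$ by definition of $B_n$; since $n\tilde\eps_n^2\ge n\an\tilde\eps_n^2\to\infty$, Chebyshev's inequality and assumption \ref{assum_3_GGV} give $D_n\ge e^{-(C+2)n\an\tilde\eps_n^2}$ with probability tending to one. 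For the sieve complement I would use that $E_0e^{\an\Lambda_n(\eta)}=\int (p^n_\eta)^{\an}(p^n_{\eta_0})^{1-\an}d\mu^n\le 1$ for every $\eta$ (Hölder again), so that Fubini and assumption \ref{assum_2_GGV} give $E_0\int_{S_n^c}e^{\an\Lambda_n}d\Pi\le\Pi(S_n^c)\le e^{-(C+3)n\an\tilde\eps_n^2}$, whence Markov's inequality bounds $\int_{S_n^c}e^{\an\Lambda_n}d\Pi$ by $e^{-(C+5/2)n\an\tilde\eps_n^2}$ with probability tending to one.

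Next I would construct the tests on $S_n$. Peel $\{d(\eta,\eta_0)\ge M\eps_\star\}\cap S_n$ into shells $S_{n,j}=\{jM\eps_\star\le d(\eta,\eta_0)<(j+1)M\eps_\star\}\cap S_n$ for $j\ge1$, cover each by $N_j\le N(\eps_n,S_n,d)\le e^{Dn\eps_n^2}$ balls of radius $ajM\eps_\star$ centred at points of $S_{n,j}$ (valid for $M$ large, since then $ajM\eps_\star\ge\eps_n$), and apply \eqref{eq:tests} with $\eps=jM\eps_\star$ to each ball centre, taking $\varphi_n$ to be the maximum of all the resulting tests. A union bound gives $E_0\varphi_n\le e^{Dn\eps_n^2}\sum_{j\ge1}e^{-KnM^2\eps_n^2 j^2}$, which tends to zero once $M^2>D/K$; note that the exponential rate of the tests is not affected by $\an$ here. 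For the posterior integral I would Fubini first and then bound pointwise: if $\eta\in S_{n,j}$ lies in a covering ball $B_{j,i}$, then $1-\varphi_n\le 1-\varphi_{j,i}$, and Hölder's inequality with exponents $1/\an$ and $1/(1-\an)$ yields $E_0[e^{\an\Lambda_n(\eta)}(1-\varphi_{j,i})]\le (E_\eta(1-\varphi_{j,i}))^{\an}(E_0(1-\varphi_{j,i}))^{1-\an}\le e^{-\an Kn(jM\eps_\star)^2}$; summing over $j$ and using $\sum_{j\ge1}\Pi(S_{n,j})\le1$ gives $E_0\int_{S_n\cap\{d\ge M\eps_\star\}}e^{\an\Lambda_n}(1-\varphi_n)d\Pi\le e^{-\an KnM^2\eps_\star^2}$ --- crucially, the metric entropy does not enter this second bound.

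Finally I would assemble the pieces. On the intersection of the events above (of probability tending to one), bound $\Pi_\an(d(\eta,\eta_0)\ge M\eps_\star\mid Y^n)\le\varphi_n+D_n^{-1}\int_{\{d\ge M\eps_\star\}}e^{\an\Lambda_n}(1-\varphi_n)d\Pi$, split the integral into its $S_n^c$ and $S_n$ parts, and apply Markov's inequality once more to the $S_n$ part (with threshold $e^{n\an\tilde\eps_n^2/4}$). Using $\eps_\star\ge\tilde\eps_n$, the first ratio is then at most $e^{-n\an\tilde\eps_n^2/2}$ and the second at most $e^{n\an\tilde\eps_n^2(C+11/4-KM^2)}$, both tending to zero for $M$ large because $n\an\tilde\eps_n^2\to\infty$; combined with $E_0\varphi_n\to0$ this yields contraction at rate $M(\eps_n\vee\tilde\eps_n)$.

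The step I expect to be the main obstacle is the construction and control of the tests. For $\an<1$ the likelihood ratio can no longer be integrated out exactly, and the Hölder substitution degrades the testing exponent from $e^{-Kn\eps^2}$ to $e^{-\an Kn\eps^2}$, mirroring the effective sample size $n'=n\an$. The care required is to organise the argument so that the metric entropy $N(\eps_n,S_n,d)\le e^{Dn\eps_n^2}$ enters only through the type-I error $E_0\varphi_n$ --- where the full, undegraded exponent $e^{-Kn\eps^2}$ is available and dominates it --- while the posterior mass over the shells is controlled purely by the total prior mass $\sum_j\Pi(S_{n,j})\le1$. It is precisely this separation that lets the conclusion hold for every $\an\in(0,1]$ under the single scaling hypothesis $n\an\tilde\eps_n^2\to\infty$, and it complements Theorem \ref{bat_result}, whose entropy-free rate instead deteriorates as $\an\to1$.
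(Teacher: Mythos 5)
Your proposal is correct and follows essentially the same route as the paper's proof: a Jensen--Chebyshev lower bound on the fractional evidence (the paper's Lemma \ref{lem:mino_den_posterior}), removal of $S_n^c$ by Fubini and Markov (Lemma \ref{lemma_sieve}), the standard peeled/aggregated tests whose type-I error absorbs the entropy, and the key H\"older step $E_0[e^{\an(\ell_n(\eta)-\ell_n(\eta_0))}(1-\varphi_n)]\le (E_\eta(1-\varphi_n))^{\an}$ that converts the full-exponent type-II error into the $n\an$ scale. The only cosmetic difference is that you construct the aggregated test explicitly and apply H\"older shell by shell, whereas the paper invokes the standard construction and applies H\"older once to a single global type-II bound.
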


In the i.i.d. density estimation model, the testing condition \eqref{eq:tests} is satisfied for instance by the Hellinger metric, $L^1$-distance or, for a bounded set of densities, by the $L^2$-distance (\citealp{vdv_FNBI}, Proposition D.8). It similarly extends to Gaussian white noise with the $L^2$-distance (\citealp{vdv_FNBI}, Lemma D.16) and various other non-i.i.d. models such as nonparametric regression, Markov chains and times series, see Chapter 8.3 in \citet{vdv_FNBI}. Having two sequences $\eps_n$ and $\tilde{\eps}_n$ adds flexibility to the approach, which can prove useful in certain non-i.i.d. models.

Returning to the $\beta$-smooth Gaussian process example and assuming for simplicity that $\eps_n \simeq \tilde{\eps}_n \simeq n^{-\frac{\beta}{2\beta+1}}$, Theorem \ref{thm_GGV} yields rate $(n\alpha_n)^{-\frac{\beta}{2\beta+1}}$ compared with the slower rate $(n\alpha_n)^{-\frac{\beta}{2\beta+1}}(1-\alpha_n)^{-1/2}$ from Theorem \ref{bat_result}. In particular, the former rate gains significantly when $\alpha_n \to 1$ and fully matches the original parametric intuition that the fractional posterior uses effective sample size $n' = n\alpha_n$.

We now apply these general results to the concrete examples of histograms and Gaussian process priors. In all cases we use the sharper rate from Theorem \ref{thm_GGV} since these priors satisfy the required entropy conditions.
	
	\begin{proposition}[Histogram prior]\label{rate_denti_PRH}
		Consider density estimation on [0,1] with true density $f_0 \in \mathcal{C}^\beta([0,1])$ for some $\beta \in (0,1]$, bounded away from 0. Let $\Pi = \Pi_n$ denote the histogram prior \eqref{def::RHP} satisfying $K_n=o\left(n\alpha_n/ \log(n\alpha_n)\right)$ and $\frac{1}{(n\alpha_n)^b}\leq \delta_{i,n} \leq 1$ for $i = 1,\dots, K_n$ for some $b>0$. Then there exists $C>0$ such that as $n\to\infty$,
		\begin{align*}
			\Pi_{\alpha_n} \left( f:  \|f-f_0\|_1 \geq C \left( \frac{K_n\log(n\alpha_n K_n)}{n\alpha_n} +\frac{1}{K_n^{2\beta}}\right)^{\frac{1}{2}}  \bigg| Y^n \right) \xrightarrow{P_0} 0.
		\end{align*}
	\end{proposition}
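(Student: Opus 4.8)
The plan is to invoke the testing-based contraction result of Theorem \ref{thm_GGV} with the $L^1$-metric $d(f,g)=\|f-g\|_1$ on densities, base point $\eta_0=f_0$, and a single rate $\eps_n=\tilde\eps_n := L\bigl(K_n\log(n\an K_n)/(n\an)+K_n^{-2\beta}\bigr)^{1/2}$ for a constant $L$ to be chosen large. Condition \eqref{eq:tests} holds for the $L^1$-distance in i.i.d. density estimation by standard results on exponentially powerful tests separating $f_0$ from $L^1$-balls around an arbitrary alternative (\cite{vdv_FNBI}, Proposition D.8; it also follows from the Hellinger case since the $L^1$-distance is bounded by a fixed multiple of the Hellinger distance). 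It then remains to exhibit sieves $S_n$ and verify the three numbered conditions of Theorem \ref{thm_GGV}.

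Since the prior $\Pi_n=\mathcal{H}(K_n,\delta_{1,n},\dots,\delta_{K_n,n})$ is supported on the finite-dimensional set $H^1_{K_n}$, I would take $S_n=H^1_{K_n}$, so that \eqref{assum_2_GGV} is trivial ($\Pi(S_n^c)=0$). For \eqref{assum_1_GGV}, observe that $\omega\mapsto K_n\sum_j\omega_j\mathbf 1_{I_j}$ is an isometry from $(S^1_{K_n},\|\cdot\|_1)$ onto $(H^1_{K_n},\|\cdot\|_1)$, whence $N(\eps,H^1_{K_n},\|\cdot\|_1)=N(\eps,S^1_{K_n},\|\cdot\|_1)\le (C/\eps)^{K_n}$ by a volume argument on the $K_n$-dimensional simplex. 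Because $\eps_n\ge LK_n^{-\beta}\ge LK_n^{-1}$ we get $\log(1/\eps_n)\lesssim\log K_n$, hence $\log N(\eps_n,S_n,\|\cdot\|_1)\lesssim K_n\log K_n$, while $n\eps_n^2\ge L^2K_n\log(n\an K_n)/\an\ge L^2K_n\log K_n$ (using $\an\le1$); so \eqref{assum_1_GGV} holds for a suitable constant $D$.

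The crux is the prior-mass condition \eqref{assum_3_GGV}. Write $f_{0,[K_n]}=K_n\sum_j\omega_{0,j}\mathbf 1_{I_j}$ with $\omega_{0,j}=\int_{I_j}f_0$; since $f_0\in\mathcal{C}^\beta$ is bounded away from $0$ and above, $\omega_{0,j}$ is of exact order $1/K_n$. Consider the histograms $f=K_n\sum_j\omega_j\mathbf 1_{I_j}$ with $\max_j|\omega_j-\omega_{0,j}|\le\rho_n/K_n$ for $\rho_n=c/(n\an)$. On this set $\|f-f_{0,[K_n]}\|_\infty\le\rho_n$, so $f\ge\tfrac12\inf f_0>0$ for $n$ large, and combining with the Hölder approximation bound $\|f_0-f_{0,[K_n]}\|_\infty\lesssim K_n^{-\beta}$ one gets $K(f_0,f)\vee V(f_0,f)\lesssim\|f_0-f\|_\infty^2\lesssim K_n^{-2\beta}+\rho_n^2\le\eps_n^2$ after enlarging $L$; hence this set lies in $B_n(f_0,\eps_n)$. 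It therefore suffices to show its Dirichlet probability is at least $e^{-Cn\an\eps_n^2}$. Under $\delta_{i,n}\in[(n\an)^{-b},1]$, standard lower bounds for Dirichlet small-ball probabilities about an interior point whose coordinates are of order $1/K_n$ (as used in \cite{CR2015} and \cite{vdv_FNBI}) give a bound of the form $\exp(-c'K_n\log(n\an K_n))$; since $n\an\eps_n^2\ge L^2K_n\log(n\an K_n)$, taking $L$ large yields \eqref{assum_3_GGV}, and $n\an\eps_n^2\to\infty$ because $K_n\to\infty$ and $n\an\to\infty$. Theorem \ref{thm_GGV} then delivers $\Pi_{\an}(f:\|f-f_0\|_1\ge M\eps_n\mid Y^n)\to^{P_0}0$, which is the assertion.

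The main obstacle is the Dirichlet prior-mass estimate: one must control the multivariate Beta normalising constant in terms of the $\delta_{i,n}$ and show that the lower bound $\delta_{i,n}\ge(n\an)^{-b}$ (together with $\delta_{i,n}\le1$) — which is precisely what keeps the prior from concentrating near the vertices of the simplex — costs only the logarithmic factor $\log(n\an K_n)$ in the exponent. The remaining ingredients (the $L^1$ testing condition, the simplex entropy bound, the $\mathcal{C}^\beta$ approximation bound, and the comparison of $K,V$ with the squared sup-norm for densities bounded away from $0$) are routine.
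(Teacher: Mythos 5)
Your proposal is correct and follows essentially the same route as the paper's proof: Theorem \ref{thm_GGV} with $d=\|\cdot\|_1$, the sieve $S_n=H^1_{K_n}$ carrying full prior mass, a covering bound for the simplex, a Dirichlet small-ball estimate (the paper's Lemmas \ref{lemma_prior_mass_condition} and \ref{lemma_6_1_GGV}) and the H\"older approximation bound $\|f_0-f_{0,[K_n]}\|_\infty\lesssim K_n^{-\beta}$. One parenthetical aside is misleading — the inequality $\|f-g\|_1\lesssim h(f,g)$ does \emph{not} by itself transfer the Hellinger testing condition to $L^1$-balls, since a small $L^1$-neighbourhood of an alternative need not be a small Hellinger neighbourhood — but your primary citation of Proposition D.8 of \cite{vdv_FNBI} for the $L^1$ testing condition is exactly what the paper uses, so nothing is lost.
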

	
As expected, the rate in the last proposition matches that for the full posterior but with the role of the sample size $n$ replaced by the effective sample size $n' = n\alpha_n$ (cf. Equation 4.8 in \citealp{CR2015}). Note that the optimal choice $K_n^* \simeq (\log (n\alpha_n)/(n\alpha_n))^{\frac{1}{2\beta+1}}$ that balances the two terms in the rate also depends on $\alpha_n$ and hence will not match the optimal truncation for the true posterior. This follows since the fractional posterior inflates the variance without significantly affecting the bias in the well-specified setting considered here. We further remark that the prior conditions required in Proposition \ref{rate_denti_PRH} become more stringent as $\alpha_n \to 0$, though one may always take $K_n \to \infty$ since $n\alpha_n \to \infty$ by assumption.

\subsection{Contraction rates for Gaussian process priors}\label{sec:contract_GP}

As mentioned in Section \ref{sec:gp_bvm} above, for a mean-zero Gaussian process $W$ viewed as a Borel-measurable map in a Banach space $(\mathbb{B},\|\cdot\|)$ with corresponding RKHS $(\mathbb{H},\|\cdot\|_{\mathbb{H}})$, the corresponding contraction rates are related to the behaviour of the concentration function, $\varphi_w$. This connection is made explicit in Theorem 2.1 of \citet{vdv}, which characterizes rates such that a Gaussian prior places sufficient mass about a given truth and concentrates on sets of bounded complexity. These conclusions are in terms of the Banach-space norm $\|\cdot\|$, which must then be related to concrete distances in standard statistical settings.

The following result extends Theorem 2.1 of \citet{vdv} to the fractional posterior by considering the solution to the equation $\varphi_{\eta_0}(\eps_n) \sim n\alpha_n \eps_n^2$, i.e. using the effective sample size on the right-hand side, see \eqref{eq:conc_eqn}. Since it is well-established that the support of a Gaussian process $W$ equals the closure of its RKHS $\mathbb{H}$ under the underlying Banach space norm $\|\cdot\|$, we require the true parameter $\eta_0$ to lie in this space.

\begin{lemma}\label{thm:GGV_verification_GP}
Let $W$ be a mean-zero Gaussian random element in a separable Banach space $(\mathbb{B},\|\cdot\|)$ with associated RKHS $(\mathbb{H},\|\cdot\|_{\mathbb{H}})$, and suppose $\eta_0$ lies in $\bar{\mathbb{H}}$, the closure of $\mathbb{H}$ in $\mathbb{B}$. If $\eps_n > 0$ and $\alpha_n > 0$ satisfy
$\varphi_{\eta_0}(\eps_n) \leq n\an \eps_n^2,$
then for any $C > 1$ with $Cn\an\eps_n^2 > \log 2$, there exist measurable sets $B_n \subset \mathbb{B}$ such that
\begin{align*}
	\log N(3\eps_n, B_n, \|\cdot\|) &\leq 6Cn\an \eps_n^2, \\
	P(W \notin B_n) &\leq e^{-Cn\an\eps_n^2}, \\
	P(\|W-\eta_0\| < 2\eps_n) &\geq e^{-n\an\eps_n^2}.
\end{align*}
\end{lemma}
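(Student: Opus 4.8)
The plan is to reproduce the proof of Theorem 2.1 of \cite{vdv}, with the sample size $n$ replaced everywhere by the effective sample size $n\an$. The only inputs are three facts about centered Gaussian measures on $(\mathbb{B},\|\cdot\|)$, none of which involve $n$: Borell's isoperimetric inequality $\Pi(W\in A+M\mathbb{H}_1)\ge \Phi(\Phi^{-1}(\Pi(W\in A))+M)$ for symmetric convex $A$ (with $\Phi$ the standard normal CDF and $\mathbb{H}_1,\mathbb{B}_1$ the closed unit balls of $\mathbb{H},\mathbb{B}$); the Cameron--Martin decentering bound $\Pi(\|W-h\|<\eps)\ge e^{-\|h\|_{\mathbb{H}}^2/2}\Pi(\|W\|<\eps)$ for $h\in\mathbb{H}$; and the small-ball lower bound $\Pi(\|W-\eta_0\|<2\eps)\ge e^{-\varphi_{\eta_0}(\eps)}$, which follows by combining the previous bound with a near-infimizer $h$ in \eqref{def:concentration_function} and the triangle inequality (here $\eta_0\in\bar{\mathbb{H}}$ is exactly what guarantees $\varphi_{\eta_0}(\eps_n)<\infty$). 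I would also record at the outset that the hypothesis $\varphi_{\eta_0}(\eps_n)\le n\an\eps_n^2$ forces $\varphi_0(\eps_n):=-\log\Pi(\|W\|<\eps_n)\le n\an\eps_n^2$ too, since the infimum term in \eqref{def:concentration_function} is nonnegative.

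First I would fix the sets: set $M_n:=-2\Phi^{-1}(e^{-Cn\an\eps_n^2})$ and $B_n:=\eps_n\mathbb{B}_1+M_n\mathbb{H}_1$. This is well defined with $M_n>0$ precisely because $Cn\an\eps_n^2>\log 2$, i.e. $e^{-Cn\an\eps_n^2}<1/2$. The small-ball claim $P(\|W-\eta_0\|<2\eps_n)\ge e^{-\varphi_{\eta_0}(\eps_n)}\ge e^{-n\an\eps_n^2}$ is then immediate from the third fact and the hypothesis. For the complement bound I would apply Borell to $A=\eps_n\mathbb{B}_1$ to get $\Pi(W\notin B_n)\le 1-\Phi(\Phi^{-1}(\Pi(\|W\|<\eps_n))+M_n)$; since $C>1$ gives $\Pi(\|W\|<\eps_n)\ge e^{-n\an\eps_n^2}\ge e^{-Cn\an\eps_n^2}$, monotonicity of $\Phi^{-1}$ yields $\Phi^{-1}(\Pi(\|W\|<\eps_n))\ge \Phi^{-1}(e^{-Cn\an\eps_n^2})=-M_n/2$, so the argument of $\Phi$ is at least $M_n/2$ and $\Pi(W\notin B_n)\le 1-\Phi(M_n/2)=\Phi(-M_n/2)=e^{-Cn\an\eps_n^2}$.

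The entropy bound is the one step needing a little care. Recalling that a maximal $2\eps_n$-separated subset of $M_n\mathbb{H}_1$ is a $2\eps_n$-net, and that if $x_1,\dots,x_N\in M_n\mathbb{H}_1$ are $2\eps_n$-separated then the open balls $x_i+\eps_n\mathbb{B}_1^{\circ}$ are disjoint, the decentering bound gives $\Pi(x_i+\eps_n\mathbb{B}_1^{\circ})\ge e^{-\|x_i\|_{\mathbb{H}}^2/2}\Pi(\|W\|<\eps_n)\ge e^{-M_n^2/2-\varphi_0(\eps_n)}$, hence $N\le e^{M_n^2/2+\varphi_0(\eps_n)}$. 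Since a $2\eps_n$-net of $M_n\mathbb{H}_1$ becomes a $3\eps_n$-net of $B_n=\eps_n\mathbb{B}_1+M_n\mathbb{H}_1$, this gives $\log N(3\eps_n,B_n,\|\cdot\|)\le M_n^2/2+\varphi_0(\eps_n)$. It then remains to bound $M_n^2$: with $x:=M_n/2=-\Phi^{-1}(e^{-Cn\an\eps_n^2})>0$, the Gaussian tail bound $\Phi(-x)\le \tfrac12 e^{-x^2/2}$ gives $e^{-Cn\an\eps_n^2}\le \tfrac12 e^{-x^2/2}$, so $x^2\le 2Cn\an\eps_n^2$ and $M_n^2/2=2x^2\le 4Cn\an\eps_n^2$. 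Combined with $\varphi_0(\eps_n)\le n\an\eps_n^2$ this yields $\log N(3\eps_n,B_n,\|\cdot\|)\le (4C+1)n\an\eps_n^2\le 6Cn\an\eps_n^2$ since $C>1$.

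I do not expect a genuine obstacle: the statement is a transcription of \cite{vdv} with $n$ replaced by $n\an$, and the Gaussian-measure facts transfer verbatim because they are dimension- and $n$-free. The only subtleties are (i) that $M_n$ must be well defined, which is exactly what $Cn\an\eps_n^2>\log 2$ ensures, and (ii) bookkeeping the numerical constants so that the entropy bound lands below $6Cn\an\eps_n^2$, for which the hypothesis $C>1$ leaves ample slack.
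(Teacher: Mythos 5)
Your proposal is correct and is exactly the argument the paper intends: the paper's proof of this lemma simply states that it is a straightforward adaptation of Theorem 11.20 of \cite{vdv_FNBI} (equivalently Theorem 2.1 of \cite{vdv}) with $n$ replaced by $n\an$, and omits the details. Your write-up carries out that adaptation faithfully — Borell's inequality for the complement bound, Cameron--Martin decentering plus a separated-set counting argument for the entropy bound, and the decentered small-ball inequality for the prior mass — with the constants bookkept correctly.
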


Lemma \ref{thm:GGV_verification_GP} involves the Banach space norm $\|\cdot\|$, which is related to statistically relevant norms and divergences in both Gaussian white noise and density estimation in \citep{vdv}. We will shortly make this correspondence explicit in Propositions \ref{thm:GGV_verification_GP_gwn} and \ref{thm:GGV_verification_GP_density_estimation} below. However, given our interest in the precise role of the fractional parameter $\alpha_n$,  we first study corresponding lower bounds for the contraction rate. For Gaussian process priors, this has been studied in \citet{C2008}, where it is established that a lower bound on the concentration function in turn implies a lower bound on the contraction rate.


\begin{lemma}[Lower bound for contraction rate]\label{thm:lower_bounds}
Let $W$ be a mean-zero Gaussian random element in a separable Banach space $(\mathbb{B},\|\cdot\|)$ with associated RKHS $(\mathbb{H},\|\cdot\|_{\mathbb{H}})$, and suppose $\eta_0$ lies in $\bar{\mathbb{H}}$, the closure of $\mathbb{H}$ in $\mathbb{B}$. Suppose $\eps_n \rightarrow 0$, $0<\alpha_n\leq 1$ such that $n\alpha_n\eps_n^2\to\infty$ satisfy
$
\Pi\left(B_{n}(\eta_0, \eps_n) \right) \geq e^{-cn\an\eps_n^2}
$
for some $c > 0$.
If $\delta_n \rightarrow 0$ satisfies
$
\varphi_{\eta_0}(\delta_n) \geq (2+c)n\an\eps_n^2,
$
then as $n\to\infty$,
$$
\Pi_\an(\eta : \|\eta-\eta_0\| \leq \delta_n | Y^n) \rightarrow^{P_0} 0.
$$
\end{lemma}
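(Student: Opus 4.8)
The plan is to adapt the lower-bound strategy of \cite{C2008} to the fractional setting, exploiting that the $\alpha_n$-posterior is obtained by downweighting the likelihood. First I would bound the $\alpha_n$-posterior mass of the small ball $U_n = \{\eta: \|\eta - \eta_0\| \leq \delta_n\}$ from above by splitting numerator and denominator in \eqref{eq:alpha_posterior_definition}. For the denominator, I would use the standard lower bound: on an event of $P_0$-probability tending to one, the evidence integral $\int e^{\alpha_n \ell_n(\eta)}d\Pi(\eta) \gtrsim e^{-\alpha_n \ell_n(\eta_0)} e^{-C' n\alpha_n \eps_n^2}$, which follows from the prior mass condition $\Pi(B_n(\eta_0,\eps_n)) \geq e^{-cn\alpha_n \eps_n^2}$ together with a control of the likelihood ratio on the KL-ball (a Chebyshev/Markov argument on $\ell_n(\eta) - \ell_n(\eta_0)$ using the $K$- and $V$-bounds defining $B_n$), exactly as in Lemma 8.10 of \cite{vdv_FNBI} but with $n$ replaced by $n\alpha_n$ in the resulting exponent.

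Second, for the numerator $\int_{U_n} e^{\alpha_n(\ell_n(\eta) - \ell_n(\eta_0))}d\Pi(\eta)$, I would take expectations under $P_0$. The key point is that $E_0 e^{\alpha_n(\ell_n(\eta) - \ell_n(\eta_0))} = \int (p_\eta^n)^{\alpha_n}(p_{\eta_0}^n)^{1-\alpha_n} d\mu^n = e^{-(1-\alpha_n)D_{\alpha_n}(p_\eta^n, p_{\eta_0}^n)} \leq 1$ by the definition of the Rényi divergence, so that $E_0 \int_{U_n} e^{\alpha_n(\ell_n(\eta)-\ell_n(\eta_0))}d\Pi(\eta) \leq \Pi(U_n)$. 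Hence by Markov's inequality the numerator is, with $P_0$-probability tending to one, at most $\Pi(U_n)/\kappa_n$ for any $\kappa_n \to 0$. Combining with the denominator bound, the $\alpha_n$-posterior mass of $U_n$ is $O_P\big(\Pi(U_n) e^{C' n\alpha_n \eps_n^2}/\kappa_n\big)$. It remains to show the prior mass $\Pi(U_n) = P(\|W - \eta_0\| \leq \delta_n)$ is small enough. By definition of the concentration function and the Gaussian shift (Cameron--Martin / Borell-type) inequality relating $-\log P(\|W-\eta_0\|<\delta_n)$ to $\varphi_{\eta_0}(\delta_n)$ (as in \cite{vdv}, Lemma 5.3, or \cite{C2008}), one gets $\Pi(U_n) \leq e^{-\varphi_{\eta_0}(\delta_n)} \leq e^{-(2+c)n\alpha_n \eps_n^2}$ using the hypothesis. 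Then $\Pi(U_n) e^{C' n\alpha_n \eps_n^2} \leq e^{-(2+c - C')n\alpha_n \eps_n^2}$, which tends to zero once $C'$ is taken close enough to $c$ (the denominator bound can be arranged with any constant $C' > c$, so one needs the gap $2+c - C' > 0$, i.e. $C' < 2+c$, which is automatic); choosing $\kappa_n$ going to zero slower than this exponential, e.g. $\kappa_n = e^{-n\alpha_n\eps_n^2}$, finishes the argument since $n\alpha_n\eps_n^2 \to \infty$.

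The main obstacle I anticipate is making the denominator lower bound quantitatively precise with the fractional exponent: one needs $\int e^{\alpha_n(\ell_n(\eta)-\ell_n(\eta_0))}d\Pi(\eta) \geq e^{-C' n\alpha_n\eps_n^2}$ with high probability and with $C'$ controllably close to $c$, which requires restricting the integral to $B_n(\eta_0,\eps_n)$ and lower-bounding $\alpha_n(\ell_n(\eta)-\ell_n(\eta_0))$ uniformly there after removing a set of small prior-conditional probability. The event where the likelihood ratio is too small has to be handled by a second-moment (Chebyshev) bound using $V(p_{\eta_0}^n, p_\eta^n) \leq n\eps_n^2$, and one must check the constants propagate correctly through the factor $\alpha_n$; this is essentially the fractional analogue of the evidence lower bound already used implicitly in Theorems \ref{bat_result} and \ref{thm_GGV}, so the technique is available, but care with constants is needed so that the final exponent $-(2+c-C')n\alpha_n\eps_n^2$ is genuinely negative. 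A secondary point is to verify that the Gaussian small-ball upper bound $-\log P(\|W-\eta_0\|<\delta_n) \geq \varphi_{\eta_0}(\delta_n)$ holds in the required form on a separable Banach space, which is standard (it is the easy direction of the concentration-function estimates) but should be cited precisely.
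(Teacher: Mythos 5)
Your proposal is correct and follows essentially the same route as the paper: the paper bounds $\Pi(\|W-\eta_0\|\le\delta_n)\le e^{-\varphi_{\eta_0}(\delta_n)}\le e^{-(2+c)n\an\eps_n^2}$ via Lemma I.28 of \cite{vdv_FNBI} and then invokes its generic Lemma \ref{lemma_sieve}, which packages exactly your two ingredients (the evidence lower bound $\int e^{\an(\ell_n(\eta)-\ell_n(\eta_0))}d\Pi\gtrsim \Pi(B_n(\eta_0,\eps_n))e^{-C'n\an\eps_n^2}$ on a high-probability event, and the bound $E_0\int_{A_n}e^{\an(\ell_n(\eta)-\ell_n(\eta_0))}d\Pi\le\Pi(A_n)$, which the paper gets from H\"older and you get equivalently from nonnegativity of the R\'enyi divergence). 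Your worry about the constants is well placed but resolvable exactly as you suggest, since the evidence-lower-bound constant can be taken strictly below $2$.
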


Note that Lemma \ref{thm:lower_bounds} yields a lower bound on the posterior contraction rate for the parameter $\eta$ to which the Gaussian process is assigned, and in the underlying Banach space norm $\|\cdot\|$, which need not match the desired statistical distance. We now specialize the above results to our two concrete models.

\begin{proposition}[Contraction rates in Gaussian white noise]\label{thm:GGV_verification_GP_gwn}
Consider the Gaussian white noise model and let the prior on $f$ be a mean-zero Gaussian random element $W$ in $L^2[0,1]$ with associated RKHS $\mathbb{H}$. If the true parameter $f_0$ lies in the support of $W$ and $\eps_n \rightarrow 0$ satisfies
$
\varphi_{f_0}(\eps_n) \leq n\an \eps_n^2,
$
then for some $M > 0$ large enough,
$$
\Pi_\an(f : \|f-f_0\|_2 > M\eps_n | Y^n) \rightarrow^{P_0} 0,
$$
as $n\to\infty$. Moreover, if $\varphi_{f_0}(\delta_n) \geq \frac{9}{4} n\an\eps_n^2$, then for sufficiently small $m>0$ and as $n\to\infty$,
$$\Pi_{\alpha_n} (f:\|f-f_0\|_2 \leq m \delta_n |Y^n) \to^{P_0} 0.$$
\end{proposition}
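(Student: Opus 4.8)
I would prove the two assertions separately, deriving the upper bound from the general $\alpha_n$-posterior contraction theorem (Theorem~\ref{thm_GGV}) fed by the Gaussian-process estimates of Lemma~\ref{thm:GGV_verification_GP}, and the matching lower bound from Lemma~\ref{thm:lower_bounds}.

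\emph{Upper bound.} Apply Theorem~\ref{thm_GGV} with parameter space $L^2[0,1]$ and metric $d=\|\cdot\|_2$. Two ingredients are required. First, the testing condition \eqref{eq:tests} for $\|\cdot\|_2$: since Gaussian white noise is a Gaussian shift experiment, the Neyman--Pearson likelihood-ratio test of $f_0$ against an $\|\cdot\|_2$-ball of suitable radius around any $f_1$ with $\|f_1-f_0\|_2>\eps$ has both error probabilities bounded by $e^{-Kn\eps^2}$ with universal $K,a$ (this is classical for shift experiments, cf.\ Lemma~D.16 in \cite{vdv_FNBI}). Second, sieves and rates meeting conditions~\ref{assum_1_GGV}--\ref{assum_3_GGV}: invoke Lemma~\ref{thm:GGV_verification_GP} with $\mathbb{B}=L^2[0,1]$, $\|\cdot\|=\|\cdot\|_2$, $\eta_0=f_0$ (which lies in $\bar{\mathbb{H}}$, the support of $W$, by assumption) and a constant taken large; this yields measurable $B_n=:S_n$ with $\log N(3\eps_n,S_n,\|\cdot\|_2)\le 6Cn\an\eps_n^2$, $\Pi(S_n^c)\le e^{-Cn\an\eps_n^2}$, and $\Pi(\|f-f_0\|_2<2\eps_n)\ge e^{-n\an\eps_n^2}$. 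In Gaussian white noise one has $K(P_{f_0}^n,P_f^n)=\tfrac n2\|f-f_0\|_2^2$ and $V(P_{f_0}^n,P_f^n)=n\|f-f_0\|_2^2$, so $B_n(f_0,\eps)=\{f:\|f-f_0\|_2\le\eps\}$; hence, setting $\tilde\eps_n:=2\eps_n$, the small-ball bound reads $\Pi(B_n(f_0,\tilde\eps_n))\ge e^{-\frac14 n\an\tilde\eps_n^2}$, giving~\ref{assum_3_GGV}. Choosing the constant in Lemma~\ref{thm:GGV_verification_GP} large enough (and using $\an\le1$) makes~\ref{assum_1_GGV} hold with rate $3\eps_n$ and some $D$, and~\ref{assum_2_GGV} hold; moreover $n\an\tilde\eps_n^2\ge 4\varphi_{f_0}(\eps_n)\to\infty$ since the concentration function diverges as $\eps\downarrow0$. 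Theorem~\ref{thm_GGV} then produces an $M$ with $\Pi_{\an}(\|f-f_0\|_2\ge M(3\eps_n\vee2\eps_n)\mid Y^n)\to^{P_0}0$, which is the first claim after relabelling $M$.

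\emph{Lower bound.} Apply Lemma~\ref{thm:lower_bounds} with the same $\mathbb{B}$, norm and $\eta_0$. Its prior-mass hypothesis is exactly the Kuelbs--Li-type small-ball estimate built into Lemma~\ref{thm:GGV_verification_GP}: from $\varphi_{f_0}(\eps_n)\le n\an\eps_n^2$ one gets $\Pi(\|f-f_0\|_2<2\eps_n)\ge e^{-\varphi_{f_0}(\eps_n)}\ge e^{-n\an\eps_n^2}$, and since $B_n(f_0,2\eps_n)$ is the $\|\cdot\|_2$-ball of radius $2\eps_n$ this is the required condition with rate sequence $2\eps_n$ and constant $c=\tfrac14$ (and $2\eps_n\to0$, $n\an(2\eps_n)^2\to\infty$). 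Lemma~\ref{thm:lower_bounds} then gives that any $\delta_n\to0$ with $\varphi_{f_0}(\delta_n)\ge(2+c)\,n\an(2\eps_n)^2$ — a fixed multiple of $n\an\eps_n^2$, the value $\tfrac94$ being what this constant bookkeeping produces — satisfies $\Pi_{\an}(\|f-f_0\|_2\le\delta_n\mid Y^n)\to^{P_0}0$; finally, since $\varphi_{f_0}$ is non-increasing, the inequality persists with $\delta_n$ replaced by $m\delta_n$ for small $m>0$, giving the stated form.

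\emph{Main obstacle.} The proof is essentially a careful specialization of results already established, so the work is mostly bookkeeping rather than conceptual: verifying the $L^2$-testing condition with universal constants (routine for Gaussian shifts), translating the abstract Banach-norm small-ball and metric-entropy statements of Lemma~\ref{thm:GGV_verification_GP} into the Kullback--Leibler ball and $d$-entropy statements demanded by Theorem~\ref{thm_GGV} via the explicit expressions for $K$ and $V$, and keeping the multiplicative constants (the large constant in Lemma~\ref{thm:GGV_verification_GP}, the factors $2$ and $3$ in front of $\eps_n$, and $c=\tfrac14$) mutually consistent so that the final contraction and anti-concentration rates come out as stated. I expect no difficulty beyond this constant-tracking.
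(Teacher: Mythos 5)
Your proposal follows essentially the same route as the paper's proof: the likelihood-ratio test for the $\|\cdot\|_2$-testing condition, Lemma \ref{thm:GGV_verification_GP} for conditions (1)--(3) of Theorem \ref{thm_GGV} (with the KL ball identified as the $L^2$-ball), and Lemma \ref{thm:lower_bounds} with $c=1/4$ for the anti-concentration statement. The only quibble is in the lower-bound constant: running your bookkeeping with rate sequence $2\eps_n$ gives $(2+\tfrac14)\,n\an(2\eps_n)^2 = 9\,n\an\eps_n^2$ rather than $\tfrac94 n\an\eps_n^2$ as you assert, but this factor-of-four discrepancy is immaterial to the result and the paper's own accounting is comparably loose.
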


In the white noise model, one can consider $W$ as a random element of $L^2[0,1]$, so that the norms for the upper and lower bounds in Proposition \ref{thm:GGV_verification_GP_gwn} match. This is no longer the case in density estimation.

\begin{proposition}[Contraction rates in density estimation]\label{thm:GGV_verification_GP_density_estimation}
Consider density estimation on $[0,1]$ and assign to the density $f$ a prior of the form \eqref{def:gp_inducement}, where $W$ is a mean-zero Gaussian random element in $L^\infty[0,1]$ with associated RKHS $\mathbb{H}$.  If the true parameter $\eta_0 = \log f_0$ lies in the support of $W$ and $\eps_n \rightarrow 0$ satisfies
$
\varphi_{\eta_0}(\eps_n) \leq n\an \eps_n^2,
$
then for $M > 0$ large enough, as $n\to\infty$,
$$
\Pi_\an(f : \|f-f_0\|_1 > M\eps_n | Y^n) \rightarrow^{P_0} 0.
$$
Moreover, there exists $C_1 > 0$ a finite constant such that if $\varphi_{\eta_0}(\delta_n) \geq C_1 n\an\eps_n^2$, then for sufficiently small $m>0$ and as $n\to\infty$,
$$\Pi_{\alpha_n} (f:\|f-f_0\|_\infty \leq m \delta_n |Y^n) \to^{P_0} 0.$$
\end{proposition}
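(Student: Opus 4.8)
The plan for the $L^1$ assertion is to verify the hypotheses of Theorem~\ref{thm_GGV} for the metric $d=\|\cdot\|_1$ on densities, with $\eps_n=\tilde\eps_n$. As in Proposition~\ref{thm:GGV_verification_GP_gwn}, the hypothesis $\varphi_{\eta_0}(\eps_n)\le n\an\eps_n^2$ (with $\eta_0=\log f_0$) is the entry point to Lemma~\ref{thm:GGV_verification_GP}, applied in $\mathbb{B}=C[0,1]$ with its free constant taken large; this produces sets $B_n\subset C[0,1]$ with $\log N(3\eps_n,B_n,\ninf{\cdot})\le 6Cn\an\eps_n^2$, $P(W\notin B_n)\le e^{-Cn\an\eps_n^2}$ and $P(\ninf{W-\log f_0}<2\eps_n)\ge e^{-n\an\eps_n^2}$. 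I would take the sieve $S_n=\{f_w:w\in B_n\}$, the image of $B_n$ under \eqref{def:gp_inducement}. The only analytic inputs needed are the elementary, location-free estimates $\|f_v-f_w\|_1\le 3\ninf{v-w}$ for $\ninf{v-w}$ small, and $K(f_0,f_w)\vee V(f_0,f_w)\le c\ninf{w-\log f_0}^2$ for $\ninf{w-\log f_0}$ small (standard for the exponential link, e.g. \cite{vdv_2007}, \cite{vdv_FNBI}). The first transfers the entropy bound on $B_n$ to $S_n$ (after an absolute rescaling of $\eps_n$) and shows $\Pi(S_n^c)\le P(W\notin B_n)$ has the required super-exponential smallness; the second gives $\{w:\ninf{w-\log f_0}<2\eps_n\}\subset B_n(\eta_0,\eps_n)$ for $n$ large, so that $\Pi(B_n(\eta_0,\eps_n))\ge e^{-n\an\eps_n^2}$. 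The testing condition \eqref{eq:tests} for $\|\cdot\|_1$ in the i.i.d. density model is classical (\cite{vdv_FNBI}, Proposition~D.8), and Theorem~\ref{thm_GGV} then yields $\|f-f_0\|_1$-contraction at a multiple of $\eps_n$.

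\textbf{Lower bound.} The wrinkle here is that $w\mapsto f_w$ is not injective, since $f_{w+c}=f_w$ for every constant $c$, so Lemma~\ref{thm:lower_bounds} cannot be applied to $W$ directly. I would pass to the centered process $\bar W=W-\int_0^1 W$, again a centered Gaussian element of $C[0,1]$, with $f_{\bar W}=f_W$; setting $\bar g=\log f_0-\int_0^1\log f_0$, one has $f_{\bar g}=f_0$. Since the $\an$-likelihood depends on $W$ only through $f_W=f_{\bar W}$, the marginal $\an$-posterior of $\bar W$ is a fractional posterior with Gaussian prior equal to the law of $\bar W$. Using that $f_0$ is bounded away from $0$ and $\infty$, a short computation shows that for $m\delta_n$ small, $\ninf{f_w-f_0}\le m\delta_n$ forces $\ninf{\bar w-\bar g}\le C'm\delta_n$ with $C'$ depending only on $\inf f_0$, so that
\[
\Pi_{\an}(f:\ninf{f-f_0}\le m\delta_n\mid Y^n)\le \Pi_{\an}(\bar W:\ninf{\bar W-\bar g}\le C'm\delta_n\mid Y^n),
\]
and it suffices to show the right-hand side is $o_P(1)$. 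This I would do with Lemma~\ref{thm:lower_bounds} applied to $\bar W$ at $\eta_0=\bar g$: its small-ball/prior-mass hypothesis follows from $\varphi_{\log f_0}(\eps_n)\le n\an\eps_n^2$ together with $\ninf{\bar W-\bar g}\le 2\ninf{W-\log f_0}$ and the Kullback--Leibler estimates above, while its concentration-function hypothesis follows from $\varphi_{\log f_0}(\delta_n)\ge C_1 n\an\eps_n^2$ once $C_1$ is large enough, using that $\varphi^{\bar W}_{\bar g}$ dominates $\varphi_{\log f_0}$ up to multiplicative constants and negligible logarithmic terms (removing the constant direction, which is the cheapest RKHS direction and orthogonal to the approximation burden of $\eta_0$, changes neither the small-ball rate nor the decentering cost by more than a constant factor). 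The phrase ``for sufficiently small $m$'' is exactly what makes $C'm\delta_n\le\delta_n$, so that monotonicity of $\varphi$ may be used.

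\textbf{Main obstacle.} The substantive difficulty is entirely in the lower bound: one must quantify how the additive-constant non-identifiability of the link affects the concentration function, i.e.\ show that $\varphi^{\bar W}_{\bar g}$ stays comparable to the given $\varphi_{\log f_0}$; this comparison and the attendant constant chasing are precisely what force the unspecified constant $C_1$ in the statement (cf.\ the Gaussian-process lower-bound analyses of \cite{vdv_2007} and \cite{C2008}). The upper bound, by contrast, is routine once the Lipschitz and Kullback--Leibler estimates for the exponential link are recorded.
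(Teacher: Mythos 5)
Your proposal is correct and follows essentially the same route as the paper: the upper bound is obtained by feeding Lemma \ref{thm:GGV_verification_GP} (for the sup-norm Banach space) into Theorem \ref{thm_GGV}, using the standard sup-norm controls on the Kullback--Leibler divergence, its variation and the statistical distance for the exponential link --- the paper tests in the Hellinger metric and passes to $L^1$ at the end rather than testing in $L^1$ directly, which is immaterial. For the lower bound the paper simply defers to the proof of Theorem 3 of \cite{C2008}; your reduction via the centered process $\bar W$ and the comparison of concentration functions is a reasonable unpacking of that same argument, and your identification of the additive non-identifiability of the link as the substantive difficulty is exactly where the cited proof does its work.
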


One typically expects the rates in $L^1$ and $L^\infty$ to match up to a logarithmic factor in $n$, so $\eps_n$ and $\delta_n$ in the last proposition should heuristically be of the same polynomial order. However, a lower bound in $L^\infty$ does not strictly imply one in the weaker $L^1$-norm and hence there is a genuine mismatch here. We next apply the above results to the concrete examples of Gaussian priors considered above.

\begin{corollary}\label{corrollary:gp_contraction_rate_examples}
Let $W$ be one of the mean-zero Gaussian process described in Examples \ref{ex:series}-\ref{ex:RL} with regularity parameter $\gamma>0$, considered as a random element in $L^p[0,1]$ with associated concentration function $\varphi_{\eta_0}$. Then $\eps_n\to 0$ satisfies $\varphi_{\eta_0}(\eps_n) \leq n\an \eps_n^2$ in the following cases.
\begin{itemize}
\item[(i)] Infinite series prior (Example \ref{ex:series}) with $p = 2$, $\eta_0 \in \mathcal{H}^\beta$ and $\eps_n \asymp (n\an)^{-\frac{\gamma\wedge \beta}{1+2\gamma}}$.
\item[(ii)] Matérn process (Example \ref{ex:Matern}) with $p=\infty$, $\eta_0 \in C^\beta$ and $\eps_n \asymp (n\an)^{-\frac{\gamma \wedge \beta}{1+2\gamma}}$.
\item[(iii)] Rescaled square exponential process (Example \ref{ex:SE}) with $p=\infty$, $\eta_0 \in C^\beta$ and $\eps_n \asymp \left(\frac{n\an}{\log^2(n\an)} \right)^{-\frac{\gamma \wedge \beta}{1+2\gamma}}$.
\item[(iv)]Riemann-Liouville process (Example \ref{ex:RL}) with $p = \infty$, $\eta_0 \in C^\beta$ and
$$
\eps_n \asymp \begin{cases}(n\an)^{-\frac{\gamma \wedge \beta}{1+2\gamma}} & \textrm{ if } \gamma \leq \beta \textrm{ or } \lfloor \gamma \rfloor = \frac{1}{2} \textrm{ or } \gamma \notin \beta + \frac{1}{2} + \mathbb{N} \\
\left(\frac{n\an}{\log(n\an)}\right)^{-\frac{\gamma \wedge \beta}{1+2\gamma}} & \textrm{ otherwise.}\end{cases}
$$
\end{itemize} 
In particular, such $\eps_n$ give a contraction rate for the $\alpha_n$-posterior distribution in $\|\cdot\|_2$-loss in Gaussian white noise (cases (i)-(iv)) or in $\|\cdot\|_1$-loss in density estimation (cases (ii)-(iv)).
\end{corollary}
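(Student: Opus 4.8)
The plan is to reduce everything to a single task: for each of the four Gaussian processes, produce an upper bound on the concentration function $\varphi_{\eta_0}(\eps)$ of the form $\varphi_{\eta_0}(\eps) \lesssim \eps^{-1/(\gamma\wedge\beta)\cdot(1+2\gamma)}$ (up to logarithmic factors in the rescaled SE and Riemann--Liouville cases), and then solve the inequality $\varphi_{\eta_0}(\eps_n) \leq n\an\eps_n^2$ for $\eps_n$. Once the bound on $\varphi_{\eta_0}$ is in hand, the claimed $\eps_n$ follows by elementary algebra: if $\varphi_{\eta_0}(\eps) \leq c\,\eps^{-(1+2\gamma)/(\gamma\wedge\beta)}$ then $\varphi_{\eta_0}(\eps_n) \leq n\an\eps_n^2$ holds as soon as $\eps_n \asymp (n\an)^{-(\gamma\wedge\beta)/(1+2\gamma)}$, and the logarithmic corrections enter in exactly the same way as in the classical ($\an=1$) analysis because $n$ only ever appears through the product $n\an$. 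The final sentence of the corollary (contraction in $\|\cdot\|_2$ for white noise, $\|\cdot\|_1$ for density estimation) is then immediate from Propositions \ref{thm:GGV_verification_GP_gwn} and \ref{thm:GGV_verification_GP_density_estimation}, whose hypothesis is precisely $\varphi_{\eta_0}(\eps_n)\leq n\an\eps_n^2$.

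The concentration-function bounds themselves are exactly those already established in the Gaussian process literature, and I would simply invoke them: the decomposition $\varphi_{\eta_0}(\eps) = -\log\Pi(\|W\|<\eps) + \frac12\inf_{\|h-\eta_0\|<\eps}\|h\|_{\mathbb H}^2$ splits the work into a small-ball term and an approximation (decentering) term. For the infinite series prior (Example \ref{ex:series}) in $L^2$, both terms are computed directly from the eigenvalues $k^{-1-2\gamma}$: the small-ball exponent is of order $\eps^{-1/\gamma}$ and, for $\eta_0\in\mathcal H^\beta$, the decentering term is of order $\eps^{-(1+2\gamma-2\beta)_+/\beta}$ up to the truncation argument, giving the stated rate with $\gamma\wedge\beta$ in the exponent — this is Theorem 4.5 / the computations in Chapter 11 of \cite{vdv_FNBI}. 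For the Matérn process the small-ball and RKHS-approximation estimates are Example 11.8 and the surrounding results in \cite{vdv_FNBI}; for the rescaled squared exponential the relevant computation (with the lengthscale $k_n$ chosen exactly so that the two terms balance) is in \cite{vdv_2007}; and for Riemann--Liouville released at zero it is in \cite{vdv_2007} again, where the case distinction ($\gamma\leq\beta$, $\lfloor\gamma\rfloor = 1/2$, $\gamma\notin\beta+\tfrac12+\mathbb N$ versus otherwise) arises from whether the polynomial part $\sum Z_k x^k$ suffices to absorb $\eta_0$ without an extra logarithmic loss in the approximation step. In each case one checks that $\eta_0$ lying in the stated Hölder or Sobolev class guarantees $\eta_0\in\bar{\mathbb H}$, so that Lemma \ref{thm:GGV_verification_GP} applies.

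The one genuinely new point relative to the classical references is bookkeeping the fractional exponent, and the main (minor) obstacle is to confirm that this substitution is legitimate: the classical statements solve $\varphi_{\eta_0}(\eps_n)\sim n\eps_n^2$, whereas here we need the one-sided inequality $\varphi_{\eta_0}(\eps_n)\leq n\an\eps_n^2$. Since $\varphi_{\eta_0}$ is non-increasing in $\eps$ and the upper bounds above are explicit powers of $\eps$, any $\eps_n$ of the stated order satisfies the inequality with room to spare, and enlarging the constant if necessary handles the logarithmic cases; no structural change to the GP arguments is needed. Finally, I would note that the claim is stated only as an upper bound (matching $\eps_n$ exist), so no lower-bound/sharpness argument is required here — those are handled separately in Lemma \ref{thm:lower_bounds} and Propositions \ref{thm:GGV_verification_GP_gwn}--\ref{thm:GGV_verification_GP_density_estimation}. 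The proof thus consists of citing the four concentration-function estimates, substituting $n\mapsto n\an$, and solving for $\eps_n$.
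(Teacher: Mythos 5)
Your overall strategy is exactly the paper's: cite the known concentration-function bounds for each process (small-ball term plus RKHS-approximation term), substitute the effective sample size $n\an$ for $n$, solve $\varphi_{\eta_0}(\eps_n)\le n\an\eps_n^2$, and feed the result into Propositions \ref{thm:GGV_verification_GP_gwn} and \ref{thm:GGV_verification_GP_density_estimation}. The individual estimates you quote later in the proposal ($-\log\Pi(\|W\|<\eps)\lesssim \eps^{-1/\gamma}$ and decentering term $\lesssim \eps^{-(1+2\gamma-2\beta)_+/\beta}$) are the ones the paper uses (Lemmas 11.36, 11.37 and 11.47 of \cite{vdv_FNBI}, Lemma 2.2 and Theorem 2.4 of \cite{vdv_2007}; note the paper takes the Riemann--Liouville bound from Theorem 4 of \cite{C2008} rather than \cite{vdv_2007}, a minor citation difference).

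There is, however, a concrete error in your headline reduction. You claim the combined bound $\varphi_{\eta_0}(\eps)\le c\,\eps^{-(1+2\gamma)/(\gamma\wedge\beta)}$ and assert that this implies $\varphi_{\eta_0}(\eps_n)\le n\an\eps_n^2$ for $\eps_n\asymp(n\an)^{-(\gamma\wedge\beta)/(1+2\gamma)}$. This implication is false: for that choice of $\eps_n$ one has $\eps_n^{-(1+2\gamma)/(\gamma\wedge\beta)}=n\an$, whereas $n\an\eps_n^2=(n\an)^{1-2(\gamma\wedge\beta)/(1+2\gamma)}\ll n\an$, so the required inequality fails by a polynomial factor. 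The correct combined exponent is $\max\{1/\gamma,\,(2\gamma-2\beta+1)/\beta\}=\tfrac{1+2\gamma}{\gamma\wedge\beta}-2$; you have dropped the $-2$. Solving $\eps_n^{-(1+2\gamma)/(\gamma\wedge\beta)+2}\le n\an\eps_n^2$, i.e.\ $\eps_n^{-(1+2\gamma)/(\gamma\wedge\beta)}\le n\an$, does give the stated rate, and indeed your own two-term bound $\eps^{-1/\gamma}+\eps^{-(2\gamma-2\beta+1)/\beta}$ yields exactly this (each term separately balances against $n\an\eps_n^2$ at $\eps_n\asymp(n\an)^{-\gamma/(1+2\gamma)}$ and $(n\an)^{-\beta/(1+2\gamma)}$ respectively, whence the $\gamma\wedge\beta$). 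So the fix is immediate, but as written the central algebraic step of your reduction does not go through; the rest of the argument (monotonicity of $\varphi_{\eta_0}$, logarithmic corrections entering only through $n\an$, and the final appeal to the contraction propositions) is fine and matches the paper.
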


In all cases, we recover the `usual' contraction rate with the sample size $n$ replaced by the effective sample size $n\alpha_n$, mirroring the parametric situation. A natural question is whether these rates are sharp, which can be investigated via Lemma \ref{thm:lower_bounds} by lower bounding the concentration function $\varphi_{\eta_0}(\eps_n)$. This is a more delicate issue for which less is known, but we consider two representative examples which can be proved as in  \citet{C2008}. The goal is to find $\delta_n$ as large as possible such that
$$
E_{f_0} \Pi_\an(f : \|f-f_0\|_p \leq m\delta_n | Y^n) \rightarrow 0,
$$
and evaluate the gap between $\delta_n$ and the rate $(n\alpha_n)^{-\frac{\gamma \wedge \beta}{1+2\gamma}}$ (possibly up to $\log (n\alpha_n)$-factors) from Corollary \ref{corrollary:gp_contraction_rate_examples}.
\begin{itemize}
\item \textit{Infinite series prior} (Example \ref{ex:series}) with regularity $\gamma>0$ and $p=2$ in Gaussian white noise. If $\gamma \leq \beta$ (undersmoothing case), then for any $f_0 \in \mathcal{H}^\beta$, we may take $\delta_n \gtrsim (n\alpha_n)^{-\frac{\gamma}{1+2\gamma}}$. If $\gamma > \beta$ (oversmoothing case), then there exists $f_0 \in \mathcal{H}^\beta$ such that for $t>1+\beta/2$, we may take $\delta_n \gtrsim (n\alpha_n)^{-\frac{\beta}{2\gamma+1}} (\log (n\alpha_n))^{-t}$.
\item \textit{Brownian motion released at zero} in density estimation with $p=\infty$. Consider $W(x) = Z_0 + B_x$ for $B$ a standard Brownian motion, $Z_0 \sim \cN(0,1)$ independent and the expontiated prior \eqref{def:gp_inducement}. This corresponds to the Riemann-Liouville process (Example \ref{ex:RL}) with $\gamma = 1/2$, but with a slight correction to the polynomial term. If $f_0 \in C^\beta$ for $\beta \geq 1/2$ (undersmoothing case), then we may take $\delta_n \gtrsim (n\alpha_n)^{-1/4}$, which equals $(n\alpha_n)^{-\frac{\gamma}{1+2\gamma}}$ with $\gamma=1/2$.
\end{itemize}

In these two examples, the upper and lower bounds match, possibly up to logarithmic factors, indicating that our results capture the correct dependence on $\alpha_n$ in the nonparametric contraction rate for the fractional posterior. This matches a similar conclusion in the parametric setting \citep{miller21,MORV22}.

\subsection{Supremum norm contraction rates in Gaussian white noise}

The two general approaches to posterior contraction used above are known to yield suboptimal rates in losses such as $L^\infty$, which are incompatible with the intrinsic distance that geometrizes the statistical model (e.g. the Hellinger distance in density estimation), see \citet{HRSH15}. An alternative method is to express such a loss in terms of multiple functionals, usually involving basis coefficients, and then apply tools from semiparametric BvM results \textit{uniformly} over these functionals \citep{cas_14}. We follow the program of \citet{cas_14} and show that this approach extends to the fractional posterior setting in Gaussian white noise.

Let $(\psi_{lk})$ denote a boundary corrected $S$-regular orthonormal wavelet basis of $L^2[0,1]$, see \citet{HKGP98} for full details and definitions. Consider the Besov ball
$$B_{\infty\infty}^\beta(R) = \left\{ f \in L^2[0,1]: \sup_{l \geq 0} \sup_{0 \leq k \leq 2^l-1} |\langle f,\psi_{lk}\rangle_2| \leq R 2^{-l(\beta+1/2)} \right\}.$$
The space $B_{\infty\infty}^\beta$ is equivalent to the usual H\"older space $C^\beta$ for non-integer $\beta$, while for integer $\beta$ it is slightly larger, satisfying the continuous embedding $C^\beta \subset B_{\infty\infty}^\beta$. We consider a wavelet series prior of the form
\begin{equation}\label{eq:wavelet_prior}
f(x) = \sum_{l\geq 0} \sum_{k=0}^{2^l-1} \sigma_l \zeta_{lk} \psi_{lk}(x),
\end{equation}
where $\zeta_{lk} \sim^{iid} \varphi$ from some density $\varphi$ on $\R$ and $\sigma_l>0$ is a scaling factor.

	\begin{proposition}\label{prop:rate_sup_norm_P1}
		Let $f_0 \in B_{\infty\infty}^\beta(R)$ for some $\beta,R>0$, and consider the wavelet series prior \eqref{eq:wavelet_prior} with (i) $\varphi$ equal to the uniform $\text{Unif}[-B,B]$ density for some $B>R$ and $\sigma_l = 2^{-l(\beta+1/2)}$ or (ii) $\varphi$ equal to a density that is positive on $[-1,1]$ and satisfies the tail condition
			\begin{align}\label{prior_2}
		c_1e^{-b_1 |x|^{1+\delta}} \leq \varphi(x) \leq c_2e^{-b_2 |x|^{1+\delta}}  \qquad \text{for all   } |x| \geq 1,
	\end{align}		
for some $b_1,b_2,c_1,c_2,\delta>0$ and $\sigma_l = 2^{-l(\beta+1/2)}(l+1)^{-\frac{1}{1+\delta}}$. Then there exists $M>0$ large enough such that
		\begin{align*}
			E_0 \int \ninf{f-f_0} d\Pi_{\alpha_n}(f|Y^n) \leq M \left(\frac{\log(n\alpha_n)}{n\alpha_n}\right)^{\frac{\beta}{2\beta +1}}.
		\end{align*}
	\end{proposition}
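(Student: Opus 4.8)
The plan is to follow the semiparametric route to sup-norm contraction of \cite{cas_14}, adapted to the fractional posterior. The key structural fact is that in Gaussian white noise the $\alpha_n$-posterior is essentially explicit on wavelet coefficients: since $\ell_n(f)$ is exactly quadratic, $e^{\alpha_n\ell_n(f)}$ is proportional to $\prod_{l,k}\exp(-\tfrac{n\alpha_n}{2}(f_{lk}-Y_{lk})^2)$ with $Y_{lk}=\langle Y^n,\psi_{lk}\rangle$, so that against the product prior \eqref{eq:wavelet_prior} the coefficients $f_{lk}$ are a posteriori independent, with $f_{lk}\mid Y^n$ having density on $\mathbb{R}$ proportional to $\exp(-\tfrac{n\alpha_n}{2}(x-Y_{lk})^2)\,\varphi(x/\sigma_l)$. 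Everything then reduces to one-dimensional computations with $n$ replaced by the effective sample size $n\alpha_n$. Using that at a fixed level the $\psi_{lk}$ have essentially disjoint supports with $\|\psi_{lk}\|_\infty\asymp 2^{l/2}$, I would first bound $\ninf{f-f_0}\lesssim\sum_{l\geq 0}2^{l/2}\max_{0\leq k<2^l}|f_{lk}-f_{0,lk}|$, integrate against $\Pi_{\alpha_n}(\cdot\mid Y^n)$ and take $E_0$, thereby reducing to a level-by-level control of $E_0\int\max_k|f_{lk}-f_{0,lk}|\,d\Pi_{\alpha_n}(\cdot\mid Y^n)$, split at the resolution level $L_n$ with $2^{L_n}\asymp (n\alpha_n/\log(n\alpha_n))^{1/(2\beta+1)}$ (which balances the two contributions below and satisfies $2^{L_n}\lesssim n^{1/(2\beta+1)}$ since $\alpha_n\leq 1$).

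For the low frequencies $l\leq L_n$ I would split $f_{lk}-f_{0,lk}=(f_{lk}-Y_{lk})+(Y_{lk}-f_{0,lk})$. Because $f_0\in B_{\infty\infty}^\beta(R)$ lies strictly inside the prior support ($B>R$ in case (i); $\varphi>0$ near the origin in case (ii)) and $L_n\lesssim \log_2 n/(2\beta+1)$, on these levels the likelihood factor is effectively flat on the $O((n\alpha_n)^{-1/2})$-scale of the coordinate posteriors, which are therefore (truncated) Gaussians centred at $Y_{lk}$. A Gaussian maximal inequality over the $2^l$ independent posterior draws gives $\int\max_k|f_{lk}-Y_{lk}|\,d\Pi_{\alpha_n}\lesssim\sqrt{(l\vee 1)/(n\alpha_n)}$, and applying another to the noise gives $E_0\max_k|Y_{lk}-f_{0,lk}|=n^{-1/2}E_0\max_{k<2^l}|\eps_{lk}|\lesssim\sqrt{(l\vee1)/n}\leq\sqrt{(l\vee1)/(n\alpha_n)}$. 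These contributions grow geometrically in $l$, so $\sum_{l\leq L_n}2^{l/2}\sqrt{(l\vee1)/(n\alpha_n)}\asymp 2^{L_n/2}\sqrt{L_n/(n\alpha_n)}$, which by the choice of $L_n$ is $\asymp(\log(n\alpha_n)/(n\alpha_n))^{\beta/(2\beta+1)}$.

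For the high frequencies $l>L_n$ the data plays no role. In case (i) the prior support forces $|f_{lk}|\leq B\sigma_l=B2^{-l(\beta+1/2)}$ pointwise, while $|f_{0,lk}|\leq R2^{-l(\beta+1/2)}$, so $\max_k|f_{lk}-f_{0,lk}|\leq (B+R)2^{-l(\beta+1/2)}$ surely. In case (ii), combining the tail bound \eqref{prior_2} with $\sigma_l=2^{-l(\beta+1/2)}(l+1)^{-1/(1+\delta)}$ one shows, via a maximal inequality over the $2^l$ coordinates, that $\int\max_k|f_{lk}|\,d\Pi_{\alpha_n}\lesssim\sigma_l(l\vee1)^{1/(1+\delta)}\asymp 2^{-l(\beta+1/2)}$ up to constants --- precisely what the extra $(l+1)^{-1/(1+\delta)}$ factor is designed to absorb. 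In either case $\sum_{l>L_n}2^{l/2}\cdot 2^{-l(\beta+1/2)}$ (with at most polylogarithmic dependence on $l$) is $\asymp 2^{-L_n\beta}\asymp(\log(n\alpha_n)/(n\alpha_n))^{\beta/(2\beta+1)}$, matching the low-frequency term. Adding the two regimes yields the claim.

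The step I expect to be the main obstacle is the high-frequency analysis in case (ii): obtaining a clean uniform-in-$(l,k)$ tail bound for the posterior densities $\propto\exp(-\tfrac{n\alpha_n}{2}(x-Y_{lk})^2)\varphi(x/\sigma_l)$ at the transitional levels $l\approx L_n$, where neither the Gaussian likelihood (precision $n\alpha_n$) nor the prior clearly dominates, and doing so with exactly the exponent $(l\vee1)^{1/(1+\delta)}$ so that the polylogarithmic factors cancel against $\sigma_l$ rather than accumulating across levels. A secondary point needing care is the effective-sample-size bookkeeping: one must verify that every threshold, maximal inequality and truncation estimate carries $n\alpha_n$ rather than $n$, so that the classical argument of \cite{cas_14} transfers verbatim under $n\mapsto n\alpha_n$ --- which is legitimate precisely because $n\alpha_n\to\infty$ by standing assumption.
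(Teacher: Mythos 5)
Your plan is correct and follows essentially the same route as the paper's proof (which itself adapts \cite{cas_14}): the same four-way decomposition at the cutoff $2^{L_n}\asymp(n\alpha_n/\log(n\alpha_n))^{1/(2\beta+1)}$, coordinatewise posterior exponential-moment/maximal-inequality bounds yielding $\sqrt{(l\vee 1)/(n\alpha_n)}$ per level below $L_n$, and the tail condition \eqref{prior_2} together with the extra $(l+1)^{-1/(1+\delta)}$ factor in $\sigma_l$ to control the high frequencies in case (ii). One verbal slip worth noting: at low frequencies it is the \emph{prior} density, not the likelihood, that must be shown effectively flat (more precisely, bounded above and below up to a factor $e^{cl}$, which is exactly where the extra $\sqrt{l}$ in the level-$l$ bound comes from) on the scale of the Gaussian likelihood factor, so that the coordinate posteriors behave like (truncated) Gaussians centred at $Y_{lk}$ with precision $n\alpha_n$.
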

	
	The conclusion of the proposition is in $E_0$-expectation, which is slightly stronger than the usual notion of a posterior contraction rate and readily implies the latter via Markov's inequality.
Proposition \ref{prop:rate_sup_norm_P1} thus shows that contraction rates in stronger norms, such as the $L^\infty$-norm, satisfy the same heuristic messages derived above, namely that nonparametric contraction rates use the effective sample size. Note that for $g$ a $\mathcal{N}(0,1)$ density, which is covered by the last result, the prior \eqref{eq:wavelet_prior} reduces to a mean-zero Gaussian process with covariance kernel $K(x,y) = \sum_{l\leq L_n,k} 2^{-l(2\beta+1)} \psi_{lk}(x) \psi_{lk}(y)$.

The uniform use of the semiparametric tools developed here can also be used to establish full nonparametric BvM results in weaker topologies which permit estimation at rate $\sqrt{n\alpha_n}$ \citep{CN14}. We mention that such results can provide frequentist coverage guarantees for certain Bayesian credible sets for the full infinite-dimensional parameter as well, although we do not pursue such extensions here.

\acks{The authors would like to thank three reviewers for helpful comments, Surya Tokdar for providing early access to the \texttt{sbde} R-package, and the Imperial College London-CNRS PhD Joint Programme for funding to support this collaboration and travel between the Sorbonne Université and Imperial College London.  ALH is funded by a CNRS--Imperial College PhD grant. IC acknowledges funding from the Institut Universitaire de France and ANR grant project BACKUP ANR-23-CE40-0018-01. }


\appendix
\section{Proofs of Main Results}

\subsection{Contraction Rates}

\begin{proof}{Proof of Theorem \ref{bat_result}}
		By Lemma \ref{lem:mino_den_posterior}, on a subset $C_n$ of $P_0$-probability at least $1-\frac{1}{n\eps_n^2}$,  for any measurable set $A \subset S$,
		\begin{equation}\label{eq:Bayes_formula}
		\begin{split}
			E_0 \Pi_{\alpha_n}(A|Y^n)  = E_0 \frac{\int_{A} \frac{p_\eta^n(Y^n)^{\alpha_n}}{p_{\eta_0}^n(Y^n)^{\alpha_n}} d\Pi(\eta)}{\int \frac{p_\eta^n(Y^n)^{\alpha_n}}{p_{\eta_0}^n(Y^n)^{\alpha_n}} d\Pi(\eta)}
			& \leq E_0 \frac{\int_{A} \frac{p_\eta^n(Y^n)^{\alpha_n}}{p_{\eta_0}^n(Y^n)^{\alpha_n}} d\Pi(\eta)}{\Pi(B_n(\eta_0, \eps_n)) e^{-2{\alpha_n}n\eps_n^2}}1_{C_n} + P_0(C_n^c) \\
			& =  \frac{\int_{A} \int p_\eta^n(x)^{\alpha_n} p_{\eta_0}^n(x)^{1-\alpha_n} d\mu(x)d\Pi(\eta)}{\Pi(B_n(\eta_0, \eps_n)) e^{-2{\alpha_n}n\eps_n^2}} +o(1),
		\end{split}		
		\end{equation}
		where the last equality follows from Fubini's theorem. Set
		\begin{align*}
			A_n&:= \left\{\eta : \: \int p_\eta^n(x)^{\alpha_n} p_{\eta_0}^n(x)^{1-\alpha_n} d\mu(x) \leq e^{- 4n\alpha_n\eps_n^2} \right\} \\
			&= \left\{\eta : \: -\frac{1}{n(1-\alpha_n)}\log\left(\int p_\eta^n(x)^{\alpha_n} p_{\eta_0}^n(x)^{1-\alpha_n} d\mu(x) \right) \geq 4\frac{\alpha_n\eps_n^2}{1-\alpha_n}\right\}\\
			& = \left\{\eta : \: \frac{1}{n} D_{\alpha_n}(p_{\eta}^n, p_{\eta_0}^n) \geq 4\frac{\alpha_n\eps_n^2}{1-\alpha_n}\right\}.
		\end{align*}		
Substituting $A_n$ into the second-last display and using the small-ball assumption \eqref{equation_thm_1} yields
		\begin{align*}
			E_0 \Pi_{\alpha_n}(A_n|Y^n) 
			& \leq \frac{\int_{A_n} e^{- 4n\alpha_n\eps_n^2} d\Pi(\eta)}{\Pi(B_n(\eta_0, \eps_n)) e^{-2{\alpha_n}n\eps_n^2}} +o(1)  \leq e^{- n\alpha_n\eps_n^2}  + o(1) = o(1),
		\end{align*}
		since $n\alpha_n\eps_n^2 \rightarrow \infty$.
	\end{proof}

	\begin{proof}{Proof of Theorem \ref{thm_GGV}}
		Denote $\bar{\eps}_n = \eps_n\vee\tilde{\eps}_n $ and note that Assumption 1 of the theorem is also satisfied for the sequence $\bar{\eps}_n$. Then this assumption together with the testing condition imply that there exists $M>0$ and tests $\psi_n$ such that $E_{\eta_0}(\psi_n(Y^n)) = o(1)$ and $\sup \limits_{\eta \in S_n, d(\eta,\eta_0) \geq M\bar{\eps}_n}E_{\eta}(1-\psi_n(Y^n)) \leq e^{-(C+3)n\bar{\eps}_n^2}$.
		Assumptions 2 and 3 and Lemma \ref{lemma_sieve} yield that $ \Pi_{\alpha_n}(S_n^c |Y^n) \to^{P_0} 0 $ and consequently, setting $A_n := \{\eta, d(\eta,\eta_0)\geq M\bar{\eps}_n\}$,
		\begin{align*}\label{eq_2}
			\Pi_{\alpha_n}(A_n | Y^n)
			&= \Pi_{\alpha_n}(A_n \cap S_n|Y^n)\psi_n(Y^n) + \Pi_{\alpha_n}(A_n \cap S_n|Y^n)(1-\psi_n(Y^n)) + \Pi_\an(A_n \cap S_n^c | Y^n) \nonumber \\
			&\leq \psi_n(Y^n) + \Pi_{\alpha_n}(A_n \cap S_n|Y^n)(1-\psi_n(Y^n)) + o_P(1) \nonumber \\
			&=  \Pi_{\alpha_n}(A_n \cap S_n|Y^n)(1-\psi_n(Y^n))	+ o_P(1).
		\end{align*}
		By Lemma \ref{lem:mino_den_posterior}, for a subset $C_n$ of $P_0$-probability at least $1-\frac{1}{n\bar{\eps}_n^2}$ and arguing as in the proof of Theorem \ref{bat_result} just above, we have
		\begin{align*}
			E_0 \Pi_{\alpha_n}(A_n \cap S_n|Y^n)(1-\psi_n(Y^n))
   &\leq E_0 \frac{\int_{A_n \cap S_n} \frac{p_\eta^n(Y^n)^{\alpha_n}}{p_{\eta_0}^n(Y^n)^{\alpha_n}} d\Pi(\eta)}{\Pi(B_n(\eta_0, \bar{\eps}_n)) e^{-2{\alpha_n}n\bar{\eps}_n^2}} (1-\psi_n(Y^n)) 1_{C_n} + P_0(C_n^c).
		\end{align*}
Using Fubini's theorem and Hölder's inequality, the last display is bounded by
		\begin{align*}
			& \frac{\int_{A_n \cap S_n} \int p_\eta^n(x)^{\alpha_n} p_{\eta_0}^n(x)^{1-\alpha_n} (1- \psi_n(x)) d\mu(x)d\Pi(\eta)}{\Pi(B_n(\eta_0, \bar{\eps}_n)) e^{-{\alpha_n}2n\bar{\eps}_n^2}} + P_{0}(C_n^c)\\ 
			& \quad \leq  \frac{\int_{A_n \cap S_n} \left(\int p_\eta^n(x) (1- \psi_n(x)) d\mu(x)\right)^{\alpha_n} \left(\int p_{\eta_0}^n(x) d\mu(x)\right)^{1-\alpha_n} d\Pi(\eta)}{e^{-Cn\an\bar{\eps}_n^2 }e^{-2n\an\bar{\eps}_n^2}} + o(1),
		\end{align*}
  which is bounded by $ e^{(2+C)n\an\bar{\eps}_n^2}\int_{A_n} e^{-(C+3)n\alpha_n\bar{\eps}_n^2} d\Pi(\eta) + o(1) \leq e^{-n\an\bar{\eps}_n^2} + o(1) = o(1)$.
	\end{proof}

	\begin{proof}{Proof of Proposition \ref{rate_denti_PRH}}
		The proof is a direct application of Theorem \ref{thm_GGV}. First, the testing condition \eqref{eq:tests} is satisfied in the density estimation model with $d=\|\cdot\|_1$. Then, let us verify the conditions \ref{assum_1_GGV}, \ref{assum_2_GGV} and \ref{assum_3_GGV} for $S_n=H^1_{K_n}$. For Condition \ref{assum_1_GGV}, set $\eps_n=\sqrt{K_n\log(n)/n}$ that satisfies $K_n\log(3K_n^{1/2}/\eps_n) \lesssim n \eps_n^2 $ and thus $\Big(3K_n^{1/2} / \eps_n\Big)^{K_n} \leq e^{Dn\eps_n^2}$ for some $D>0$. By a standard result on the $\eps$-covering number of the unit ball $B_{\|\cdot\|_2}(0_{\R^{K_n}}, 1)$, for $n$ large enough, it follows,
\begin{align*}
	N(\eps_n, H^1_{K_n}, \|\cdot\|_1) &\leq N(\eps_n, S^1_{K_n}, \|\cdot\|_1) \leq N(\eps_n, B_{\|\cdot\|_2}(0_{\R^{K_n}}, 1) , \|\cdot\|_1) \\ 
	& \leq N(\eps_n / K_n^{1/2}, B_{\|\cdot\|_2}(0_{\R^{K_n}}, 1), \|\cdot\|_2) \leq \Big(3K_n^{1/2} / \eps_n\Big)^{K_n} \leq e^{Dn\eps_n^2},
\end{align*} 
and therefore $\eps_n$ satisfies Condition \ref{assum_1_GGV}. For the random histogram prior, we have $\Pi((H^1_{K_n})^C)= 0$ and so Condition \ref{assum_2_GGV} is clearly satisfied. Finally, by Lemma \ref{lemma_prior_mass_condition}, the sequence $\tilde{\eps}_n^2 = K_n\log(n\alpha_nK_n)/ n\alpha_n + K_n^{-2\beta}$ satisfies $\Pi(B_n(f_0, M\tilde{\eps}_n)) \geq e^{-n\alpha_n(M\tilde{\eps}_n)^2}$ for some $M>0$, and thus the result follows from Theorem \ref{thm_GGV}.
	\end{proof}
	
\begin{proof}{Proof of Lemma \ref{thm:GGV_verification_GP}}
	The proof is a straightforward adaptation of the proof of Theorem 11.20 in \citet{vdv_FNBI} to the $\alpha_n-$posterior, and is hence omitted.
\end{proof}

\begin{proof}{Proof of Lemma \ref{thm:lower_bounds}}
By Lemma I.28 of \citet{vdv_FNBI}, the concentration function satisfies
$$\varphi_{\eta_0}(\eps) \leq -\log \Pi( \|W-\eta_0\| \leq \eps) \leq \varphi_{\eta_0} (\eps/2)$$
for any $\eps>0$. In particular, $\Pi(\|W-\eta_0\| \leq \delta_n) \leq e^{-\varphi_{\eta_0}(\delta_n)} \leq e^{-(2+c)n\alpha_n\eps_n^2}$, so that under the lemma hypotheses,
	$$
	\frac{\Pi(\|W-\eta_0\|\leq \delta_n)}{\Pi(B_{KL}(\eta_0, \eps_n))}\leq \frac{e^{-(2+c)n\an\eps_n^2}}{e^{-cn\an\eps_n^2}} \leq e^{-2n\an\eps_n^2} \to 0.
	$$
The result then follows from Lemma \ref{lemma_sieve}.
\end{proof}

\begin{proof}{Proof of Proposition \ref{thm:GGV_verification_GP_gwn}}
In Gaussian white noise, the testing condition \eqref{eq:tests} is satisfied by the likelihood ratio test with the distance $d = \|\cdot\|_2$ (\citealp{vdv_FNBI}, Lemma D.16), and hence it suffices to verify conditions (1)-(3) of Theorem \ref{thm_GGV} in order to apply that theorem. For $\eps_n$ satisfying $\varphi_{f_0}(\eps_n) \leq n\an\eps_n^2$, Lemma \ref{thm:GGV_verification_GP} gives sets $B_n$ satisfying conditions (1)-(2). By Lemma 8.30 of \citet{vdv_FNBI}, the Kullback-Leibler neighbourhoods take the form $B_n(f_0,\eps_n) = \{f: \|f-f_0\|_2 \leq \eps_n\}$ (not to be confused with the $B_n$ from Lemma \ref{thm:GGV_verification_GP}). But then $	\Pi(\|f-f_0\|_2 < 2\eps_n) \geq e^{-n\an\eps_n^2}$ from the third part of Lemma \ref{thm:GGV_verification_GP}, which verifies (3) for $\eps_n$ possibly a multiple of itself. The contraction upper bound thus follows from Theorem \ref{thm_GGV}. For the lower bound, we apply Lemma \ref{thm:lower_bounds} with $c = 1/4$, so that $\delta_n$ satisfying $\varphi_{f_0}(\delta_n) \geq \frac{9}{4}n\an\eps_n^2$ is a lower bound for the contraction rate.
\end{proof}

\begin{proof}{Proof of Proposition \ref{thm:GGV_verification_GP_density_estimation}}
In density estimation, the testing condition \eqref{eq:tests} is satisfied for the Hellinger distance $d_H$ (\citealp{vdv_FNBI}, Proposition D.8), and hence it again suffices to verify conditions (1)-(3) of Theorem \ref{thm_GGV}. By Lemma 3.1 of \citet{vdv}, the squared Hellinger distance, Kullback-Leibler divergence and its $2^{nd}$-variation $V$ between exponentiated densities $f_w$ and $f_v$ of the form \eqref{def:gp_inducement} are each bounded by a multiple of $\|v-w\|_\infty^2$ as soon as $\|v-w\|_\infty \leq D_0$ for some finite constant $D_0<\infty$. Conditions (1)-(2) can thus be verified with $d=\|\cdot\|_\infty$, while for (3) it suffices to show $\Pi(\|W-\log f_0\|_\infty \leq \eps_n) \geq e^{-Cn\alpha_n \eps_n^2}$. These three conditions each follow from Lemma \ref{thm:GGV_verification_GP} for $\eps_n$ satisfying $\varphi_{\log f_0}(\eps_n) \leq n\an\eps_n^2$, so that we have contraction rate $\eps_n$ in Hellinger distance. Since the $L^1$-distance is bounded by a multiple of the Hellinger distance, we get the same contraction rate in $L^1$.
%
%
%
For the lower bound, the proof is similar to the proof of Theorem 3 of \citet{C2008}.
 \end{proof}

 \begin{proof}{Proof of Corollary \ref{corrollary:gp_contraction_rate_examples}}
{\it Case (i): infinite series}. For $\eps>0$ small enough, the centered small ball probability satisfies $\varphi_{0}(\eps) \asymp \eps^{-\frac{1}{\gamma}}$ (Lemma 11.47 in \citealp{vdv_FNBI}), while $\inf_{h\in\mathbb{H}: \| h - \eta_0\|_2 < \eps}\|h\|_\mathbb{H}^2 \lesssim \eps^{-\frac{2\gamma- 2\beta + 1}{\beta}}$ for $\beta \leq \gamma + 1/2$ (the latter quantity is $O(1)$ if $\beta > \gamma +1/2$ since then $\eta_0$ is in the RKHS of $W$ and one may take $h = \eta_0$). We thus have $\varphi_{\eta_0}(\eps_n) \lesssim \eps_n^{-1/\gamma} + \eps_n^{-(2\gamma-2\beta+1)/\beta}$, which can be checked is $O(n\alpha_n\eps_n^2)$ for $\eps_n \asymp (n\an)^{-\frac{\gamma \wedge \beta}{1+2\gamma}}$.

{\it Case (ii): Mat\'ern.} For $\eps>0$ small enough and $\eta_0 \in C^\beta$, we have $\varphi_{\eta_0}(\eps) \lesssim \eps^{-1/\gamma} + \eps^{-(2\gamma-2\beta+1)/\beta}$ by Lemmas 11.36 and 11.37 of \citet{vdv_FNBI}. As in case (i), this is $O(n\alpha_n\eps_n^2)$ for $\eps_n \asymp (n\an)^{-\frac{\gamma \wedge \beta}{1+2\gamma}}$.

{\it Case (iii): squared exponential}. 
Taking the length scale $k_n = \left(\frac{n\an}{\log^2(n\an)}\right)^{-\frac{1}{1+2\gamma}}$, Lemma 2.2 and Theorem 2.4 of \citet{vdv_2007} imply that  for $\eta_0 \in C^\beta$,
$$
\varphi_{w_0}(\eps_n) \lesssim \frac{1}{k_n} \left(\log \frac{1}{k_n \eps_n^2} \right)^2 + \frac{1}{k_n}
$$
if $k_n^\beta \lesssim \eps_n$.
Then $\varphi_{\eta_0}(\eps_n) \lesssim n\an \eps_n^2$ is satisfied for $\eps_n \gtrsim k_n^\beta \vee \frac{\log (n\an)}{\sqrt{n k_n}}$, which has minimal solution
$
\eps_n \asymp \left(\frac{n\an}{\log^2(n\an)}\right)^{-\frac{\gamma \wedge \beta}{1+2\gamma}}.
$

{\it Case (iv) Riemann-Liouville.} For $\eta_0 \in C^{\beta}$, the concentration function satisfies (Theorem 4 of \citealp{C2008})
\begin{equation}\label{eq:RL_concentration}
    \varphi_{\eta_0}(\eps) \lesssim \begin{cases}
	\eps^{-\frac{1}{\gamma}} & 0 < \gamma \leq \beta, \\
	\eps^{-\frac{2\gamma - 2\beta+1}{\beta}} & \gamma > \beta \textrm{ and } (\lfloor \gamma \rfloor=1/2 \textrm{ or }\gamma \notin \beta + 1/2 + \mathbb{N}), \\
	\eps^{-\frac{2\gamma - 2\beta+1}{\beta}}\log(1/\eps) & \textrm{otherwise}.
\end{cases}
\end{equation}
In the first two cases, $\varphi_{\eta_0}(\eps_n) \lesssim n\an \eps_n^2$ is satisfied by
$
\eps_n = (n\an)^{-\frac{\gamma \wedge \beta}{1+2\gamma}},
$
while in the third case, $\varphi_{\eta_0}(\eps_n) \lesssim n\an \eps_n^2$ for $\eps_n = \left(\frac{n\an}{\log(n\an)} \right)^{-\frac{\gamma \wedge \beta}{1+2\gamma}}$.
 \end{proof}

\subsection{Bernstein--von Mises Results}

\begin{proof}{Proof of Theorem \ref{thm:general_bvm_ap}} In this proof, to avoid any possible confusion, we use the explicit notation $o_{P_0}(1)$ for a term going to $0$ in $P_0$--probability (instead of the shorthand $o_P(1)$). To show that $\sqrt{n}(\psi(\eta)- \hat{\psi})$ converges in distribution (in $P_0$--probability) to a  $\cN(0,V_0)$ law, it suffices to do so for $\sqrt{n}(\psi(\eta)- \hat{\psi})1_{A_n}(\eta)$. Indeed, $\sqrt{n}(\psi(\eta)- \hat{\psi})=\sqrt{n}(\psi(\eta)- \hat{\psi})1_{A_n}(\eta)+\sqrt{n}(\psi(\eta)- \hat{\psi})1_{A_n^c}(\eta)$, and since by assumption $\Pi_{\al_n}[A_n^c\given Y^n]=o_{P_0}(1)$, for $\eta\sim \Pi_{\al_n}[\cdot\given Y^n]$ the variable $1_{A_n^c}(\eta)$  goes to $0$ in probability, and so does $\sqrt{n}(\psi(\eta)- \hat{\psi})1_{A_n^c}(\eta)$ (the probability that it is non--zero is $\Pi_{\al_n}[A_n^c\given Y^n]$).  

Since convergence in distribution is implied by convergence of Laplace transforms (this is also true for convergence in distribution in $P_0$--probability, see Lemma 1 of the supplement of \citealp{CR2015} for details on this), it is enough to show, for any real $t$, that $E_{\al_n}[e^{\sqrt{n}(\psi(\eta)- \hat{\psi})1_{A_n}}\given Y^n]$ goes to $e^{t^2V_0/2}$ in $P_0$--probability. 
Since $e^{\sqrt{n}(\psi(\eta)- \hat{\psi})1_{A_n}}=e^{\sqrt{n}(\psi(\eta)- \hat{\psi})}1_{A_n}+1_{A_n^c}$, using again that $\Pi_{\al_n}[A_n^c\given Y^n]=o_{P_0}(1)$, it is enough to show that 
\begin{align*}
 E_{{\alpha_n}}(e^{ t\sqrt{n\an} (\psi(\eta) - \hat{\psi})} |& Y^n, A_n)  := \frac{\int_{A_n} e^{ t\sqrt{n\an} (\psi(\eta) - \hat{\psi})} e^{{\alpha_n}\ell_n(\eta) - \alpha_n \ell_n(\eta_t)} e^{\alpha_n \ell_n(\eta_t)} d\Pi(\eta)}{\int_{A_n} e^{{\alpha_n}\ell_n(\eta)}d\Pi(\eta)} \\
  = &\frac{\int_{A_n} e^{ t\sqrt{n\an} (\psi(\eta) - \hat{\psi})} e^{{\alpha_n}\ell_n(\eta) - \alpha_n \ell_n(\eta_t)} e^{\alpha_n \ell_n(\eta_t)} d\Pi(\eta)}{\int  e^{{\alpha_n}\ell_n(\eta)}d\Pi(\eta)} \Pi_{\alpha_n}(A_n\given Y^n)^{-1} 
\end{align*} 
goes to $e^{t^2V_0/2}$ in $P_0$--probability, where $\eta_t = \eta - t\psi_0/\sqrt{n\an}$ the path as in \eqref{def:eta_perturbed}. 


 Using the LAN expansion in Assumption \ref{ass:expansion_assumption} and the linearity of $W_n$,
\begin{align*}
\ell_n (\eta) - \ell_n(\eta_t) &= -\frac{n}{2}\|\eta - \eta_0\|_L^2 + \frac{n}{2} \|\eta_t-\eta_0\|_L^2 + \sqrt{n}W_n(\eta - \eta_t) + R_n(\eta,\eta_0) - R_n(\eta_t,\eta_0) \\
&= - \frac{t\sqrt{n}}{\sqrt{\alpha_n}} \langle \psi_0 , \eta - \eta_0 \rangle_L + \frac{t^2}{2\alpha_n} \|\psi_0\|_L^2 + \frac{t}{\sqrt{\alpha_n}} W_n(\psi_0) + R_n(\eta,\eta_0) - R_n(\eta_t,\eta_0),
\end{align*}
recalling that $\|\cdot\|_L$ is a norm induced by a Hilbert space. Using the definition \eqref{def:psi_hat} of $\hat{\psi}$ and the functional expansion in Assumption \ref{ass:expansion_assumption},
\begin{align*}
t\sqrt{n\an} (\psi(\eta) - \hat{\psi}) = t \sqrt{n\alpha_n} \langle \psi_0 , \eta - \eta_0 \rangle_L - t\sqrt{\alpha_n} W_n(\psi_0) + t \sqrt{n\alpha_n} r(\eta,\eta_0).
\end{align*}
Combining the last two displays thus gives
\begin{align*}
	t\sqrt{n\an} (\psi(\eta) - \hat{\psi}) &+\alpha_n\ell_n(\eta) - \alpha_n \ell_n(\eta_t) \\
	&=  \alpha_n\ell_n(\eta_t) + \frac{t^2 \|\psi_0\|_L^2 }{2} + \underbrace{t\sqrt{n\an}r(\eta, \eta_0) + \alpha_n(R_n(\eta ,\eta_0) - R_n(\eta_t ,\eta_0))}_{\textrm{Rem}(\eta,\eta_0)},
\end{align*}
%
%
where $\sup_{\eta \in A_n}|\textrm{Rem}(\eta, \eta_0)| = o_{P_0}(1)$ by assumption.
Substituting this into the first display of the proof gives
$$E_{{\alpha_n}}(e^{ t\sqrt{n\an} (\psi(\eta) - \hat{\psi})} | Y^n, A_n) = e^{o_{P_0}(1) + t^2\left|\left| \psi_0 \right| \right|^2_L/2} \cdot \frac{\int_{A_n} e^{{\alpha_n}\ell_n(\eta_t) }d\Pi(\eta)}{\int e^{{\alpha_n}\ell_n(\eta)}d\Pi(\eta)}.$$
Since the last ratio equals $1 + o_{P_0}(1)$ by assumption,
the last display goes to  $e^{t^2 V_0/2}$ in $P_0$--probability, which concludes the proof. 
\end{proof}



\begin{proof}{Proof of Theorem \ref{thm:bvm_density_ap}}
We proceed by verifying the assumptions of Theorem \ref{thm:general_bvm_ap} for the parameter $\eta = \log f$. We first need to verify Assumption \ref{ass:expansion_assumption}. As in the discussion preceding the statement of Theorem 2.4, we have the LAN and functional expansions given by:
\begin{align*}
\ell_n(\eta) - \ell_n(\eta_0) &= -\frac{n}{2}\|\eta - \eta_0\|_L^2 + \sqrt{n}W_n(\eta - \eta_0) + R_n(\eta, \eta_0)\\
\psi(f) - \psi(f_0) &=\langle \eta - \eta_0, \effinf \rangle_L + \mathcal{B}(f,f_0) + \tilde{r}(f,f_0),
\end{align*}
where
$
\mathcal{B}(f,f_0) = -\int \left[\eta - \eta_0 - \frac{f-f_0}{f_0}\right]\effinf f_0,
$
so that $r(f,f_0) = \mathcal{B}(f,f_0) + \tilde{r}(f,f_0)$.
With $f_t$ as in the statement of Theorem \ref{thm:bvm_density_ap}
and $\eta_t = \log f_t$,
$$
R_n(\eta, \eta_0) - R_n(\eta_t, \eta_0) = \frac{t\sqrt{n}}{\sqrt{\an}} \langle \eta - \eta_0, \effinf \rangle_L - \frac{t^2}{2\an}\|\effinf\|_L^2 + n\log F(e^{-t\effinf/\sqrt{n\an}}).
$$
Expanding the last term, we have for $f \in A_n \subset \{\|f-f_0\|_1 \leq \epsilon_n \}$,
\begin{align*}
    n\log F (e^{-t\effinf/\sqrt{n\an}}) &=n\log\left(1 - \frac{t}{\sqrt{n\an}}\int f \effinf + \frac{t^2}{2n\an}\int f \effinf^2 + o\left(\int f\left(\frac{t^2 \effinf^2}{n\an} \right) \right) \right) \\
    &= n\log\big(1 - \frac{t}{\sqrt{n\an}}\langle \eta - \eta_0, \effinf \rangle_L - \frac{t}{\sqrt{n\an}} \mathcal{B}(f, f_0) + \\
    & \quad + \frac{t^2}{2n\an}\|\effinf\|_L^2 +\frac{t^2}{2n\an}(F - F_0)(\effinf^2) + O((n\an)^{-3/2}) \big) \\
    &= -t\frac{\sqrt{n}}{\sqrt{\an}} \langle \eta - \eta_0, \effinf \rangle_L - t\frac{\sqrt{n}}{\sqrt{\an}}\mathcal{B}(f, f_0) + \frac{t^2}{2\an}\|\effinf\|_L^2 + o(1),
\end{align*}
since $(F - F_0)(\effinf^2) \leq \|\effinf\|^2_\infty \|f - f_0\|_1 \lesssim \eps_n$ on $A_n$. Hence we have
$$
R_n(\eta, \eta_0) - R_n(\eta_t, \eta_0) = -t\frac{\sqrt{n}}{\sqrt{\an}}\mathcal{B}(f, f_0) + o(1),
$$
and the condition on remainder terms in Assumption \ref{ass:expansion_assumption} reduces to
\begin{align*}
\sup_{f \in A_n}|\sqrt{n\an}r(f,f_0)| = o_P(1),
\end{align*}
which is satisfied by assumption. The result then follows from Theorem \ref{thm:general_bvm_ap}.
\end{proof}

\begin{proof}{Proof of Proposition \ref{thm_random_histo_prior}}
		To prove Proposition \ref{thm_random_histo_prior}, we use Lemma \ref{thm} and Lemma \ref{thm_conv_int} stated below.  Lemma \ref{thm} is proved in Section \ref{sec:ancillary} and the proof is very similar to the one of Theorem \ref{thm:bvm_density_ap}. The main differences with Theorem \ref{thm:bvm_density_ap} are that the change of variables condition is stated in term of the projection of $\tilde{\psi}$ and the posterior concentration is around the projection of $f_0$. For a random histogram prior, these two changes turn out to be useful when one wants to give sufficient conditions for the change of variables condition to be satisfied. Indeed, this is is done in Lemma \ref{thm_conv_int} which is also proved in Section \ref{sec:ancillary}.
		\begin{lemma}\label{thm}
			Recall that $\hat{\psi}_{[K_n]}=\psi(f_0)+\frac{1}{n}\sum_{i=1}^{n} \tilde{\psi}_{[K_n]}(Y_i)$. Suppose $f_0$ is bounded and
   \begin{align}\label{assum_RHP_1}
   \Pi_{\alpha_n}(A_n|Y^n):=\Pi_{\alpha_n}(\{f \in H^1_{K_n}, \: \|f-f_{0,K_n}\|_1 \leq \eps_n\}|Y^n)= 1+o_P(1),
   \end{align}
   for a sequence $\eps_n =o(1)$. Set $f_t=fe^{-\frac{t \tilde{\psi}_{[K_n]}}{\sqrt{n\alpha_n}}} / F(e^{-\frac{t \tilde{\psi}_{[K_n]}}{\sqrt{n\alpha_n}}})$ and suppose 
			\begin{align}\label{assum_RHP_2}
				\frac{\int_{A_n} e^{\alpha_nl_n(f_t)} d\Pi(f)}{\int e^{\alpha_nl_n(f)} d\Pi(f)} =1 +o_P(1).
			\end{align}
	Then the $\alpha_n$-posterior distribution of $\sqrt{n\alpha_n}(\psi(f)-\hat{\psi}_{[K_n]})$ converges weakly to a Gaussian distribution with mean 0 and variance $V=\int f_0 \tilde{\psi}_{f_0}^2$.
		\end{lemma}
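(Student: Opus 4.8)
The proof follows the lines of the proof of Theorem~\ref{thm:bvm_density_ap}, with the efficient influence function $\effinf$ replaced everywhere by its $L^2$-projection $\tilde\psi_{[K_n]}$ onto $H^1_{K_n}$, and with the efficient centering replaced by $\hat\psi_{[K_n]}$. I would work with $\eta=\log f$, $\eta_0=\log f_0$ and the density-estimation LAN norm $\|\cdot\|_L$, and, fixing $t\in\R$, show that $E_{\alpha_n}(e^{t\sqrt{n\alpha_n}(\psi(f)-\hat\psi_{[K_n]})}\mid Y^n,A_n)$ converges in $P_0$-probability to $e^{t^2V/2}$ with $V=\int f_0\effinf^2$. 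Since $\Pi_{\alpha_n}(A_n\mid Y^n)=1+o_P(1)$ by \eqref{assum_RHP_1}, restricting the posterior expectation to $A_n$ is harmless (exactly as in the proof of Theorem~\ref{thm:general_bvm_ap}), and convergence of Laplace transforms then yields the stated weak convergence in $P_0$-probability (Lemma~1 of the supplement of \cite{CR2015}).

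The crux is the expansion of $t\sqrt{n\alpha_n}(\psi(f)-\hat\psi_{[K_n]})+\alpha_n\bigl(\ell_n(\eta)-\ell_n(\eta_t)\bigr)$ on $A_n$, with $\eta_t=\log f_t$ and $f_t$ as in the statement. For the functional I would write $\psi(f)-\psi(f_0)=\int\tilde\psi_{[K_n]}(f-f_0)+F_0(\tilde\psi_{[K_n]})+\mathcal B_{K_n}(f,f_0)=\langle\eta-\eta_0,\tilde\psi_{[K_n]}\rangle_L+r(f,f_0)$, using that $\effinf-\tilde\psi_{[K_n]}$ is $L^2$-orthogonal to every $K_n$-bin histogram, in particular to $f\in H^1_{K_n}$, so $\int(\effinf-\tilde\psi_{[K_n]})f=0$ and the projection error disappears; here $\mathcal B_{K_n}=\int\tilde\psi_{[K_n]}(f-f_0)-\langle\eta-\eta_0,\tilde\psi_{[K_n]}\rangle_L$ is the usual log-versus-linear discrepancy and $r=\mathcal B_{K_n}+F_0(\tilde\psi_{[K_n]})$. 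For the likelihood ratio I would Taylor-expand $n\log F(e^{-t\tilde\psi_{[K_n]}/\sqrt{n\alpha_n}})$ to second order: since $\|\tilde\psi_{[K_n]}\|_\infty\le\|\effinf\|_\infty<\infty$, the cubic remainder is $O\bigl(n(n\alpha_n)^{-3/2}\bigr)$, which is $O\bigl((n\alpha_n)^{-1/2}\bigr)$ after multiplying by $\alpha_n$; this produces the quadratic term $\tfrac{t^2}{2\alpha_n}\mathrm{Var}_f(\tilde\psi_{[K_n]})$ together with further linear-in-$(\eta-\eta_0)$, $\mathcal B_{K_n}$ and $F_0(\tilde\psi_{[K_n]})$ terms. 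The key observation is that, when all contributions are summed, the linear-in-$(\eta-\eta_0)$ terms and the $W_n(\tilde\psi_{[K_n]})$ terms cancel (the centering $\hat\psi_{[K_n]}$ being the empirical mean of the same $\tilde\psi_{[K_n]}$ used to build $f_t$), and the ``bias'' contributions cancel \emph{identically} via the relation $r=\mathcal B_{K_n}+F_0(\tilde\psi_{[K_n]})$ --- so nothing of order $\sqrt{n\alpha_n}\times(\text{bias})$ survives, even though $F_0(\tilde\psi_{[K_n]})$ (hence $r$) is \emph{not} assumed to vanish. What is left is $\tfrac{t^2}{2}\mathrm{Var}_f(\tilde\psi_{[K_n]})+O\bigl((n\alpha_n)^{-1/2}\bigr)$.

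It then remains to identify the variance on $A_n$. Because $\tilde\psi_{[K_n]}$ and $\tilde\psi_{[K_n]}^2$ are $K_n$-bin histograms, self-adjointness of the histogram projection gives $\int f_{0,[K_n]}\tilde\psi_{[K_n]}=\int f_0\tilde\psi_{[K_n]}$ and $\int f_{0,[K_n]}\tilde\psi_{[K_n]}^2=\int f_0\tilde\psi_{[K_n]}^2$, hence $\mathrm{Var}_{f_{0,[K_n]}}(\tilde\psi_{[K_n]})=\int f_0\tilde\psi_{[K_n]}^2-\bigl(\int f_0\tilde\psi_{[K_n]}\bigr)^2=:V_{K_n}=\|\tilde\psi_{[K_n]}\|_L^2$; moreover $|\mathrm{Var}_f(\tilde\psi_{[K_n]})-\mathrm{Var}_{f_{0,[K_n]}}(\tilde\psi_{[K_n]})|\lesssim\|\effinf\|_\infty^2\,\|f-f_{0,[K_n]}\|_1\le C\eps_n$ on $A_n$, and $V_{K_n}\to V=\int f_0\effinf^2$ since $\tilde\psi_{[K_n]}\to\effinf$ in $L^2$, $f_0$ is bounded and $\int\effinf f_0=0$. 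Consequently $t\sqrt{n\alpha_n}(\psi(f)-\hat\psi_{[K_n]})+\alpha_n(\ell_n(\eta)-\ell_n(\eta_t))=\tfrac{t^2}{2}V+o(1)$ uniformly over $A_n$, so that $E_{\alpha_n}(e^{t\sqrt{n\alpha_n}(\psi(f)-\hat\psi_{[K_n]})}\mid Y^n,A_n)=e^{t^2V/2+o_P(1)}\cdot\frac{\int_{A_n}e^{\alpha_n\ell_n(\eta_t)}d\Pi(f)}{\int e^{\alpha_n\ell_n(f)}d\Pi(f)}$, and the last ratio is $1+o_P(1)$ by the change-of-measure hypothesis \eqref{assum_RHP_2}. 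I expect the main obstacle to be precisely the bookkeeping in the second paragraph: tracking the $F_0(\tilde\psi_{[K_n]})$- and $\mathcal B_{K_n}$-terms through both expansions and verifying that they cancel algebraically rather than merely up to $o_P(1)$ --- this is what allows the conclusion to hold without assuming the semiparametric bias $\sqrt{n\alpha_n}F_0(\tilde\psi_{[K_n]})$ is negligible, which is the whole point of stating the result in terms of $\hat\psi_{[K_n]}$.
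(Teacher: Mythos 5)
Your proposal is correct and follows essentially the same route as the paper's proof: a second-order expansion of $n\log F(e^{-t\tilde{\psi}_{[K_n]}/\sqrt{n\alpha_n}})$, the use of the histogram-projection orthogonality $\int(\effinf-\tilde{\psi}_{[K_n]})f=0$ on $A_n$, identification of the variance $V_{K_n}\to V$, and the observation that centering at $\hat{\psi}_{[K_n]}$ absorbs $F_0(\tilde{\psi}_{[K_n]})$ exactly, so no bias condition is needed. The only (purely presentational) difference is that the paper computes $\alpha_n\ell_n(f_t)-\alpha_n\ell_n(f)$ directly rather than routing the bookkeeping through the LAN/functional-expansion decomposition of Theorem \ref{thm:general_bvm_ap}, which makes the algebraic cancellation you emphasize automatic.
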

	\begin{lemma}\label{thm_conv_int}
		Let $\Pi$ be the random histogram prior \eqref{def::RHP} with $k=K_n$ and weights satisfying \eqref{condition_weights}. Suppose 
  \begin{align}\label{assum_consistency}
  \Pi_{\alpha_n}(\tilde{A}_n|Y^n):=\Pi_{\alpha_n}(\{f \in H^1_{K_n}, \|f-f_{0,[K_n]}\|_1 \leq \tilde{\eps}_n \}|Y^n)= 1+o_P(1),
  \end{align}
  for a sequence $\tilde{\eps}_n = o(1)$. Then there exists $\eps_n=o(1)$ a positive sequence (possibly bigger than $\tilde{\eps}_n$), such that
  \begin{align} \label{contraction}
  \Pi_{\alpha_n}(A_n|Y^n):=\Pi_{\alpha_n}(\{f \in H^1_{K_n}, \|f-f_{0,[K_n]}\|_1 \leq \eps_n \}|Y^n)= 1+o_P(1),
  \end{align}
  and
		\begin{align}\label{change_var}
			\frac{\int_{A_n} e^{\alpha_nl_n(f_{t})} d\Pi(f)}{\int e^{\alpha_nl_n(f)} d\Pi(f)} =1 +o_P(1).
		\end{align} 
	\end{lemma}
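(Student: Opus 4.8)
The plan is to exploit the structure of the random histogram prior. Because $\tilde{\psi}_{[K_n]}$ is the $L^2$-projection of $\tilde{\psi}_{f_0}$ onto $H^1_{K_n}$, it is itself a histogram with $K_n$ bins, so the tilted density $f_t = f e^{-t\tilde{\psi}_{[K_n]}/\sqrt{n\alpha_n}}/F(e^{-t\tilde{\psi}_{[K_n]}/\sqrt{n\alpha_n}})$ again lies in $H^1_{K_n}$ whenever $f$ does. Writing $f = K_n\sum_{j=1}^{K_n}\omega_j 1_{I_j}$ with $\omega\sim\mathcal{D}(\delta_{1,n},\dots,\delta_{K_n,n})$, setting $v_j := K_n\int_{I_j}\tilde{\psi}_{f_0}$ so that $\tilde{\psi}_{[K_n]}=\sum_j v_j 1_{I_j}$ and $\max_j|v_j|\le\|\tilde{\psi}_{f_0}\|_\infty=:L<\infty$ (the representer is bounded, cf. \eqref{expansion_psi}), and putting $a_j := e^{-tv_j/\sqrt{n\alpha_n}}$, the map $T_t:f\mapsto f_t$ acts on the weights by the projective transformation $\omega\mapsto\omega'$ with $\omega_j'=\omega_j a_j/\sum_l\omega_l a_l$. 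This is a bijection of $S^1_{K_n}$ whose inverse is the same transformation with $a_j$ replaced by $a_j^{-1}$; in particular $T_t^{-1}=T_{-t}$ on $H^1_{K_n}$.

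First I would perform the substitution $g=T_t(f)$ in the numerator of \eqref{change_var}. The image of $\{f\in A_n\}$ is $\{g\in T_t(A_n)\}$, and a standard Jacobian computation for projective maps of the simplex gives that the law $\Pi\circ T_t^{-1}$ of $f_t$ when $f\sim\Pi$ is absolutely continuous with respect to $\Pi$, with density at the histogram $g$ carrying weights $\omega'$ equal to $\prod_{j}a_j^{-\delta_{j,n}}\cdot(\sum_l\omega_l'/a_l)^{-\sum_j\delta_{j,n}}$. Since $|tv_j/\sqrt{n\alpha_n}|\le Lt/\sqrt{n\alpha_n}\to0$ uniformly in $j$, one has $\prod_j a_j^{-\delta_{j,n}}=\exp((t/\sqrt{n\alpha_n})\sum_j\delta_{j,n}v_j)=\exp(O(\sum_j\delta_{j,n}/\sqrt{n\alpha_n}))$, and because $\sum_l\omega_l'=1$ with $\omega_l'\ge0$ also $\sum_l\omega_l'/a_l=1+O(1/\sqrt{n\alpha_n})$, so the second factor equals $\exp(O(\sum_j\delta_{j,n}/\sqrt{n\alpha_n}))$ too. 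By the weight condition \eqref{condition_weights}, $\sum_j\delta_{j,n}=o(\sqrt{n\alpha_n})$, hence this Radon--Nikodym derivative is $1+o(1)$ uniformly over $H^1_{K_n}$ and deterministically. Therefore $\int_{A_n}e^{\alpha_n\ell_n(f_t)}d\Pi(f)=(1+o(1))\int_{T_t(A_n)}e^{\alpha_n\ell_n(g)}d\Pi(g)$.

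It then remains to pick $\eps_n$ and verify $\Pi_{\alpha_n}(T_t(A_n)|Y^n)=1+o_P(1)$. Here I would use that $T_t$ is a near-identity map: uniformly over $f\in H^1_{K_n}$ one has $\|T_t(f)-f\|_1\le C_t/\sqrt{n\alpha_n}$ for a constant $C_t$ depending only on $t$ and $L$, since the tilting factor $e^{-t\tilde{\psi}_{[K_n]}/\sqrt{n\alpha_n}}$ and its $F(\cdot)$-mean each deviate from $1$ by $O(1/\sqrt{n\alpha_n})$ and $F(e^{-t\tilde{\psi}_{[K_n]}/\sqrt{n\alpha_n}})\ge1/2$ for $n$ large. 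Taking $\eps_n:=\tilde{\eps}_n+(n\alpha_n)^{-1/4}$ — which is $o(1)$, not smaller than $\tilde{\eps}_n$, and, for each fixed $t$ and all $n$ large, satisfies $C_t/\sqrt{n\alpha_n}\le(n\alpha_n)^{-1/4}$ — one gets $T_{-t}(\tilde{A}_n)\subset A_n:=\{f\in H^1_{K_n}:\|f-f_{0,[K_n]}\|_1\le\eps_n\}$, equivalently $\tilde{A}_n\subset T_t(A_n)$. Consequently $\Pi_{\alpha_n}(A_n|Y^n)\ge\Pi_{\alpha_n}(\tilde{A}_n|Y^n)=1+o_P(1)$, which is \eqref{contraction}, and likewise $\Pi_{\alpha_n}(T_t(A_n)|Y^n)=1+o_P(1)$; combined with the previous step this yields $\int_{A_n}e^{\alpha_n\ell_n(f_t)}d\Pi(f)/\int e^{\alpha_n\ell_n(f)}d\Pi(f)=(1+o(1))\,\Pi_{\alpha_n}(T_t(A_n)|Y^n)=1+o_P(1)$, which is \eqref{change_var}.

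I expect the main obstacle to be the Radon--Nikodym computation together with its uniform control. The projective change of variables for the Dirichlet law must be carried out carefully, and it is precisely the mass condition \eqref{condition_weights} on $\sum_j\delta_{j,n}$ that makes both $\prod_j a_j^{-\delta_{j,n}}$ and $(\sum_l\omega_l'/a_l)^{-\sum_j\delta_{j,n}}$ asymptotically negligible uniformly over the simplex; everything else, in particular the mild enlargement of $\tilde{\eps}_n$ to $\eps_n$ and the set inclusions, is then routine.
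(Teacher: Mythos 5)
Your proof is correct and follows essentially the same route as the paper's: a projective change of variables on the Dirichlet weights whose Radon--Nikodym factor $\prod_j a_j^{-\delta_{j,n}}\,(\sum_l\omega_l'/a_l)^{-\sum_j\delta_{j,n}}$ is shown to be $1+o(1)$ uniformly via the weight condition \eqref{condition_weights}, followed by the set inclusion $\tilde{A}_n\subset T_t(A_n)$ obtained by enlarging $\tilde{\eps}_n$ by an $O(1/\sqrt{n\alpha_n})$ amount. The only differences are cosmetic (the paper takes $\eps_n=\tilde{\eps}_n+(e^{2|t|\|\tilde{\psi}\|_\infty/\sqrt{n\alpha_n}}-1)$ and writes the change of measure as an explicit Jacobian substitution), so no changes are needed.
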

	We can combine these two results to prove Proposition \ref{thm_random_histo_prior}.
	Indeed, from assumptions \eqref{condition_weights} and \eqref{assum::prop_RHP_1}, using Lemma \ref{thm_conv_int}, we know that there exists a positive sequence $\tilde{\eps}_n$ decreasing to $0$ satisfying \eqref{contraction} and \eqref{change_var}.
	Then we deduce from Lemma \ref{thm} that the posterior distribution of $\sqrt{n\alpha_n}(\psi(f)-\hat{\psi}_{[K_n]})$
	converges weakly to a Gaussian distribution with mean 0 and
	variance $V=\int f_0 \tilde{\psi}_{f_0}^2$.
	Finally, assumption \eqref{assum::prop_RHP_2} implies that $\sqrt{n\alpha_n}(\psi(f)-\hat{\psi})$
	converges weakly to a Gaussian distribution with mean 0 and
	variance $V=\int f_0 \tilde{\psi}_{f_0}^2$.
	\end{proof}

	\begin{proof}{Proof of Corollary \ref{cor::BvM_RHP}}
		This is a direct application of Proposition \ref{thm_random_histo_prior}. Using the assumptions made on $K_n$, the weights and $f_0$, and Proposition \ref{rate_denti_PRH}, we deduce that there exists $\eps_n\to 0$ satisfying \eqref{assum::prop_RHP_1}. Let us now consider the bias term $\sqrt{n\alpha_n}(\hat{\psi}_{[K_n]} - \hat{\psi})$. Recall that from Lemma \ref{lem:bias}, 
		\begin{align*}
			\sqrt{n\alpha_n}(\hat{\psi} - \hat{\psi}_{[K_n]}) = \sqrt{n\alpha_n}(-F_0(\tilde{\psi}_{[K_n]}) + o_P(1/\sqrt{n})) = -\sqrt{n\alpha_n}F_0(\tilde{\psi}_{[K_n]}) + o_P(1).
		\end{align*}
		Using the definition of $\tilde{\psi}_{[K_n]}$,
\begin{align*}
F_0(\tilde{\psi}_{[K_n]})& = \int_{0}^1 f_0 \tilde{\psi}_{[K_n]} = \int_{0}^1 f_0 (a_{[K_n]} - a) =  \int_{0}^1 (f_{0,[K_n]} -f_0 )(a_{[K_n]} - a),
\end{align*}
so that by the H\"older regularity of $f_0$ and $a$,
\begin{align*}
\sqrt{n\alpha_n}|F_0(\tilde{\psi}_{[K_n]})|
&\leq \sqrt{n\alpha_n} \|a_{[K_n]} - a\|_{\infty}\|f_{0,[K_n]} -f_0\|_{\infty} \leq \sqrt{n\alpha_n} K_n^{-\gamma -\beta} =o(1)
\end{align*}
by assumption \eqref{assum::cor_RHP_1}. Hence the assumptions of Proposition \ref{thm_random_histo_prior} are satisfied, which yields the result. 
	\end{proof}
	
\begin{proof}{Proof of Proposition \ref{prop::counterex}}
Recall from \citet{CR2015} p.2371, that the assumptions made on $a$, $K_n$, and $f_0$ allow us to bound the bias term $ F_0(\tilde{\psi}_{[K_n]})$ as follows
	\begin{align}\label{calc::bias}
		n^{-\frac{(\gamma+1)}{3} } \lesssim K_n^{-(\gamma+1) } \lesssim - F_0(\tilde{\psi}_{[K_n]}) \lesssim  K_n^{-(\gamma+1) } \lesssim n^{-\frac{(\gamma+1)}{3} }.
	\end{align}
We first prove the result regarding the full posterior. Since, $f_0 \in \mathcal{C}^1([0,1])$ and is bounded away from 0, $\frac{n^{1/3} }{2}\leq K_n \leq n^{1/3}$ and hence $K_n=o(n/\log(n))$, $ n^{-b}\leq \delta_{i,n}=n^{-b} \leq 1$, we deduce from Proposition \ref{rate_denti_PRH} that there exists $\eps_n\to 0$ satisfying \eqref{assum::prop_RHP_1}. Combining this latter result with the fact that $\sum_{i=1}^{K_n}\delta_{i,n}= K_n n^{-b} \leq n^{1/3 - b} = o(\sqrt{n})$, we can use Proposition \ref{thm_random_histo_prior} to deduce that the posterior distribution of $\sqrt{n}(\psi(f) - \hat{\psi}_{[K_n]} )$ converges weakly to the $\mathcal{N}(0, V_0)$ distribution in $P_0$-probability.
Moreover, \eqref{calc::bias} implies  $|\sqrt{n}F_0(\tilde{\psi}_{[K_n]})| \geq c > 0$ since $\gamma\leq1/2$ and even $|\sqrt{n}F_0(\tilde{\psi}_{[K_n]})| \rightarrow \infty$ if $\gamma<1/2$ .
    
For the result regarding the $\alpha_n$-posterior, the proof is similar. Since, $f_0 \in \mathcal{C}^1([0,1])$ bounded away from 0, $K_n\leq n^{1/3} = o(n^{1-x} /\log(n^{1-x}))$ since $x < 2/3$ hence $K_n=o(n\alpha_n/\log(n\alpha_n))$, $(n\alpha_n)^{-b'}\leq \delta_{i,n}= n^{-b} \leq 1$ for some $b'>0$, we deduce from Proposition \ref{rate_denti_PRH} that there exists $\eps_n\to 0$ satisfying \eqref{assum::prop_RHP_1}.  Combining this latter result with the fact that $\sum_{i=1}^{K_n}\delta_{i,n}= K_n n^{-b} \leq n^{1/3 - b} = o(\sqrt{n\alpha_n})$ since $b>1/6$, we can use Proposition \ref{thm_random_histo_prior} to deduce that the posterior distribution of $\sqrt{n\alpha_n}(\psi(f) - \hat{\psi}_{[K_n]} )$ converges weakly to the $\mathcal{N}(0, V_0)$ distribution in $P_0$-probability.
Finally, by \eqref{calc::bias}, $\sqrt{n\alpha_n}F_0(\tilde{\psi}_{[K_n]}) = o(1)$ since $x>(1-2\gamma)/3$.
Thus, by Lemma \ref{lem:bias}, it follows that $\sqrt{n\alpha_n}(\hat{\psi} - \hat{\psi}_{[K_n]}) = o_P(1)$. Therefore, we deduce that the $\alpha_n$-posterior distribution of $\sqrt{n\alpha_n}(\psi(f) - \hat{\psi} )$ converges weakly to the $\mathcal{N}(0, V_0)$ distribution in $P_{0}$-probability.	
\end{proof}

\begin{proof}{Proof of Theorem \ref{thm:general_theorem_gp_gwn}}
We will verify the conditions of Theorem \ref{thm:bvm_gwn_ap}, for which we need to construct suitable sets $A_n$ satisfying Assumption \ref{ass:expansion_assumption} and the `change of measure' condition \eqref{GTGWN_3}. Under the theorem hypothesis that $\varphi_{f_0}(\eps_n) \leq n\an \eps_n^2$, Proposition \ref{thm:GGV_verification_GP_gwn} implies that the posterior contracts about $f_0$ at rate $\eps_n$ in $\|\cdot\|_2$, i.e. $\Pi_{\alpha_n}(B_n|Y) \to^{P_0} 1$ for $B_n = \{f:\|f-f_0\|_2 \leq M \eps_n\}$ with $M>0$ large enough.

Turning to condition \eqref{GTGWN_3}, we follow \citet{C2012} and first approximate the perturbation $f_t = f- \frac{t\psi_0}{\sqrt{n\an}}$ by an element of the RKHS and then apply the Cameron-Martin Theorem. To this end, let $\psi_n\in\mathbb{H}$ satisfy \eqref{eq:RKHS_conditions}. Define the following isometry associated to the Gaussian process $W$:
\begin{align*}
	U_W : \textrm{Vect}\langle \{ t \rightarrow K(\cdot,t): t\in\mathbb{R} \} \rangle &\rightarrow L^2(\Omega) \\
	\eta := \sum_{i=1}^p a_iK(\cdot, t_i) &\mapsto \sum_{i=1}^p a_i W_{t_i} =: U_W(\eta),
\end{align*}
and since any $h \in \mathbb{H}$ is the limit of a sequence $\sum_{i=1}^{p_n} a_{i,n}K(\cdot, t_{i,n})$, $U_W$ can be extended to an isometry $U_W: \mathbb{H} \rightarrow L^2(\Omega)$. Then $U_W(h)$ is the $L^2$-limit of the sequence $\sum_{i=1}^{p_n}a_{i,n}W_{t_i}$, so that it is a Gaussian random variable with mean 0 and variance $\|h\|_\mathbb{H}^2$.
Recalling that $f=W$ is a Gaussian process under the prior, the usual Gaussian tail bound implies
\begin{equation}\label{eq:tail_bound}
\Pi(W: |U_W(\psi_n)| \geq M_0 \sqrt{n\alpha_n} \eps_n \|\psi_n\|_{\mathbb{H}}) \leq 2e^{-M_0^2n\alpha_n\eps_n^2/2},
\end{equation}
so that the posterior probability of the set in the last display tends to zero in $P_0$-probability by Lemma \ref{lemma_sieve} for $M_0>0$ large enough. Together with the contraction result, this shows that the sets
$$ A_n = \{ w: |U_w(\psi_n)| \leq M_0 \sqrt{n\an} \eps_n \|\psi_n\|_\mathbb{H}\} \cap B_n$$
satisfy $\Pi(A_n|Y) \to^{P_0} 1$ as $n\to\infty$. Since $A_n \subset B_n$ and using the assumptions of the present theorem, the sets $A_n$ satisfy Assumption \ref{ass:expansion_assumption}.

It thus remains to establish the condition \eqref{GTGWN_3}. Define the approximate perturbation $f_n = f-\frac{t\psi_n}{\sqrt{n\alpha_n}}$, which we will now show satisfies
\begin{equation}\label{eq:likelihood_change}
\sup_{f \in B_n}|\an(\ell_n(f_n) - \ell_n(f_t))| = o_P(1).
\end{equation}
Indeed, using the LAN expansion for the Gaussian white noise model, under $P_0$,
$$\an(\ell_n(f_n) - \ell_n(f_t)) = t\sqrt{n\an}\int_0^1(f-f_0)(\psi_n - \psi_0) - \frac{t^2}{2} (\|\psi_n\|_2^2 - \|\psi_0\|_2^2) + \frac{t}{\sqrt{n\alpha_n}} W_n(\psi_0-\psi_n),$$
where we recall $W_n(g) \sim \cN(0,\|g\|_2^2)$ for any $g \in L^2$. By Cauchy-Schwarz, the first term is bounded by $t\sqrt{n\an}\|f-f_0\|_2 \|\psi_n - \psi_0 \|_2 \leq t\sqrt{n\an}\zeta_n \epsilon_n = o(1)$ by assumption \eqref{eq:RKHS_conditions} for $f\in B_n$. The absolute value of the second term equals
  \begin{align*} 
  \tfrac{t^2}{2}\left|\langle\psi_n - \psi_0, \psi_n + \psi_0 \rangle_2  \right|  \leq \tfrac{t^2}{2}\|\psi_n-\psi_0\|_2\|\psi_n+\psi_0\|_2 \leq \tfrac{t^2}{2} \zeta_n (2\|\psi_0\|_2 + \zeta_n) = o(1),
  \end{align*}  
again by assumption \eqref{eq:RKHS_conditions}. The third term has distribution $N\left(0, t^2 \an\|\psi_n - \psi_0\|_2^2/n \right)$, which is $o_P(1)$ since its variance tends to zero as $n\to\infty$. Together, these three bounds establish \eqref{eq:likelihood_change}.

A version of the Cameron-Martin theorem (\citealp{c12}, Lemma 17) states that for all $\Phi : \mathbb{B} \rightarrow \mathbb{R}$ measurable and for any $g, h \in \mathbb{H}$ and $\rho > 0$,
$$ E(1_{\{ |U_W(g)| \leq \rho \}} \Phi(W-h)) = E(1_{ \{ |U_W(g) + \langle g,h\rangle_\mathbb{H}| \leq \rho \} } \Phi(W)e^{U_W(-h) - \|h\|^2_\mathbb{H}/2}).$$
Using \eqref{eq:likelihood_change} and the last display with $h_t = t\psi_n/\sqrt{n\alpha_n}$ and
$\rho_t = M_0 \sqrt{n\alpha_n}\eps_n \|\psi_n\|_{\mathbb{H}}$, the quantity in \eqref{GTGWN_3} equals
$$\frac{\int_{A_n} e^{\alpha_n \ell_n(f_t)} d\Pi(f)}{\int e^{\alpha_n \ell_n(f)} d\Pi(f)}
= \frac{\int_{B_{n,t}} 1_{\{ |U_w(\psi_n) + \langle\psi_n,h_t \rangle_{\mathbb{H}}| \leq \rho_t\}} e^{\alpha_n \ell_n(w)} e^{U_w(-h_t) - \|h_t\|_{\mathbb{H}}^2/2} d\Pi(w)}{\int e^{\alpha_n \ell_n(f)} d\Pi(f)} e^{o_P(1)},
$$
where $B_{n,t} = B_n - h_t =  \{w: \|w+ t\psi_n/\sqrt{n\alpha_n} - f_0\|_2 \leq M \eps_n\}$. For $w$ in the domain of the top integral, using also \eqref{eq:RKHS_conditions},
\begin{align*}
|U_w(-h_t) - \|h_t\|_{\mathbb{H}}^2/2|  &= \frac{t}{\sqrt{n\alpha_n}} \left| U_w(\psi_n) + \tfrac{1}{2} \langle \psi_n ,h_t \rangle_\mathbb{H} \right| 
\\& \leq \frac{t}{\sqrt{n\alpha_n}} \rho_t + \frac{t^2}{2n\alpha_n}\|\psi_n\|_\mathbb{H}^2  \leq tM_0 \sqrt{n\alpha_n} \eps_n \zeta_n + \frac{t^2}{2} \zeta_n^2 \to 0.
\end{align*}
Setting
$A_{n,t} = \{ w: |U_w(\psi_n) + \langle \psi_n,h_t \rangle_\mathbb{H}| \leq \rho_t \} \cap B_{n,t},$
the ratio of integrals thus equals
$$\frac{\int_{A_{n,t}} e^{\alpha_n \ell_n(w)}  d\Pi(w)}{\int e^{\alpha_n \ell_n(f)} d\Pi(f)} e^{o_P(1)} = \Pi_{\alpha_n} (A_{n,t}|Y) e^{o_P(1)}.$$
It thus remains to show $\Pi_{\alpha_n}(A_{n,t}|Y) = 1+o_P(1)$. Since
$$A_{n,t}^c = \{ w:  |U_w(\psi_n) + \langle \psi_n,h_t \rangle_\mathbb{H}| > M_0 \sqrt{n\alpha_n}\eps_n \|\psi_n\|_{\mathbb{H}} \} \cup \{w: \|w+t\psi_n/\sqrt{n\alpha_n} - f_0\|_2 >M \eps_n \},$$
it suffices to consider the posterior probability of each of the last sets. Since $\|w+t\psi_n/\sqrt{n\alpha_n} - w\|_2 \lesssim \|\psi_n\|_2 /\sqrt{n\alpha_n} \lesssim (1 + \zeta_n)/\sqrt{n\alpha_n}$, the second set is contained in $\{w:\|w-f_0\|_2 > M\eps_n - C/\sqrt{n\alpha_n}\}$, which has posterior probability $o_P(1) $ by Proposition \ref{thm:GGV_verification_GP_gwn}, possibly after replacing $\eps_n$ by a multiple of itself.
For the first set, note that
$|\langle \psi_n,h_t \rangle_\mathbb{H}| = t \|\psi_n\|_{\mathbb{H}}^2 /\sqrt{n\alpha_n}$ is of strictly smaller order than$\sqrt{n\alpha_n} \eps_n \|\psi_n\|_\mathbb{H}$
if and only if $\|\psi_n\|_{\mathbb{H}} = o(n\alpha_n\eps_n)$. By \eqref{eq:RKHS_conditions}, it suffices that $\zeta_n = o(\sqrt{n\alpha_n}\eps_n)$, which holds since $\zeta_n \to 0$ while $n\alpha_n\eps_n^2 \to \infty$. Thus the first set is contained in $\{ w:|U_w(\psi_n)| >(M_0/2) \sqrt{n\alpha_n}\eps_n \|\psi_n\|_\mathbb{H} \}$ for $n$ large enough. Arguing as in \eqref{eq:tail_bound} and using Lemma \ref{lemma_sieve}, the posterior probability of this set is thus $o_P(1)$. This shows that $\Pi(A_{n,t}^c|Y) = o_P(1)$ as required.
\end{proof}

\begin{proof}{Proof of Theorem \ref{thm:general_theorem_gp_density_estimation}}
The proof is similar to the proof of Theorem \ref{thm:general_theorem_gp_gwn}, but with a few minor differences. For the LAN expansion, we have under $P_0$,
$$
\an(\ell_n(\eta_n) - \ell_n(\eta_t)) = t\mathbb{G}_n(\effinf - \psi_n) + t\sqrt{n} \int (f_0 - f_\eta)(\effinf  - \psi_n) + o(1),
$$
where $\mathbb{G}_n(g) = \frac{1}{\sqrt{n}} \sum_{i=1}^n (g(Y_i) - E_0(g(Y_i)))$, so that $\mathbb{G}(\effinf - \psi_n) = o_P(1)$. We consider sets $B_n$ defined in terms of the $\|\cdot\|_1$-norm rather than $\|\cdot\|_2$-norm, so that we may use our contraction results for density estimation.  In the last display, we thus use H\"older's inequality $t\sqrt{n}\|f_0 - f_\eta\|_1 \|\effinf - \psi_n\|_\infty$ instead of Cauchy-Schwarz, which requires the slightly stronger assumption involving the $L^\infty$-norm $\|\effinf - \psi_n\|_\infty$ to show that this tends to 0.
\end{proof}

\begin{proof}{Proof of Corollary \ref{cor:gp_examples}}
We apply Theorem \ref{thm:general_theorem_gp_gwn} in Gaussian white noise and Theorem \ref{thm:general_theorem_gp_density_estimation} in density estimation. In both cases, the required functional expansion holds by the linearity of $\psi(f)$, so that it remains to verify \eqref{eq:conc_eqn} and that one can suitably approximate the representers $\psi_0=a$ or $\tilde{\psi}_{f_0} = a - \int_0^1 f_0 a$ by elements of the RKHS.

By Corollary \ref{corrollary:gp_contraction_rate_examples}, in each case $\eps_n = (n\an)^{-\frac{\gamma \wedge \beta}{2\gamma + 1}}$ satisfies the condition \eqref{eq:conc_eqn} on the concentration function, possibly up to a $\log (n\an)$-factor that does not affect our results here.
		Next, one can show that in Examples \ref{ex:Matern}-\ref{ex:RL} (see the proof of Theorem 4 in \citealp{C2008} for the Riemann-Liouville process, the proof of Lemma 11.37 in \citealp{vdv_FNBI} for the Matérn process, and the proof of Lemma 2.2 in \citealp{vdv_2007} for the rescaled square exponential process), for an appropriate kernel smoother $\phi$ and sequence $\sigma_n$,  
		$$
		\psi_n(x) = \left[\frac{1}{\sigma_n}\phi\left(\frac{\cdot}{\sigma_n}\right)*a(\cdot)\right](x)
		$$
		 satisfies $\psi_n \in \mathbb{H}$, 
		 $\|\psi_n - a\|_\infty \leq \sigma_n^\mu,$
		  and
		  $
		  \|\psi_n\|_\mathbb{H}^2 \lesssim \sigma_n^{-2\gamma -1 + 2\mu}.
		  $
		Setting $\sigma_n = \zeta_n^{1/\mu}$, we obtain $\zeta_n = (n\an)^{-\frac{\mu}{2\gamma + 1 }}$ as a suitable choice to satisfy the bounds on $\psi_n$. It thus remains to show
$\sqrt{n\an} \epsilon_n \zeta_n \rightarrow 0,$
for which a sufficient condition is $\gamma \wedge \beta > \frac{1}{2} + (\gamma - \mu)$. The final part of the condition comes from the fact that we need $\epsilon_n \rightarrow 0,$ which is satisfied if $\gamma \wedge \beta > 0$.

For the infinite series prior in Gaussian white noise, one instead uses the truncated series $\psi_n =\sum_{k=1}^{J_n} \langle a,\phi_k\rangle_2 \phi_k \in \mathbb{H}$, for which $\|\psi_n - a\|_2^2 = \sum_{k>J_n} k^{2\mu - 2\mu} |\langle a,\phi_k\rangle_2|^2 \lesssim J_n^{-2\mu} \|a\|_{\mathcal{H}^\mu}^2$ and $\|\psi_n\|_{\mathbb{H}}^2 \lesssim J_n^{2\gamma+1-2\mu}$. Taking $J_n \asymp (n\alpha_n)^{1/(2\gamma+1)}$ and $\zeta_n \asymp (n\alpha_n)^{-\mu/(2\gamma)}$ as above, we recover the same conditions as in Examples \ref{ex:Matern}-\ref{ex:RL}.
\end{proof}

\begin{proof}{Proof of Lemma \ref{prop:contraction_rate_an_posterior_Lmu}}
By conjugacy of the $\alpha_n$-posterior of $f$, the \ap distribution of $\psi(f)$ is
$$N\left(
\sum_{k=1}^\infty\frac{n\alpha_n \lambda_k}{1+n\alpha_n\lambda_k}\psi_kY_k,
\sum_{k=1}^\infty \frac{\lambda_k}{1+n\alpha_n\lambda_k} \psi_k^2 \right).$$
As in \citet{BIP2011}, it thus suffices to show that
	$$
	\left|\sum_{k=1}^\infty \left( \frac{n\an \lambda_k}{1+n\an \lambda_k} \psi_kf_{0,k} - \psi_kf_{0,k} \right)\right|^2 + \frac{1}{n}\sum_{k=1}^\infty \left(\frac{n\an \lambda_k}{1+n\an \lambda_k}\psi_k \right)^2 + \sum_{k=1}^\infty \frac{\lambda_k}{1+n\an\lambda_k} \psi_k^2
	$$
	is bounded by a multiple of $\eps_n^2$. We have,
\begin{align*}
	\left|\sum_{k=1}^\infty \left( \frac{n\an \lambda_k}{1+n\an \lambda_k} \psi_kf_{0,k} - \psi_kf_{0,k} \right)\right|^2 &=	\left|\sum_{k=1}^\infty \frac{\psi_k f_{0,k}}{1+n\an \lambda_k}\right|^2 \leq   \|f_0\|_\beta^2 \sum_{k=1}^\infty \frac{\psi_k^2k^{-2\beta}}{(1+n\alpha_nk^{-1-2\gamma})^2} \\
	&\lesssim \|f_0\|_\beta^2 \cdot\|(l_i)\|_\mu^2 \cdot(n\alpha_n)^{-\left(\frac{2\beta + 2\mu}{1+2\gamma} \wedge 2\right)} \lesssim\eps_n^2,
\end{align*}
where we have used Lemma 8.2 in \citet{BIP2011} to deduce the second last inequality. For the second term, 
\begin{align*}
	\frac{1}{n}\sum_{k=1}^\infty \left(\frac{n\an \lambda_k}{1+n\an \lambda_k}\psi_k \right)^2 &= \sum_{i=1}^\infty \frac{\psi_k^2n\alpha_n^2\lambda_i^2}{(1+n\alpha_n\lambda_k)^2} =n\alpha_n^2 \sum_{k=1}^\infty \frac{\psi_k^2 k^{-2-4\gamma}}{(1+n\an k^{-1-2\gamma})^2} \\ 
	&\lesssim  \|l\|_\mu^2 \cdot n\an^2 (n\an)^{-\left(\frac{2 + 4\gamma +2\mu}{1+2\gamma}\wedge 2\right)}
	\lesssim \alpha_n (n\an )^{-\left(\frac{1 + 2\gamma +2\mu}{1+2\gamma}\wedge 1\right)} \lesssim \eps_n^2,
\end{align*}
where we used Lemma 8.1 of \citet{BIP2011} for the first inequality. Finally, 
\begin{align*}
	\sum_{k=1}^\infty \frac{\lambda_k}{1+n\an\lambda_k} \psi_k^2
	&= \sum_{k=1}^\infty \frac{\psi_k^2 k^{-1-2\gamma}}{1 + n\an k^{-1-2\gamma}} \lesssim \|l\|_\mu^2 (n\alpha_n)^{-\left(\frac{1+2\gamma + 2\mu}{1+2\gamma} \wedge 1\right)} \lesssim \eps_n^2
\end{align*}
where we again invoke Lemma 8.1 of \citet{BIP2011} for the first inequality.
\end{proof}

\subsection{Credible Regions}
\begin{proof}{Proof of Theorem \ref{prop_corrected_region}}
	Using \eqref{conv_BvM_alpha} and standard results recalled in Lemma \ref{lemma_conv_quantiles}, it follows that for all $\delta \in (0,1)$, $\sqrt{n\alpha_n}(a_{n,\delta}^Y-\hat{\psi}) = \sqrt{V}q_\delta + o_P(1)$ and hence we have the expansion of the quantile
	\begin{align}\label{dvpmt_quantiles_alpha}
		 a_{n,\delta}^Y = \hat{\psi} + \frac{\sqrt{V}q_\delta}{\sqrt{n\alpha_n}} + o_P(\frac{1}{\sqrt{n\alpha_n}}).
	\end{align} 
	Combining this expansion and assumption \eqref{assum::estimator_bar_psi}, we obtain $\sqrt{\alpha_n}(a_{n, \delta}^Y - \bar{\psi}) =  \sqrt{V}q_{\delta}/\sqrt{n} + o_P(1/\sqrt{n}).$
	%
	Hence we can expand the shift-and-rescale set as
	\begin{align*}
		\mathcal{J}_{\alpha_n} & = \left(\sqrt{\alpha_n}(a_{n, \frac{\delta}{2}}^Y - \bar{\psi}) +\bar{\psi} , \sqrt{\alpha_n}(a_{n, 1-\frac{\delta}{2}}^Y - \bar{\psi}) +\bar{\psi} \right] \\
		&= \Big(\hat{\psi} + \sqrt{V}q_{\frac{\delta}{2}}/\sqrt{n} + o_P(1/\sqrt{n})  , \hat{\psi} + \sqrt{V}q_{1-\frac{\delta}{2}} /\sqrt{n} + o_P(\frac{1}{\sqrt{n}}) \Big]. 
	\end{align*}
	 This last expansion together with assumption \eqref{conv_hat_psi} yield the first conclusion of Theorem \ref{prop_corrected_region}.
  Let us move to the case $\alpha_n=\alpha\in (0,1]$ is fixed. By \eqref{dvpmt_quantiles_alpha}, the posterior median equals
	 \begin{align*}
	 	a_{n, \frac{1}{2}}^Y = \hat{\psi} + \frac{\sqrt{V}q_\frac{1}{2}}{\sqrt{n\alpha_n}} + o_P(1/\sqrt{n\alpha_n}) = \hat{\psi} + o_P(\frac{1}{\sqrt{n\alpha_n}}) = \hat{\psi} + o_P(\frac{1}{\sqrt{n}})
	 \end{align*}
	 since $q_\frac{1}{2}=0$.
  Hence \eqref{assum::estimator_bar_psi} is satisfied and the result follows. 
\end{proof}

\begin{proof}{Proof of Proposition \ref{prop:corrected_credible_sets_behaviour}}
For $\bar{\psi}$ the posterior mean/median, define $T_n = \sqrt{n}(\bar{\psi} - \hat{\psi})$. We need to show that $|T_n| = o_P(1)$ to satisfy the assumption \eqref{assum::estimator_bar_psi} of Theorem \ref{prop_corrected_region}. We have
\begin{align*}
	T_n = -\sqrt{n}\sum_{k=1}^\infty \frac{1}{1+n\an \lambda_k}a_k f_{0,k} - \sum_{k=1}^\infty \frac{1}{1+n\an \lambda_k}a_k \epsilon_k =: -t_{n,1}  -t_{n,2}.
\end{align*}
The second term is Gaussian with mean 0 and variance $\sum_{k=1}^\infty \frac{1}{(1+n\an \lambda_k)^2}a_k^2 \asymp (n\an)^{-(\frac{2\mu}{1+2\gamma}\wedge 2)}$ by Lemma 8.1 of \citet{BIP2011}. Thus, $|t_{n,2}| = o_P(1)$ since $\mu, \gamma > 0$. Turning to $t_{n,1}$, set ${k^*} = (n\an)^{\frac{1}{1+2\gamma}}$. For $k \leq k^*$, we have $(n\an)k^{-1-2\gamma} < 1+(n\an)k^{-1-2\gamma} \leq 2(n\an)k^{-1-2\gamma}$, and for $k > k^*$, we have $1 < 1+(n\an)k^{-1-2\gamma} < 2$.
Hence, we can write
\begin{equation}\label{eq:tn1}
\begin{split}
	|t_{n,1}| = \sqrt{n}\sum_{k=1}^{\infty} \frac{k^{-1-(\beta + \mu)}}{1+n\an k^{-1-2\gamma}} &\asymp \frac{\sqrt{n}}{n\an}\sum_{k=1}^{k^*} \frac{k^{-1-(\beta + \mu)}}{k^{-1-2\gamma}} + \sqrt{n}\sum_{k={k^*} +1}^\infty k^{-1-(\beta + \mu)}	\\
& \asymp \frac{1}{\sqrt{n} \alpha_n} \sum_{k=1}^{k^*} k^{-(\beta + \mu-2\gamma)} + \sqrt{n} (n\alpha_n)^{-\frac{\beta+\mu}{1+2\gamma}}.
\end{split}
\end{equation}
It thus suffices to study when this quantity is $o(1)$. The second term in the last display is $o(1)$ if and only if $\alpha_n \gg n^{\frac{1+2\gamma}{2\beta+2\mu}-1}=: \omega_n$, while the first term has three cases.

(1) $\beta + \mu > 1+2\gamma$: the first sum in \eqref{eq:tn1} is summable even for $k^*=\infty$, and hence $|t_{n,1}| \asymp \frac{1}{\sqrt{n}\alpha_n} + + \sqrt{n} (n\alpha_n)^{-\frac{\beta+\mu}{1+2\gamma}}$, which is $o(1)$ if and only if $\alpha_n \gg \max(n^{-1/2},\omega_n) = n^{-1/2}$, i.e. $\sqrt{n}\alpha_n \to \infty$.

(2) $\beta + \mu = 1+2\gamma$: note that $\omega_n = n^{-1/2}$, while the first term in \eqref{eq:tn1} equals $\frac{1}{\sqrt{n}\alpha_n} \sum_{k=1}^{k^*} k^{-1} \asymp \frac{1}{\sqrt{n}\alpha_n} \log k^* \asymp \frac{1}{\sqrt{n}\alpha_n} \log (n\alpha_n)$. This is $o(1)$ if and only if $\alpha_n \gg n^{-1/2} \log n$, i.e. $\frac{\sqrt{n}\an}{\log n} \rightarrow \infty$.

(3) $\beta + \mu < 1 + 2\gamma$: the first term in \eqref{eq:tn1} is of size $\frac{1}{\sqrt{n}\alpha_n} (k^*)^{1+2\gamma-\beta-\mu} \asymp \sqrt{n} (n\alpha_n)^{-\frac{\beta+\mu}{1+2\gamma}}$, which is exactly the same order as the second term in \eqref{eq:tn1}. Thus $|t_{n,1}| = o(1)$ if and only if $\alpha_n \gg\omega_n$. But our results are restricted to the regime $0<\alpha_n\leq 1$ and hence we require that $\omega_n \to 0$ to have a valid choice satisfying $0<\omega_n \ll \alpha_n\leq 1$. One can then check that $\omega_n \to 0$ if and only if $\frac{1}{2} + \gamma < \beta + \mu$, which determines the lower bound in this range. For such a choice, $|t_{n,1}| = o(1)$ if and only if $\omega_n^{-1} \alpha_n = n^{1 - \frac{1+2\delta}{2(\beta + \mu)}}\an \rightarrow \infty$.
\end{proof}

\subsection{Supremum norm contraction rates}
\begin{proof}{Proof of Proposition \ref{prop:rate_sup_norm_P1}}%
We focus on the case (ii) for brevity, the case (i) being similar (though easier, see also \citealp{cas_14}). Set $L_n:=\lfloor\frac{n\alpha_n}{\log(n\alpha_n)\log(2)(2\beta+1}\rfloor$. For all sequences $(f_{lk})$, denote $f^{L_n}: = \sum_{l=0}^{L_n}\sum_{k} f_{lk}\psi_{lk}$ and $f^{L_n^{C}} := \sum_{l>L_n}\sum_{k} f_{lk}\psi_{lk}$. Also denote $\hat{f}^{L_n}:=\sum_{l=0}^{L_n}\sum_{k} Y_{lk}\psi_{lk}$. We have 
		\begin{align}\label{majo_gen}
		E_0(\int \ninf{f-f_0} d\Pi_{\alpha_n}(f|Y^n)) \leq \underbrace{E_0(\int\ninf{f^{L_n}-\hat{f}^{L_n}}d\Pi_{\alpha_n}(f|Y^n))}_{(a)}
		+\underbrace{ E_0(\ninf{\hat{f}^{L_n}-f_0^{L_n}})}_{(b)}\nonumber\\
		+ \underbrace{E_0(\int \ninf{f^{L_n^c}}d\Pi_{\alpha_n}(f|Y^n))}_{(c)} + \underbrace{\ninf{f_0^{L_n^c}}}_{(d)}.
		\end{align}
\textbf{Term $(d)$} Using the assumptions made on the coefficients $(f_{0,lk})$ and the localisation property of the wavelet basis $(\psi_{lk})$ that $\sum_{k}|\psi_{lk}(x)| \lesssim 2^{l/2} $ for all $x\in [0,1]$,
        \begin{align}\label{majo_iv}
			\ninf{f_0^{L_n^c}} & \leq \sum_{l>L^n} \max_{k}\left|f_{0,lk}\right|\ninf{\sum_{k}|\psi_{lk}|} \lesssim  \sum_{l>L^n} 2^{-l(\frac{1}{2}+\beta )} 2^{l/2} \lesssim 2^{-\beta L_n}.
		\end{align}
\textbf{Term $(b)$} Using the localisation property of the basis $(\psi_{lk})_{lk}$, it follows
		\begin{align*}
		 \ninf{\hat{f}^{L_n}-f_0^{L_n}} = \ninf{\sum_{l=0}^{L_n} \sum_{k}\frac{\eps_{lk}}{\sqrt{n}}\psi_{lk} } \leq \sum_{l=0}^{L_n} \max_{k}\left|\frac{\eps_{lk}}{\sqrt{n}}\right| \ninf{\sum_{k}|\psi_{lk}|}\lesssim \frac{1}{\sqrt{n}} \sum_{l=0}^{L_n} \max_{k}\left|\eps_{lk}\right| 2^{l/2}.
         \end{align*}
Then a standard result about the maximum of $n$ gaussian variables gives that
        \begin{align}\label{majo_ii}
	   E_0(\ninf{\hat{f}^{L_n}-f_0^{L_n}}) & \lesssim \frac{1}{\sqrt{n}} \sum_{l=0}^{L_n} E_0(\max_{2^l- 1 \geq k \geq 0}\left|\eps_{lk}\right|) 2^{l/2} \lesssim \frac{1}{\sqrt{n}} \sum_{l=0}^{L_n} \sqrt{\log(2^{l+1})} 2^{l/2}\lesssim \frac{\sqrt{L_n}}{\sqrt{n}} 2^{\frac{L_n}{2}}.
		\end{align}
\textbf{Term $(a)$}
		Let $t>0$. Using the localisation property of the basis $(\psi_{lk})_{lk}$ and Jensen's inequality, we have
		\begin{align}\label{majo_1}
		\int\ninf{f^{L_n}-\hat{f}^{L_n}}d\Pi_{\alpha_n}(f|Y^n) &= \frac{1}{\sqrt{n\alpha_n}}\sum_{l=0}^{L_n} 2^{l/2} \int \max_{k} |\sqrt{n\alpha_n}(f_{lk}- Y_{lk})|d\Pi_{\alpha_n}(f|Y^n) \nonumber\\
		&\leq  \frac{1}{\sqrt{n\alpha_n}}\sum_{l=0}^{L_n} 2^{l/2} \frac{1}{t} \log \left(\sum_{k=0}^{2^l-1} \int e^{t|\sqrt{n\alpha_n}(f_{lk}- Y_{lk})|}d\Pi_{\alpha_n}(f|Y^n)\right) .
		\end{align}
Let $t \in \R$, we want to bound $\int e^{t\sqrt{n\alpha_n}(f_{lk}- Y_{lk})}d\Pi_{\alpha_n}(f|Y^n)$ uniformly over $l\leq L_n$ and $k=0,\dots, 2^l-1$. By definition of the $\alpha_n$-posterior distribution
		\begin{align}\label{def_TL}
		\int e^{t\sqrt{n\alpha_n}(f_{lk}- Y_{lk})}d\Pi_{\alpha_n}(f|Y^n) 
		&= \frac{\int_{\R} e^{t\sqrt{n\alpha_n}(u-Y_{lk})} e^{-\frac{n\alpha_n}{2}(u-Y_{lk})^2} \frac{1}{\sigma_l} \varphi(\frac{u}{\sigma_l}) du}{\int_{\R} e^{-\frac{n\alpha_n}{2}(u-Y_{lk})^2} \frac{1}{\sigma_l} \varphi(\frac{u}{\sigma_l}) du} \nonumber \\
		&= \frac{\int_{\R} e^{t(u-\sqrt{\alpha_n}\eps_{lk})} e^{-\frac{1}{2}(u-\sqrt{\alpha_n}\eps_{lk})^2} \varphi(\frac{1}{\sigma_l}(\frac{u}{\sqrt{n\alpha_n}}+ f_{0, lk})) du}{\int_{\R} e^{-\frac{1}{2}(u-\sqrt{\alpha_n}\eps_{lk})^2}  \varphi(\frac{1}{\sigma_l}(\frac{u}{\sqrt{n\alpha_n}}+ f_{0, lk})) du}.
		\end{align}
Then let us notice that for $B>R$, if $x \in [-B(l+1)^\mu ; B(l+1)^\mu]$, then $\varphi(x) \geq c_1 e^{-b_1B^{(1+\delta)}(l+1)} \geq C e^{-cl} $ and $1_{[-B(l+1)^\mu ; B(l+1)^\mu]}(\frac{1}{\sigma_l}(\frac{u}{\sqrt{n\alpha_n}}+ f_{0, lk})) \geq 1_{[-\sqrt{\log(n\alpha_n)}(B - R) ; \sqrt{\log(n\alpha_n)}(B- R) ]}(u) \geq 1_{[-1; 1]}(u)$. Combining this remark with \eqref{def_TL}, it follows,
		\begin{align}\label{majo_3}
		\int e^{t\sqrt{n\alpha_n}(f_{lk}- Y_{lk})}d\Pi_{\alpha_n}(f|Y^n)  \lesssim \frac{e^{\frac{t^2}{2}}e^{cl}}{\int_{-1}^{1} e^{-\frac{1}{2}(u-\sqrt{\alpha_n}\eps_{lk})^2}  du} \lesssim \frac{e^{\frac{t^2}{2}}e^{cl}}{\int_{-1}^{1} e^{-\frac{1}{2}(u-\eps_{lk})^2}  du}.
		\end{align}
Combining \eqref{majo_1} and \eqref{majo_3}, we obtain
		\begin{align*}
		\int\ninf{f^{L_n}-\hat{f}^{L_n}}d\Pi_{\alpha_n}(f|Y^n) \leq \frac{1}{\sqrt{n\alpha_n}}\sum_{l=0}^{L_n} 2^{l/2} \frac{1}{t} \log \left(\sum_{k=0}^{2^l-1} \frac{2e^{\frac{t^2}{2}}e^{cl}}{\int_{-1}^{1} e^{-\frac{1}{2}(u-\eps_{lk})^2}  du}\right).
		\end{align*}
		Taking the $E_0$-expectation and using Jensen's inequality, we get
		\begin{align*}
		E_0 \int\ninf{f^{L_n}-\hat{f}^{L_n}}d\Pi_{\alpha_n}(f|Y^n) &\leq \frac{1}{\sqrt{n\alpha_n}}\sum_{l=0}^{L_n} 2^{l/2} \frac{1}{t} \log \left(\sum_{k=0}^{2^l-1} 2e^{\frac{t^2}{2}}e^{cl} C\right) \\
		&= \frac{1}{\sqrt{n\alpha_n}}\sum_{l=0}^{L_n} 2^{l/2} (\frac{\log(2^{l+1}e^{cl} C)}{t} + \frac{t}{2} ).
		\end{align*} 
		Setting $t=\sqrt{2\log(2^{l+1}e^{cl} C)}$, we obtain  
		\begin{align}\label{majo_i_1}
		E_0 \int\ninf{f^{L_n}-\hat{f}^{L_n}}d\Pi_{\alpha_n}(f|Y^n) &\leq  \frac{1}{\sqrt{n\alpha_n}}\sum_{l=0}^{L_n} 2^{l/2} \sqrt{2\log(2^{l+1}e^{cl} C)} \lesssim \frac{\sqrt{L_n}}{\sqrt{n\alpha_n}}   2^{\frac{L_n}{2}}.
		\end{align}
		\textbf{Term $(c)$} Using the localisation property of the basis and Jensen's inequality, we have 
		\begin{align}\label{majo_gen_2}
		E_0\int \ninf{f^{L_n^c}}d\Pi_{\alpha_n}(f|Y^n)
		\leq \sum_{l>L_n} 2^{l/2} \frac{1}{t} \log (\sum_{k=0}^{2^l-1} E_0 \int e^{t|f_{lk}|}d\Pi_{\alpha_n}(f|Y^n)).
		\end{align}
		Let $t\in \R$, we have
		\begin{align*}
		\int e^{tf_{lk}}d\Pi_{\alpha_n}(f|Y^n) &= \frac{\int e^{tu} e^{-\frac{n\alpha_n}{2}(u-Y_{lk})^2 } \frac{1}{\sigma_l}\varphi(\frac{u}{\sigma_l})du}{\int e^{-\frac{n\alpha_n}{2}(u-Y_{lk})^2 } \frac{1}{\sigma_l}\varphi(\frac{u}{\sigma_l})du} \\
		&= \frac{\int e^{t(\frac{u}{\sqrt{n\alpha_n}} +f_{0,lk})} e^{-\frac{u^2}{2} + u\sqrt{\alpha_n}\eps_{lk}} \frac{1}{\sqrt{n\alpha_n}\sigma_l}\varphi(\frac{1}{\sigma_l}(\frac{u}{\sqrt{n\alpha_n}} +f_{0,lk}))du}{\int e^{-\frac{u^2}{2} + u\sqrt{\alpha_n}\eps_{lk}} \frac{1}{\sqrt{n\alpha_n}\sigma_l}\varphi(\frac{1}{\sigma_l}(\frac{u}{\sqrt{n\alpha_n}} +f_{0,lk}))du}.
		\end{align*}
		First, we bound from below the denominator. Denote $\mathcal{A}:= \{u : \left|\frac{1}{\sigma_l}(\frac{u}{\sqrt{n\alpha_n}} +f_{0,lk}) \right| \leq 1 \}$ and \newline $\mu(\mathcal{A}) := \int_{\mathcal{A}} \frac{1}{\sqrt{n\alpha_n}\sigma_l}\varphi(\frac{1}{\sigma_l}(\frac{u}{\sqrt{n\alpha_n}} +f_{0,lk}))du = \int_{-1}^{1}\varphi(u)du  $.
		Using Jensen's inequality with the exponential function, we obtain
		\begin{align*}
		D_{lk} &:= \int e^{-\frac{u^2}{2} + u\sqrt{\alpha_n}\eps_{lk}} \frac{1}{\sqrt{n\alpha_n}\sigma_l}\varphi(\frac{1}{\sigma_l}(\frac{u}{\sqrt{n\alpha_n}} +f_{0,lk}))du \\
		&\geq \mu(\mathcal{A}) \int_{\mathcal{A}} e^{-\frac{u^2}{2} + u\sqrt{\alpha_n}\eps_{lk}} \frac{1}{\sqrt{n\alpha_n}\sigma_l \mu(\mathcal{A})}\varphi(\frac{1}{\sigma_l}(\frac{u}{\sqrt{n}} +f_{0,lk}))du \\
		&\geq  \mu(\mathcal{A}) e^{\int_{\mathcal{A}} \left(-\frac{u^2}{2} + u\sqrt{\alpha_n}\eps_{lk}\right) \frac{1}{\sqrt{n\alpha_n}\sigma_l \mu(\mathcal{A})}\varphi(\frac{1}{\sigma_l}(\frac{u}{\sqrt{n\alpha_n}} +f_{0,lk}))du}.
		\end{align*}	
		Denote $\zeta_l= \int_{\mathcal{A}} u \frac{1}{\sqrt{n\alpha_n}\sigma_l \mu(\mathcal{A})}\varphi(\frac{1}{\sigma_l}(\frac{u}{\sqrt{n\alpha_n}} +f_{0,lk}))du $.
		\begin{align}\label{mino_D}	
		D_{lk} &\geq \mu(\mathcal{A}) e^{- \frac{1}{2}\sup_{u \in \mathcal{A}} u^2   + \sqrt{\alpha_n}\eps_{lk} \zeta_l} \geq \mu(\mathcal{A}) e^{- Cn\alpha_n(\sigma_l^2 +f_{0,lk}^2 )  + \sqrt{\alpha_n}\eps_{lk} \zeta_l},
		\end{align}
		for some constant $C>0$.
		%
		Now split the integral of the numerator as follows 
		\begin{align*}
		&\int e^{t(\frac{u}{\sqrt{n\alpha_n}} +f_{0,lk})} e^{-\frac{u^2}{2} + u\sqrt{\alpha_n}\eps_{lk}} \frac{1}{\sqrt{n\alpha_n}\sigma_l}\varphi(\frac{1}{\sigma_l}(\frac{u}{\sqrt{n\alpha_n}} +f_{0,lk}))du\\
		&= \underbrace{\int_{\mathcal{A}} e^{t(\frac{u}{\sqrt{n\alpha_n}} +f_{0,lk})} e^{-\frac{u^2}{2} + u\sqrt{\alpha_n}\eps_{lk}} \frac{1}{\sqrt{n\alpha_n}\sigma_l}\varphi(\frac{1}{\sigma_l}(\frac{u}{\sqrt{n\alpha_n}} +f_{0,lk}))du}_{:=N^1_{lk}(t)} \\
		&+ \underbrace{\int_{\mathcal{A}^C} e^{t(\frac{u}{\sqrt{n\alpha_n}} +f_{0,lk})} e^{-\frac{u^2}{2} + u\sqrt{\alpha_n}\eps_{lk}} \frac{1}{\sqrt{n\alpha_n}\sigma_l}\varphi(\frac{1}{\sigma_l}(\frac{u}{\sqrt{n\alpha_n}} +f_{0,lk}))du}_{:=N^2_{lk}(t)}. 
		\end{align*}
		Using \eqref{mino_D} and Fubini's theorem, it follows
		\begin{align*}
		E_0\frac{N^1_{lk}(t)}{D_{lk}}  &\lesssim  e^{ Cn\alpha_n(\sigma_l^2 +f_{0,lk}^2 ) } \int_{\mathcal{A}} e^{t(\frac{u}{\sqrt{n\alpha_n}} +f_{0,lk})} e^{-\frac{u^2}{2}}	\underbrace{E_0( e^{\sqrt{\alpha_n}\eps_{lk} (u  - \zeta_l)} )}_{=e^{\frac{\alpha_n(u-\zeta_l)^2}{2}}} \frac{1}{\sqrt{n\alpha_n}\sigma_l}\varphi(\frac{1}{\sigma_l}(\frac{u}{\sqrt{n\alpha_n}} +f_{0,lk}))du\\
		&\lesssim e^{ Cn\alpha_n(\sigma_l^2 +f_{0,lk}^2 )  + \frac{\alpha_n\zeta_l^2}{2}} \int_{\mathcal{A}} e^{t(\frac{u}{\sqrt{n\alpha_n}} +f_{0,lk})} e^{-(1-\alpha_n)\frac{u^2}{2}}e^{-u\alpha_n\zeta_l}	 \frac{1}{\sqrt{n\alpha_n}\sigma_l}\varphi(\frac{1}{\sigma_l}(\frac{u}{\sqrt{n\alpha_n}} +f_{0,lk}))du\\
		&\lesssim e^{ Cn\alpha_n(\sigma_l^2 +f_{0,lk}^2 )  + \frac{\zeta_l^2}{2}} \int_{\mathcal{A}} e^{\left|t(\frac{u}{\sqrt{n\alpha_n}} +f_{0,lk})\right|} e^{|u\zeta_l|}	 \frac{1}{\sqrt{n\alpha_n}\sigma_l}\varphi(\frac{1}{\sigma_l}(\frac{u}{\sqrt{n\alpha_n}} +f_{0,lk}))du\\
		&\lesssim  e^{ Cn\alpha_n(\sigma_l^2 +f_{0,lk}^2 )  + \frac{\zeta_l^2}{2}} e^{|t|\sigma_l + |\zeta_l| \sqrt{n\alpha_n}(\sigma_l +|f_{0,lk}|)}.
		\end{align*}
		Since $|\zeta_l| \leq \sup_{u \in \mathcal{A}} |u| \leq \sqrt{n\alpha_n}(\sigma_l +|f_{0,lk}|)  $, we have   
		\begin{align}\label{majo_N_1}
		E_0\frac{N^1_{lk}(t)}{D_{lk}}  \lesssim  e^{ Cn\alpha_n(\sigma_l^2 +f_{0,lk}^2 )  +|t|\sigma_l},
		\end{align}
		for some constant $C>0$. On the other hand, the change of variables $u = \sqrt{n\alpha_n}(\sigma_ly + f_{0,lk})$ and \eqref{mino_D} give
		\begin{align*}
		\frac{N^2_{lk}(t)}{D_{lk}} \lesssim e^{Cn\alpha_n(\sigma_l^2 +f_{0,lk}^2 ) } \int_{[-1;1]^C} e^{tu\sigma_l} e^{-\frac{n\alpha_n}{2}(\sigma_l u - f_{0,lk})^2 + \sqrt{\alpha_n}\eps_{lk}(\sqrt{n\alpha_n}(\sigma_l u - f_{0,lk})-\zeta_l)} \varphi(u)du.
		\end{align*}
		Therefore, using Fubini's theorem, we get
		\begin{align*}
		E_0\frac{N^2_{lk}(t)}{D_{lk}} &\lesssim e^{Cn\alpha_n(\sigma_l^2 +f_{0,lk}^2 )} \int_{[-1;1]^C} e^{tu\sigma_l} e^{-\frac{n\alpha_n}{2}(\sigma_l u - f_{0,lk})^2} \underbrace{E_0(e^{\eps_{lk}\sqrt{\alpha_n}(\sqrt{n\alpha_n}(\sigma_l u - f_{0,lk})-\zeta_l)})}_{e^{\frac{\alpha_n(\sqrt{n\alpha_n}(\sigma_l u - f_{0,lk})-\zeta_l)^2}{2}}} \varphi(u)du \\
		&\lesssim e^{Cn\alpha_n(\sigma_l^2 +f_{0,lk}^2 ) +  \frac{\alpha_n\zeta_l^2}{2}} \int_{[-1;1]^C} e^{tu\sigma_l} e^{-(1-\alpha_n)\left(\frac{n\alpha_n}{2}(\sigma_l u - f_{0,lk})^2\right)} e^{-\alpha_n\sqrt{n\alpha_n}\zeta_l(\sigma_l u - f_{0,lk})} du \\
		&\lesssim e^{Cn\alpha_n(\sigma_l^2 +f_{0,lk}^2 ) + \frac{\zeta_l^2}{2}} \int_{[-1;1]^C} e^{tu\sigma_l} e^{-\alpha_n\sqrt{n\alpha_n}\zeta_l(\sigma_l u - f_{0,lk})} \varphi(u)du\\
		&\lesssim e^{Cn\alpha_n(\sigma_l^2 +f_{0,lk}^2 ) + \frac{\zeta_l^2}{2}+ \alpha_n\sqrt{n\alpha_n}|\zeta_l f_{0,lk}|} \int_{[-1;1]^C} e^{u(t\sigma_l - \alpha_n\sqrt{n\alpha_n} \zeta_l \sigma_l)} \varphi(u)du\\
        &\lesssim e^{Cn\alpha_n(\sigma_l^2 +f_{0,lk}^2 )} \int_{[-1;1]^C} e^{u(t\sigma_l - \alpha_n\sqrt{n\alpha_n} \zeta_l \sigma_l)} \varphi(u)du,
		\end{align*}
		where the constant $C$ may change for line to line. Using the tail behavior of $\varphi$, one can bound its Laplace tranform and we get 
		\begin{align}\label{majo_N_2}
		E_0\frac{N^2_{lk}(t)}{D_{lk}} &\lesssim e^{Cn\alpha_n(\sigma_l^2 +f_{0,lk}^2 )} e^{C(|t|\sigma_l + \alpha_n\sqrt{n\alpha_n} |\zeta_l|\sigma_l)^{\frac{\delta +1}{\delta}}} \lesssim  e^{Cn\alpha_n(\sigma_l^2 +f_{0,lk}^2 ) + C(|t|\sigma_l + \sqrt{n\alpha_n} |\zeta_l|\sigma_l)^{\frac{\delta +1}{\delta}}}.
		\end{align}
		Combining \eqref{majo_N_1} and \eqref{majo_N_2}, we obtain
		\begin{align}\label{majo_trans}
		E_0\int e^{tf_{lk}}d\Pi_{\alpha_n}(f|Y^n) \lesssim e^{C(n\alpha_n(\sigma_l^2 +f_{0,lk}^2 )  + |t|\sigma_l+ (|t|\sigma_l + \sqrt{n\alpha_n} |\zeta_l|\sigma_l)^{\frac{\delta +1}{\delta}})} .
		\end{align}
		Combining \eqref{majo_gen_2} and \eqref{majo_trans}, and denoting $\phi_l=R2^{-l(\frac{1}{2}+\beta)}$, $\forall t>0$ we have
		\begin{align}\label{mino_2}
		E_0\int \ninf{f^{L_n^c}}d\Pi_{\alpha_n}(f|Y^n) 
		\leq \sum_{l>L_n} 2^{l/2} \frac{1}{t} \log (2^{l+1}e^{C(n\alpha_n(\sigma_l^2 +\phi_l^2)  + |t|\sigma_l+ (|t|\sigma_l + n\alpha_n |\sigma_l + \phi_l|\sigma_l)^{\frac{\delta +1}{\delta}})}    ).
		\end{align}
		Using the fact that for $l>L_n$, $n\alpha_n \phi_l \leq n\alpha_n\phi_{L_n} \leq \sqrt{\log(n\alpha_n} \leq L_n \leq l$ and $n\alpha_n\phi_{L_n}\sigma_{L_n} \leq \log(n\alpha_n)(L_n+1)^\mu \leq \log(n\alpha_n)^{\frac{\delta}{1+\delta}} \lesssim L_n^{\frac{\delta}{1+\delta}} \lesssim l^{\frac{\delta}{1+\delta}}$, we deduce that for any $t>0$,
		\begin{align*}
		E_0\int \ninf{f^{L_n^c}}d\Pi_{\alpha_n}(f|Y^n) &\leq \sum_{l>L_n} 2^{l/2} \frac{1}{t} \log (2^{l+1}e^{C(l + t\sigma_l+ (t\sigma_l + l^{\frac{\delta}{\delta+1}})^{\frac{\delta +1}{\delta}})} )\\
		&\lesssim \sum_{l>L_n} 2^{l/2} \frac{1}{t} (l + t\sigma_l+ (t\sigma_l + l^{\frac{\delta}{1+\delta}})^{\frac{\delta +1}{\delta}}).
		\end{align*}
		Choosing $t=l^{\frac{\delta}{\delta + 1}}\sigma_l^{-1}$, we obtain
		\begin{align}\label{majo_iii_1}
		E_0\int \ninf{f^{L_n^c}}d\Pi_{\alpha_n}(f|Y^n) &\lesssim \sum_{l>L_n} 2^{l/2} \sigma_l l^{-\frac{\delta}{\delta + 1}}(l + l^{\frac{\delta}{\delta + 1}}+ (l^{\frac{\delta}{\delta + 1}} + l^{\frac{\delta}{1+\delta}})^{\frac{\delta +1}{\delta}}) \nonumber\\ 
        &\lesssim \sum_{l>L_n} 2^{l/2} \sigma_l l^{-\frac{\delta}{\delta + 1}} (l + l^{\frac{\delta}{\delta + 1}}) \lesssim \sum_{l>L_n} 2^{l/2} \sigma_l l^{-\frac{\delta}{\delta + 1}} l  \nonumber \\
        &\lesssim \sum_{l>L_n}2^{l/2} \sigma_l l^{\frac{1}{\delta + 1}} \lesssim  \sum_{l>L_n}2^{l/2} 2^{- l(\frac{1}{2} +\beta)} \lesssim 2^{-\beta L_n}.
		\end{align} 
		\textbf{Conclusion.} Combining \eqref{majo_ii}, \eqref{majo_iv}, \eqref{majo_i_1} and \eqref{majo_iii_1}, one gets, as desired,
		\begin{align*}
		E_0(\int \ninf{f-f_0} d\Pi_{\alpha_n}(f|Y^n)) \lesssim \frac{1}{\sqrt{n\alpha_n}}  \sqrt{L_n} 2^{\frac{L_n}{2}} + 2^{-\beta L_n} \lesssim \frac{\log(n\alpha_n)}{n\alpha_n}^{\frac{\beta}{2\beta+1}}.
		\end{align*}
	\end{proof}

\section{Ancillary Results}\label{sec:ancillary}
\subsection{Contraction Rates}
\begin{lemma}\label{lem:mino_den_posterior} For any distribution $\Pi$ on $S$, any $C, \eps>0$ and $0 < \alpha \leq 1$, with $P_{0}$-probability at least $1-\frac{1}{C^2n\eps^2}$, we have
		\begin{align*}
			\int_{S} \frac{p_\eta^n(Y^n)^\alpha}{p_{\eta_0}^n(Y^n)^\alpha}d\Pi(\eta) \geq \Pi(B_n(\eta_0, \eps))e^{-\alpha(C+1)n\eps^2}.
		\end{align*}
	\end{lemma}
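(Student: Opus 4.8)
The plan is to reduce everything to a concentration statement for a single log-likelihood-ratio average, by restricting the domain of integration to the Kullback--Leibler neighbourhood and then using convexity twice. First I would discard the part of the integral outside $B_n(\eta_0,\eps)$: if $\Pi(B_n(\eta_0,\eps))=0$ the claimed bound is trivial since the left-hand side is nonnegative, so assume $\Pi(B_n(\eta_0,\eps))>0$ and write $\Pi'(\cdot)=\Pi(\cdot\cap B_n(\eta_0,\eps))/\Pi(B_n(\eta_0,\eps))$ for the renormalised restriction of the prior. Then
$$\int_S \frac{p_\eta^n(Y^n)^\alpha}{p_{\eta_0}^n(Y^n)^\alpha}\,d\Pi(\eta)\;\geq\;\Pi(B_n(\eta_0,\eps))\int \exp\!\Big(\alpha\log\tfrac{p_\eta^n(Y^n)}{p_{\eta_0}^n(Y^n)}\Big)\,d\Pi'(\eta)\;\geq\;\Pi(B_n(\eta_0,\eps))\,e^{\alpha Z},$$
where $Z:=\int \log\big(p_\eta^n(Y^n)/p_{\eta_0}^n(Y^n)\big)\,d\Pi'(\eta)$ and the second inequality is Jensen's inequality applied to the convex map $x\mapsto e^{\alpha x}$ and the probability measure $\Pi'$. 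It therefore suffices to show that $Z\geq-(C+1)n\eps^2$ on an event of $P_0$-probability at least $1-\tfrac{1}{C^2n\eps^2}$.

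Next I would estimate the first two moments of $Z$ under $P_0$. Using Tonelli/Fubini (splitting the integrand into positive and negative parts, the defining inequalities of $B_n(\eta_0,\eps)$ guaranteeing integrability) one gets $E_0 Z=-\int_{B_n(\eta_0,\eps)} K(p_{\eta_0}^n,p_\eta^n)\,d\Pi'(\eta)\geq-n\eps^2$, since $K(p_{\eta_0}^n,p_\eta^n)\leq n\eps^2$ on $B_n(\eta_0,\eps)$. For the variance, write $Z-E_0 Z=-\int\big(\log\tfrac{p_{\eta_0}^n(Y^n)}{p_\eta^n(Y^n)}-K(p_{\eta_0}^n,p_\eta^n)\big)\,d\Pi'(\eta)$, square both sides, apply Jensen's inequality for $x\mapsto x^2$ to move the square inside the $\Pi'$-integral, and take $E_0$; by Fubini this yields $\operatorname{Var}_0 Z\leq\int V(p_{\eta_0}^n,p_\eta^n)\,d\Pi'(\eta)\leq n\eps^2$ from the second defining bound of $B_n(\eta_0,\eps)$.

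Finally, Chebyshev's inequality finishes the argument: on the bad event $\{Z<-(C+1)n\eps^2\}$ one has $Z-E_0Z<-(C+1)n\eps^2-E_0Z\leq-Cn\eps^2$, hence
$$P_0\big(Z<-(C+1)n\eps^2\big)\;\leq\;P_0\big(|Z-E_0Z|>Cn\eps^2\big)\;\leq\;\frac{\operatorname{Var}_0 Z}{C^2n^2\eps^4}\;\leq\;\frac{1}{C^2n\eps^2},$$
and on the complementary event $Z\geq-(C+1)n\eps^2$, so that $\int_S(p_\eta^n/p_{\eta_0}^n)^\alpha\,d\Pi\geq\Pi(B_n(\eta_0,\eps))\,e^{-\alpha(C+1)n\eps^2}$, as required.

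This is the fractional-posterior analogue of the classical evidence lower bound (see e.g.\ \cite{GGV,vdv_FNBI}), and none of the steps presents a genuine obstacle; the only points needing mild care are the measurability and integrability justifying the two applications of Fubini and the two Jensen steps. It is worth noting that the argument uses only $\alpha>0$ — the restriction $\alpha\le1$ plays no role here — and that the exponent depends \emph{linearly} on $\alpha$, which is precisely the form needed when this bound is fed into the proofs of Theorems \ref{bat_result} and \ref{thm_GGV}, where it is invoked with $C=1$.
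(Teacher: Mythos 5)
Your proof is correct and follows essentially the same route as the paper's: restrict to $B_n(\eta_0,\eps)$, renormalise the prior, apply Jensen to pull the exponent out, bound $E_0Z$ and $\operatorname{Var}_0 Z$ by $n\eps^2$ via the defining inequalities of the Kullback--Leibler ball, and conclude with Chebyshev. Your phrasing of Jensen through the convex map $x\mapsto e^{\alpha x}$ rather than the concave logarithm is an equivalent cosmetic variation, and your Chebyshev bound $1/(C^2n\eps^2)$ matches the lemma statement (the paper's proof text contains a harmless typo $1/(Cn\eps^2)$ at that step).
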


	\begin{proof}{Proof}
		Suppose $\Pi(B_n(\eta_0, \eps))>0$ (otherwise the result is immediate), and denote by $\bar{\Pi} = \frac{\Pi(\cdot \cap B_n(\eta_0, \eps))}{\Pi(B_n(\eta_0, \eps))}$ the normalized prior to $B_n(\eta_0, \eps)$.
		Now let us bound from below
		\begin{align}\label{eq:ELBO_lb}
			\int_{S} \frac{p_\eta^n(Y^n)^\alpha}{p_{\eta_0}^n(Y^n)^\alpha}d\Pi(\eta) \geq \int_{B_n(\eta_0, \eps)} \frac{p_\eta^n(Y^n)^\alpha}{p_{\eta_0}^n(Y^n)^\alpha}d\Pi(\eta) = \Pi(B_n(\eta_0, \eps)) \int \frac{p_\eta^n(Y^n)^\alpha}{p_{\eta_0}^n(Y^n)^\alpha}d\bar{\Pi}(\eta). 
		\end{align}
		Since $\bar{\Pi}$ is a probability measure on $S$, Jensen's inequality applied to the logarithm gives,
		\begin{align*}
				\log\left(\int \frac{p_\eta^n(Y^n)^\alpha}{p_{\eta_0}^n(Y^n)^\alpha}d\bar{\Pi}(\eta)\right) 
				\geq  \alpha  \int \log\left(\frac{p_\eta^n(Y^n)}{p_{\eta_0}^n(Y^n)}\right)d\bar{\Pi}(\eta).
		\end{align*}	
		Consider now the random variable $Z := \int \log\left(\frac{p_\eta^n(Y^n)}{p_{\eta_0}^n(Y^n)}\right)d\bar{\Pi}(\eta)$. Then
		\begin{align*}
			E_0 |Z|  
			 &\leq   \int_{B_n(\eta_0, \eps)} E_0  \left\arrowvert\log\left(\frac{p_\eta^n(Y^n)}{p_{\eta_0}^n(Y^n)}\right)\right\arrowvert d\bar{\Pi}(\eta) \\
			&= \int_{B_n(\eta_0, \eps)} \int \left\arrowvert\log\left(\frac{p_\eta^n(x)}{p_{\eta_0}^n(x)}\right)\right\arrowvert  p_{\eta_0}^n(x) d\mu^n(x)d\bar{\Pi}(\eta)) \\ & \leq n\eps^2 + 1. 
		\end{align*}
		Thus Z is integrable and using Fubini's theorem,
		\begin{align*}
			E_0 Z
			= \int_{B_n(\eta_0, \eps)}  \int \log\left(\frac{p_\eta^n(x)}{p_{\eta_0}^n(x)}\right) p_{\eta_0}^n(x) d\mu^n(x) d\bar{\Pi}(\eta) 
		 = \int_{B_n(\eta_0, \eps)} -K(p_{\eta_0}^n, p_{\eta}^n) d\bar{\Pi}(\eta) \geq -n\eps^2.
		\end{align*}
		Turning to the variance,
		\begin{align*}
			\text{Var}_0(Z) = \text{Var}_0(-Z) 
			&= E_0 \left(\int \log\left(\frac{p_{\eta_0}^n(Y^n)}{p_\eta^n(Y^n)}\right)d\bar{\Pi}(\eta) - \int_{B_n(\eta_0, \eps)} K(p_{\eta_0}^n, p_{\eta}^n) d\bar{\Pi}(\eta)\right)^2 \\
			&= E_0 \left(\int \log\left(\frac{p_{\eta_0}^n(Y^n)}{p_\eta^n(Y^n)}\right) - K(p_{\eta_0}^n, p_{\eta}^n) d\bar{\Pi}(\eta)\right)^2 \\
			&\leq  \int_{B_n(\eta_0, \eps)} E_0 \left(\log\left(\frac{p_{\eta_0}^n(Y^n)}{p_\eta^n(Y^n)}\right) - K(p_{\eta_0}^n, p_{\eta}^n)\right)^2 d\bar{\Pi}(\eta) 
			\leq n\eps^2,
		\end{align*}
		using that $\bar{\Pi}$ is supported on $B_n(\eta_0, \eps)$. By Chebychev's inequality, 
		$P_0(|Z-E(Z)| \geq Cn\eps^2) \leq \frac{1}{Cn\eps^2} $.
		Thus, on the event $\{ |Z-E(Z)| \leq Cn\eps^2\}$, which has a probability at least $1 - \frac{1}{Cn\eps^2}$,
		\begin{align*}
			\log\left(\int \frac{p_\eta^n(Y^n)^\alpha}{p_{\eta_0}^n(Y^n)^\alpha}d\bar{\Pi}(\eta)\right) \geq \alpha(Z - EZ + EZ) \geq -\alpha(C+1)n\eps^2.
		\end{align*}
		Substituting this bound into \eqref{eq:ELBO_lb} then gives the result.
	\end{proof}

	\begin{lemma}\label{lemma_sieve}
		Let $A_n$ be measurable sets, $0 < \alpha_n \leq 1$  and $\eps_n$ be a non-negative sequence such that $n\alpha_n \eps_n^2 \rightarrow \infty$. If
		\begin{align*}
			\frac{\Pi(A_n)}{\Pi(B_n(\eta_0, \eps_n)) e^{-2n\alpha_n\eps_n^2}} = o(1),
		\end{align*}
		then $\Pi_{\alpha_n}(A_n |Y^n) \to^{P_{0}} 0 $.
	\end{lemma}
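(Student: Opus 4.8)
The plan is to run the standard ``evidence lower bound plus first-moment'' argument, now with the exponent $\alpha_n$ tracked explicitly. Write the fractional posterior mass of $A_n$ as the ratio
\[
\Pi_{\alpha_n}(A_n\mid Y^n)=\frac{\int_{A_n}\big(p_\eta^n(Y^n)/p_{\eta_0}^n(Y^n)\big)^{\alpha_n}\,d\Pi(\eta)}{\int_{S}\big(p_\eta^n(Y^n)/p_{\eta_0}^n(Y^n)\big)^{\alpha_n}\,d\Pi(\eta)}.
\]
First I would control the denominator using Lemma \ref{lem:mino_den_posterior} with $C=1$, $\eps=\eps_n$ and $\alpha=\alpha_n$: on an event $C_n$ with $P_0(C_n)\ge 1-\tfrac{1}{n\eps_n^2}$ one has $\int_S (p_\eta^n/p_{\eta_0}^n)^{\alpha_n}d\Pi(\eta)\ge \Pi(B_n(\eta_0,\eps_n))e^{-2\alpha_n n\eps_n^2}$. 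Note $P_0(C_n^c)\to 0$ because $n\eps_n^2\ge n\alpha_n\eps_n^2\to\infty$ (using $\alpha_n\le 1$), which is the only place the hypothesis $n\alpha_n\eps_n^2\to\infty$ together with $\alpha_n\le 1$ is needed for this step.

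Next I would bound the numerator in expectation. By Tonelli's theorem,
\[
E_0\!\left[\int_{A_n}\Big(\tfrac{p_\eta^n(Y^n)}{p_{\eta_0}^n(Y^n)}\Big)^{\alpha_n}d\Pi(\eta)\right]
=\int_{A_n}\!\int p_\eta^n(x)^{\alpha_n}p_{\eta_0}^n(x)^{1-\alpha_n}\,d\mu^n(x)\,d\Pi(\eta),
\]
and since $0<\alpha_n\le 1$, Hölder's inequality gives $\int p_\eta^n(x)^{\alpha_n}p_{\eta_0}^n(x)^{1-\alpha_n}d\mu^n(x)\le\big(\int p_\eta^n\big)^{\alpha_n}\big(\int p_{\eta_0}^n\big)^{1-\alpha_n}=1$, so the whole expectation is at most $\Pi(A_n)$. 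Combining this with the denominator bound on $C_n$,
\[
E_0\big[\Pi_{\alpha_n}(A_n\mid Y^n)\mathbf 1_{C_n}\big]\le\frac{\Pi(A_n)}{\Pi(B_n(\eta_0,\eps_n))e^{-2\alpha_n n\eps_n^2}}=o(1)
\]
by the hypothesis of the lemma.

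Finally I would conclude by Markov's inequality: for any fixed $\delta>0$,
\[
P_0\big(\Pi_{\alpha_n}(A_n\mid Y^n)>\delta\big)\le P_0(C_n^c)+\tfrac{1}{\delta}E_0\big[\Pi_{\alpha_n}(A_n\mid Y^n)\mathbf 1_{C_n}\big]=o(1),
\]
using that $\Pi_{\alpha_n}(A_n\mid Y^n)\in[0,1]$. Hence $\Pi_{\alpha_n}(A_n\mid Y^n)\to^{P_0}0$. There is no serious obstacle here; the only points requiring care are (i) invoking Lemma \ref{lem:mino_den_posterior} with the correct dependence on $\alpha_n$ so that the $e^{-2\alpha_n n\eps_n^2}$ factor matches the one appearing in the hypothesis, and (ii) checking that $P_0(C_n^c)\to0$, which uses $\alpha_n\le1$ to pass from $n\alpha_n\eps_n^2\to\infty$ to $n\eps_n^2\to\infty$.
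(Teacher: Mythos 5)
Your proof is correct and matches the paper's argument essentially step for step: the paper also lower-bounds the denominator via Lemma \ref{lem:mino_den_posterior} on a high-probability event, bounds the $E_0$-expectation of the numerator by $\Pi(A_n)$ using Fubini and H\"older's inequality, and concludes from the hypothesis. The only cosmetic difference is that you make the final Markov-inequality step explicit, whereas the paper absorbs the complement event into an $o(1)$ term and concludes directly from $E_0\Pi_{\alpha_n}(A_n\mid Y^n)=o(1)$.
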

	
\begin{proof}{Proof}
Applying H\"older's inequality to the right-hand side of \eqref{eq:Bayes_formula} implies
\begin{align*}
			E_0 \Pi_{\alpha_n}(A_n|Y^n)   &\leq  \frac{\int_{A_n} \left(\int p_\eta^n(x) d\mu^n(x)\right)^{\alpha_n} \left(\int p_{\eta_0}^n(x) d\mu^n(x)\right)^{1-\alpha_n} d\Pi(\eta)}{\Pi(B_n(\eta_0, \eps_n)) e^{-2{\alpha_n}n\eps_n^2}} + o(1)\\
			&= \frac{\Pi(A_n)}{\Pi(B_n(\eta_0, \eps_n)) e^{-2{\alpha_n}n\eps_n^2}} + o(1) = o(1).
		\end{align*}
\end{proof}	

	
	\begin{lemma}\label{lemma_prior_mass_condition} Consider density estimation on [0,1] with true density $f_0 \in \mathcal{C}^\beta([0,1])$ for some $\beta \in (0,1]$, bounded away from 0. Let $\Pi = \Pi_n$ denote the histogram prior \eqref{def::RHP} satisfying $K_n=o\left(n\alpha_n/ \log(n\alpha_n)\right)$ and for all $i \in \{1,\dots, K_n\}$, $\frac{1}{(n\alpha_n)^b}\leq \delta_{i,n} \leq 1$ for some $b>0$. Then the sequence $\eps_n^2 = K_n\log(n\alpha_nK_n) / (n\alpha_n) + K_n^{-2\beta}$ satisfies $\Pi(B_n(f_0, M\eps_n)) \geq e^{-n\alpha_n(M\eps_n)^2}$ for some $M>0$. 
	\end{lemma}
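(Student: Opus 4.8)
The plan is to follow the classical prior-mass argument for random histogram priors (as in the proof of the analogous statement in \cite{CR2015}, but with the sample size $n$ replaced throughout by the effective sample size $n\an$). Concretely, I would approximate $f_0$ by its $K_n$-bin histogram projection $f_{0,[K_n]}$, reduce the Kullback--Leibler neighbourhood $B_n(f_0,M\eps_n)$ to a sup-norm ball around $f_{0,[K_n]}$, translate that ball into a box around the mean weight vector $\omega^0=(\int_{I_1}f_0,\dots,\int_{I_{K_n}}f_0)$ on the simplex $S^1_{K_n}$, and then lower bound the Dirichlet mass of this box.

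First I would record the approximation and regularity facts. Since $f_0\in\mathcal{C}^\beta([0,1])$, one has $\|f_0-f_{0,[K_n]}\|_\infty\lesssim K_n^{-\beta}\le\eps_n$ (using $K_n^{-2\beta}\le\eps_n^2$), and since $f_0$ is bounded away from $0$ and above, so is $f_{0,[K_n]}$ for $n$ large; in particular $\omega^0$ lies in the interior of $S^1_{K_n}$ with $\omega^0_j\asymp 1/K_n$. Next, for probability densities bounded away from $0$ both $K(f_0,f)$ and $V(f_0,f)$ are bounded by a multiple of $\|f_0-f\|_\infty^2$ once $\|f_0-f\|_\infty$ is small (a standard fact, of the same kind used via Lemma 3.1 of \cite{vdv} in the proof of Proposition \ref{thm:GGV_verification_GP_density_estimation}); combined with $K(p_{f_0}^n,p_f^n)=nK(f_0,f)$ and the bias bound, this shows $B:=\{f\in H^1_{K_n}:\|f-f_{0,[K_n]}\|_\infty\le\eps_n\}\subseteq B_n(f_0,M\eps_n)$ for $M$ large enough. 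Writing $f=K_n\sum_j\omega_j\mathbf{1}_{I_j}$ one has $\|f-f_{0,[K_n]}\|_\infty=K_n\|\omega-\omega^0\|_\infty$, so $f\in B$ iff $\|\omega-\omega^0\|_\infty\le\eps_n/K_n$, and it remains to bound $\Pi(B)=\mathcal{D}(\delta_{1,n},\dots,\delta_{K_n,n})\big(\{\|\omega-\omega^0\|_\infty\le\eps_n/K_n\}\big)$ from below by $e^{-CK_n\log(n\an K_n)}$.

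For the Dirichlet estimate I would restrict to the sub-box $T'$ in which the first $K_n-1$ coordinates each lie within $\eps_n/K_n^2$ of $\omega^0_j$, so the dependent coordinate deviates by at most $(K_n-1)\eps_n/K_n^2<\eps_n/K_n$ and $T'$ is contained in the target box. On $T'$ every $\omega_j\asymp 1/K_n\le 1$, and since $\delta_{j,n}-1\in(-1,0]$ this forces $\omega_j^{\delta_{j,n}-1}\ge 1$; hence $\int_{T'}\prod_j\omega_j^{\delta_{j,n}-1}\,d\omega\ge\mathrm{Leb}(T')=(2\eps_n/K_n^2)^{K_n-1}$. For the normalising constant, $\Gamma(\sum_i\delta_{i,n})\ge\min_{x>0}\Gamma(x)>0$ while $\Gamma(\delta_{i,n})\le 1/\delta_{i,n}\le(n\an)^b$, so $\Gamma(\sum_i\delta_{i,n})/\prod_i\Gamma(\delta_{i,n})\gtrsim(n\an)^{-bK_n}$. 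Taking logarithms, and using $K_n\le n\an$ (from $K_n=o(n\an/\log(n\an))$), hence $\log K_n\lesssim\log(n\an K_n)$ and $|\log\eps_n|\lesssim\log(n\an K_n)$, one obtains $\log\Pi(B)\ge-CK_n\log(n\an K_n)$. Since $n\an\eps_n^2=K_n\log(n\an K_n)+n\an K_n^{-2\beta}\ge K_n\log(n\an K_n)$, enlarging $M$ so that $M^2\ge C$ yields $\Pi(B_n(f_0,M\eps_n))\ge\Pi(B)\ge e^{-n\an(M\eps_n)^2}$, as required.

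The main obstacle is the Dirichlet lower bound: one has to choose the sub-box so that fixing $K_n-1$ free coordinates does not push the dependent coordinate out of the target ball, and one has to control the normalising constant $\Gamma(\sum_i\delta_{i,n})/\prod_i\Gamma(\delta_{i,n})$ when the weights are as small as $(n\an)^{-b}$ --- it is precisely this factor that produces the $bK_n\log(n\an)$ contribution and is the reason the logarithm appearing in $\eps_n$ is $\log(n\an K_n)$ rather than $\log K_n$. By contrast, the bias step and the Kullback--Leibler-to-sup-norm reduction are routine consequences of $f_0$ being bounded away from zero.
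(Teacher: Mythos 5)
Your proposal is correct and follows essentially the same route as the paper: reduce the Kullback--Leibler neighbourhood to a small ball around the histogram projection $f_{0,[K_n]}$ via the bounds $K(f_0,f),V(f_0,f)\lesssim\|f-f_0\|_\infty^2$ and the bias estimate $\|f_0-f_{0,[K_n]}\|_\infty\lesssim K_n^{-\beta}$, then lower-bound the Dirichlet mass of the corresponding neighbourhood of $\omega^0$ while tracking the small weights, which is exactly where the $(n\an)^{-bK_n}$ factor and hence the $\log(n\an K_n)$ in $\eps_n$ come from. The only difference is cosmetic: the paper passes through an $L^1$-ball of radius $2\rho_n$ with $\rho_n^2=\log(n\an K_n)/(n\an K_n)$ and invokes the adapted Dirichlet small-ball estimate of Lemma \ref{lemma_6_1_GGV}, whereas you compute the Dirichlet mass of a coordinatewise sub-box directly (using $\omega_j^{\delta_{j,n}-1}\ge 1$ and $\Gamma(\delta)\le 1/\delta$), which is a self-contained and equally valid substitute for that lemma.
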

	
		\begin{proof}{Proof}
		Using that the Kullback-Leiber and its $2^{nd}$-variation tensorizes in density estimation, we may write 
		%
		%
		\begin{align}\label{def_KL}
			B_n(f_0,\eps)= B_1(f_0, \eps)= \{f \in \mathcal{F}: \: K(f_0,f)\leq\eps^2, V(f_0,f)\leq \eps^2  \}.
		\end{align}
		Let $\rho_n^2 = \log(n\alpha_n K_n)/(n\alpha_n K_n)$. Since $K_n= o(\frac{n\alpha_n}{\log(n\alpha_n)})$, it holds that $(\rho_n K_n)^2 = \\ K_n\log(n\alpha_n K_n)/(n\alpha_n) = o(1)$ and thus $\rho_n \leq K_n^{-1}$ for $n$ large enough. This bound, the assumption $\delta_{i,n} \leq 1$ together with Lemma \ref{lemma_6_1_GGV} give that there exist positive constants $C$ and $c$ such that for all integer $n$, 
		\begin{align}\label{calc_norme_l_1}
			\Pi(f \in H^1_{K_n}, \|f-f_{0,K_n}\|_1 \leq 2 \rho_n) \geq  Ce^{-cK_n\log(\frac{1}{\rho_n})} \prod_{i=1}^{K_n} \delta_{i,n}.
		\end{align}
		Using basic properties of histograms, we also have
		\begin{align}\label{calc_norme_l_inf}
			\Pi(f \in H^1_{K_n},\|f-f_{0,K_n}\|_\infty \leq 2K_n\rho_n) &\geq \Pi(f \in H^1_{K_n}, \|f-f_{0,K_n}\|_1 \leq 2\rho_n).
		\end{align}
		Since $f_0$ satisfies $m \leq f_0\leq M$ for some $M>m>0$, we also have $m\leq f_{0,K_n}\leq M$ for all $n$.	Now let $f \in H^1_{K_n}$ such that $\|f-f_{0,K_n}\|_\infty \leq 2K_n\rho_n$. Since $K_n\rho_n \rightarrow 0$, for $n$ large enough, $\frac{m}{2}<f<2M$. Since $f$ and $f_0$ are bounded away from zero and infinity, using that $\log(1+x) \leq x$,
		\begin{align*}
			K(f_0,f) 
			&=\int_{0}^{1} \log\left(1+\frac{f_0-f}{f} \right)f_0 \leq  \int_{0}^{1} \frac{f_0-f}{f}(f_0-f+f) \nonumber\
			= \int_{0}^{1} \frac{(f_0-f)^2}{f} \leq \frac{2}{m}\|f-f_0\|_{\infty}^2.
		\end{align*}
		Also, since $x \mapsto \log x$ is $\frac{1}{r}$-Lipschitz on $[r, \infty)$,
		\begin{align*}
			V(f_0,f)
			& \leq \int_{0}^{1} \left(\log\frac{f_0}{f}\right)^2 f_0 \nonumber 
			\leq \frac{4}{m^2} \int_{0}^{1} |f_0-f|^2 f_0 \leq \frac{4}{m^2}\|f-f_0\|_{\infty}^2.
		\end{align*}
		Moreover, for $f_0 \in \mathcal{C}^\beta([0,1])$ and any $f \in H^1_{K_n}$ such that $\|f-f_{0,K_n}\|_\infty \leq 2K_n\rho_n$,
		\begin{align*}
			&\|f-f_0\|_{\infty}^2 \leq 2\|f-f_{0,K_n}\|_{\infty}^2 + 2\|f_0-f_{0,K_n}\|_{\infty}^2 \leq 8(K_n\rho_n)^2 + 2K_n^{-2\beta} \lesssim \eps_n^2.
		\end{align*}
		Combining the last three displays thus implies that $K(f_0,f) \leq D \eps_n^2$ and $V(f_0,f) \leq D \eps_n^2$ for some constant $D = D(m) = D(f_0)>0$. Together with \eqref{def_KL}-\eqref{calc_norme_l_inf}, we obtain
		%
		%
		\begin{align}\label{masse_vois_inf}
			\Pi(B_n(f_0, \sqrt{D}\eps_n)) &= \Pi(\{f, ~ K(f_0,f) \leq D \eps_n^2, \; V(f_0,f) \leq D \eps_n^2 \}) 
			\nonumber\\ 
			&\geq \Pi(f \in H^1_{K_n}, \|f-f_{0,K_n}\|_1 \leq 2\rho_n) \geq  Ce^{-cK_n\log(\frac{1}{\rho_n})} \prod_{i=1}^{K_n} \delta_{i,n}. 
		\end{align}
		Using the assumption on the weights $(\delta_{i,n})$ and the definition of $\rho_n$ yields that 
		\begin{align*}
			\Pi(B_n(f_0, \sqrt{D}\eps_n)) &\geq Ce^{-cK_n\log(\frac{1}{\rho_n})} e^{-bK_n\log(n\alpha_n)} \geq Ce^{-cK_n\log(\frac{1}{\rho_n})},
		\end{align*}
		where the constants $C$ and $c$ may change from line to line. Finally, since the sequence $\rho_n^2$ satisfies $\frac{1}{\rho_n^2}\log(\frac{1}{\rho_n}) \leq n\alpha_nK_n$ and thus $K_n\log(\frac{1}{\rho_n}) \leq n \alpha_n(K_n\rho_n)^2 $, it follows 	
		\begin{align*}
			\Pi(B_n(f_0, \sqrt{D}\eps_n)) &\geq Ce^{-cK_n\log(\frac{1}{\rho_n})}   \geq Ce^{-cn\alpha_n(K_n\rho_n)^2 } \geq Ce^{-cn\alpha_n\eps_n^2 }  = Ce^{-n\alpha_n(\sqrt{c}\eps_n)^2 }.
		\end{align*} 
		Denoting $D':= \max(\sqrt{D}, \sqrt{c}) + 1$, for $n$ large enough we have
		\begin{align*}
			\Pi(B_n(f_0, D'\eps_n)) \geq e^{-n\alpha_n(D'\eps_n)^2}.
		\end{align*}
		
	\end{proof}
	
	\begin{lemma}\label{lemma_6_1_GGV}
		Let $X_1, \dots, X_K$ be distributed according to the Dirichlet distribution on the K-simplex with parameters $\delta= (\delta_1, \dots, \delta_K)$, where $0< \delta_i \leq 1$ for all $i$ . Let $x_0=(x_{10}, \dots, x_{K0})$ be any point on the K-simplex. There exist positive constants $c$, $C$, independent of $K$, $\delta$ and $x_0$  such that, for $\eps \leq K^{-1}$
		\begin{align*}
			P\left(\sum_{i=1}^{K} |X_i -x_{i0}| \leq 2\eps \right) \geq C e^{-cK\log(\frac{1}{\eps})}\prod_{i=1}^{K} \delta_i
		\end{align*}
	\end{lemma}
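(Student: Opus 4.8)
The plan is to lower bound the probability by integrating the Dirichlet density over an explicit small region $R\subset S^1_K$ that is contained in the $\ell^1$-ball $\{x:\sum_{i=1}^K|x_i-x_{i0}|\le 2\eps\}$, and to bound the density on $R$ from below by a quantity free of $x$. Parametrise the simplex by $(x_1,\dots,x_{K-1})$ with $x_K:=1-\sum_{i<K}x_i$, so that the $\mathcal{D}(\delta_1,\dots,\delta_K)$ law has density $\frac{\Gamma(\sum_i\delta_i)}{\prod_i\Gamma(\delta_i)}\prod_{i=1}^K x_i^{\delta_i-1}$ with respect to Lebesgue measure on $\{x_i\ge 0,\ \sum_{i<K}x_i\le 1\}$.

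First I would obtain a pointwise lower bound on the density that is independent of $x$. Since $0\le x_i\le 1$ and $\delta_i-1\le 0$, each factor satisfies $x_i^{\delta_i-1}\ge 1$, hence $\prod_i x_i^{\delta_i-1}\ge 1$ on the simplex. For the normalising constant, $\Gamma(\sum_i\delta_i)\ge c_0:=\min_{t>0}\Gamma(t)>0$, a universal constant, while for $\delta\in(0,1]$ one has $\Gamma(\delta)=\Gamma(\delta+1)/\delta\le 1/\delta$ because $\Gamma\le 1$ on $[1,2]$ by log-convexity; therefore $1/\prod_i\Gamma(\delta_i)\ge\prod_i\delta_i$. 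Combining, the density is at least $c_0\prod_{i=1}^K\delta_i$ everywhere on the simplex, which produces both the factor $\prod_i\delta_i$ and the universal constant $C$.

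Next I would build $R$. Shrink $x_0$ toward the barycentre by setting $\bar x_{i0}:=(1-\eps/2)x_{i0}+\eps/(2K)$, so $\bar x_0\in S^1_K$, each $\bar x_{i0}\ge\eps/(2K)$, and $\sum_i|\bar x_{i0}-x_{i0}|\le\eps$. Take $R$ to be the box $\{\,|x_i-\bar x_{i0}|\le\rho,\ i=1,\dots,K-1\,\}$ (with $x_K$ determined) for $\rho:=\eps/(2K^2)$. A direct check using $\rho<\bar x_{i0}$ gives $x_i>0$ for $i<K$, and $x_K\ge\bar x_{K0}-(K-1)\rho>0$, so $R$ lies in the simplex; moreover $|x_K-\bar x_{K0}|\le(K-1)\rho$, whence $\sum_i|x_i-\bar x_{i0}|\le 2(K-1)\rho\le\eps/K$ and therefore $\sum_i|x_i-x_{i0}|\le 2\eps$. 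The $(K-1)$-dimensional volume of $R$ is $(2\rho)^{K-1}=(\eps/K^2)^{K-1}$.

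Finally I would assemble the bound: $P(\sum_i|X_i-x_{i0}|\le 2\eps)\ge\int_R(\text{density})\ge c_0\prod_i\delta_i\cdot(\eps/K^2)^{K-1}$, and rewrite $(\eps/K^2)^{K-1}=\exp(-(K-1)(2\log K+\log(1/\eps)))$. Since $\eps\le K^{-1}$ forces $\log K\le\log(1/\eps)$ (the case $K=1$ being trivial), this is $\ge e^{-3K\log(1/\eps)}$, yielding the claim with $c=3$ and $C=c_0$. The main obstacle is the geometric step: fitting a box of definite volume inside the simplex near a possibly boundary point $x_0$ while respecting the $\ell^1$-budget. The barycentre shrink resolves this, but the $K$-dependence of $\rho$ must be tracked carefully, since a crude choice $\rho\asymp\eps/K$ makes the last coordinate $x_K$ infeasible and one is forced to $\rho\asymp\eps/K^2$; the $\Gamma$-function estimates are elementary but must be carried out to keep all constants independent of $K$, $\delta$, and $x_0$.

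\hfill\BlackBox
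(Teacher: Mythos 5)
Your proof is correct. Note that the paper itself does not write out an argument for this lemma: it simply states that the proof is that of Lemma 6.1 in Ghosal--Ghosh--van der Vaart, ``keeping track of the dependence on the Dirichlet parameters.'' Your proposal is a complete, self-contained version in the same spirit -- lower-bounding the Dirichlet density pointwise and integrating over an explicit region inside the $\ell^1$-ball -- and it correctly isolates exactly the two ingredients the paper leaves implicit. The factor $\prod_i\delta_i$ comes out of the normalising constant via $\Gamma(\delta)=\Gamma(\delta+1)/\delta\le 1/\delta$ for $\delta\in(0,1]$ together with $\Gamma(\sum_i\delta_i)\ge\min_{t>0}\Gamma(t)>0$, and $\prod_i x_i^{\delta_i-1}\ge 1$ since all $\delta_i\le 1$; this is precisely the ``dependence on the Dirichlet parameters'' that must be tracked. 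The geometric step is also sound: the barycentre shrink $\bar x_{i0}=(1-\eps/2)x_{i0}+\eps/(2K)$ costs $\le\eps$ of the $\ell^1$-budget while guaranteeing $\bar x_{i0}\ge\eps/(2K)$, and the choice $\rho=\eps/(2K^2)$ (rather than $\eps/K$) is indeed needed so that the last coordinate $x_K\ge\bar x_{K0}-(K-1)\rho>0$ stays feasible; the resulting volume $(\eps/K^2)^{K-1}$ combined with $\log K\le\log(1/\eps)$ gives $e^{-3K\log(1/\eps)}$, so the constants $c=3$ and $C=\min_{t>0}\Gamma(t)$ are universal as required. I find no gap.
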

	\begin{proof}{Proof}
		The proof is the same as that of Lemma 6.1 in \citet{GGV}, except one keeps track of the dependence on the Dirichlet parameters.
	\end{proof}

\subsection{Bernstein--von Mises Results}

\begin{proof}{Proof of Lemma \ref{thm} }
		Let $f \in A_n$. First we have
		\begin{align}\label{calculation_1}
			\alpha_n \ell_n(f_t) &= \alpha_n  \sum_{i=1}^{n} \Big(\log f(Y_i) - \frac{t \tilde{\psi}_{[K_n]}(Y_i) }{\sqrt{n\alpha_n}} - \log (F(e^{-\frac{t \tilde{\psi}_{[K_n]}}{\sqrt{n\alpha_n}}}) ) \Big) \nonumber\\
			&= \alpha_n l_n(f) - t \sqrt{n \alpha_n} \frac{1}{n}\sum_{i=1}^{n}  \tilde{\psi}_{[K_n]}(Y_i)-n\alpha_n \log (F(e^{-\frac{t \tilde{\psi}_{[K_n]}}{\sqrt{n\alpha_n}}}) ).
		\end{align}
		Let us expand the term $\log (F(e^{-\frac{t \tilde{\psi}_{[K_n]}}{\sqrt{n\alpha_n}}}))$. Throughout the calculations below, one can keep track of the uniformity of the remainder terms and check that the remainder in the final expansion is uniform over $A_n$. The fact that $\tilde{\psi}$ is bounded (and so is $\tilde{\psi}_{[K_n]}$) ensures this uniformity. By expanding the logarithm around 1, 
		\begin{align*}
			\log (F(e^{-\frac{t \tilde{\psi}_{[K_n]}}{\sqrt{n\alpha_n}}})) &= \log \int fe^{-\frac{t \tilde{\psi}_{[K_n]}}{\sqrt{n\alpha_n}}}= \log \int_{0}^{1} f (1 -\frac{t \tilde{\psi}_{[K_n]}}{\sqrt{n\alpha_n}} + \frac{t^2 \tilde{\psi}_{[K_n]}^2}{2n\alpha_n} + o(\frac{1}{n\alpha_n})) \\
			&= \log\Big( 1 - \frac{t}{\sqrt{n\alpha_n}} \int_{0}^{1} f \tilde{\psi}_{[K_n]} + \frac{t^2}{2n\alpha_n} \int_{0}^{1} f \tilde{\psi}_{[K_n]}^2 +o(\frac{1}{n\alpha_n}))\\
			&= - \frac{t}{\sqrt{n\alpha_n}} \int_{0}^{1} f \tilde{\psi}_{[K_n]} + \frac{t^2}{2n\alpha_n} \left(\int_{0}^{1} f \tilde{\psi}_{[K_n]}^2 - \left(\int_{0}^{1} f \tilde{\psi}_{[K_n]}\right)^2 \right) + o(\frac{1}{n\alpha_n}).
		\end{align*}
		Since over $A_n$, $f$ is an histogram of size $K_n$, we deduce
		\begin{align*}
			\log (F(e^{-\frac{t \tilde{\psi}_{[K_n]}}{\sqrt{n\alpha_n}}}))=- \frac{t}{\sqrt{n\alpha_n}} \int_{0}^{1} f \tilde{\psi}_{f_0} + \frac{t^2}{2n\alpha_n} \left(\int_{0}^{1} f \tilde{\psi}_{[K_n]}^2 - \left(\int_{0}^{1} f \tilde{\psi}_{f_0}\right)^2 \right) + o(\frac{1}{n\alpha_n}).
		\end{align*}
		Then, the facts that $\|f-f_{0,K_n}\|_1\leq \eps_n$ over $A_n$ and $\tilde{\psi}_{[K_n]}$ is bounded imply
		\begin{align}\label{expansion}
			&\log (F(e^{-\frac{t \tilde{\psi}_{f_0}}{\sqrt{n\alpha_n}}})) \nonumber\\
			&= - \frac{t}{\sqrt{n\alpha_n}} \int_{0}^{1} f \tilde{\psi}_{f_0} 
			+ \frac{t^2}{2n\alpha_n} \left(\int_{0}^{1} f_{0,K_n} \tilde{\psi}_{[K_n]}^2 - \Big( \int_{0}^{1} f_{0, K_n}\tilde{\psi}_{f_0}\Big) ^2 \right)+o(\frac{1}{n\alpha_n}) \nonumber\\
			&=- \frac{t}{\sqrt{n\alpha_n}} \int_{0}^{1} f \tilde{\psi}_{f_0} 
			+ \frac{t^2}{2n\alpha_n} V_{K_n}  +o(\frac{1}{n\alpha_n}).
		\end{align}
		Thus, combining \eqref{calculation_1} and \eqref{expansion}, we have 
		\begin{align*}
			\alpha_n l_n(f_t)
			&= \alpha_n l_n(f) - t \sqrt{n \alpha_n} \frac{1}{n}\sum_{i=1}^{n}  \tilde{\psi}_{[K_n]}(Y_i)+ t\sqrt{n\alpha_n} \int_{0}^{1} f \tilde{\psi}_{f_0} - \frac{t^2}{2} V_{K_n} + o(1)\\
			&= \alpha_n l_n(f) + t\sqrt{n\alpha_n}(-\frac{1}{n}\sum_{i=1}^{n}  \tilde{\psi}_{[K_n]}(Y_i) + \psi(f)-\psi(f_0))  - \frac{t^2}{2} V_{K_n} + o(1).
		\end{align*}
		By rearranging and using the definition of $\hat{\psi}_{[K_n]}$,
		\begin{align}\label{dvpmt_final}
			\alpha_n l_n(f)+ t\sqrt{n\alpha_n}(\psi(f)-\hat{\psi}_{[K_n]} )=\alpha_n l_n(f_t) + \frac{t^2}{2}V_{K_n} + o(1).
		\end{align}
		Let us show that $V_{K_n} \rightarrow \int f_0 \tilde{\psi}^2$. Since $K_n \rightarrow \infty$, $\int (\tilde{\psi}_{[K_n]} - \tilde{\psi})^2 =o(1)$ and since $f_0$ is bounded it follows $\int f_0(\tilde{\psi}_{[K_n]} - \tilde{\psi})^2 =o(1) $. Hence $\left|\int f_0\tilde{\psi}_{[K_n]}^2 - \int f_0\tilde{\psi}^2 \right| = o(1)$ and thus $\left|\int f_{0,K_n}\tilde{\psi}_{[K_n]}^2 - \int f_0\tilde{\psi}^2 \right| = o(1)$. Moreover, $|\int f_0\tilde{\psi}_{[K_n]}| = |\int f_0(\tilde{\psi}_{[K_n]} - \tilde{\psi} )| \leq \|f_0\|_{\infty}\|\tilde{\psi}_{[K_n]} - \tilde{\psi}\|_2 = o(1)$. Finally $V_{K_n}= \int f_0 \tilde{\psi} ^2 +o(1)$. Using this result together with \eqref{dvpmt_final} and Assumption \eqref{assum_RHP_2} it follows
		\begin{align*}
			&E_{\alpha_n}(e^{t\sqrt{n\alpha_n}(\psi(f)-\hat{\psi}_{[K_n]})}|Y^n, A_n)=\frac{\int_{A_n} e^{t\sqrt{n\alpha_n}(\psi(f)-\hat{\psi}_{[K_n]})}e^{\alpha_nl_n(f)} d\Pi(f)}{\int e^{\alpha_nl_n(f) } d\Pi(f)} \\
			& = \frac{\int_{A_n} e^{\alpha_nl_n(f_t) + \frac{t^2}{2}V_{K_n}+o(1)} d\Pi(f)}{\int e^{\alpha_nl_n(f)} d\Pi(f)}
			=e^{\frac{t^2}{2}V_{K_n}}(1+o(1))\frac{\int_{A_n} e^{\alpha_nl_n(f_t)} d\Pi(f)}{\int e^{\alpha_nl_n(f)} d\Pi(f)} \\
            &= e^{\frac{t^2}{2}F_0(\tilde{\psi}^2_{f_0})}(1+o_P(1)).
		\end{align*}
		The last estimate is for the restricted distribution $\Pi_{\alpha_n}(\cdot|Y^n, A_n)$ but Assumption \eqref{assum_RHP_1} implies
		that the unrestricted version also follows and this proves Lemma \ref{thm}.
	\end{proof}
	%
	\begin{proof}{Proof of Lemma \ref{thm_conv_int}}
		Set $\eps_n= \tilde{\eps}_n +( e^{2\frac{|t|}{\sqrt{n\alpha_n}} \|\tilde{\psi}\|_{\infty}} - 1)$. First, $\eps_n \rightarrow 0$ and  since $\eps_n \geq \tilde{\eps}_n$, we have $\Pi_{\alpha_n}(A_n|Y^n)=1 +o_P(1)$. Now let us show the convergence \eqref{change_var}.	For $k \geq 1$, let us set
		\begin{align*}
			U_{k}=\left\{(\omega_1, \dots, \omega_{k-1}) \in (0,1)^{k-1}, \quad \sum_{i=1}^{k-1}\omega_i<1 \right\}.
		\end{align*}
		Throughout the proof, we will use the notation $\omega_{k}=1-\sum_{j=1}^{k-1} \omega_j$. Let us denote by $H$ the map
		\begin{align*}  
			\begin{array}{cccc}
				H: & U_{k}& \rightarrow & H^1_k \\
				&  (\omega_1, \dots, \omega_{k-1}) & & x \rightarrow k\sum_{j=1}^{k}\omega_j1_{I_j}(x).\\
			\end{array}
		\end{align*}
		By definition of the prior distribution, we have 
		\begin{align}\label{calc_chan_var_1}
			&\int_{A_n} e^{\alpha_nl_n(f_{t})} d\Pi(f)= \int_{H^1_{K_n}} 1_{f\in A_n} e^{\alpha_nl_n(f_{t})} d\Pi(f) \\
			&= \int_{U_{K_n-1}} 1_{H(\omega_1, \dots, \omega_{K_n-1}) \in A_n} e^{\alpha_nl_n(H(\omega_1, \dots, \omega_{K_n-1}) e^{-\frac{t \bar{\psi}_{[K_n]}}{\sqrt{n\alpha_n}}})/\int H(\omega_1, \dots, \omega_{K_n-1})e^{-\frac{t \bar{\psi}_{[K_n]}}{\sqrt{n\alpha_n}}}))} \\
			&\times \frac{1}{B(\delta)}\prod_{i=1}^{K_n}\omega_i^{\delta_{i,n}-1} d\omega_1\dots d\omega_{K_n-1}. \nonumber
		\end{align}
		For an integer $n$ and $j \in \{1,\dots, K_n\}$, denote $\gamma_j=e^{t\tilde{\psi}_j / \sqrt{n\alpha_n}}$ with $\tilde{\psi}_j=K_n\int_{I_j}\tilde{\psi}$. For $k$ an integer and $x \in ]0,+\infty[^k$, let us denote by $S_x$ the map
		\begin{align*}
			\begin{array}{cccc}
				S_x: & U_{k}& \rightarrow & ]0,+
				\infty[ \\
				&  (\omega_1, \dots, \omega_{k-1}) & & \sum_{j=1}^{k}\omega_jx_j.\\
			\end{array}
		\end{align*}
		For an integer $k$ and vector $x \in ]0,+\infty[^k$, denote
		\begin{align*}
			\begin{array}{cccc}
				\phi_x: & U_{k}& \rightarrow & U_{k} \\
				&  (\omega_1, \dots, \omega_{k-1}) & & (\frac{\omega_1x_1}{S_{x}(\omega)}, \dots, \frac{\omega_{k-1}x_{k-1}}{S_{x}(\omega)}).\\
			\end{array}
		\end{align*}
		This mapping is well defined and note that 
  \begin{align}\label{calc_chan_var_2}
      \frac{H(\omega_1, \dots, \omega_{K_n-1}) e^{-\frac{t \bar{\psi}_{[K_n]}}{\sqrt{n\alpha_n}}}}{\int H(\omega_1, \dots, \omega_{K_n-1})e^{-\frac{t \bar{\psi}_{[K_n]}}{\sqrt{n\alpha_n}}}} = H(\phi_{\gamma^{-1}}(\omega_1, \dots, \omega_{K_n-1})).
  \end{align}
	Moreover, one can show that for all integer $k$ and for all $x \in ]0,+\infty[^k$, $\phi_x$ is bijective and its inverse is $\phi_{x^{-1}}$.
		From Lemma 5 in the supplemental article of \citet{CR2015}, the mappings $\phi_x$ and $\phi_{x^{-1}}$  are $\mathcal{C}^1$, and the determinant of the jacobian matrix of the map $\phi_x$ is given by 			
		\begin{align}
			\det(D_{\phi_x}(\omega_1, \dots, \omega_{k-1}))=\frac{1}{S_{x}(\omega)^k}\prod_{i=1}^{k}x_i.
		\end{align}
		Let us combine \eqref{calc_chan_var_1} and \eqref{calc_chan_var_2} and then make the change of variables $(\xi_1, \dots, \xi_{K_n-1}) \rightarrow \phi_{\gamma}(\xi_1, \dots, \xi_{K_n-1})$ in \eqref{calc_chan_var_1} and using $1-\sum_{i=1}^{K_n-1}\frac{\gamma_i\xi_i}{S_\gamma(\xi)}=\frac{\gamma_{K_n}\xi_{K_n}}{S_\gamma(\xi)}$, it follows
		\begin{align}\label{calc_chan_var_4}
			&\int_{A_n} e^{\alpha_nl_n(f_{t})} d\Pi(f)&\nonumber\\
			&=\int_{U_{K_n-1}} 1_{H(\phi_{\gamma}(\xi_1, \dots, \xi_{K_n-1}))\in A_n} e^{\alpha_nl_n(H(\xi_1, \dots, \xi_{K_n-1}))} \frac{1}{B(\delta)}\prod_{i=1}^{K_n}\left(\frac{\gamma_i\xi_i}{S_{\gamma}(\xi)}\right)^{\delta_{i,n}-1} \frac{1}{S_{\gamma}(\xi)^{K_n}}\prod_{i=1}^{K_n}\gamma_i d\xi_i\nonumber\\
			&=\int_{U_{K_n-1}} 1_{H(\phi_{\gamma}(\xi_1, \dots, \xi_{K_n-1}))\in A_n} e^{\alpha_nl_n(H(\xi_1, \dots, \xi_{K_n-1}))} \underbrace{\prod_{i=1}^{K_n}\gamma_i^{\delta_{i,n}}}_{(*)} \underbrace{\frac{1}{S_{\gamma}(\xi)^{\sum_{i=1}^{K_n} \delta_{i,n}}}}_{(**)} \frac{1}{B(\delta)}\prod_{i=1}^{K_n}\xi_i^{\delta_{i,n}-1} d\xi_i.
		\end{align}
		For the term $(*)$, which does not depend on $\xi$, we have
		\begin{align*}
			&\prod_{j=1}^{K_n}\gamma_j^{\delta_{j,n}}= \prod_{j=1}^{K_n}e^{t\tilde{\psi}_j\delta_{j,n}/\sqrt{n\alpha_n}}=e^{t\sum_{j=1}^{K_n}\tilde{\psi}_j\delta_{j,n}/\sqrt{n\alpha_n} } \\
			&  e^{-\frac{|t|}{\sqrt{n\alpha_n}} \|\tilde{\psi}\|_\infty\sum_{j=1}^{K_n}\delta_{j,n}} \leq \prod_{j=1}^{K_n}\gamma_j^{\delta_{j,n}} \leq e^{\frac{|t|}{\sqrt{n\alpha_n}} \|\tilde{\psi}\|_\infty\sum_{j=1}^{K_n}\delta_{j,n}},
		\end{align*}
		so that $\prod_{j=1}^{K_n}\gamma_j^{\delta_{j,n}} = 1+o(1)$ using the condition \eqref{condition_weights}.
		As for the term $(**)$, for $\xi \in U_{K_n-1}$ we have $ S_{\gamma}(\xi) = \sum_{j=1}^{K_n} \gamma_j \xi_j = \sum_{j=1}^{K_n} e^{\frac{t}{\sqrt{n\alpha_n}}\tilde{\psi}_j}  \xi_j $, and thus
    \begin{align}
        e^{-\frac{t}{\sqrt{n\alpha_n}}\|\tilde{\psi}\|_\infty}  \sum_{j=1}^{K_n} \xi_j \leq S_{\gamma}(\xi) \leq e^{\frac{t}{\sqrt{n\alpha_n}}\|\tilde{\psi}\|_\infty}  \sum_{j=1}^{K_n} \xi_j \label{calc_inter}  \\
        e^{-\frac{|t|}{\sqrt{n\alpha_n}} \|\tilde{\psi}\|_\infty \sum_{i=1}^{K_n} \delta_{i,n}}\leq  S_{\gamma}(\xi)^{-\sum_{i=1}^{K_n} \delta_{i,n}} \leq e^{\frac{|t|}{\sqrt{n\alpha_n}} \|\tilde{\psi}\|_\infty \sum_{i=1}^{K_n} \delta_{i,n}}.
    \end{align}
        Using Assumption \eqref{condition_weights} gives that term $(**)$ is $1+o(1)$ uniformly over $U_{K_n-1}$. By combining \eqref{calc_chan_var_4} and the results on term $(*)$ and term $(**)$, we obtain
		\begin{align}\label{calc_chan_var_5}
			&\int_{A_n} e^{\alpha_nl_n(f_{t})} d\Pi(f) \\
			&=(1+o(1))\int_{U_{K_n-1}} 1_{H(\phi_{\gamma}(\xi_1, \dots, \xi_{K_n-1}))\in A_n} e^{\alpha_nl_n(H(\xi_1, \dots, \xi_{K_n-1}))} \frac{1}{B(\delta)}\prod_{i=1}^{K_n}\xi_i^{\delta_{i,n}-1} d\xi_1\dots d\xi_{K_n-1}\nonumber.
		\end{align}
		Next, let us show that we have the inclusion
		\begin{align}\label{inclusion}
			\{(\xi_1,\dots,\xi_{K_n-1})& \in U_{K_n-1} , \: \|H(\xi_1,\dots,\xi_{K_n-1}) - f_{0,K_n} \|_1 \leq \tilde{\eps}_n\} \nonumber\\
			\subset &\{(\xi_1,\dots,\xi_{K_n-1}) \in U_{K_n-1} , \: \|H( \phi_{\gamma}(\xi_1,\dots,\xi_{K_n-1})) - f_{0,K_n} \|_1 \leq \eps_n\}.
		\end{align} 
		For all integer $n$, denote $(\xi^0_1,\dots,\xi^0_{K_n-1})$ the element of $ U_{K_n-1}$ such that $H(\xi^0_1,\dots,\xi^0_{K_n-1})=f_{0,K_n}$. Let $(\xi_1,\dots,\xi_{K_n-1}) \in U_{K_n-1}$ such that $\|H(\xi_1,\dots,\xi_{K_n-1}) - f_{0,K_n} \|_1 \leq \tilde{\eps}_n \iff \sum_{i=1}^{K_n}|\xi_i- \xi^0_i| \leq \tilde{\eps}_n$. Then we have
		\begin{align*}
			\|H( \phi_{\gamma}(\xi_1,\dots,\xi_{K_n-1})) - f_{0,K_n} \|_1= \sum_{i=1}^{K_n}|\frac{\xi_i\gamma_i}{S_{\gamma}(\xi)} - \xi^0_i| &\leq \sum_{i=1}^{K_n}|\xi_i - \xi^0_i| + \sum_{i=1}^{K_n} |\frac{\xi_i\gamma_i}{S_{\gamma}(\xi)} -\xi_i|\\
			&\leq \tilde{\eps}_n + \sum_{i=1}^{K_n} \xi_i|\frac{\gamma_i}{S_{\gamma}(\xi)} -1|.
		\end{align*}
		By \eqref{calc_inter}, it follows that for all $i \in \{1,\dots, K_n\}$
		\begin{align*}
			\frac{e^{-\frac{|t|}{\sqrt{n\alpha_n}}\|\tilde{\psi}\|_\infty}}{e^{\frac{|t|}{\sqrt{n\alpha_n}}\|\tilde{\psi}\|_\infty}} \leq \frac{\gamma_i}{S_{\gamma}(\xi)} &\leq \frac{e^{\frac{|t|}{\sqrt{n\alpha_n}}\|\tilde{\psi}\|_\infty}}{e^{-\frac{|t|}{\sqrt{n\alpha_n}}\|\tilde{\psi}\|_\infty}} \\
			|\frac{\gamma_i}{S_{\gamma}(\xi)} -1 | &\leq e^{2\frac{|t|}{\sqrt{n\alpha_n}}\|\tilde{\psi}\|_\infty}-1.
		\end{align*}
		Hence $\|H( \phi_{\gamma}(\xi_1,\dots,\xi_{K_n-1})) - f_{0,K_n} \|_1 \leq \tilde{\eps}_n + (e^{2\frac{|t|}{\sqrt{n\alpha_n}}\|\tilde{\psi}\|_\infty}-1) = \eps_n.$ Thus we have the inclusion \eqref{inclusion}. Finally, combining \eqref{calc_chan_var_5} and \eqref{inclusion}, we obtain
		\begin{align*}
			(1+o_P(1))\int_{U_{K_n-1}} 1_{H(\xi_1,\dots,\xi_{K_n-1})\in \tilde{A}_n} e^{\alpha_nl_n(H(\xi_1, \dots, \xi_{K_n-1}))} \frac{1}{B(\delta)}\prod_{i=1}^{K_n}\xi_i^{\delta_i-1} d\xi_1\dots d\xi_{K_n-1} \\
			\leq \int_{A_n} e^{\alpha_nl_n(f_{t})} d\Pi(f) \leq (1+o_P(1))\int e^{\alpha_nl_n(f)}  d\Pi(f),
		\end{align*}
		hence
		\begin{align*}
			(1+o_P(1)) \Pi_{\alpha_n}(\tilde{A}_n|Y^n)
			\leq \frac{\int_{A_n} e^{\alpha_nl_n(f_{t})} d\Pi(f)}{\int e^{\alpha_nl_n(f)} d\Pi(f)} \leq (1+o_P(1)).
		\end{align*}
		By assumption \ref{assum_consistency} $\Pi_{\alpha_n}(\tilde{A}_n|Y^n)=1+o_P(1)$  and the result follows.
	\end{proof}

	\begin{lemma}\label{lem:bias}
	The following expansion holds for $\hat\psi$,
		\begin{align*}
			\hat{\psi} - \hat{\psi}_{[K_n]} = -F_0(\tilde{\psi}_{[K_n]}) +o_P(1/\sqrt{n}).
		\end{align*}
	\end{lemma}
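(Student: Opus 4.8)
The plan is to view $\hat\psi-\hat\psi_{[K_n]}$ as an empirical average of the function $g_n:=\tilde\psi_{f_0}-\tilde\psi_{[K_n]}$ and to split off its expectation from its centred fluctuation, the point being that $\|g_n\|_2\to0$ forces the fluctuation to be of strictly smaller order than $1/\sqrt n$. Unwinding the definitions of $\hat\psi$ and $\hat\psi_{[K_n]}$ gives
\[
\hat\psi-\hat\psi_{[K_n]}=\frac1n\sum_{i=1}^n\big(\tilde\psi_{f_0}(Y_i)-\tilde\psi_{[K_n]}(Y_i)\big)=\frac1n\sum_{i=1}^n g_n(Y_i).
\]
First I would take the $P_0$-expectation of this average: since $Y_i\sim f_0$,
\[
E_0\!\left[\frac1n\sum_{i=1}^n g_n(Y_i)\right]=\int g_n f_0=F_0(\tilde\psi_{f_0})-F_0(\tilde\psi_{[K_n]})=-F_0(\tilde\psi_{[K_n]}),
\]
where the last equality uses the normalisation $\int\effinf f_0=0$ from \eqref{expansion_psi} (recall $\effinf=\tilde\psi_{f_0}$).

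It then remains to show that the centred part $\mathbb{G}_n(g_n):=\frac1{\sqrt n}\sum_{i=1}^n\big(g_n(Y_i)-E_0 g_n(Y_i)\big)$ is $o_P(1)$. I would do this by a second-moment bound: the summands are i.i.d.\ and centred, so
\[
\mathrm{Var}_0\big(\mathbb{G}_n(g_n)\big)=\mathrm{Var}_0\big(g_n(Y_1)\big)\le E_0\big[g_n(Y_1)^2\big]=\int g_n^2 f_0\le\|f_0\|_\infty\,\|g_n\|_2^2,
\]
using that $f_0$ is bounded, and then apply Chebyshev's inequality. Thus the whole argument reduces to checking $\|g_n\|_2=\|\tilde\psi_{f_0}-\tilde\psi_{[K_n]}\|_2\to0$. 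Here I would use that $\tilde\psi_{[K_n]}$ is, by construction, the $L^2[0,1]$-orthogonal projection of $\tilde\psi_{f_0}$ onto the space of regular $K_n$-bin histograms, so $\|\tilde\psi_{f_0}-\tilde\psi_{[K_n]}\|_2$ is the $L^2$-distance of $\tilde\psi_{f_0}$ to that space, which tends to $0$ as $K_n\to\infty$ for any $\tilde\psi_{f_0}\in L^2[0,1]$ since step functions are dense in $L^2[0,1]$; in the settings where this lemma is invoked the representer is bounded (indeed $\gamma$-Hölder), so one even has the explicit bound $\|\tilde\psi_{f_0}-\tilde\psi_{[K_n]}\|_2\le\|\tilde\psi_{f_0}-\tilde\psi_{[K_n]}\|_\infty\lesssim K_n^{-\gamma}$. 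Combining the two displayed identities then yields $\hat\psi-\hat\psi_{[K_n]}=-F_0(\tilde\psi_{[K_n]})+o_P(1/\sqrt n)$.

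The computations are routine; the only point requiring a little care is the $L^2$-approximation statement $\|\tilde\psi_{f_0}-\tilde\psi_{[K_n]}\|_2\to0$, which is where the hypotheses $K_n\to\infty$, $f_0$ bounded, and square-integrability (boundedness) of the representer enter, and which is precisely what makes the stochastic remainder $\mathbb{G}_n(g_n)/\sqrt n$ of strictly smaller order than $1/\sqrt n$ rather than of exact order $1/\sqrt n$.
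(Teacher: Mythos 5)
Your proof is correct and follows essentially the same route as the paper's: both decompose $\hat\psi-\hat\psi_{[K_n]}$ into the deterministic mean $-F_0(\tilde\psi_{[K_n]})$ plus a centred empirical fluctuation, bound the fluctuation's second moment by $\|f_0\|_\infty\|\tilde\psi_{f_0}-\tilde\psi_{[K_n]}\|_2^2/n$, and conclude via $\|\tilde\psi_{f_0}-\tilde\psi_{[K_n]}\|_2\to 0$ as $K_n\to\infty$. Your explicit justification of this last convergence (density of step functions, or the H\"older rate $K_n^{-\gamma}$) is slightly more detailed than the paper's, which simply asserts it, but the argument is the same.
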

	\begin{proof}{Proof} By definition,
		\begin{align}\label{conv}
			\hat{\psi} - \hat{\psi}_{[K_n]} &= \psi(f_0)+\frac{1}{n}\sum_{i=1}^{n} \tilde{\psi}(Y_i) - \psi(f_0) -\frac{1}{n}\sum_{i=1}^{n} \tilde{\psi}_{[K_n]}(Y_i) = \frac{1}{n}\sum_{i=1}^{n} (\tilde{\psi}(Y_i) - \tilde{\psi}_{[K_n]}(Y_i)  )\nonumber\\
			&= \frac{1}{n}\sum_{i=1}^{n}\Big( \tilde{\psi}(Y_i) - \tilde{\psi}_{[K_n]}(Y_i) + F_0(\tilde{\psi}_{[K_n]}) \Big) - F_0(\tilde{\psi}_{[K_n]}). 
		\end{align}
		Moreover, using that $E_0(\tilde{\psi}(Y_1) - \tilde{\psi}_{[K_n]}(Y_1)) = - F_0(\tilde{\psi}_{[K_n]})$,
		\begin{align}\label{expansion_bias}
			&E_0(\left(\frac{1}{n}\sum_{i=1}^{n} \Big(\tilde{\psi}(Y_i) - \tilde{\psi}_{[K_n]}(Y_i) + F_0(\tilde{\psi}_{[K_n]})\Big)\right)^2) \nonumber \\ 
            & =\frac{1}{n^2} E_0(\left(\sum_{i=1}^{n} \Big(\tilde{\psi}(Y_i) - \tilde{\psi}_{[K_n]}(Y_i) + F_0(\tilde{\psi}_{[K_n]}\Big)\right)^2) \nonumber\\
			&= \frac{1}{n} E_0(\Big(\tilde{\psi}(Y_1) - \tilde{\psi}_{[K_n]}(Y_1) + F_0(\tilde{\psi}_{[K_n]}\Big)^2) \leq \frac{1}{n} E_0(\Big(\tilde{\psi}(Y_1) - \tilde{\psi}_{[K_n]}(Y_1)\Big)^2) \nonumber\\
			&=\frac{1}{n}\int \left(\tilde{\psi} - \tilde{\psi}_{[K_n]}\right)^2 f_0  \leq \frac{\|f_0\|_{\infty}}{n}  \int \left(\tilde{\psi} - \tilde{\psi}_{[K_n]}\right)^2 =\frac{\|f_0\|_{\infty}}{n}  o(1) = o(\frac{1}{n}),
		\end{align}
		where we used the  assumption $K_n \rightarrow \infty$ in the last calculation. Hence, we obtain $\frac{1}{n}\sum_{i=1}^{n}\Big( \tilde{\psi}(Y_i) - \tilde{\psi}_{[K_n]}(Y_i) + F_0(\tilde{\psi}_{[K_n]})\Big) = o_P(1/\sqrt{n})$. Combining this with \eqref{conv}  yields the result.
	\end{proof}
	
%
%
	
\subsection{Credible regions}
    \begin{lemma}\label{lemma_conv_quantiles}
		Let $(Q^Y_{n})$ be a sequence of random real distributions, $(u_n)$ a positive sequence, $(Y_n)$ a sequence of real random variables and $V$ a positive constant. Let $\delta \in (0,1)$ and denote $a_{n,\delta}^Y$ the random $\delta$-quantile of $Q^Y_{n}$. If
		\begin{align}\label{conv_loi}
			\tilde{Q}^Y_{n}:=u_n(Q^Y_{n} - Y_n)\overset{\cL}{\to} \mathcal{N}(0, V),
		\end{align}		
	then $u_n( a_{n,\delta}^Y - Y_{n}) \xrightarrow[]{P} \sqrt{V}q_{\delta}$.
	\end{lemma}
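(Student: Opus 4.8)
The plan is to translate the weak convergence \eqref{conv_loi} of the rescaled-and-recentered distributions $\tilde Q^Y_n = u_n(Q^Y_n - Y_n)$ into convergence of their $\delta$-quantiles, using the fact that the limit $\cN(0,V)$ has a continuous, strictly increasing CDF at every point (in particular at $q_\delta$, its $\delta$-quantile $\sqrt{V}\,z_\delta$ — note the lemma writes $q_\delta$ for the standard normal quantile so that $\sqrt V q_\delta$ is the $\delta$-quantile of $\cN(0,V)$). The key classical fact is that if a sequence of (random) distribution functions $F_n$ converges weakly to a distribution function $F$ that is continuous and strictly increasing in a neighbourhood of $F^{-1}(\delta)$, then $F_n^{-1}(\delta) \to F^{-1}(\delta)$; here the subtlety is only that everything is random, so the convergence statements must be read in $P$-probability, and one must be slightly careful because $\tilde Q^Y_n$ is itself a random measure.

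First I would denote by $G_n(x) = \tilde Q^Y_n((-\infty,x]) = Q^Y_n((-\infty, Y_n + x/u_n])$ the CDF of $\tilde Q^Y_n$ and by $\Phi_V$ the CDF of $\cN(0,V)$. By definition, the $\delta$-quantile of $\tilde Q^Y_n$ is exactly $u_n(a_{n,\delta}^Y - Y_n)$, so it suffices to show $u_n(a_{n,\delta}^Y - Y_n) \to^P \sqrt V q_\delta$. Fix $\eps>0$. I would show $P(u_n(a_{n,\delta}^Y - Y_n) > \sqrt V q_\delta + \eps) \to 0$ and $P(u_n(a_{n,\delta}^Y - Y_n) < \sqrt V q_\delta - \eps) \to 0$ separately. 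For the first, observe that on the event $\{u_n(a_{n,\delta}^Y - Y_n) > \sqrt V q_\delta + \eps\}$, by monotonicity of the quantile function we have $G_n(\sqrt V q_\delta + \eps/2) \le \delta$ (the CDF has not yet reached level $\delta$ at a point to the left of the $\delta$-quantile); but \eqref{conv_loi} implies $G_n(\sqrt V q_\delta + \eps/2) \to^P \Phi_V(\sqrt V q_\delta + \eps/2) > \delta$, since $\Phi_V$ is strictly increasing at $\sqrt V q_\delta$. Hence the probability of that event tends to $0$. The bound $P(u_n(a_{n,\delta}^Y - Y_n) < \sqrt V q_\delta - \eps)\to 0$ is symmetric, using $G_n(\sqrt V q_\delta - \eps/2) \ge \delta$ on that event together with $G_n(\sqrt V q_\delta - \eps/2) \to^P \Phi_V(\sqrt V q_\delta - \eps/2) < \delta$ (here one uses the convention that the $\delta$-quantile is $\inf\{x: G_n(x)\ge \delta\}$, so $G_n(x)<\delta$ for all $x$ strictly below it). Combining the two estimates gives $P(|u_n(a_{n,\delta}^Y - Y_n) - \sqrt V q_\delta| > \eps) \to 0$, which is the claim.

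The one point requiring mild care — and the closest thing to an obstacle — is that \eqref{conv_loi} is stated as convergence in law of a \emph{random} distribution, so the pointwise statement $G_n(x)\to^P \Phi_V(x)$ at a fixed continuity point $x$ of $\Phi_V$ needs to be extracted from it; this follows from the definition of convergence in distribution in $P$-probability via the bounded-Lipschitz (or Portmanteau) characterization, exactly as used elsewhere in the paper (e.g. around \eqref{cvl} and in \cite{CR2015}), since $\Phi_V$ is continuous everywhere. Once that is in hand, the argument is the standard quantile-continuity argument and nothing else is delicate: there are no tail or uniform-integrability issues because we only ever evaluate CDFs at fixed points near $\sqrt V q_\delta$, where $\Phi_V$ is strictly monotone.
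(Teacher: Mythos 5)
Your proposal is correct and follows essentially the same route as the paper: convert the weak convergence \eqref{conv_loi} into convergence in probability of the CDFs of $\tilde Q^Y_n$ to that of $\cN(0,V)$, then conclude by the standard monotonicity argument comparing $G_n$ at $\sqrt{V}q_\delta\pm\eps/2$ with the level $\delta$ (this is exactly what the paper's Lemma \ref{conv_alea_1} does with $\rho$ in place of $\eps/2$). The only difference is that the paper first upgrades to \emph{uniform} convergence of the CDFs via Lemma 2 of the supplement of \cite{CR2015} before invoking its quantile lemma, whereas you use only pointwise convergence in probability at two fixed continuity points, which indeed suffices.
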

	
	\begin{proof}{Proof}
		By Lemma 2 in the supplement of \citet{CR2015}, \eqref{conv_loi} implies that 
		\begin{align*}
			\sup_{s \in \R} | \tilde{Q}_n^Y((-\infty , s]) - \mathcal{N}(0,V)((-\infty , s]) | \xrightarrow{P} 0.
		\end{align*}
		Using Lemma \ref{conv_alea_1} and that $u_n( a_{n,\delta}^Y - Y_n)$ is the $\delta$-quantile of $\tilde{Q}_n^Y$, we deduce that $u_n( a_{n,\delta}^Y - Y_n) \xrightarrow[]{P} \sqrt{V}q_{\delta}$.
	\end{proof}

	\begin{lemma}\label{conv_alea_1}
			Let $p\in (0,1)$. Let $(F_n)$ be a sequence of random cumulative distribution functions and $(q^n_p)$ the (random) sequence of its $p$-quantiles. Let $F$ be a fixed continuous increasing cumulative distribution function and $q_p$ be its $p$-quantile. If $\sup_{s \in \R} |F_n(s)-F(s)| \to^P 0$ as $n\to\infty$, then $|q^n_p - q_p| \to^P 0$ as $n\to\infty$.
	\end{lemma}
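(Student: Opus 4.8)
The plan is to use that a continuous, strictly increasing $F$ pins down its $p$-quantile through the two-sided strict inequalities $F(q_p-\eps)<p<F(q_p+\eps)$ for every $\eps>0$, and then to transport these inequalities to $F_n$ via the uniform closeness $\sup_s|F_n(s)-F(s)|$. Throughout, I would work with the convention $q^n_p=\inf\{s\in\R:F_n(s)\ge p\}$ (and similarly for $q_p$); the argument adapts verbatim to the ``$>$'' convention.

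First, I would record the deterministic facts about $F$. By right-continuity $F(q_p)\ge p$, while for $s<q_p$ one has $F(s)<p$, so by continuity $F(q_p)=p$. Since $F$ is strictly increasing, for any fixed $\eps>0$ the numbers $a_\eps:=p-F(q_p-\eps)$ and $b_\eps:=F(q_p+\eps)-p$ are both strictly positive. Set $\delta_\eps:=\tfrac12\min\{a_\eps,b_\eps\}>0$ and let $E_n:=\{\sup_{s\in\R}|F_n(s)-F(s)|<\delta_\eps\}$; by hypothesis $P(E_n)\to 1$ as $n\to\infty$.

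Next, I would argue pathwise on $E_n$. There $F_n(q_p-\eps)\le F(q_p-\eps)+\delta_\eps<p$ and $F_n(q_p+\eps)\ge F(q_p+\eps)-\delta_\eps>p$. Since $F_n$ is non-decreasing, the first inequality gives $F_n(s)<p$ for all $s\le q_p-\eps$, hence $q^n_p\ge q_p-\eps$; the second inequality shows $q_p+\eps\in\{s:F_n(s)\ge p\}$, hence $q^n_p\le q_p+\eps$. Thus $|q^n_p-q_p|\le\eps$ on $E_n$, so $P(|q^n_p-q_p|>\eps)\le P(E_n^c)\to 0$, and since $\eps>0$ is arbitrary this is the claim. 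I do not anticipate a genuine obstacle; the only points requiring care are fixing the convention for the (generalised) quantile and keeping the inequalities strict on the correct sides so that $q^n_p$ is forced into $[q_p-\eps,q_p+\eps]$. It is precisely the assumption that $F$ is strictly increasing (not merely non-decreasing) that guarantees $a_\eps,b_\eps>0$, and without it the conclusion can fail.
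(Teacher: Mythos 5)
Your proof is correct and follows essentially the same route as the paper's: both fix $\eps>0$, use strict monotonicity and continuity of $F$ to get the two-sided strict gaps $F(q_p-\eps)<p<F(q_p+\eps)$, take half the minimum gap as the uniform-closeness threshold, and conclude on that high-probability event that $q^n_p$ is trapped in $[q_p-\eps,q_p+\eps]$. Your additional care about the quantile convention and the verification that $F(q_p)=p$ are fine refinements but do not change the argument.
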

	\begin{proof}{Proof}
	For $\rho > 0$ arbitrary, we show that $P(|q_p-q_p^n| \leq \rho) \rightarrow 1$. Since $F$ is increasing and continuous, we have $F(q_p-\rho)<F(q_p)=p<F(q_p+\rho)$. Set $\eps =\min(F(q_p+\rho)-p, p - F(q_p-\rho))/2$. On the event $\{\sup_{s \in \R} |F_n(s)-F(s)|\leq \eps\}$ it follows 
			\begin{align*}
				F_n(q_p+\rho) \geq F(q_p+\rho)-\eps \geq F(q_p+\rho)-\frac{F(q_p+\rho)-p}{2} = p + \frac{F(q_p+\rho)-p}{2} > p  .
			\end{align*}
		By definition of the quantile $q_n^p$, this implies $q_n^p \leq q_p+\rho$. Similarly,
			\begin{align*}
			F_n(q_p-\rho) \leq F(q_p-\rho)+\eps \leq F(q_p-\rho)+\frac{p - F(q_p-\rho) }{2} = p - \frac{p - F(q_p-\rho)}{2} < p  .
			\end{align*}
			Hence $ q_p-\rho < q_n^p $, and thus it follows 
         $\{\sup_{s \in \R} |F_n(s)-F(s)|\leq \eps\} \subset \{|q^n_p - q_p|\leq \rho\}.$
		Let $\delta > 0$. Since $\sup_{s \in \R} |F_n(s)-F(s)|=o_P(1)$, there exists $N_0$ such that for all $n \geq N_0$,$P(\sup_{s \in \R} |F_n(s)-F(s)|\leq \eps) \geq 1-\delta$. Hence one deduces that for all $ n \geq N_0$, $1-\delta \leq P(\sup_{s \in \R} |F_n(s)-F(s)|\leq \eps) \leq P(|q^n_p - q_p|\leq \rho)$.
		\end{proof}


\bibliography{23-0089.bib}

\end{document}